\documentclass[10pt, reqno]{amsart}
\usepackage{fullpage}
\usepackage{amsmath, amsthm}
\usepackage{amscd}
\usepackage{enumerate}
\usepackage{float,amsmath,amssymb,mathrsfs,bm,multirow,graphics}
\usepackage[dvips]{graphicx}
\usepackage[percent]{overpic}
\usepackage[numbers,sort&compress]{natbib}
\usepackage{xcolor}
\usepackage{todonotes}
\usepackage{amsaddr} 
\usepackage{mathtools}
\usepackage{subcaption}
\usepackage{mathtools}
\usepackage{stackengine} 
 
\usepackage{tikz}
\usepackage{tikz-cd}
\usepackage{tqft}
\usetikzlibrary{decorations.pathreplacing,decorations.markings}

\newtheorem{proposition}{Proposition}[section]
\newtheorem{theorem}[proposition]{Theorem}
\newtheorem{lemma}[proposition]{Lemma}

\newtheorem{definition}[proposition]{Definition}
\newtheorem{assumption}[proposition]{Assumption}
\newtheorem{remark}[proposition]{Remark}


\newcommand{\re}{\text{\upshape Re} \,}
\newcommand{\im}{\text{\upshape Im} \,}

 \newcommand\ben{\begin{equation*}}
 \newcommand\ebn{\end{equation*}}
 \newcommand\beq{\begin{equation}}
 \newcommand\eeq{\end{equation}}
 \newcommand\lb{\left(}
  \newcommand\rb{\right)} 

\numberwithin{equation}{section}

\allowdisplaybreaks[1]

\usepackage[colorlinks=true]{hyperref}
\hypersetup{urlcolor=blue, citecolor=red, linkcolor=blue}
\title{Non-polynomial $q$-Askey scheme: integral representations, eigenfunction properties, and polynomial limits}
\author{Jonatan Lenells and Julien Roussillon}

\address{Department of Mathematics, KTH Royal Institute of Technology, \\ 100 44 Stockholm, Sweden.}
\email{julienro@kth.se}
\email{jlenells@kth.se}

\begin{document}

\begin{abstract}
We construct a non-polynomial generalization of the $q$-Askey scheme. Whereas the elements of the $q$-Askey scheme are given by $q$-hypergeometric series, the elements of the non-polynomial scheme are given by contour integrals, whose integrands are built from Ruijsenaars' hyperbolic gamma function. Alternatively, the integrands can be expressed in terms of Faddeev's quantum dilogarithm, Woronowicz's quantum exponential, or Kurokawa's double sine function.  We present the basic properties of all the elements of the scheme, including their integral representations, joint eigenfunction properties, and polynomial limits.
\end{abstract} 

\maketitle

\noindent
{\small{\sc AMS Subject Classification (2020)}: 33D45, 33D70, 33E20, 81T40.}

\noindent
{\small{\sc Keywords}: $q$-Askey scheme, orthogonal polynomial, confluent limit, conformal field theory, Virasoro fusion kernel, Ruijsenaars' hypergeometric function, quantum dilogarithm.}


\section{Introduction}

\subsection{Introduction}
The Askey scheme is a way of organizing orthogonal polynomials. Each element of the scheme represents a family of polynomials, and the families at a given level can be obtained from those at the level above by taking suitable limits \cite{AW1985}. 
The Wilson and Racah polynomials lie at the top level and are thus the most general polynomials of the scheme. In addition to the classical Jacobi, Laguerre, and Hermite polynomials, the Askey scheme also includes less well-known families such as the continuous dual Hahn and Meixner--Pollaczek polynomials. 

The $q$-Askey scheme is a generalization of the Askey scheme. The polynomials in the $q$-Askey scheme are $q$-analogs of the polynomials in the Askey scheme, and the elements in the latter scheme can be recovered by taking the limit $q \to 1$ \cite{KLS2010}. The Askey--Wilson and $q$-Racah polynomials lie at the top level and all the other polynomials in the $q$-Askey scheme can be obtained from these via appropriate limit operations. We refer to Appendix \ref{AppendixB} for a list of basic properties of elements in the $q$-Askey scheme.

\begin{figure}[h!]
\tikzstyle{block} = [rectangle, draw, fill=blue!20, 
    text width=10em, text centered, rounded corners, minimum height=2em]
\tikzstyle{line} = [draw, -latex, shorten >= 4pt, shorten <= 0pt]
   \centering
\begin{tikzpicture}[node distance = .3cm, auto]
    \node[align=left] at (-7,0) {Level $1$};
    \node[align=left] at (-7,-1.2) {Level $2$};
    \node[align=left] at (-7,-2.6) {Level $3$};
    \node[align=left] at (-7,-3.95) {Level $4$};
    \node[align=left] at (-7,-5.25) {Level $5$};

    \node [block] (AW) {Askey--Wilson};
    \node [block, below left=.4cm and -0.1cm of AW] (Hahn) {Continuous dual $q$-Hahn};
    \node [block, below right=.5cm and -0.1cm of AW] (Jacobi) {Big $q$-Jacobi};
   \node [block, below=.5cm of Hahn] (Salam) {Al-Salam Chihara};
   \node [block, below=.5cm of Salam] (Xn) {Continuous big $q$-Hermite};
   \node [block, below=.5cm of Xn] (Qn) {Continuous $q$-Hermite};
   \node[block, below left=.66cm and -1.4cm of Jacobi] (Yn) {Little $q$-Jacobi};
   \node[block, below right=.66cm and -1.4cm of Jacobi] (Ln) {Big $q$-Laguerre};
   \node[block, below left= .62cm and -1.4cm of Ln] (Wn) {Little $q$-Laguerre};
   \node[block,below=.5cm of Wn] (Mn) {Little $q$-Laguerre with $\alpha=0$};
    \path [line] (AW) -- (Hahn);
    \path [line] (AW) -- (Jacobi);
    \path[line] (Hahn) -- (Salam);
    \path[line] (Salam) -- (Xn);
    \path[line] (Xn) -- (Qn);
    \path[line] (Jacobi) -- (Yn);
    \path[line] (Jacobi) -- (Ln);
    \path[line] (Ln) -- (Wn);
    \path[line] (Yn) -- (Wn);
    \path[line] (Wn) -- (Mn);
\end{tikzpicture}
\caption{The part of the $q$-Askey scheme relevant for the present paper.
\label{qaskey}}
\end{figure}
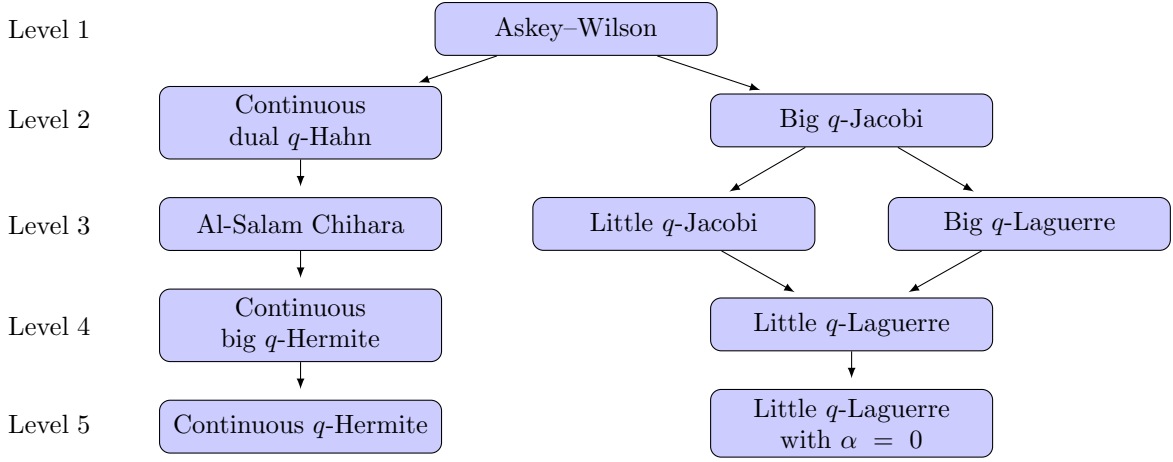

\begin{figure}[h!]
\tikzstyle{block} = [rectangle, draw, fill=blue!20, 
    text width=10em, text centered, rounded corners, minimum height=2em]
\tikzstyle{line} = [draw, -latex, shorten >= 4pt, shorten <= 0pt]
   \centering
\begin{tikzpicture}[node distance = .3cm, auto]
    \node[align=left] at (-6.3,0) {Level $1$};
    \node[align=left] at (-6.3,-1.3) {Level $2$};
    \node[align=left] at (-6.3,-2.5) {Level $3$};
    \node[align=left] at (-6.3,-3.7) {Level $4$};
    \node[align=left] at (-6.3,-4.9) {Level $5$};

    \node [block] (AW) {$\mathcal{R}\left[\substack{\theta_1\;\;\theta_t\vspace{0.1cm}\\ \theta_{\infty}\;\theta_0};\substack{\sigma_s \vspace{0.15cm} \\  \sigma_t}, b\right]$};
    \node [block, below =0.5cm of AW] (HahnJacobi) {$\mathcal{H}(b,\theta_0,\theta_t,\theta_*,\sigma_s,\nu)$};
\node [block, below left=.5cm and -.75cm  of HahnJacobi] (SalamLittleJacobi) {$\mathcal{S}(b,\theta_0,\theta_t,\sigma_s,\rho)$};
\node [block, below right=.5cm and -.75cm  of HahnJacobi] (BigLaguerre) {$\mathcal{L}(b,\theta_t,\theta,\lambda,\mu)$};
\node [block, below =.5cm of BigLaguerre] (LittleLaguerre) {$\mathcal{W}(b,\theta_t,\kappa,\omega)$};
\node [block, below=.5cm of LittleLaguerre] (M) {$\mathcal{M}(b,\zeta,\omega)$};
\node [block, below =.5cm of SalamLittleJacobi] (ContinuousBigHermite) {$\mathcal{X}(b,\theta,\sigma_s,\omega)$};
\node [block, below =.5cm of ContinuousBigHermite] (BigHermite) {$\mathcal{Q}(b,\sigma_s,\eta)$};

    \path [line] (AW) -- (HahnJacobi);
    \path [line] (HahnJacobi) -- (SalamLittleJacobi);
    \path [line] (HahnJacobi) -- (BigLaguerre);
    \path [line] (BigLaguerre) -- (LittleLaguerre);
     \path [line] (SalamLittleJacobi) -- (ContinuousBigHermite);
     \path [line] (ContinuousBigHermite) -- (BigHermite);
     \path [line] (LittleLaguerre) -- (M);

\end{tikzpicture}
\caption{The elements of the non-polynomial scheme which generalize the ones in Figure \ref{qaskey}. 
\label{nonpolynomialscheme}}
\end{figure}
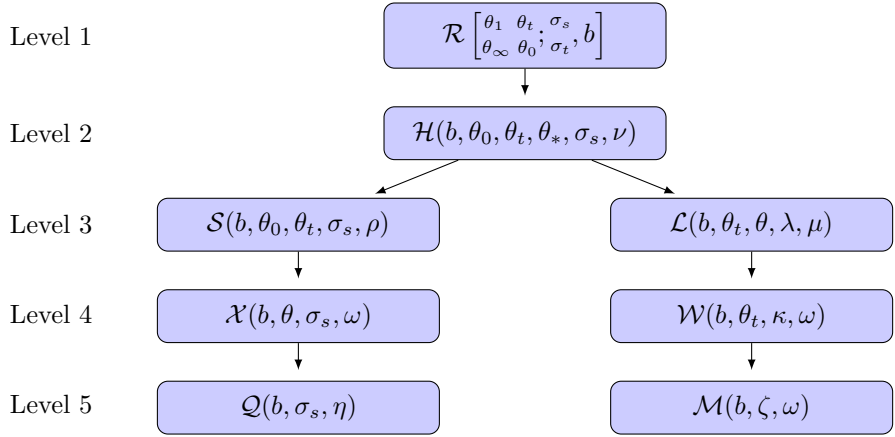

Our goal in this paper is to present a non-polynomial generalization of the $q$-Askey scheme. In fact, we will only be interested in continuous (as opposed to discrete) orthogonal polynomials and we will therefore only consider the part of the $q$-Askey scheme originating from the Askey--Wilson polynomials shown in Figure \ref{qaskey}. 
Just like the Askey and $q$-Askey schemes, the non-polynomial scheme we present has five levels, see Figure \ref{nonpolynomialscheme}.
Whereas each element in the $q$-Askey scheme is a family of polynomials, each element of the non-polynomial scheme is a meromorphic function. These meromorphic functions are given by contour integrals whose integrands are built from Ruijsenaars' hyperbolic gamma function \cite{R1996} (alternatively, the integrands can be expressed in terms of Faddeev's quantum dilogarithm \cite{FK1994}, Woronowicz's quantum exponential \cite{W2000}, or Kurokawa's double sine function \cite{K91}).
The non-polynomial scheme is a generalization of the $q$-Askey scheme in the sense that each element in Figure \ref{qaskey} is obtained from an element at the same level in Figure \ref{nonpolynomialscheme} when one of the variables is suitably discretized.


A crucial property of the elements of the $q$-Askey scheme is that they are joint eigenfunctions. More precisely, if $p_n(x)$ is a family of orthogonal polynomials in the $q$-Askey scheme, then the $p_n$ are eigenfunctions of a second-order difference operator in the $n$-variable as well as of a second-order $q$-difference operator in the $x$-variable. We refer to the corresponding eigenfunction equations as the recurrence and difference equations for $p_n$, respectively. The elements of the non-polynomial scheme are also joint eigenfunctions. Indeed, we will show that each function in the non-polynomial scheme satisfies two pairs of difference equations. In the polynomial limit, one of these pairs reduces to the recurrence relation, while the other pair reduces to the difference equation.

The first (top) level of the non-polynomial scheme consists of a single function, namely, Ruijsenaars' hypergeometric function $\mathcal{R}$\footnote{Ruijenaars' hypergeometric function is usually referred to as $R$ in the literature.  However, we denote the Askey--Wilson polynomials in \eqref{AW} by $R_n$. Thus, to avoid confusion, we denote Ruijenaars' hypergeometric function by $\mathcal R$.} \cite{R1999}. Below this top level, there are four further levels involving functions which can be obtained from $\mathcal{R}$ by taking various limits; we denote these functions by $\mathcal{H}$, $\mathcal{S}$, $\mathcal{X}$, $\mathcal{Q}$, $\mathcal{L}$, $\mathcal{W}$, and $\mathcal{M}$, where the letters are chosen so that $\mathcal{H}$ is a non-polynomial generalization of the continuous dual $q$-Hahn polynomials which are denoted by $H_n$ in Appendix \ref{AppendixB}, $\mathcal{S}$ is a non-polynomial generalization of the Al-Salam Chihara polynomials which are denoted by $S_n$ in Appendix \ref{AppendixB}, etc. 

Each of the functions in the non-polynomial scheme depends on a number of parameters as well as two variables. For example, $\mathcal{H}$ depends on the parameters $b, \theta_0, \theta_t, \theta_*$ as well as the two variables $\sigma_s, \nu$, while $\mathcal{S}$ depends on the parameters $b, \theta_0, \theta_t$ as well as the two variables $\sigma_s, \rho$, see Figure \ref{nonpolynomialscheme}.  
As a matter of notation, we will refer to the top level of the scheme (which involves $\mathcal{R}$) as the first level, to the next level (which involves $\mathcal{H}$) as the second level, etc. Each function at levels 2-5 will be defined as a limit of the function above it in the scheme. We will show that this limit exists (at least for a certain range of the two variables). We will also derive an integral representation for the function, show that it extends to a meromorphic function of each of the two variables everywhere in the complex plane, and establish two pairs of difference equations (one in each of the two variables). 
Finally, we will establish the polynomial limit to the corresponding element in the $q$-Askey scheme. Actually, two of the functions in the non-polynomial scheme, namely $\mathcal{H}$ and $\mathcal{S}$, possess two different polynomial limits: $\mathcal{H}$ reduces to the continuous dual $q$-Hahn polynomials when $\nu$ is discretized and to the big $q$-Jacobi polynomials when $\sigma_s$ is discretized; similarly, $\mathcal{S}$ reduces to the Al-Salam Chihara polynomials when $\rho$ is discretized and to the little $q$-Jacobi polynomials when $\sigma_s$ is discretized.

The orthogonal polynomials in the Askey and $q$-Askey schemes are of fundamental importance in a wide variety of fields. Elements in the $q$-Askey scheme have found applications, for instance, in models of statistical mechanics \cite{CK,Sasa,U07,USW}, in representation theory of quantum algebras \cite{AAK,BMZ,K96,K96bis,Koo93,Koo93bis,NM1,NM2,Z92}, and in the geometry of Painlev\'e equations \cite{M2016}. We expect the functions in the proposed non-polynomial scheme to also be relevant in many different contexts (see Section \ref{perspectivessec} for a related discussion). This is certainly true of the top element, Ruijsenaars' $\mathcal{R}$-function. As an example of the broad relevance of $\mathcal{R}$, we note that one of the authors recently showed \cite{R2021} that $\mathcal{R}$ is equivalent (up to a change of variables) to the Virasoro fusion kernel which is a central object in conformal field theory \cite{CBMP,PT1,PT2,Ribault}. In fact, it was in the context of conformal field theory that we first conceived of the non-polynomial scheme presented in this paper. First, in \cite{LR1}, we introduced a family of confluent Virasoro fusion kernels $\mathcal{C}_k$ while studying confluent conformal blocks of the second kind of the Virasoro algebra. Later, we realized that the $\mathcal{C}_k$ can be viewed as non-polynomial generalizations of the continuous dual $q$-Hahn and the big $q$-Jacobi polynomials, which led us to conjecture that there exists a non-polynomial generalization of the $q$-Askey scheme with the Virasoro fusion kernel as its top member \cite{LR2}. In this paper, we prove this conjecture. However, instead of adopting the Virasoro fusion kernel as the top element of the scheme, we use Ruijsenaars' $\mathcal{R}$-function as our starting point. The result of \cite{R2021} implies that these two choices are equivalent, but we have found that the scheme originating from $\mathcal{R}$ is simpler and mathematically more convenient.

\subsection{Organization of the paper}

In Section \ref{sbsec}, we introduce the function $s_b(z)$ which is the basic building block used to define the elements of the non-polynomial scheme. The eight functions $\mathcal{R}, \mathcal{H}, \mathcal{S}, \mathcal{X}, \mathcal{Q}, \mathcal{L}, \mathcal{W}, \mathcal{M}$ that make up the non-polynomial scheme are considered one by one in the eight Sections \ref{Rsec}--\ref{Msec}. In Section \ref{dualitysec}, we derive---as an easy application of the non-polynomial scheme---a few duality formulas that relate members of the $q$-Askey scheme. Section \ref{perspectivessec} presents some perspectives. In the two appendices, we have collected relevant definitions and properties of $q$-hypergeometric series and of the $q$-Askey scheme.

\subsection{Standing assumption}
Throughout the paper, we make the following assumption. 

\begin{assumption}[Restrictions on the parameters] \label{assumption} We assume that \beq(b,\theta_\infty,\theta_1,\theta_t,\theta_0,\theta_*,\theta) \in (0,\infty) \times \mathbb{R}^6.\eeq
\end{assumption}

Assumption \ref{assumption} is made primarily for simplicity; we expect that all our results admit meromorphic continuations to more general values of the parameters, such as $b\notin i\mathbb{R}$ and $(\theta_\infty,\theta_1,\theta_t,\theta_0,\theta_*,\theta) \in \mathbb{C}^6$.

\subsection*{Acknowledgements}
The authors acknowledge support from the European Research Council, Grant Agreement No. 682537, the Swedish Research Council, Grant No. 2015-05430, and the Ruth and Nils-Erik Stenb\"ack Foundation.

\section{The function $s_b(z)$}\label{sbsec}

The elements of the non-polynomial scheme presented in this article are given by contour integrals, whose integrands involve the function $s_b(z)$ defined by
\begin{equation}\label{defsb}
s_b(z)=\operatorname{exp}{\left[  i \int_0^\infty \frac{dy}{y} \left(\frac{\operatorname{sin}{2yz}}{2\operatorname{sinh}(b^{-1}y)\operatorname{sinh}(by)}-\frac{z}{y}\right)\right]}, \qquad |\im z|<\frac{Q}{2},
\end{equation}
where $Q := b + b^{-1}$. The function $s_b(z)$ is closely related to Ruijsenaars' hyperbolic gamma function \cite{R1996}, Faddeev's quantum dilogarithm function \cite{FK1994}, Woronowicz's quantum exponential function \cite{W2000}, and Kurokawa's double sine function \cite{K91}. More precisely, it is related to Ruijsenaars' hyperbolic gamma function $G$ in \cite[Eq. (A.3)]{R1999} by
\begin{align}\label{sbgbGE}
s_b(z) = G(b, b^{-1}; z).
\end{align}
It follows from \eqref{sbgbGE} and the results in \cite{R1999} that $s_b(z)$ is a meromorphic function of $z \in \mathbb{C}$ with zeros $\{z_{m,l}\}_{m,l=0}^\infty$ and poles $\{p_{m,l}\}_{m,l=0}^\infty$ located at 
\begin{equation}\label{polesb}
\begin{split}
&z_{m,l}=\frac{i Q}{2}+i m b +il b^{-1}, \qquad m,l = 0,1, 2,\dots, \qquad (\text{zeros}), 
	\\
&p_{m,l}=-\frac{i Q}{2} -i m b-il b^{-1}, \qquad m,l = 0,1, 2,\dots, \qquad (\text{poles}).
\end{split}
\end{equation}
The multiplicity of the zero $z_{m,l}$ in (\ref{polesb}) is given by the number of distinct pairs $(m_i,l_i) \in \mathbb{Z}_{\geq 0} \times \mathbb{Z}_{\geq 0}$ such that $z_{m_i,l_i}=z_{m,l}$. 
The pole $p_{m,l}$ has the same multiplicity as the zero $z_{m,l}$.
In particular, if $b^2$ is an irrational real number, then all the zeros and poles in (\ref{polesb}) are distinct and simple. The residue $s_b$ at the simple pole $z=-iQ/2$ is given by
\beq \label{ressb}
\underset{z=-\frac{iQ}{2}}{\text{Res}} s_b(z) =\frac{i}{2\pi}.
\eeq 
Furthermore, $s_b$ is a meromorphic solution of the following pair of difference equations:
\begin{equation}\label{differencesb}
\frac{s_b(z+\frac{ib}{2})}{s_b(z-\frac{ib}{2})}=2\operatorname{cosh}{\pi b z}, \qquad \frac{s_b(z+\frac{i}{2b})}{s_b(z-\frac{i}{2b})}=2\operatorname{cosh}{\frac{\pi z}{b}}.
\end{equation}
Applying the difference equations \eqref{differencesb} recursively, it can be verified that, for any integer $m \geq 0$,
\begin{align} \label{differencepochsb}
& \frac{s_b(x+imb^{\pm 1})}{s_b(x)} = e^{-\frac{\pi b^{\pm 1} m}{2} (2 x+i b^{\pm 1} m)} \left(-e^{i \pi  b^{\pm 2}} e^{2 \pi  b^{\pm 1} x};e^{2 i \pi  b^{\pm 2}}\right)_m,
\end{align}
where the $q$-Pochhammer symbol $(a;q)_m$ is defined in Appendix \ref{appendixA}.
Finally, the function $s_b(z)$ has the obvious symmetry
\beq \label{sbinverse}
s_b(z) = s_{b^{-1}}(z)
\eeq
and possesses an asymptotic formula which is a consequence of \cite[Theorem A.1]{R1999} and (\ref{sbgbGE}): 
For each $\epsilon >0$, 
\begin{align}\label{sbasymptotics}
\pm \ln s_b(z) = -\frac{i\pi z^2}{2} - \frac{i\pi}{24}(b^2 + b^{-2}) + O(e^{-\frac{2\pi(1-\epsilon)}{\max(b, b^{-1})}|\re z|}), \qquad \re z \to \pm \infty,
\end{align}
uniformly for $(b,\im z)$ in compact subsets of $(0,\infty) \times \mathbb{R}$. 

\section{The function $\mathcal{R}$}\label{Rsec}
The top element of the non-polynomial $q$-Askey scheme presented in this paper is Ruijsenaars' hypergeometric function $\mathcal{R}$ \cite{R1999}.
Using the notation of \cite{R2021}, this function can be expressed as
\beq\label{defR}
\mathcal{R}\left[\substack{\theta_1\;\;\theta_t\vspace{0.1cm}\\ \theta_{\infty}\;\theta_0}; \substack{\sigma_s \vspace{0.15cm} \\  \sigma_t}, b\right] = P_\mathcal{R}\left[\substack{\theta_1\;\;\theta_t\vspace{0.1cm}\\ \theta_{\infty}\;\theta_0};\substack{\sigma_s \vspace{0.15cm} \\  \sigma_t},b\right] \int_{\mathsf{R}} dx ~ I_\mathcal{R}\left[x;\substack{\theta_1\;\;\theta_t\vspace{0.1cm}\\ \theta_{\infty}\;\theta_0};\substack{\sigma_s \vspace{0.15cm} \\  \sigma_t},b\right],\eeq
where the prefactor $P_{\mathcal{R}}$ is given by
\beq \label{prefR}
P_\mathcal{R}\left[\substack{\theta_1\;\;\theta_t\vspace{0.1cm}\\ \theta_{\infty}\;\theta_0};\substack{\sigma_s \vspace{0.15cm} \\  \sigma_t},b\right] = s_b\lb \tfrac{iQ}2+2\theta_t \rb \prod_{\epsilon = \pm 1} s_b\lb \tfrac{iQ}2+\theta_0+\theta_1+\epsilon\theta_\infty+\theta_t \rb s_b(\epsilon \sigma_s-\theta_0-\theta_t)s_b(\epsilon \sigma_t-\theta_1-\theta_t),
\eeq
and the integrand $I_{\mathcal{R}}$ is defined by
\beq \label{IR}
I_\mathcal{R}\left[x;\substack{\theta_1\;\;\theta_t\vspace{0.1cm}\\ \theta_{\infty}\;\theta_0};\substack{\sigma_s \vspace{0.15cm} \\  \sigma_t},b\right] = \frac{1}{s_b(x+\tfrac{i Q}{2}) s_b(x+\tfrac{i Q}{2}+2 \theta_t)}\prod_{\epsilon=\pm1}\frac{s_b(x+\theta_0+\theta_t+\epsilon\sigma_s) s_b(x+\theta_1+\theta_t+\epsilon\sigma_t)}{s_b\left(x+\tfrac{i Q}{2}+\theta_0+\theta_1+\epsilon\theta_\infty+\theta_t\right)}.
\eeq
In view of \eqref{polesb}, the integrand $I_\mathcal{R}$ possesses eight semi-infinite sequences of poles in the complex $x$-plane.  With the restriction that $b>0$ imposed by Assumption \ref{assumption}, there are four vertical downward sequences starting at $x =- \theta_0-\theta_t\pm \sigma_s -\frac{i Q}{2}$ and $x = - \theta_1 - \theta_t \pm \sigma_t -\frac{i Q}{2}$, and four vertical upward sequences starting at $x=0$,  $x=-2\theta_t$, and $x=-\theta_0-\theta_1\pm \theta_\infty-\theta_t$. The contour $\mathsf{R}$ in (\ref{defR}) is any curve from  $-\infty$  to $+\infty$ which separates the four upward from the four downward sequences of poles. 
If in addition to Assumption \ref{assumption}, we also assume that $\sigma_s, \sigma_t \in \mathbb{R}$, then the contour of integration $\mathsf{R}$ can be chosen to be any curve from $-\infty$ to $+\infty$ lying in the open strip $\im x\in (-Q/2,0)$.

Thanks to the symmetry \eqref{sbinverse} of the function $s_b$, we have
\beq \label{Rinverse}
\mathcal{R}\left[\substack{\theta_1\;\;\theta_t\vspace{0.1cm}\\ \theta_{\infty}\;\theta_0};\substack{\sigma_s \vspace{0.15cm} \\  \sigma_t},b^{-1}\right] = \mathcal{R}\left[\substack{\theta_1\;\;\theta_t\vspace{0.1cm}\\ \theta_{\infty}\;\theta_0};\substack{\sigma_s \vspace{0.15cm} \\  \sigma_t},b\right].
\eeq  
Moreover, it follows directly from \eqref{defR} that $\mathcal{R}$ is even in each of the variables $\sigma_s$ and $\sigma_t$, and that it satisfies the following self-duality symmetry:
\beq\label{selfdual}
\mathcal{R}\left[\substack{\theta_1\;\;\theta_t\vspace{0.1cm}\\ \theta_{\infty}\;\theta_0};\substack{\sigma_s \vspace{0.15cm} \\  \sigma_t},b\right]  = \mathcal{R}\left[\substack{\theta_0\;\;\theta_t\vspace{0.1cm}\\ \theta_{\infty}\;\theta_1};\substack{\sigma_t \vspace{0.15cm} \\  \sigma_s},b\right].
\eeq

\subsection{Difference equations}
Let $e^{\pm i b \partial_x}$ be the translation operator which formally acts on meromorphic functions $f(x)$ by $e^{\pm i b \partial_x} f(x)=f(x\pm i b)$. Let $H_\mathcal{R}$ be the difference operator defined by
\beq\label{HR}
H_\mathcal{R} \left[\substack{\theta_1\;\;\theta_t\vspace{0.1cm}\\ \theta_{\infty}\;\theta_0};\substack{b,\;\sigma_s}\right] = H_\mathcal{R}^+\left[\substack{\theta_1\;\;\theta_t\vspace{0.1cm}\\ \theta_{\infty}\;\theta_0};\substack{b,\;\sigma_s}\right] e^{ib \partial_{\sigma_s}} + H_\mathcal{R}^+\left[\substack{\theta_1\;\;\theta_t\vspace{0.1cm}\\ \theta_{\infty}\;\theta_0};\substack{b,\;-\sigma_s}\right] e^{-ib \partial_{\sigma_s}} + H_\mathcal{R}^0\left[\substack{\theta_1\;\;\theta_t\vspace{0.1cm}\\ \theta_{\infty}\;\theta_0};\substack{b,\;\sigma_s}\right],
\eeq
where
\beq \label{HRplus}
H_\mathcal{R}^+ \left[\substack{\theta_1\;\;\theta_t\vspace{0.1cm}\\ \theta_{\infty}\;\theta_0};\substack{b,\;\sigma_s}\right] = - \frac{4 \prod _{\epsilon = \pm 1} \cosh \left(\pi  b \left(\frac{ib}{2} +\theta_t+\sigma_s+\epsilon\theta_0\right)\right) \cosh \left(\pi  b \left(\frac{ib}{2}+\theta_1+\sigma_s+\epsilon\theta_\infty\right)\right)}{\sinh (2 \pi  b \sigma_s) \sinh (\pi  b (2 \sigma_s+i b))}
\eeq
and
\beq\label{H0} \begin{split}
H_\mathcal{R}^0\left[\substack{\theta_1\;\;\theta_t\vspace{0.1cm}\\ \theta_{\infty}\;\theta_0};\substack{b,\;\sigma_s}\right]=  -2\cosh{( 2\pi b (\theta_1+\theta_t+\tfrac{ib}2))} - H_\mathcal{R}^+ \left[\substack{\theta_1\;\;\theta_t\vspace{0.1cm}\\ \theta_{\infty}\;\theta_0};\substack{b,\;\sigma_s}\right] - H_\mathcal{R}^+ \left[\substack{\theta_1\;\;\theta_t\vspace{0.1cm}\\ \theta_{\infty}\;\theta_0};\substack{b,\;-\sigma_s}\right].
\end{split} \eeq
For $(\sigma_s,\sigma_t) \in \mathbb C^2$, the function $\mathcal{R}$ satisfies the following four difference equations \cite{R1999} (using the notation of \cite{R2021}):
\begin{subequations}\label{diffeqR}\begin{align}
\label{diffeqR1} & H_\mathcal{R}\left[\substack{\theta_1\;\;\theta_t\vspace{0.1cm}\\ \theta_{\infty}\;\theta_0};\substack{b,\;\sigma_s}\right] \mathcal{R}\left[\substack{\theta_1\;\;\theta_t\vspace{0.1cm}\\ \theta_{\infty}\;\theta_0};\substack{\sigma_s \vspace{0.15cm} \\ \sigma_t},b\right] = 2\cosh{(2\pi b \sigma_t)} \mathcal{R}\left[\substack{\theta_1\;\;\theta_t\vspace{0.1cm}\\ \theta_{\infty}\;\theta_0};\substack{\sigma_s \vspace{0.15cm} \\  \sigma_t},b\right], \\
\label{diffeqR2} & H_\mathcal{R}\left[\substack{\theta_1\;\;\theta_t\vspace{0.1cm}\\ \theta_{\infty}\;\theta_0};\substack{b^{-1},\;\sigma_s}\right] \mathcal{R}\left[\substack{\theta_1\;\;\theta_t\vspace{0.1cm}\\ \theta_{\infty}\;\theta_0};\substack{\sigma_s \vspace{0.15cm} \\  \sigma_t},b\right] = 2\cosh{(2\pi b^{-1} \sigma_t)} \mathcal{R}\left[\substack{\theta_1\;\;\theta_t\vspace{0.1cm}\\ \theta_{\infty}\;\theta_0};\substack{\sigma_s \vspace{0.15cm} \\  \sigma_t},b\right], \\
\label{diffeqR3} & H_\mathcal{R}\left[\substack{\theta_0\;\;\theta_t\vspace{0.1cm}\\ \theta_{\infty}\;\theta_1};\substack{b,\;\sigma_t}\right] \mathcal{R}\left[\substack{\theta_1\;\;\theta_t\vspace{0.1cm}\\ \theta_{\infty}\;\theta_0};\substack{\sigma_s \vspace{0.15cm} \\  \sigma_t},b\right] = 2\cosh{(2\pi b \sigma_s)} \mathcal{R}\left[\substack{\theta_1\;\;\theta_t\vspace{0.1cm}\\ \theta_{\infty}\;\theta_0};\substack{\sigma_s \vspace{0.15cm} \\  \sigma_t},b\right], \\
\label{diffeqR4} & H_\mathcal{R}\left[\substack{\theta_0\;\;\theta_t\vspace{0.1cm}\\ \theta_{\infty}\;\theta_1};\substack{b^{-1},\;\sigma_t}\right] \mathcal{R}\left[\substack{\theta_1\;\;\theta_t\vspace{0.1cm}\\ \theta_{\infty}\;\theta_0};\substack{\sigma_s \vspace{0.15cm} \\  \sigma_t},b\right] = 2\cosh{(2\pi b^{-1} \sigma_s)} \mathcal{R}\left[\substack{\theta_1\;\;\theta_t\vspace{0.1cm}\\ \theta_{\infty}\;\theta_0};\substack{\sigma_s \vspace{0.15cm} \\  \sigma_t},b\right].
\end{align}\end{subequations}
Note that the difference equations \eqref{diffeqR2}, \eqref{diffeqR3}, and \eqref{diffeqR4} follow from \eqref{diffeqR1} and the symmetries \eqref{Rinverse}--\eqref{selfdual} of the function $\mathcal{R}$. 

\subsection{Polynomial limit}
It was shown in \cite{R1999} that the function $\mathcal{R}$ reduces to the Askey--Wilson polynomials when one of the variables $\sigma_s$ and $\sigma_t$ is suitably discretized. This result was reobtained in the CFT setting in \cite{LR2}. We now recall the result of \cite{LR2}. 
In addition to Assumption \ref{assumption}, we need the following assumption.

\begin{assumption}[Restriction on the parameters]\label{assumptionAW}
Assume that $b > 0$ is such that $b^2$ is irrational, and that, for $\epsilon,\epsilon' = \pm 1,$
\beq\label{restrictionAW}\begin{split}
& \theta_\infty, \theta_t , \re \sigma_s, \re \sigma_t \neq 0, \qquad 
\re\big(\theta_0 - \theta_1 + \epsilon \sigma_s + \epsilon' \sigma_t\big) \neq 0 , \qquad \theta_0+\theta_1+\epsilon\theta_\infty+\epsilon'\theta_t \neq 0.
\end{split}\eeq
\end{assumption}
Assumption \ref{assumptionAW} implies that the four increasing and the four decreasing sequences of poles of the integrand in \eqref{IR} do not overlap. The assumption that $b^2$ is irrational implies that all the poles of the integrand are simple.

\begin{theorem}{\cite[Theorem 4.2]{LR2}} \label{thmAW} Suppose Assumptions \ref{assumption} and \ref{assumptionAW} are satisfied. 
Define $\{\sigma_s^{(n)}\}_{n=0}^\infty \subset \mathbb{C}$ by
\beq\label{limsigmas}
  \sigma_s^{(n)} = \theta_0+\theta_t+\tfrac{iQ}2+ibn.
\eeq
Under the parameter correspondence
\beq\label{paramAW}
\alpha_R =e^{2 \pi  b \left(\frac{i Q}{2}+\theta_1+\theta_t\right)}, \quad \beta_R =e^{2 \pi  b \left(\frac{i Q}{2}+\theta_0-\theta_\infty\right)}, \quad \gamma_R=e^{2 \pi  b \left(\frac{i Q}{2}-\theta_1+\theta_t\right)}, \quad \delta_R=e^{2 \pi  b \left(\frac{i Q}{2}+\theta_0+\theta_\infty\right)}, \quad q=e^{2i\pi b^2},
\eeq
the Ruijsenaars hypergeometric function $\mathcal{R}$ defined in \eqref{defR} satisfies, for each integer $n \geq 0$,
\beq\label{limitAW}
\lim\limits_{\sigma_s \to \sigma_s^{(n)}} \mathcal{R} \left[\substack{\theta_1\;\;\;\theta_t\vspace{0.1cm}\\ \theta_{\infty}\;\;\theta_0};\substack{\sigma_s \vspace{0.15cm} \\  \sigma_t},b\right] 
= R_n(e^{2\pi b \sigma_t};\alpha_R,\beta_R,\gamma_R,\delta_R,q),
\eeq
where $R_n$ are the Askey--Wilson polynomials defined in (\ref{AW}).
\end{theorem}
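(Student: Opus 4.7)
The plan is to exploit the interplay between a zero of the prefactor $P_\mathcal{R}$ and a pinching of the integration contour as $\sigma_s \to \sigma_s^{(n)}$. By \eqref{polesb} and the irrationality of $b^2$ from Assumption \ref{assumptionAW}, the factor $s_b(\sigma_s - \theta_0 - \theta_t)$ in \eqref{prefR} has a simple zero at $\sigma_s = \sigma_s^{(n)} = \theta_0 + \theta_t + iQ/2 + ibn$. At the same time, the integrand $I_\mathcal{R}$ exhibits a contour pinching: for each $m \in \{0, 1, \ldots, n\}$, the downward pole $x_m = \sigma_s - \theta_0 - \theta_t - iQ/2 - imb$ of $s_b(x + \theta_0 + \theta_t - \sigma_s)$ approaches, as $\sigma_s \to \sigma_s^{(n)}$, the upward pole at $x = i(n-m)b$ arising from the zero of $s_b(x + iQ/2)$ in the denominator of $I_\mathcal{R}$.

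For $\sigma_s$ in a punctured neighborhood of $\sigma_s^{(n)}$, I would deform $\mathsf{R}$ upward past the $n+1$ approaching poles $x_0, \ldots, x_n$ to a new contour $\mathsf{R}'$, collecting their residues via \eqref{ressb}. On $\mathsf{R}'$ the integrand has no colliding poles in the limit, so $\int_{\mathsf{R}'} I_\mathcal{R}\, dx$ remains uniformly bounded, and multiplied by the vanishing $P_\mathcal{R}$ it contributes nothing. For each residue, the factor $1/s_b(x_m + iQ/2) = 1/s_b(\sigma_s - \theta_0 - \theta_t - imb)$ has, by \eqref{polesb}, a simple pole at $\sigma_s = \sigma_s^{(n)}$ which exactly cancels the simple zero of $s_b(\sigma_s - \theta_0 - \theta_t)$ in $P_\mathcal{R}$, so each product $P_\mathcal{R}\cdot(\text{residue at } x_m)$ tends to a finite limit. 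Applying the product formula \eqref{differencepochsb} to express the ratios $s_b(y \pm imb)/s_b(y)$ appearing in each residue as $q$-Pochhammer symbols with base $q = e^{2i\pi b^2}$, and absorbing the exponentials of $\sigma_t$ into the variable $z = e^{2\pi b \sigma_t}$, the sum over $m$ rearranges into a terminating series of ${}_4\phi_3$ type.

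The main obstacle will be the final identification step: verifying that, under the parameter correspondence \eqref{paramAW}, this sum coincides term-by-term with the Askey--Wilson ${}_4\phi_3$ series defining $R_n$ in \eqref{AW}. This is careful bookkeeping of phases combined with a reversing-summation identity such as $(a;q)_{n-m} = (a;q)_n\, (-a^{-1})^m q^{\binom{m}{2} - m(n-1)}/(q^{1-n}a^{-1};q)_m$ used to convert the natural $q$-Pochhammer structure of the residue sum into $(q^{-n};q)_m$-type factors. If direct matching becomes tedious, a cleaner alternative is to observe that both sides satisfy the same three-term recurrence in $n$, obtained by restricting the difference equation \eqref{diffeqR1} to $\sigma_s = \sigma_s^{(n)}$, where the shifts $e^{\pm ib\partial_{\sigma_s}}$ map $n \to n\pm 1$, and then invoke uniqueness after checking agreement at $n = 0, 1$.
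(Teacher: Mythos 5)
The paper does not actually prove Theorem \ref{thmAW} here---it imports it from \cite{LR2} (and \cite{R1999})---but both routes you propose are precisely the two methods the paper itself describes and carries out for the analogous polynomial limits at lower levels of the scheme: the direct residue extraction at the pinching poles followed by $q$-Pochhammer bookkeeping via \eqref{differencepochsb} is executed in full for $\mathcal{M}$ in the proof of Theorem \ref{fromMtoMn}, while the ``compute $n=0$, then induct on the three-term recurrence obtained by restricting the $\sigma_s$-difference equation to the discrete points'' strategy is exactly the proof of Theorems \ref{thhahn} and \ref{thmHtoJn}, so your proposal is correct and methodologically identical. The one detail you gloss over is that the downward pole sequence of $s_b(x+\theta_0+\theta_t-\sigma_s)$ is doubly indexed, $x_{m,l}=\sigma_s-\theta_0-\theta_t-\tfrac{iQ}{2}-imb-ilb^{-1}$, so the contour deformation may also cross poles with $l\ge 1$; their residues must in principle be collected, but (as in the proof of Theorem \ref{fromMtoMn}) they carry no pole in $\sigma_s$ at $\sigma_s^{(n)}$ and are therefore annihilated by the simple zero of $P_{\mathcal{R}}$, so only the $l=0$ terms you list survive.
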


\section{The function $\mathcal{H}$} \label{section4}
The Askey--Wilson polynomials $R_n$ form the top element of the $q$-Askey scheme.
Just like the elements of the $q$-Askey scheme are obtained from the polynomials $R_n$ via various limiting procedures, the elements of the non-polynomial scheme we present in this paper are obtained by taking various limits of Ruijsenaars' hypergeometric function $\mathcal{R}$. The second level of the non-polynomial scheme involves the function $\mathcal{H}(b,\theta_0,\theta_t,\theta_*,\sigma_s,\nu)$, which is defined as the confluent limit $\Lambda \to -\infty$ of the top element $\mathcal{R}$ evaluated at
\beq\label{defthetastar}
\theta_\infty = \frac{\Lambda-\theta_*}2, \qquad \theta_1 = \frac{\Lambda+\theta_*}2, \qquad \sigma_t = \frac{\Lambda}2 + \nu.
\eeq

In this section, we derive an integral representation for the function $\mathcal{H}$ and we show that it is a joint eigenfunction of four difference operators, two acting on $\sigma_s$ and the other two on $\nu$. We also show that $\mathcal{H}$ reduces to the continuous dual $q$-Hahn and the big $q$-Jacobi polynomials when $\nu$ and $\sigma_s$ are suitably discretized, respectively. Since these polynomials lie at the second level of the $q$-Askey scheme, this shows that $\mathcal{H}$ indeed provides a natural non-polynomial generalization of the elements at the second level. 

\subsection{Definition and integral representation}
\begin{definition}
The function $\mathcal{H}$ is defined by
\beq\label{fromRtoH}
\mathcal{H}(b,\theta_0,\theta_t,\theta_*,\sigma_s,\nu) = \lim\limits_{\Lambda\to -\infty} \mathcal{R}\left[\substack{\frac{\Lambda+\theta_*}2\;\; \theta_t\vspace{0.1cm}\\ \frac{\Lambda-\theta_*}2\;\; \theta_0};\substack{\sigma_s \vspace{0.15cm} \\  \frac{\Lambda}2+\nu},b\right].
\eeq
\end{definition}

The next theorem shows that, for each choice of $(b,\theta_0,\theta_t,\theta_*) \in (0,\infty) \times \mathbb{R}^3$, $\mathcal{H}$ is a well-defined and analytic function of 
\begin{align}\label{sigmasnudomain}
(\sigma_s,\nu) \in (\mathbb{C} \setminus \Delta_{\mathcal{H}, \sigma_s}) \times \big(\{\im \nu > -Q/2\} \setminus \Delta_{\nu}\big),
\end{align}
where $\Delta_{\mathcal{H}, \sigma_s}, \Delta_{\nu} \subset \mathbb{C}$ are discrete sets of points at which $\mathcal{H}$ may have poles. In particular, $\mathcal{H}$ is a meromorphic function of $\sigma_s \in \mathbb{C}$ and of $\nu$ for $\im \nu > -Q/2$. 
More precisely, $\Delta_{\mathcal{H}, \sigma_s}$ and $\Delta_{\nu}$ are given by
\begin{subequations}\label{Dsigmasnudef}
\begin{align}\label{Dsigmasdef}
\Delta_{\mathcal{H}, \sigma_s} := &\; \{\pm \sigma_s \, |\, \sigma_s \in \Delta_{\mathcal{H}, \sigma_s}'\},
	\\ \nonumber
\Delta_{\nu} := &\; \{\tfrac{\theta_*}{2} \pm \theta_t + \tfrac{iQ}{2} + i m b +il b^{-1}\}_{m,l=0}^\infty \cup \{-\tfrac{\theta_*}{2} - \theta_0 + \tfrac{iQ}{2} + i m b +il b^{-1}\}_{m,l=0}^\infty
	\\\label{Dnudef}
& \cup \{\tfrac{\theta_*}{2}+ \theta_t -\tfrac{i Q}{2} -i m b-il b^{-1}\}_{m,l=0}^\infty
\end{align}
\end{subequations}
where
\begin{align*}
\Delta_{\mathcal{H}, \sigma_s}' := &\; \{\theta_0 \pm \theta_t + \tfrac{iQ}{2} + i m b +il b^{-1}\}_{m,l=0}^\infty
\cup \{-\theta_* + \tfrac{iQ}{2} + i m b +il b^{-1}\}_{m,l=0}^\infty
	\\
& 
\cup \{\theta_0+ \theta_t -\tfrac{i Q}{2} -i m b-il b^{-1}\}_{m,l=0}^\infty.
\end{align*}
The theorem also provides an integral representation for $\mathcal{H}$ for $(\sigma_s,\nu)$ satisfying \eqref{sigmasnudomain}.
In fact, even if the requirement $\im \nu > -Q/2$ is needed to ensure convergence of the integral in the integral representation for $\mathcal{H}$, we will show later in this section, with the help of the difference equations satisfied by $\mathcal{H}$, that $\mathcal{H}$ extends to a meromorphic function of $(\sigma_s, \nu) \in \mathbb{C}^2$.

\begin{theorem} \label{thmforH}
Suppose that Assumption \ref{assumption} holds. Let $\Delta_{\mathcal H,\sigma_s}, \Delta_{\nu} \subset \mathbb{C}$ be the discrete subsets defined in \eqref{Dsigmasnudef}. Then the limit in \eqref{fromRtoH} exists uniformly for $(\sigma_s,\nu)$ in compact subsets of 
$$D_{\mathcal{H}} := (\mathbb{C} \setminus \Delta_{\mathcal H,\sigma_s}) \times \big(\{\im \nu > -Q/2\} \setminus \Delta_{\nu}\big).$$
Moreover, $\mathcal{H}$ is an analytic function of $(\sigma_s,\nu) \in D_{\mathcal{H}}$ and admits the following integral representation:
\beq\label{defH}
\mathcal{H}(b,\theta_0,\theta_t,\theta_*,\sigma_s,\nu) = P_\mathcal{H}\lb \sigma_s,\nu\rb \displaystyle \int_{\mathsf{H}} dx ~ I_\mathcal{H}\lb x, \sigma_s,\nu\rb \qquad \text{for $(\sigma_s,\nu) \in D_{\mathcal{H}}$},
\eeq
where the dependence of $P_\mathcal{H}$ and $I_\mathcal{H}$ on $b,\theta_0,\theta_t,\theta_*$ is omitted for simplicity, 
\begin{align}
\label{PH} &
P_\mathcal{H}\lb \sigma_s,\nu\rb = s_b\left(2 \theta_t+\tfrac{i Q}{2}\right) s_b\left(\theta_0+\theta_*+\theta_t+\tfrac{i Q}{2}\right) s_b\left(\nu-\tfrac{\theta_*}{2}-\theta_t \right) \prod _{\epsilon = \pm 1}  s_b\left(\epsilon\sigma_s-\theta_0-\theta_t\right), \\
&  \label{IH}
I_\mathcal{H}\lb x,\sigma_s,\nu\rb = e^{i \pi  x \left(\frac{\theta_*}{2}-\theta_0+\nu-\frac{i Q}{2}\right)} \frac{s_b\left(x+\frac{\theta_*}{2}+\theta_t-\nu\right) \prod_{\epsilon=\pm 1} s_b(x+\theta_0+\theta_t+\epsilon\sigma_s)}{s_b\left(x+\frac{i Q}{2}\right) s_b\left(x+2\theta_t+\frac{i Q}{2}\right) s_b\left(x+\theta_0+\theta_*+\theta_t+\frac{i Q}{2}\right)},
\end{align}
and the contour $\mathsf{H}$ is any curve from  $-\infty$  to $+\infty$ which separates the three upward from the three downward sequences of poles.
In particular, $\mathcal{H}$ is a meromorphic function of $(\sigma_s, \nu) \in \mathbb{C}\times \{\im \nu > -Q/2\}$. If $(\sigma_s,\nu) \in \mathbb{R}^2$, then the contour $\mathsf{H}$ can be any curve from $-\infty$ to $+\infty$ lying within the strip $\im{x} \in (-Q/2,0)$. 
\end{theorem}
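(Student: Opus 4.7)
The plan is to take the pointwise limit inside the integral representation \eqref{defR}--\eqref{IR} after specializing the parameters via \eqref{defthetastar}, using the asymptotic formula \eqref{sbasymptotics} to compute $\lim_{\Lambda \to -\infty}$ of every $s_b$-factor whose argument depends on $\Lambda$. Substituting \eqref{defthetastar}, the $\Lambda$-dependent factors in $P_\mathcal{R}$ reduce to $s_b(\tfrac{iQ}{2}+\theta_0+\theta_t+\Lambda)$ (argument with $\re\to-\infty$) and $s_b(-\Lambda-\tfrac{\theta_*}{2}-\theta_t-\nu)$ (argument with $\re\to+\infty$), while in $I_\mathcal{R}$ they form the ratio $s_b(x+\Lambda+\tfrac{\theta_*}{2}+\theta_t+\nu)/s_b(x+\tfrac{iQ}{2}+\theta_0+\theta_t+\Lambda)$ (both with $\re\to-\infty$ for $x$ in a bounded horizontal strip). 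All other $s_b$-factors are $\Lambda$-independent and match those in \eqref{PH}--\eqref{IH} up to the exponential $e^{i\pi x(\tfrac{\theta_*}{2}-\theta_0+\nu-iQ/2)}$ still to be produced.

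Applying \eqref{sbasymptotics} to each of these four factors gives explicit quadratic exponents in their arguments. A short computation shows that the $\Lambda^{2}$ and $\Lambda^{1}$ contributions cancel exactly between prefactor and integrand; the $\Lambda^{0}$ remainder from the integrand ratio is precisely $e^{i\pi x(\tfrac{\theta_*}{2}-\theta_0+\nu-iQ/2)}$, and the remaining $\Lambda^{0}$ contribution from the prefactor pair vanishes on its own. Hence the formal pointwise limit of $\mathcal{R}$ under \eqref{defthetastar} equals $P_\mathcal{H}(\sigma_s,\nu)\int_\mathsf{H} I_\mathcal{H}(x,\sigma_s,\nu)\,dx$ as claimed.

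To justify the limit interchange, I would fix a compact set $K\subset D_\mathcal{H}$ and choose the contour $\mathsf{H}$ inside a horizontal strip that is independent of both $\Lambda$ and $(\sigma_s,\nu)\in K$, with finitely many indentations around the $\Lambda$-independent poles in $\Delta_{\mathcal H,\sigma_s}\cup\Delta_\nu$ that meet the strip. Since the $\Lambda$-dependent upward pole sequence of $I_\mathcal{R}$ starts at $x=-\tfrac{iQ}{2}-\theta_0-\theta_t-\Lambda$ and drifts to $\re x\to+\infty$, while the $\Lambda$-dependent downward sequences drift to $\re x\to-\infty$, this contour remains admissible for $-\Lambda$ sufficiently large, uniformly in $K$. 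The error in \eqref{sbasymptotics} is uniform in $\im z$ on compact subsets, so combining it with the exact cancellation of Step~2 produces an $x$-integrable dominating function that is independent of $\Lambda$ and of $(\sigma_s,\nu)\in K$. Dominated convergence gives pointwise convergence of the integral to \eqref{defH}, and the Weierstrass/Morera theorem upgrades this to analyticity of $\mathcal{H}$ on $D_\mathcal{H}$. Integrability of $I_\mathcal{H}$ itself follows by another application of \eqref{sbasymptotics} to the surviving factors: the integrand decays like $e^{-2\pi(\im\nu+Q/2)|\re x|}$ as $\re x\to\pm\infty$, which is integrable precisely when $\im\nu>-Q/2$. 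Meromorphic extension in $\sigma_s\in\mathbb{C}$ and in $\nu$ on the half-plane $\im\nu>-Q/2$ then follows by the standard contour-deformation argument: deforming $\mathsf{H}$ across any pole of $I_\mathcal{H}$ that it encounters picks up residues which are themselves meromorphic, and the only singularities of $\mathcal{H}$ arise from pinching of $\mathsf{H}$ by colliding upward--downward pole pairs, which occurs exactly on $\Delta_{\mathcal H,\sigma_s}\cup\Delta_\nu$.

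The main obstacle is the uniform limit-interchange in Step~3. The asymptotic \eqref{sbasymptotics} is only uniform in $\im z$ on compact sets, so one must verify that the contour $\mathsf{H}$ and its indentations can be chosen to lie in a single horizontal strip uniformly as $(\sigma_s,\nu)$ ranges over $K$ and as $\Lambda\to-\infty$, and that the distance from $\mathsf{H}$ to every moving and static pole of $I_\mathcal{R}$ is uniformly bounded below. The standing Assumption~\ref{assumption} together with the compactness of $K$ and with the fact that the $\Lambda$-dependent poles drift to infinity should make this bookkeeping possible, but it is the step requiring genuine care.
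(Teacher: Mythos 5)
Your overall strategy is the same as the paper's: factor the specialized $P_\mathcal{R}I_\mathcal{R}$ as $P_\mathcal{H}\,X(x,\Lambda)\,I_\mathcal{H}$ with
$X(x,\Lambda)=e^{i\pi x (\frac{iQ}2+\theta_0-\frac{\theta_*}2-\nu)}\,\tfrac{s_b(\Lambda+\theta_0+\theta_t+\frac{iQ}{2})}{s_b(\Lambda+\frac{\theta_*}{2}+\theta_t+\nu)}\,\tfrac{s_b(x+\Lambda+\frac{\theta_*}{2}+\theta_t+\nu)}{s_b(x+\Lambda+\theta_0+\theta_t+\frac{iQ}{2})}$,
show $X\to 1$, and apply dominated convergence. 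The gap is in your Step~3. You assert that the ``exact cancellation'' of the $\Lambda^{2}$ and $\Lambda^{1}$ terms yields an $x$-integrable dominating function independent of $\Lambda$. But that cancellation uses the $\re z\to-\infty$ branch of \eqref{sbasymptotics} for all four $\Lambda$-dependent factors, which is only valid where $\re x+\Lambda$ is large and negative. The contour runs to $+\infty$, so for $\re x\geq-\Lambda$ the integrand's ratio must be estimated with the $\re z\to+\infty$ branch, the quadratic terms no longer cancel against the prefactor ratio, and one is left with a residual factor of size $e^{-2\pi(\frac{Q}{2}-\im\nu)(\Lambda+\re x)}$. When $\im\nu>Q/2$ this \emph{grows} exponentially in $\re x$, so $X$ is not uniformly bounded on $\mathsf{H}$ and ``$X\approx 1$ times an integrable bound on $I_\mathcal{H}$'' is not a valid domination. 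The paper closes this by a two-regime case analysis (its inequalities \eqref{inequalityXC5}--\eqref{inequalityXC6}), checking that the growth rate $2\pi(\im\nu-\frac{Q}{2})$ of $X$ is strictly beaten by the decay rate $2\pi(\frac{Q}{2}+\im\nu)$ of $I_\mathcal{H}$ on the right tail, the margin being exactly $2\pi Q$. This estimate is the substantive content of the proof and is missing from your sketch; moreover, your closing paragraph locates the difficulty in the contour placement and pole distances, which is the routine part, rather than in this growth-versus-decay comparison.

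Two smaller points. First, your stated decay of $I_\mathcal{H}$ as $\re x\to-\infty$ is not $e^{-2\pi(\frac{Q}{2}+\im\nu)|\re x|}$ but $e^{-2\pi Q|\re x|}$ (the $\nu$-dependent factor only affects the right tail); this does not change the conclusion that $\im\nu>-Q/2$ is the convergence condition, but the correct left-tail rate is what makes the domination work for large $\im\nu$. Second, your final contour-deformation remark about meromorphic continuation in $\nu$ beyond pole collisions is fine, but note the theorem only claims meromorphy on $\{\im\nu>-Q/2\}$ at this stage; the extension to all of $\mathbb{C}$ is done later via the difference equations, not by deformation here.
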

\begin{proof}
Let $(b,\theta_0,\theta_t,\theta_*) \in (0,\infty) \times \mathbb{R}^3$.
Using the identity $s_b(z)= 1/s_b(-z)$, it is straightforward to verify that
\begin{align} \label{PRIR}
& P_\mathcal{R}\left[\substack{\frac{\Lambda+\theta_*}2\;\; \theta_t\vspace{0.1cm}\\ \frac{\Lambda-\theta_*}2\;\; \theta_0};\substack{\sigma_s \vspace{0.15cm} \\  \frac{\Lambda}2+\nu}\;,b\right] I_\mathcal{R}\left[x; \substack{\frac{\Lambda+\theta_*}2\;\; \theta_t\vspace{0.1cm}\\ \frac{\Lambda-\theta_*}2\;\; \theta_0};\substack{\sigma_s \vspace{0.15cm} \\  \frac{\Lambda}2+\nu} \;,b\right] = \;  P_\mathcal{H}\lb \sigma_s,\nu\rb X(x,\Lambda) I_\mathcal{H}\lb x, \sigma_s,\nu\rb,
\end{align}
where the dependence of $X(x,\Lambda)$ on $b,\theta_0,\theta_t,\theta_*,\sigma_s,\nu$ is omitted for simplicity, and
\beq \label{XforRtoH}
X(x,\Lambda) = e^{i\pi x (\frac{iQ}2+\theta_0-\frac{\theta_*}2-\nu)} \frac{s_b\left(\Lambda+\theta_0+\theta_t +\frac{i Q}{2}\right)}{s_b\left(\Lambda+\frac{\theta_*}{2}+\theta_t+\nu\right)} \frac{s_b\left(x+\Lambda+\frac{\theta_*}{2}+\theta_t+\nu\right)}{s_b\left(x+\Lambda+\theta_0+\theta_t +\frac{i Q}{2}\right)}.
\eeq

Due to the properties \eqref{polesb} of the function $s_b$, the function $I_\mathcal{H}(\cdot, \sigma_s, \nu)$ possesses three increasing sequences of poles starting at $x=0, x=-2\theta_t$ and $x =-\theta_0-\theta_*-\theta_t$, as well as three decreasing sequences of poles starting at $x= -\tfrac{iQ}2-\tfrac{\theta_*}{2}-\theta_t+\nu$ and $x=-\tfrac{iQ}2 \pm \sigma_s-\theta_0-\theta_t$. 
The discrete sets $\Delta_{\mathcal H,\sigma_s}$ and $\Delta_{\nu}$ contain all the values of $\sigma_s$ and $\nu$, respectively, for which poles in any of the three increasing collide with poles in any of the decreasing sequences. Indeed, consider for example the decreasing sequence starting at $x= -\tfrac{iQ}2-\tfrac{\theta_*}{2}-\theta_t+\nu$ and the increasing sequence starting at $x=0$. Poles from these two sequences collide if and only if
$$\nu \in \{\tfrac{\theta_*}{2} + \theta_t +\tfrac{iQ}{2} +  i m b +il b^{-1}\}_{m, l = 0}^\infty,$$
giving rise to the first set on the right-hand side of \eqref{Dnudef}.

Similarly, \eqref{polesb} implies that $X(\cdot, \Lambda)$ possesses one increasing sequence of poles starting at $x = -\Lambda - \theta_0 - \theta_t$ and one decreasing sequence of poles starting at $x = -\tfrac{iQ}2 -\Lambda - \tfrac{\theta_*}{2} - \theta_t - \nu$. The real parts of the poles in these two sequences tend to $+\infty$ as $\Lambda \to -\infty$. The increasing sequence lies in the half-plane $\im x \geq 0$ and the decreasing sequence lies in the half-plane $\im x \leq -\im \nu - Q/2$.

The sets $\Delta_{\mathcal H,\sigma_s}$ and $\Delta_{\nu}$ also contain all the values of $\sigma_s$ and $\nu$ at which the prefactor $P_\mathcal{H}(\sigma_s,\nu)$ has poles. For example, $P_\mathcal{H}$ has poles originating from the factor $s_b\left(\nu-\tfrac{\theta_*}{2}-\theta_t \right) $ at 
$$\nu = \tfrac{\theta_*}{2}+ \theta_t -\frac{i Q}{2} -i m b-il b^{-1}, \qquad m, l = 0,1, 2, \dots,$$
giving rise to the last set on the right-hand side of \eqref{Dnudef}.

Let $K_{\sigma_s}$ be a compact subset of $\mathbb{C} \setminus \Delta_{\mathcal H,\sigma_s}$ and let $K_{\nu}$ be a compact subset of $\{\im \nu > -Q/2\} \setminus \Delta_{\nu}$. Suppose $(\sigma_s, \nu) \in K_{\sigma_s} \times K_{\nu}$. Then, the above discussion shows that it is possible to choose a contour $\mathsf{H} = \mathsf{H}(\sigma_s, \nu)$ from  $-\infty$  to $+\infty$ which separates the four upward from the four downward sequences of poles of $X(\cdot,\Lambda)  I_\mathcal{H}\lb \cdot, \sigma_s,\nu\rb$. It also follows that if we let the right tail of $\mathsf{H}$ approach the horizontal line $\im x = -\epsilon$ as $\re x \to +\infty$, where $\epsilon > 0$ is sufficiently small, then there exists a $N < 0$ such that $\mathsf{H}$ can be chosen to be independent of $\Lambda$ for $\Lambda < -N$.
Thus, for such a choice of $\mathsf{H}$, \eqref{defR} and \eqref{PRIR} imply that, for all $(\sigma_s, \nu) \in K_{\sigma_s} \times K_{\nu}$ and all $\Lambda < -N$,
\begin{align}\label{RPXI}
\mathcal{R}\left[\substack{\frac{\Lambda+\theta_*}2\;\; \theta_t\vspace{0.1cm}\\ \frac{\Lambda-\theta_*}2\;\; \theta_0};\substack{\sigma_s \vspace{0.15cm} \\  \frac{\Lambda}2+\nu},b\right]
= P_\mathcal{H}\lb \sigma_s,\nu\rb \int_{\mathsf{H}} dx X(x,\Lambda)  I_\mathcal{H}\lb x, \sigma_s,\nu\rb.
\end{align}
If $(\sigma_s,\nu) \in \mathbb{R}^2$, then $\mathsf{H}$ can be any curve from $-\infty$ to $+\infty$ lying within the strip $\im{x} \in (-Q/2,0)$.

Utilizing the asymptotic formula \eqref{sbasymptotics} for the function $s_b$ with $\epsilon = 1/2$, we find that
\beq\label{lnX}
\operatorname{ln}{\lb X(x, \Lambda)\rb} = O\Big( e^{-\frac{\pi \lvert \Lambda \lvert}{\max(b,b^{-1})} } \Big), \qquad \Lambda \to -\infty,
\eeq
uniformly for $(\sigma_s, \nu) \in K_{\sigma_s} \times K_{\nu}$ and for $x$ in bounded subsets of $\mathsf{H}$. We deduce that
\beq\label{Xpointwiselimit}
\lim\limits_{\Lambda \to -\infty} X(x,\Lambda) = 1,
\eeq
uniformly for $(\sigma_s, \nu) \in K_{\sigma_s} \times K_{\nu}$ and $x$ in bounded subsets of $\mathsf{H}$.

Using the asymptotic formula \eqref{sbasymptotics} for $s_b$ with $\epsilon=1/2$,  we find that $I_\mathcal{H}$ obeys the estimate 
 \begin{align} \label{decayIH}
I_\mathcal{H}\lb x,\sigma_s,\nu\rb = \begin{cases}
O\lb e^{-2\pi(\frac{Q}{2} + \im \nu) |\re x|} \rb, & \re x \to + \infty,\\
O\lb e^{-2\pi Q |\re x|} \rb, & \re x \to - \infty, 
\end{cases}
\end{align}
uniformly for $(\sigma_s,\nu) \in K_{\sigma_s} \times K_{\nu}$ and $\im x$ in compact subsets of $\mathbb{R}$. 
Since the contour $\mathsf{H}$ stays a bounded distance away from the increasing and the decreasing pole sequences, we infer that there exists a constant $C_1>0$ such that
\beq \label{inequalityIH}
\lvert I_\mathcal{H}\lb x,\sigma_s,\nu\rb \lvert \leq \begin{cases}
C_1e^{-2\pi(\frac{Q}{2} + \im \nu) |\re x|}, & x \in \mathsf{H}, ~ \re x \geq 0, \\
C_1e^{-2\pi Q |\re x|}, & x \in \mathsf{H}, ~ \re x \leq 0, 
\end{cases}
\eeq
uniformly for $(\sigma_s,\nu) \in K_{\sigma_s} \times K_{\nu}$. In particular, since $K_{\nu} \subset \{\im \nu > -Q/2\}$, $I_\mathcal{H}$ has exponential decay on the left and right tails of the contour $\mathsf{H}$. 

Suppose we can show that there exist constants $c>0$ and $C>0$ such that
\beq \label{inequalityPRIR}
|X(x,\Lambda)  I_\mathcal{H}\lb x, \sigma_s,\nu\rb| \leq C e^{-c \lvert \re x \lvert}
\eeq
uniformly for all $\Lambda < -N$, $x \in \mathsf{H}$, and $(\sigma_s, \nu) \in K_{\sigma_s} \times K_{\nu}$. Then it follows from \eqref{RPXI}, \eqref{Xpointwiselimit}, and Lebesgue's dominated convergence theorem that the limit in \eqref{fromRtoH} exists uniformly for $(\sigma_s, \nu) \in K_{\sigma_s} \times K_{\nu}$ and is given by \eqref{defH}. Since $K_{\sigma_s} \subset \mathbb{C} \setminus \Delta_{\mathcal H,\sigma_s}$ and $K_{\nu} \subset \mathbb{C} \setminus \Delta_{\nu}$ are arbitrary compact subsets, this proves that the limit in \eqref{fromRtoH} exists uniformly for $(\sigma_s,\nu)$ in compact subsets of $D_{\mathcal{H}}$ and proves \eqref{defH}. Moreover, the analyticity of $\mathcal{H}$ as a function of $(\sigma_s,\nu) \in D_{\mathcal{H}}$ follows from the analyticity of $\mathcal{R}$ together with the uniform convergence on compact subsets. Alternatively, the analyticity of $\mathcal{H}$ in $D_{\mathcal{H}}$ can be inferred directly from the representation \eqref{defH}. Indeed, the possible poles of $\mathcal{H}$ lie at such values of $(\sigma_s,\nu)$ at which either the prefactor $P_{\mathcal{H}}$ has a pole or at which the contour of integration gets pinched between two poles of the integrand $I_\mathcal{H}$, and the definitions of $\Delta_{\mathcal H,\sigma_s}$ and $\Delta_\nu$ exclude both of these situations. Thus to complete the proof of the theorem, it only remains to prove \eqref{inequalityPRIR}.

To prove \eqref{inequalityPRIR}, we need to estimate the function $X$ defined in \eqref{XforRtoH}.
The asymptotic formula (\ref{sbasymptotics}) for $s_b$ with $\epsilon=1/2$ implies that there exist constants $C_2, C_3, C_4>0$ such that the inequalities 
\begin{align}
& \bigg|\frac{s_b\left(\Lambda+\theta_0+\theta_t +\frac{i Q}{2}\right)}{s_b\left(\Lambda+\frac{\theta_*}{2}+\theta_t+\nu\right)}\bigg| \leq C_2 e^{\pi (\frac{Q }{2} - \im \nu) |\Lambda|}, \qquad \Lambda < -N,  \\
& \bigg|\frac{s_b\left(x+\Lambda+\frac{\theta_*}{2}+\theta_t+\nu\right)}{s_b\left(x+\Lambda+\theta_0+\theta_t +\frac{i Q}{2}\right)} 
\bigg| \leq C_3 e^{-\pi(\frac{Q}{2} - \im \nu)|\Lambda + \re{x}|}, \qquad x \in \mathsf{H}, ~ \Lambda \in \mathbb{R}, \\
& |e^{i\pi x (\frac{iQ}2+\theta_0-\frac{\theta_*}2-\nu)}| \leq C_4 e^{-\pi(\frac{Q}{2}-\im \nu)\re{x}}, \qquad x \in \mathsf{H},
\end{align}
hold uniformly for $(\sigma_s,\nu) \in K_{\sigma_s} \times K_{\nu}$.
Combining the above estimates, we infer that there exists a constant $C_5$ such that 
\beq \label{inequalityX}
|X(x,\Lambda)| \leq C_5 e^{\pi(\frac{Q}{2}-\im \nu)(|\Lambda| - |\Lambda + \re{x}| - \re{x})}, \qquad x \in \mathsf{H}, ~ \Lambda < -N,
\eeq
uniformly for $(\sigma_s,\nu) \in K_{\sigma_s} \times K_{\nu}$. The inequality \eqref{inequalityX} can be rewritten as follows:
\beq\label{inequalityXC5}
|X(x,\Lambda)| \leq \begin{cases} C_5 e^{-2\pi (\frac{Q}{2} - \im \nu)(\Lambda + \re{x})}, & \Lambda + \re{x} \geq 0, \\
C_5, &  \Lambda + \re{x} \leq 0, \end{cases} \quad x \in \mathsf{H}, ~ \Lambda<-N,
\eeq
uniformly for $(\sigma_s,\nu) \in K_{\sigma_s} \times K_{\nu}$. 

If $\nu \in K_{\nu}$ is such that $-Q/2 < \im \nu \leq Q/2$, then \eqref{inequalityXC5} implies that $|X|$ is uniformly bounded for all $x \in \mathsf{H}$, $\Lambda<-N$, and $(\sigma_s,\nu) \in K_{\sigma_s} \times K_{\nu}$; hence \eqref{inequalityPRIR} follows from \eqref{inequalityIH} in this case.
On the other hand, if $\nu \in K_{\nu}$ is such that $\im \nu \geq Q/2$, then \eqref{inequalityIH} and \eqref{inequalityXC5} yield the existence of a constant $C_6> 0$ independent of $x \in \mathsf{H}$, $\Lambda < -N$, and $(\sigma_s,\nu) \in K_{\sigma_s} \times K_{\nu}$ such that
\beq\label{inequalityXC6}
|X(x,\Lambda)  I_\mathcal{H}\lb x, \sigma_s,\nu\rb|
\leq  \begin{cases} 
C_6e^{2\pi (\frac{Q}{2} - \im \nu)|\Lambda|}e^{-2\pi Q|\re{x}|}
\leq C_6e^{-2\pi Q|\re{x}|}, & \re{x} \geq -\Lambda, 
	\\
C_6e^{-2\pi(\frac{Q}{2} + \im \nu) |\re x|} \leq C_6 e^{-2\pi Q|\re x|}, & 0 \leq \re x \leq -\Lambda,
	\\
C_6e^{-2\pi Q |\re x|}, & \re x \leq 0, 
\end{cases}
\eeq
which shows \eqref{inequalityPRIR} also in this case. This completes the proof.
\end{proof}

Furthermore, thanks to the symmetry \eqref{sbinverse} of $s_b$, we have
\beq \label{Hbbinverse}
\mathcal{H}(b^{-1},\theta_0,\theta_t,\theta_*,\sigma_s,\nu) = \mathcal{H}(b,\theta_0,\theta_t,\theta_*,\sigma_s,\nu).
\eeq

\subsection{Difference equations}\label{Hdifferencesubsec}

We now show that the four difference equations \eqref{diffeqR} satisfied by the function $\mathcal{R}$ survive in the confluent limit \eqref{fromRtoH}. This implies that the function $\mathcal{H}$ is a joint eigenfunction of four difference operators, two acting on $\sigma_s$ and the remaining two on $\nu$. 

We know from Theorem \ref{thmforH} that $\mathcal{H}$ is a well-defined meromorphic function of $(\sigma_s, \nu) \in \mathbb{C} \times \{\im \nu > -Q/2\}$. The difference equations will first be derived as equalities between meromorphic functions defined on this limited domain. However, the difference equations in $\nu$ can then be used to show that: (i) the limit in \eqref{fromRtoH} exists for all $\nu$ in the whole complex plane away from a discrete subset, (ii) $\mathcal{H}$ is in fact a meromorphic function of $(\sigma_s, \nu)$ in all of $\mathbb{C}^2$, and (iii) the four difference equations hold as equalities between meromorphic functions on $\mathbb{C}^2$, see Proposition \ref{Hextensionprop}.

\subsubsection{First pair of difference equations}
Define the difference operator $H_\mathcal{H}(b,\sigma_s)$ by
\beq\label{HH}\begin{split}
H_\mathcal{H}(b,\sigma_s) & = H^+_\mathcal{H}(b,\sigma_s) e^{ib\partial_{\sigma_s}} + H^+_\mathcal{H}(b,-\sigma_s) e^{-ib\partial_{\sigma_s}} + H_\mathcal{H}^{0}(b,\sigma_s),
 \end{split}\eeq
where $H^0_{\mathcal{H}}$ is defined by 
 \beq \label{HH0} \begin{split}
H^0_{\mathcal{H}}(b,\sigma_s) &=e^{- \pi b(iQ+\theta_*+2\theta_t)}-H^+_\mathcal{H}(b,\sigma_s)-H^+_\mathcal{H}(b,-\sigma_s),
\end{split} \eeq
with
\beq\label{HHplus}
H^+_\mathcal{H}(b,\sigma_s) = -2 e^{-\pi b\left(\sigma_s+\frac{i b}{2}\right)}\cosh(\pi  b (\tfrac{i b}{2}+\theta_*+\sigma_s))\frac{\prod _{\epsilon =\pm1} \cosh \left(\pi  b \left(\frac{ib}{2}+\theta_t+\sigma_s+\epsilon\theta_0 \right)\right)}{\sinh (\pi  b (2 \sigma_s+i b)) \sinh (2 \pi  b \sigma_s)}
\eeq

\begin{proposition}\label{diffeqHprop}
For $\sigma_s \in \mathbb{C}$ and $\im \nu > -Q/2$, the function $\mathcal{H}$ defined by \eqref{fromRtoH} satisfies the following pair of difference equations:
\begin{subequations} \label{diffeqH}\begin{align}
\label{diffeqH1} & H_\mathcal{H}(b,\sigma_s) ~ \mathcal{H}(b,\theta_0,\theta_t,\theta_*,\sigma_s,\nu) = e^{-2\pi b \nu} \mathcal{H}(b,\theta_0,\theta_t,\theta_*,\sigma_s,\nu), \\
\label{diffeqH2} & H_\mathcal{H}(b^{-1},\sigma_s) ~ \mathcal{H}(b,\theta_0,\theta_t,\theta_*,\sigma_s,\nu) = e^{-2\pi b^{-1} \nu} \mathcal{H}(b,\theta_0,\theta_t,\theta_*,\sigma_s,\nu).
\end{align} \end{subequations}
\end{proposition}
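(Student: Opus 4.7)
The plan is to derive \eqref{diffeqH} by taking the confluent limit $\Lambda \to -\infty$ directly in the $\mathcal{R}$-difference equations \eqref{diffeqR1} and \eqref{diffeqR2}, after inserting the substitution \eqref{defthetastar} and performing a suitable rescaling. For \eqref{diffeqH1}, I would substitute $\theta_\infty = (\Lambda - \theta_*)/2$, $\theta_1 = (\Lambda + \theta_*)/2$, and $\sigma_t = \Lambda/2 + \nu$ into \eqref{diffeqR1} and multiply both sides by $e^{\pi b \Lambda}$. The right-hand side becomes $e^{2\pi b \Lambda + 2\pi b \nu} + e^{-2\pi b \nu}$, which tends to the desired eigenvalue $e^{-2\pi b \nu}$ as $\Lambda \to -\infty$ since $b>0$.

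The bulk of the work is to show that the rescaled coefficient functions of $H_\mathcal{R}$ converge pointwise to those of $H_\mathcal{H}$. Under \eqref{defthetastar}, the second product in \eqref{HRplus} reduces to $\cosh(\pi b(\tfrac{ib}{2} + \Lambda + \sigma_s))\cosh(\pi b(\tfrac{ib}{2} + \theta_* + \sigma_s))$, so the $\Lambda$-dependence is trapped in a single factor. As $\Lambda \to -\infty$, that factor is dominated by $\tfrac{1}{2}e^{-\pi b(\Lambda + ib/2 + \sigma_s)}$, and multiplying $H_\mathcal{R}^+(\sigma_s)$ by $e^{\pi b \Lambda}$ cancels the $\Lambda$-growth and yields exactly $H_\mathcal{H}^+(b,\sigma_s)$ in \eqref{HHplus}; the analogous computation works for $H_\mathcal{R}^+(-\sigma_s)$. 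For the constant term, $-2\cosh(2\pi b(\theta_1 + \theta_t + ib/2)) = -2\cosh(\pi b(\Lambda + \theta_* + 2\theta_t + ib))$, whose $e^{\pi b \Lambda}$-rescaled limit is $-e^{-\pi b(ib + \theta_* + 2\theta_t)}$; the identity $e^{-i\pi b Q} = -e^{-i\pi b^2}$ (which uses $Q = b + b^{-1}$) rewrites this as $e^{-\pi b(iQ + \theta_* + 2\theta_t)}$, matching the first term of $H_\mathcal{H}^0$ in \eqref{HH0}. Adding the limits of $-e^{\pi b \Lambda} H_\mathcal{R}^+(\pm\sigma_s)$ then reproduces $H_\mathcal{H}^0(b,\sigma_s)$ in full.

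The main obstacle is to justify exchanging the limit $\Lambda \to -\infty$ with the difference operator, that is, to pass from coefficient-wise convergence together with convergence of $\mathcal{R}$ to convergence of $H_\mathcal{R}\,\mathcal{R}$. This is where Theorem \ref{thmforH} is essential: the limit \eqref{fromRtoH} holds uniformly on compact subsets of $D_\mathcal{H}$, so for any $(\sigma_s, \nu)$ with $\im\nu > -Q/2$ such that the three points $\sigma_s - ib$, $\sigma_s$, $\sigma_s + ib$ all lie in $\mathbb{C} \setminus \Delta_{\mathcal{H},\sigma_s}$, I would choose a single compact neighborhood containing these three points inside $\mathbb{C} \setminus \Delta_{\mathcal{H},\sigma_s}$ and conclude that the three shifted evaluations of $\mathcal{R}$ converge to the corresponding shifted evaluations of $\mathcal{H}$. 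Combined with the coefficient limits above, this proves \eqref{diffeqH1} on the complement of a discrete set, and since both sides are meromorphic on $\mathbb{C} \times \{\im\nu > -Q/2\}$ the identity extends to this full domain. The equation \eqref{diffeqH2} follows by repeating the argument for \eqref{diffeqR2}, now rescaling by $e^{\pi b^{-1}\Lambda}$ and invoking the dual identity $e^{-i\pi b^{-1} Q} = -e^{-i\pi b^{-2}}$ to identify the constant term.
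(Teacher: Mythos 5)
Your proposal is correct and follows essentially the same route as the paper: substitute \eqref{defthetastar} into \eqref{diffeqR1}, multiply by $e^{\pi b \Lambda}$, verify the coefficient-wise limits $e^{\pi b \Lambda}H_\mathcal{R}^{+} \to H_\mathcal{H}^{+}$ and $e^{\pi b \Lambda}H_\mathcal{R}^{0} \to H_\mathcal{H}^{0}$ (your constant-term identification via $e^{-i\pi b Q}=-e^{-i\pi b^2}$ is exactly right), and use the uniform convergence from Theorem \ref{thmforH} to pass to the limit. The only cosmetic difference is that the paper deduces \eqref{diffeqH2} from \eqref{diffeqH1} and the symmetry \eqref{Hbbinverse} rather than rerunning the limit argument on \eqref{diffeqR2}; both are valid.
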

\begin{proof}
The proof consists of taking the confluent limit \eqref{fromRtoH} of the difference equation \eqref{diffeqR1}. On the one hand, we have
\beq\label{limitcosh}
\lim\limits_{\Lambda \to -\infty} \Big(e^{\pi b \Lambda}2\cosh{2\pi b \sigma_t}\lvert_{\sigma_t = \tfrac{\Lambda}2+\nu}\Big) = e^{-2\pi b \nu}.
\eeq
On the other hand, straightforward computations using asymptotics of hyperbolic functions show that the following limits hold:
\beq
\lim\limits_{\Lambda \to -\infty} e^{\pi b \Lambda} H_\mathcal{R}^+\left[\substack{\tfrac{\Lambda+\theta_*}{2}\;\;\theta_t\vspace{0.1cm}\\ \tfrac{\Lambda-\theta_*}{2} \;\theta_0};\substack{b,\;\pm\sigma_s}\right] = H_\mathcal{H}^+(b,\pm\sigma_s), \qquad \lim\limits_{\Lambda \to -\infty} e^{\pi b \Lambda} H_\mathcal{R}^0\left[\substack{\tfrac{\Lambda+\theta_*}{2}\;\;\theta_t\vspace{0.1cm}\\ \tfrac{\Lambda-\theta_*}{2} \;\theta_0};\substack{b,\;\sigma_s}\right] = H^0_\mathcal{H}(b,\sigma_s),
\eeq
where $H_\mathcal{R}^+$ and $H_\mathcal{R}^0$ are defined in \eqref{HRplus} and \eqref{H0}, respectively. Therefore we obtain
\beq \label{HRtoHHsigmas}
\lim\limits_{\Lambda \to -\infty} e^{\pi b \Lambda}  H_\mathcal{R}\left[\substack{\tfrac{\Lambda+\theta_*}{2}\;\;\theta_t\vspace{0.1cm}\\ \tfrac{\Lambda-\theta_*}{2} \;\theta_0};\substack{b,\;\sigma_s}\right] = H_\mathcal{H}(b,\sigma_s),
\eeq
where $H_\mathcal{R}$ is given in \eqref{HR}. By Theorem \ref{thmforH}, the limit in \eqref{fromRtoH} exists whenever $(\sigma_s, \nu)\in D_{\mathcal{H}}$. Thus, the difference equation \eqref{diffeqH1} follows after multiplying \eqref{diffeqR1} by $e^{\pi b \Lambda}$ and utilizing \eqref{limitcosh}, \eqref{HRtoHHsigmas}, and the definition \eqref{fromRtoH} of $\mathcal{H}$. Finally,  \eqref{diffeqH2} follows from \eqref{diffeqH1} and the symmetry \eqref{Hbbinverse} of $\mathcal{H}$.
\end{proof}

\subsubsection{Second pair of difference equations}

Define the dual difference operator $\tilde{H}_\mathcal{H}(b,\nu)$ by
\begin{align} \label{HHtilde}
\tilde{H}_\mathcal{H}(b,\nu) = \tilde{H}^+_\mathcal{H}(b,\nu) e^{ib\partial_\nu} + \tilde{H}^-_\mathcal{H}(b,\nu) e^{-ib\partial_\nu} + \tilde{H}_\mathcal{H}^0(b,\nu),
\end{align}
where $\tilde{H}_{\mathcal{H}}^0$ is defined by
\beq \label{HHtilde0}
\tilde{H}_{\mathcal{H}}^0(b,\nu) = -2\operatorname{cosh}{(2\pi b \lb \tfrac{ib}2+\theta_0+\theta_t \rb)}-\tilde{H}^+_\mathcal{H}(b,\nu)-\tilde{H}^{-}_\mathcal{H}(b,\nu),
\eeq
with
\beq\label{HHtildeplus}
\tilde{H}^\pm_\mathcal{H}(b,\nu) = -4e^{2\pi b \nu}e^{\mp \pi b(\theta_0+\theta_t)} \operatorname{cosh}{\lb \pi b \lb \tfrac{ib}2+\theta_0 \pm (\nu + \tfrac{\theta_*}{2}) \rb \rb} \operatorname{cosh}{\lb \pi b \lb \tfrac{ib}2+\theta_t \pm (\nu - \tfrac{\theta_*}{2}) \rb \rb}.
\eeq

\begin{proposition}\label{diffeqtildeHprop}
For $\sigma_s \in \mathbb{C}$ and $\im (\nu - ib^{\pm 1}) > -Q/2$, the function $\mathcal{H}$ satisfies the following pair of difference equations:
\begin{subequations} \label{diffeqtildeH}\begin{align}
\label{diffeqtildeH1} & \tilde{H}_\mathcal{H}(b,\nu) ~ \mathcal{H}(b,\theta_0,\theta_t,\theta_*,\sigma_s,\nu) = 2\cosh{(2\pi b \sigma_s)} ~ \mathcal{H}(b,\theta_0,\theta_t,\theta_*,\sigma_s,\nu), \\
\label{diffeqtildeH2} & \tilde{H}_\mathcal{H}(b^{-1},\nu) ~ \mathcal{H}(b,\theta_0,\theta_t,\theta_*,\sigma_s,\nu) = 2\cosh{(2\pi b^{-1} \sigma_s)} ~ \mathcal{H}(b,\theta_0,\theta_t,\theta_*,\sigma_s,\nu).
\end{align} \end{subequations}
\end{proposition}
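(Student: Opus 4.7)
The plan is to derive \eqref{diffeqtildeH} as the $\Lambda \to -\infty$ confluent limit of the dual difference equation \eqref{diffeqR3}, following the same strategy as in the proof of Proposition \ref{diffeqHprop}. In contrast to that proposition, no exponential rescaling is needed here: the right-hand side $2\cosh(2\pi b \sigma_s)$ of \eqref{diffeqR3} is already independent of $\Lambda$, and under the substitution \eqref{defthetastar} the coefficients of the difference operator $H_\mathcal{R}$ acting on $\sigma_t$ will be shown to tend to finite, nonzero limits. Since $\partial_{\sigma_t} = \partial_\nu$ at fixed $\Lambda$ under the identification $\sigma_t = \Lambda/2 + \nu$, the limiting operator acts as a difference operator in $\nu$.

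The computational core is an asymptotic analysis of $H_\mathcal{R}^+$ from \eqref{HRplus}, applied with the parameter permutation demanded by \eqref{diffeqR3} (namely $\theta_0 \leftrightarrow \theta_1$ at the top, with $\sigma_t$ in place of $\sigma_s$) and with the substitutions $\theta_\infty = (\Lambda - \theta_*)/2$, $\theta_1 = (\Lambda + \theta_*)/2$, $\sigma_t = \Lambda/2 + \nu$. Two of the four cosh factors in the numerator lose their $\Lambda$-dependence; the remaining two and both sinh factors in the denominator each contribute a simple exponential in $\Lambda$. Collecting these exponentials, one verifies that the leading $e^{-2\pi b \Lambda}$ behavior in numerator and denominator cancels exactly and leaves the prefactor $e^{2\pi b \nu}e^{-\pi b(\theta_0+\theta_t)}$ together with the two surviving cosh factors, reproducing $\tilde{H}_\mathcal{H}^+(b,\nu)$ as in \eqref{HHtildeplus}. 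The parallel computation for $H_\mathcal{R}^+(-\sigma_t)$ (where now $-\sigma_t \to +\infty$ so a different pair of cosh factors dominates) gives $\tilde{H}_\mathcal{H}^-(b,\nu)$. For the constant term, \eqref{H0} shows that the parameter permutation $\theta_0 \leftrightarrow \theta_1$ turns the a priori $\Lambda$-dependent expression $-2\cosh(2\pi b(\theta_1 + \theta_t + \tfrac{ib}{2}))$ into the $\Lambda$-independent $-2\cosh(2\pi b(\theta_0 + \theta_t + \tfrac{ib}{2}))$, which matches the corresponding term of $\tilde{H}_\mathcal{H}^0(b,\nu)$ in \eqref{HHtilde0}. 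Thus $H_\mathcal{R} \to \tilde{H}_\mathcal{H}(b,\nu)$ coefficient by coefficient.

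To promote these coefficient-level limits to the operator identity \eqref{diffeqtildeH1}, I invoke the uniform convergence statement of Theorem \ref{thmforH}. The hypothesis $\im(\nu - ib^{\pm 1}) > -Q/2$ is precisely what ensures that each of the three points $\nu$, $\nu + ib$, $\nu - ib$ lies in the half-plane $\im \nu > -Q/2$ on which $\mathcal{H}$ is defined through \eqref{fromRtoH}. Restricting $(\sigma_s,\nu)$ to a compact subset of $D_\mathcal{H}$ that also contains small neighborhoods of the shifted values, I pass to the limit $\Lambda \to -\infty$ term by term in the specialized form of \eqref{diffeqR3}; the left-hand side converges to $\tilde{H}_\mathcal{H}(b,\nu)\mathcal{H}(b,\theta_0,\theta_t,\theta_*,\sigma_s,\nu)$ and the right-hand side to $2\cosh(2\pi b \sigma_s)\mathcal{H}(b,\theta_0,\theta_t,\theta_*,\sigma_s,\nu)$. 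This proves \eqref{diffeqtildeH1}. Equation \eqref{diffeqtildeH2} then follows at once by repeating the argument with $b$ replaced by $b^{-1}$ and invoking the symmetry \eqref{Hbbinverse}.

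The main obstacle is the bookkeeping in the asymptotic expansion of $H_\mathcal{R}^\pm$: one must verify both that the growing exponentials cancel between numerator and denominator and that the residual prefactors agree with $-4e^{2\pi b \nu}e^{\mp \pi b(\theta_0 + \theta_t)}$ in \eqref{HHtildeplus} on the nose (the sinh factor $\sinh(\pi b(2\sigma_t + ib))$ contributes an extra $e^{i\pi b^2}$ that is crucial to cancel the $e^{i\pi b^2}$ arising from the two shifted cosh factors in the $-\sigma_t$ computation). Once this calculation is executed correctly, every remaining ingredient is routine: the coefficient-level convergence of $H_\mathcal{R}$, the uniform convergence of $\mathcal{R}$ to $\mathcal{H}$ provided by Theorem \ref{thmforH}, and the $b \leftrightarrow b^{-1}$ symmetry.
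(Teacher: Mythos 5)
Your proposal is correct and follows essentially the same route as the paper: take the confluent limit $\Lambda \to -\infty$ of \eqref{diffeqR3} under the substitution \eqref{defthetastar}, verify that the coefficients $H_\mathcal{R}^{\pm}, H_\mathcal{R}^{0}$ (with the $\theta_0 \leftrightarrow \theta_1$ permutation and $\sigma_t = \Lambda/2 + \nu$) converge to $\tilde{H}_\mathcal{H}^{\pm}, \tilde{H}_\mathcal{H}^{0}$, invoke the uniform convergence from Theorem \ref{thmforH} to pass to the limit, and obtain \eqref{diffeqtildeH2} from the symmetry \eqref{Hbbinverse}. The paper compresses the coefficient asymptotics into ``it is straightforward to verify,'' whereas you spell out the cancellation of the growing exponentials; your bookkeeping, including the role of the hypothesis $\im(\nu - ib^{\pm 1}) > -Q/2$, checks out.
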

\begin{proof}
It is straightforward to verify that the following limits hold:
\beq
\lim\limits_{\Lambda \to -\infty} H_\mathcal{R}^+\left[\substack{\theta_0\;\;\;\;\;\;\theta_t\vspace{0.1cm}\\ \frac{\Lambda-\theta_*}2\;\frac{\Lambda+\theta_*}2};\substack{b,\;\pm(\frac{\Lambda}2+\nu)}\right] = \tilde{H}^\pm_\mathcal{H}(b,\nu), \quad \lim\limits_{\Lambda \to -\infty} H_\mathcal{R}^0\left[\substack{\theta_0\;\;\;\;\;\;\theta_t\vspace{0.1cm}\\ \frac{\Lambda-\theta_*}2\;\frac{\Lambda+\theta_*}2};\substack{b,\;\frac{\Lambda}2+\nu}\right] = \tilde{H}^0_\mathcal{H}(b,\nu),
\eeq
where $H_\mathcal{R}^+$ and $H_\mathcal{R}^0$ are defined in \eqref{HRplus} and \eqref{H0}, respectively. We obtain 
\beq \label{HRtoHHtilde}
\lim\limits_{\Lambda \to -\infty} H_\mathcal{R}\left[\substack{\theta_0\;\;\;\;\;\;\theta_t\vspace{0.1cm}\\ \frac{\Lambda-\theta_*}2\;\frac{\Lambda+\theta_*}2};\substack{b,\;\frac{\Lambda}2+\nu}\right] = \tilde{H}_\mathcal{H}(b,\nu),
\eeq
where $H_\mathcal{R}$ is defined in \eqref{HR}.  The difference equation \eqref{diffeqtildeH1} follows from \eqref{diffeqR3}, \eqref{fromRtoH}, \eqref{HRtoHHtilde}, and Theorem \ref{thmforH}. 
Finally, \eqref{diffeqtildeH2} follows from \eqref{diffeqtildeH1} and the symmetry \eqref{Hbbinverse} of $\mathcal{H}$.
\end{proof}


Using the difference equations \eqref{diffeqtildeH}, we can show that $\mathcal{H}$ extends to a meromorphic function of $\nu$ everywhere in the complex plane. More precisely, we have the following proposition.

\begin{proposition}\label{Hextensionprop}
Let $(b,\theta_0,\theta_t,\theta_*) \in (0,\infty) \times \mathbb{R}^3$ and $\sigma_s \in \mathbb{C} \setminus \Delta_{\mathcal H,\sigma_s}$. Then there is a discrete subset $\Delta \subset \mathbb{C}$ such that the limit in \eqref{fromRtoH} exists for all $\nu \in \mathbb{C}\setminus \Delta$. Moreover, the function $\mathcal{H}$ defined by \eqref{fromRtoH} is a meromorphic function of $(\sigma_s, \nu) \in \mathbb{C}^2$ and the four difference equations \eqref{diffeqH} and \eqref{diffeqtildeH} hold as equalities between meromorphic functions of $(\sigma_s, \nu) \in \mathbb{C}^2$.
\end{proposition}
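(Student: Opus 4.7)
The plan is to extend $\mathcal{H}(b,\theta_0,\theta_t,\theta_*,\sigma_s,\nu)$ from the strip $\{\im\nu > -Q/2\}$ provided by Theorem \ref{thmforH} to all of $\nu\in\mathbb{C}$ by bootstrapping with the difference equations \eqref{diffeqtildeH1}--\eqref{diffeqtildeH2}. Abbreviating $\mathcal{H}(\nu) = \mathcal{H}(b,\theta_0,\theta_t,\theta_*,\sigma_s,\nu)$, we rearrange \eqref{diffeqtildeH1} as
\[
\mathcal{H}(\nu-ib) = \frac{\bigl(2\cosh(2\pi b\sigma_s) - \tilde{H}^0_\mathcal{H}(b,\nu)\bigr)\mathcal{H}(\nu) - \tilde{H}^+_\mathcal{H}(b,\nu)\,\mathcal{H}(\nu+ib)}{\tilde{H}^-_\mathcal{H}(b,\nu)}.
\]
For $\im\nu > -Q/2$, both $\mathcal{H}(\nu)$ and $\mathcal{H}(\nu+ib)$ lie in the domain of Theorem \ref{thmforH}, and the coefficients $\tilde{H}^\pm_\mathcal{H}(b,\cdot),\tilde{H}^0_\mathcal{H}(b,\cdot)$ are entire in $\nu$. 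The right-hand side therefore provides a meromorphic continuation of $\mathcal{H}$ (in $\nu$, with $\sigma_s$ as parameter) to the enlarged strip $\{\im\nu > -Q/2 - b\}$, with potential new poles only at the discrete zero set of $\nu\mapsto\tilde{H}^-_\mathcal{H}(b,\nu+ib)$.

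Iterating this step produces extensions to $\{\im\nu > -Q/2 - nb\}$ for every $n\in\mathbb{Z}_{\geq 0}$, and an entirely analogous rearrangement of \eqref{diffeqtildeH2} yields extensions to $\{\im\nu > -Q/2 - mb^{-1}\}$ for every $m\in\mathbb{Z}_{\geq 0}$. Since $b>0$ and $b^{-1}>0$, the union of these half-planes exhausts $\mathbb{C}$. The two bootstraps are consistent because each continues the single function of Theorem \ref{thmforH} and meromorphic continuation to a connected open set is unique. The resulting exceptional set $\Delta\subset\mathbb{C}$ is a countable union of translates of $\Delta_\nu$ by $-inb-imb^{-1}$ together with the discrete zero sets of the coefficient functions encountered along each iteration; for any compact $K\subset\mathbb{C}$ only finitely many iteration steps are needed to reach every point of $K$, so $\Delta\cap K$ is finite and $\Delta$ is discrete in $\mathbb{C}$. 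The existence of the limit in \eqref{fromRtoH} for $\nu\in\mathbb{C}\setminus\Delta$ then follows by applying the same difference-equation identities forwards, identifying the limit (whenever the relevant coefficients are non-zero) with the extended meromorphic function.

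Joint meromorphicity on $(\sigma_s,\nu)\in\mathbb{C}^2$ follows because in each bootstrap formula the $\sigma_s$-dependence enters only through the entire factor $2\cosh(2\pi b^{\pm 1}\sigma_s)$ and through values of $\mathcal{H}$ already known to be jointly analytic in $(\sigma_s,\nu)$ on the original domain by Theorem \ref{thmforH}; the resulting extensions are thus quotients of meromorphic functions on $\mathbb{C}^2$. Finally, the four equations \eqref{diffeqH} and \eqref{diffeqtildeH} were established in Propositions \ref{diffeqHprop} and \ref{diffeqtildeHprop} as identities on open subsets of $\mathbb{C}^2$; since both sides are now meromorphic on all of $\mathbb{C}^2$ and agree on those open sets, uniqueness of meromorphic continuation promotes them to identities on $\mathbb{C}^2$. \textbf{The main subtlety} is the bookkeeping: checking that the $b$-bootstrap and the $b^{-1}$-bootstrap yield the same continuation on their overlap (handled by uniqueness of analytic continuation) and that the poles accumulated along all iteration steps form a discrete set (handled by the compactness-based local-finiteness argument above, which avoids any irrationality hypothesis on $b^2$).
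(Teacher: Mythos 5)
Your proposal is correct and follows essentially the same route as the paper: solve the dual difference equation in $\nu$ for the downward translate and iterate strip by strip, collecting a discrete exceptional set, then extend the four difference equations by uniqueness of meromorphic continuation. The only (immaterial) difference is that the paper performs the rearrangement at the level of the $\mathcal{R}$-equation \eqref{diffeqR3} before taking the confluent limit, so that existence of the limit in \eqref{fromRtoH} on each new strip is immediate, whereas you continue $\mathcal{H}$ first and then identify the limit afterwards — which is the same computation in a different order.
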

\begin{proof}
Consider $\nu$ such that $\im \nu > -Q/2$ but $\im (\nu-ib) \leq -Q/2$. Solving \eqref{diffeqR3} for 
$$e^{-ib\partial_\nu}\bigg(\mathcal{R}\left[\substack{\frac{\Lambda+\theta_*}2\;\; \theta_t\vspace{0.1cm}\\ \frac{\Lambda-\theta_*}2\;\; \theta_0};\substack{\sigma_s \vspace{0.15cm} \\  \frac{\Lambda}2+\nu},b\right]\bigg),$$
 taking the confluent limit $\Lambda \to -\infty$, and using \eqref{fromRtoH}, \eqref{HRtoHHtilde}, and Theorem \ref{thmforH}, we conclude that the limit in \eqref{fromRtoH} exists also for $\nu$ in the strip $\{\nu \in \mathbb{C} | -b -Q/2 < \im \nu \leq -Q/2\} \setminus \Delta_1$, where $\Delta_1$ is a discrete set.  By iteration, we conclude that the limit in \eqref{fromRtoH} exists for all $\nu \in \mathbb{C} \setminus \Delta$, where $\Delta$ is a discrete set. This proves the first assertion. The remaining assertions now follow by repeating the proofs of Propositions \ref{diffeqHprop} and \ref{diffeqtildeHprop} with $\nu \in \mathbb{C}\setminus \Delta$.
\end{proof}

\subsection{First polynomial limit}

In this subsection, we show that the function $\mathcal{H}$ reduces to the continuous dual $q$-Hahn polynomials when $\nu$ is suitably discretized. 
In addition to Assumption \ref{assumption}, we make the following assumption.

\begin{assumption}[Restriction on the parameters]\label{assumptionhahnjacobi} 
Assume that $b > 0$ is such that $b^2$ is irrational, and that
\beq\label{restrictionhahnjacobi}\begin{split}
& \theta_t, \re \sigma_s \neq 0, \qquad \re\big(\tfrac{\theta_*}{2}- \nu -\theta_0 \pm  \sigma_s \big) \neq 0, \qquad \theta_0 + \theta_* \pm \theta_t \neq 0.
\end{split}\eeq
\end{assumption}
Assumption \ref{assumptionhahnjacobi} implies that the three increasing and the three decreasing sequences of poles of the integrand in \eqref{defH} do not overlap. 
The assumption that $b^2$ is irrational ensures that all the poles of the integrand are simple and that $q=e^{2i\pi b^2}$ is not a root of unity.

Define $\{\nu_n\}_{n=0}^\infty \subset \mathbb{C}$ by
\beq\label{nun}
\nu_n = \tfrac{\theta_*}{2}+\theta_t+\tfrac{i Q}2+inb.
\eeq
The sequence $\{\nu_n\}_{n=0}^\infty$ is a subset of the set $\Delta_\nu$ of possible poles of $\mathcal{H}$ defined in \eqref{Dnudef}. The following theorem shows that $\mathcal{H}$ still has a finite limit as $\nu \to \nu_n$ for each $n \geq 0$ and that the limit is given by the continuous dual $q$-Hahn polynomials. The reason the limit is finite is that the prefactor $P_\mathcal{H}$ has a simple zero at each $\nu_n$; this zero cancels the simple pole that the integral in \eqref{defH} has due to the contour of integration being pinched between two poles of the integrand. 

\begin{theorem}[From $\mathcal{H}$ to the continuous dual $q$-Hahn polynomials] \label{thhahn} 
Let $\sigma_s \in \mathbb{C} \setminus \Delta_{\mathcal H,\sigma_s}$ and suppose that Assumptions \ref{assumption} and \ref{assumptionhahnjacobi} are satisfied.  Under the parameter correspondence 
\beq\label{paramhahn}
\alpha_H=e^{2\pi b(\theta_t+\theta_0+\frac{iQ}2)}, \quad \beta_{H}=e^{2\pi b (\theta_t-\theta_0+\frac{iQ}2)}, \quad \gamma_{H}=e^{2\pi b (\theta_*+\frac{iQ}2)}, \qquad q=e^{2i\pi b^2},
\eeq
the function $\mathcal{H}$ defined in \eqref{defH} satisfies, for each integer $n\geq 0$,
\beq\label{limHtoHn}
\lim\limits_{\nu \to \nu_n} \mathcal{H}(b,\theta_0,\theta_t,\theta_*,\sigma_s,\nu) = 
H_n(e^{2\pi b \sigma_s};\alpha_H,\beta_{H},\gamma_{H},q),
\eeq
where $H_n$ are the continuous dual $q$-Hahn polynomials defined in \eqref{qhahn}.
\end{theorem}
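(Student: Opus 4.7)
The plan is to exploit the pinching mechanism flagged in the remark preceding the statement. At $\nu = \nu_n$, the factor $s_b(\nu - \tfrac{\theta_*}{2} - \theta_t)$ appearing in the prefactor $P_{\mathcal H}$ of \eqref{PH} equals $s_b(\tfrac{iQ}{2} + inb) = s_b(z_{n,0}) = 0$, and since $b^2$ is irrational this zero is simple. The strategy is to use this simple zero to cancel the simple pole that the contour integral in \eqref{defH} develops as $\nu \to \nu_n$, and then evaluate the finite residue.

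First I would pinpoint the pinching. In the $x$-plane, the integrand $I_{\mathcal H}$ has the increasing sequence of poles $\{imb + ilb^{-1}\}_{m,l\geq 0}$ coming from $1/s_b(x+\tfrac{iQ}{2})$, and the decreasing sequence $\{\nu - \tfrac{\theta_*}{2} - \theta_t - \tfrac{iQ}{2} - imb - ilb^{-1}\}_{m,l \geq 0}$ coming from the numerator factor $s_b(x+\tfrac{\theta_*}{2}+\theta_t-\nu)$. Equating a member of the former with a member of the latter at $\nu = \nu_n$ forces $l = l' = 0$ and $m + m' = n$, producing exactly $n+1$ pinch points at $x = imb$ for $m = 0, \ldots, n$; Assumption \ref{assumptionhahnjacobi} rules out any other coincidences of poles of the integrand. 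Starting from the integral representation \eqref{defH} for generic $\nu$ and analytically continuing in $\nu$ into a punctured neighborhood of $\nu_n$, the contour $\mathsf H$ must be deformed across the $n+1$ increasing poles that the decreasing sequence has crossed, yielding a sum of residues plus a background contour integral analytic at $\nu_n$.

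The background integral is annihilated in the limit by the simple zero of $P_{\mathcal H}$, so only the $n+1$ residue contributions survive. Each residue at $x = imb$ is evaluated using \eqref{ressb} together with the shift relation \eqref{differencesb}, and every resulting product of values of $s_b$ is converted via \eqref{differencepochsb} into a ratio of $q$-Pochhammer symbols in $q = e^{2i\pi b^2}$; in parallel, the vanishing factor $s_b(\nu - \tfrac{\theta_*}{2} - \theta_t)$ is Taylor-expanded around $\nu_n$ so that the $(\nu-\nu_n)^{-1}$ produced by the pinch cancels against this simple zero. After invoking the parameter dictionary \eqref{paramhahn}, the resulting sum over $m = 0, \ldots, n$ should match term by term the $_3\phi_2$ finite series defining $H_n(e^{2\pi b \sigma_s}; \alpha_H, \beta_H, \gamma_H, q)$ in \eqref{qhahn}.

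The main obstacle will be this final combinatorial identification: while each residue is in principle a direct computation, reassembling the $n+1$ residues together with the regular part of $P_{\mathcal H}(\sigma_s,\nu_n)$ into the standard $_3\phi_2$ form requires careful tracking of the $q$-Pochhammer factors that come out of the repeated $ib$ shifts in the three denominator factors and the one numerator factor of $I_{\mathcal H}$, as well as of the exponential $e^{i\pi x(\cdots)}$ which under the substitution $y = e^{2\pi b \sigma_s}$ supplies the characteristic pair of arguments $\alpha_H y, \alpha_H/y$ of continuous dual $q$-Hahn polynomials. The algebra should mirror, and in fact simplify, that of Theorem \ref{thmAW}, which handles the analogous and more elaborate top-level limit $\mathcal R \to R_n$.
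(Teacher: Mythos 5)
Your proposal is correct in outline, but it follows the route the paper explicitly mentions and then declines to take for this theorem. The paper's proof of Theorem \ref{thhahn} only performs the residue/pinching computation for $n=0$ (a single colliding pair, one residue, yielding $P_0=1$), and then handles all $n\geq 1$ by showing that $\lim_{\nu\to\nu_n}\tilde{H}_{\mathcal H}(b,\nu)=R_{H_n}$ under \eqref{paramhahn}, so that the limits $P_n$ satisfy the three-term recurrence \eqref{recurrenceHn} of the continuous dual $q$-Hahn polynomials; induction then gives $P_n=H_n$. You instead propose to carry out the full contour deformation at each $\nu_n$, collect all $n+1$ residues at the colliding poles, and resum them into the ${}_3\phi_2$ of \eqref{qhahn}. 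This is exactly the paper's ``first approach,'' which it executes in detail only for $\mathcal{M}$ in Section \ref{Msec}, so there is good evidence it goes through; but it is substantially more laborious, since you must also account for the crossed poles $x=i(n-m)b-ilb^{-1}$ with $l\geq 1$ (these contribute residues that are regular at $\nu_n$ and are killed by the zero of $P_{\mathcal H}$, as in \eqref{deformM}--\eqref{resIMml}) and then perform the $q$-Pochhammer gymnastics to reach the ${}_3\phi_2$. What your approach buys is self-containedness: it does not rely on Proposition \ref{diffeqtildeHprop} or on the limit of the difference operator. What the paper's approach buys is brevity and robustness: once the $n=0$ limit and the degeneration of $\tilde H_{\mathcal H}$ to the recurrence operator are known, the identification with $H_n$ is automatic and no resummation is needed. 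One small imprecision to fix if you pursue your route: the residues are most naturally taken at the moving (decreasing) poles $x=\nu-\tfrac{\theta_*}{2}-\theta_t-\tfrac{iQ}{2}-imb-ilb^{-1}$ that cross the contour as $\nu\to\nu_n$, rather than at the fixed increasing poles $x=imb$; the two descriptions agree only in the coincidence limit, and the bookkeeping of which poles are crossed is cleaner in the former convention.
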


\begin{proof}
There are two different ways to prove \eqref{limHtoHn}. The first approach consists of taking the limit $\nu\to \nu_n$ in the integral representation \eqref{defH} for $\mathcal{H}$ for each $n$; the second approach computes the limit for $n=0$ and then uses the limit of the difference equation \eqref{diffeqtildeH1} to extend the result to other values of $n$.  The first approach is described in detail in Section \ref{Msec} for the function $\mathcal{M}$. Here we use the second approach.

We first show that the limit in \eqref{limHtoHn} exists for $n=0$ and equals $1$. The function $s_b(\nu-\tfrac{\theta_*}{2}-\theta_t)$ in \eqref{PH} has a simple zero at $\nu_0=\tfrac{\theta_*}{2}+\theta_t+\tfrac{iQ}2$.  Moreover, in the limit $\nu \to \nu_0$, the pole of the function $s_b(x+\tfrac{\theta_*}{2}+\theta_t-\nu)$ in \eqref{IH} at $x_0 :=-\tfrac{iQ}2-\tfrac{\theta_*}{2}-\theta_t+\nu$ collides with the pole of $s_b(x+\tfrac{iQ}2)^{-1}$ located at $x=0$, pinching the contour $\mathsf{H}$.  Therefore, before taking the limit $\nu\to \nu_0$, we deform the contour $\mathsf{H}$ into a contour $\mathsf{H}'$ which passes below $x_0$. We obtain
\beq \label{Hdeform}
\mathcal{H}(b,\theta_0,\theta_t,\theta_*,\sigma_s,\nu) = -2i\pi \; P_\mathcal{H}\lb \sigma_s,\nu\rb \underset{x=x_0}{\text{Res}}\lb I_\mathcal{H}(x,\sigma_s,\nu)\rb + P_\mathcal{H}(\sigma_s,\nu) \displaystyle \int_{\mathsf{H}'} dx \; I_\mathcal{H}(x,\sigma_s,\nu).
\eeq
Using the residue \eqref{ressb},  a straightforward computation yields
\begin{align} \label{resIH}
\nonumber -2i\pi  \underset{x=x_0}{\text{Res}}\lb I_\mathcal{H}(x,\sigma_s,\nu)\rb = & \; e^{i \pi  \left(\nu-\theta_0+\frac{\theta_*}{2} -\frac{i Q}{2}\right) \left(\nu-\frac{\theta_*}{2}-\theta_t -\frac{i Q}{2}\right)} \\
& \times \frac{s_b\left(\theta_0-\frac{\theta_*}{2}+\nu -\frac{i Q}{2}-\sigma_s\right) s_b\left(\theta_0-\frac{\theta_*}{2}+\nu -\frac{i Q}{2}+\sigma_s\right)}{s_b\left(\theta_0+\frac{\theta_*}{2}+\nu \right) s_b\left(\nu-\frac{\theta_*}{2}-\theta_t\right) s_b\left(\nu-\frac{\theta_*}{2}+\theta_t \right)}.
\end{align}
The right-hand side of \eqref{resIH} has a simple pole at $\nu=\nu_0$ due to the factor $s_b(\nu-\tfrac{\theta_*}{2}-\theta_t)^{-1}$. Moreover, in the limit $\nu\to \nu_0$ the second term in \eqref{Hdeform} vanishes thanks to the zero of $P_\mathcal{H}$. Thus we obtain
\beq
\lim\limits_{\nu\to \nu_0} \mathcal{H}(b,\theta_0,\theta_t,\theta_*,\sigma_s,\nu) = -2i\pi \lim\limits_{\nu\to \nu_0}\Big( P_\mathcal{H}\lb \sigma_s,\nu\rb \underset{x=x_0}{\text{Res}}\lb I_\mathcal{H}(x,\sigma_s,\nu)\rb\Big).
\eeq
Using $s_b(x)=s_b(-x)^{-1}$, it is straightforward to verify that the right-hand side equals $1$; this proves \eqref{limHtoHn}.

For each integer $n\geq 0$, let $P_n$ denote the left-hand side of \eqref{limHtoHn}:
\beq\label{limHtoHn2}
P_n := \lim\limits_{\nu\to \nu_n} \mathcal{H}(b,\theta_0,\theta_t,\theta_*,\sigma_s,\nu).
\eeq
The same kind of contour deformation used to establish the case $n=0$ shows that the limit in \eqref{limHtoHn2} exists for all $n \geq 0$. To show that $P_n$ equals the continuous dual $q$-Hahn polynomials $H_n$, we consider the limit $\nu \to \nu_n$ of the difference equation \eqref{diffeqtildeH1}. Using the parameter correspondence \eqref{paramhahn}, it is straightforward to verify that 
\beq
\lim\limits_{\nu\to \nu_n} \tilde{H}_\mathcal{H}(b,\nu) = R_{H_n},
\eeq
where the operators $\tilde{H}_\mathcal{H}$ and $R_{H_n}$ are defined in \eqref{HHtilde} and \eqref{recurrenceoperatorHn}, respectively. Hence taking the limit $\nu\to\nu_n$ of the difference equation \eqref{diffeqtildeH1}, we see that $P_n$ satisfies
\beq \label{RHnPn}
R_{H_n} P_n = \lb z+z^{-1} \rb P_n,
\eeq
where $z=e^{2\pi b \sigma_s}$. Thus the $P_n$ satisfy the same recurrence relation \eqref{recurrenceHn} as the continuous dual $q$-Hahn polynomials evaluated at $z=e^{2\pi b \sigma_s}$. Since we have already shown that $P_0=H_0=1$, we infer that $P_n=H_n$ for all $n\geq 0$ by induction, where $H_n$ is evaluated at $z=e^{2\pi b \sigma_s}$. This completes the proof of \eqref{limHtoHn}.
\end{proof}

\subsection{Second polynomial limit}

In this subsection, we show that $\mathcal{H}$ reduces to the big $q$-Jacobi polynomials when $\sigma_s$ is suitably discretized.

\begin{theorem} [From $\mathcal{H}$ to the big $q$-Jacobi polynomials] \label{thmHtoJn}
Let $\nu \in \{\im \nu > -Q/2\} \setminus \Delta_{\nu}$ and suppose that Assumptions \ref{assumption} and \ref{assumptionhahnjacobi} are satisfied. 
Under the parameter correspondence 
\beq\label{paramjacobi}
\alpha_J = e^{4\pi b \theta_t}, \quad \beta_J = e^{4\pi b \theta_0}, \quad \gamma_J = e^{2\pi b(\theta_0+\theta_*+\theta_t)}, \quad x_J=e^{2\pi b (\theta_t+\frac{\theta_*}2+\frac{iQ}2)}e^{-2\pi b \nu}, \quad q=e^{2i\pi b^2},
\eeq
the function $\mathcal{H}$ defined in \eqref{defH} satisfies, for each integer $n\geq 0$,
\beq\label{limHtoJn}
\lim\limits_{\sigma_s \to \sigma_s^{(n)}} \mathcal{H}(b,\theta_0,\theta_t,\theta_*,\sigma_s,\nu) = J_n(x_J;\alpha_J,\beta_J,\gamma_J;q),
\eeq
where $ \sigma_s^{(n)} \in \mathbb{C}$ is given in \eqref{limsigmas} and where $J_n$ are the big $q$-Jacobi polynomials defined in \eqref{Jn}.
\end{theorem}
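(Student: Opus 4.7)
The plan is to mirror the second strategy used in the proof of Theorem \ref{thhahn}: establish the limit \eqref{limHtoJn} at $n=0$ by a direct contour-and-residue analysis, and then propagate to all $n \geq 0$ by taking the limit $\sigma_s \to \sigma_s^{(n)}$ of the $\sigma_s$-difference equation \eqref{diffeqH1}.

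For the base case $n=0$, the point $\sigma_s^{(0)}=\theta_0+\theta_t+iQ/2$ is a simple zero of the factor $s_b(\sigma_s-\theta_0-\theta_t)$ in $P_{\mathcal{H}}$ (see \eqref{PH}), while simultaneously the downward pole of $s_b(x+\theta_0+\theta_t-\sigma_s)$ in \eqref{IH} at $x=\sigma_s-\theta_0-\theta_t-iQ/2$ collides with the upward pole of $1/s_b(x+iQ/2)$ at $x=0$, pinching the contour $\mathsf{H}$. Mirroring \eqref{Hdeform}, I would deform $\mathsf{H}$ into a contour $\mathsf{H}'$ passing below $x=0$ and split $\mathcal{H}$ into a residue term and a deformed-integral term. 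The simple zero of $P_{\mathcal{H}}$ kills the deformed integral in the limit, while the residue term carries a compensating simple pole (produced by a factor of $s_b(\sigma_s-\theta_0-\theta_t)^{-1}$ that emerges after applying \eqref{ressb} and the reflection $s_b(z)=s_b(-z)^{-1}$). A direct simplification of the surviving finite limit should yield $1 = J_0(x_J;\alpha_J,\beta_J,\gamma_J;q)$.

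For general $n$, I first establish that $P_n := \lim_{\sigma_s\to\sigma_s^{(n)}}\mathcal{H}$ exists. At $\sigma_s^{(n)}$, the top $n+1$ poles of the downward tower from $s_b(x+\theta_0+\theta_t-\sigma_s)$, located at $x=0,ib,\ldots,inb$, collide with the corresponding top $n+1$ poles of the upward tower from $1/s_b(x+iQ/2)$; irrationality of $b^2$ together with Assumption \ref{assumptionhahnjacobi} precludes further or higher-order collisions. Deforming $\mathsf{H}$ below all $n+1$ pinch points writes $\mathcal{H}$ as a sum of $n+1$ residues plus a deformed integral that is annihilated in the limit by the zero of $P_{\mathcal{H}}$, so $P_n$ is a finite sum of $n+1$ residues.

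To identify $P_n$ with $J_n$, I take the limit $\sigma_s\to\sigma_s^{(n)}$ of \eqref{diffeqH1}. Because $\sigma_s^{(n+1)}-\sigma_s^{(n)}=ib$, the shift operators $e^{\pm ib\partial_{\sigma_s}}$ become the discrete shifts $n\mapsto n\pm 1$. Rewriting the hyperbolic coefficients \eqref{HHplus}--\eqref{HH0} at $\sigma_s=\sigma_s^{(n)}$ under the substitution \eqref{paramjacobi}, I expect a routine simplification to show that $\lim_{\sigma_s\to\sigma_s^{(n)}} H_{\mathcal{H}}(b,\sigma_s)$ is the three-term recurrence operator in $n$ for $J_n(\,\cdot\,;\alpha_J,\beta_J,\gamma_J;q)$, while the eigenvalue $e^{-2\pi b \nu}$, after absorbing the $n$-independent factor $e^{2\pi b(\theta_t+\theta_*/2+iQ/2)}$ prescribed by \eqref{paramjacobi}, becomes the evaluation variable $x_J$. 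Since $P_0=J_0=1$ and both sequences obey the same three-term recurrence, induction yields $P_n=J_n(x_J;\alpha_J,\beta_J,\gamma_J;q)$. The main technical obstacle is precisely this last identification: one must check that the products of hyperbolic cosines in $H_{\mathcal{H}}^\pm$, evaluated at $\sigma_s=\sigma_s^{(n)}$, collapse onto the rational functions of $q^n$ appearing in the big $q$-Jacobi recurrence coefficients, with no residual $n$-dependent gauge that would obstruct the induction; the base case $n=0$ fixes the only freely available overall normalization.
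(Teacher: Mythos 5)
Your proposal is correct and follows essentially the same route as the paper's own proof: the $n=0$ case is handled by the identical contour deformation (residue at the pinch point compensating the simple zero of $P_{\mathcal{H}}$, deformed integral killed by that zero), existence of the limit for general $n$ is obtained by the same multi-residue deformation, and the identification with $J_n$ proceeds by showing $\lim_{\sigma_s\to\sigma_s^{(n)}}H_{\mathcal{H}}(b,\sigma_s)=e^{-2\pi b(\theta_t+\theta_*/2+iQ/2)}R_{J_n}$ so that $R_{J_n}P_n=x_JP_n$ and induction from $P_0=1$ applies. The "routine simplification" you flag as the main obstacle is exactly the step the paper also asserts without displaying, so there is no substantive divergence.
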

\begin{proof}
We first prove that \eqref{limHtoJn} holds for $n=0$. The function $s_b(\sigma_s-\theta_0-\theta_t)$ in \eqref{PH} has a simple zero located at $\sigma_s=\sigma_s^{(0)}=\theta_0+\theta_t+\tfrac{iQ}2$. On the other hand, in the limit $\sigma_s=\sigma_s^{(0)}$, the contour $\mathsf{H}$ is squeezed between the pole of $s_b(x+\theta_0+\theta_t-\sigma_s)$ in \eqref{IH} located at $x_0=-\tfrac{iQ}2+\sigma_s-\theta_0-\theta_t$ and the pole of $s_b(x+\tfrac{iQ}2)^{-1}$ at $x=0$. Therefore, before taking the limit $\sigma_s \to \sigma_s^{(0)}$, we deform the contour $\mathsf{H}$ into a contour $\mathsf{H}'$ which passes below $x_0$. We obtain
\beq \label{Hdeform2}
\mathcal{H}(b,\theta_0,\theta_t,\theta_*,\sigma_s,\nu) = -2i\pi \; P_\mathcal{H}\lb \sigma_s,\nu\rb \underset{x=x_0}{\text{Res}}\lb I_\mathcal{H}(x,\sigma_s,\nu)\rb + P_\mathcal{H}\lb \sigma_s,\nu\rb \displaystyle \int_{\mathsf{H}'} dx \; I_\mathcal{H}(x,\sigma_s,\nu).
\eeq
A straightforward computation using \eqref{ressb} shows that
\begin{align*}
-2i\pi  \underset{x=x_0}{\text{Res}} I_\mathcal{H}(x,\sigma_s,\nu) = &\; e^{i \pi  \left(-\theta_0+\frac{\theta_*}{2}+\nu -\frac{i Q}{2}\right) \left(-\theta_0-\theta_t-\frac{i Q}{2}+\sigma_s\right)} 
 \frac{s_b\left(2 \sigma_s-\frac{i Q}{2}\right) s_b\left(\sigma_s-\theta_0+\frac{\theta_*}{2}-\nu -\frac{i Q}{2}\right)}{s_b(\theta_*+\sigma_s) s_b(\sigma_s-\theta_0-\theta_t) s_b(\sigma_s-\theta_0+\theta_t)}.
\end{align*}
The right-hand side has a simple pole at $\sigma_s=\sigma_s^{(0)}$ due to the factor $s_b(\sigma_s-\theta_0-\theta_t)^{-1}$. Moreover, the second term in \eqref{Hdeform2} vanishes at $\sigma_s=\sigma_s^{(0)}$ thanks to the zero of $P_\mathcal{H}$. Therefore,
\beq
\lim\limits_{\sigma_s\to\sigma_s^{(0)}} \mathcal{H}(b,\theta_0,\theta_t,\theta_*,\sigma_s,\nu) = -2i\pi \lim\limits_{\sigma_s\to\sigma_s^{(0)}} \Big(P_\mathcal{H}\lb \sigma_s,\nu\rb \underset{x=x_0}{\text{Res}}\lb I_\mathcal{H}(x,\sigma_s,\nu)\rb\Big).
\eeq
A straightforward computation using $s_b(x)=s_b(-x)^{-1}$ shows that the right-hand side equals $1$; this proves \eqref{limHtoJn} for $n=0$.

We now use the difference equation \eqref{diffeqH1} to show that \eqref{limHtoJn} holds also for $n \geq 1$. Let
\beq
P_n := \lim\limits_{\sigma_s\to\sigma_s^{(n)}} \mathcal{H}(b,\theta_0,\theta_t,\theta_*,\sigma_s,\nu), \qquad n = 0, 1, \dots.
\eeq
The same kind of contour deformation used for $n=0$ shows that the limit in \eqref{limHtoJn} exists for all $n \geq 0$.  
Moreover, under the parameter correspondence \eqref{paramjacobi}, we have
\beq
 \lim\limits_{\sigma_s\to\sigma_s^{(n)}} H_\mathcal{H}(b,\sigma_s) = e^{-2\pi b(\theta_t+\frac{\theta_*}2+\frac{iQ}2)} R_{J_n},
\eeq
where the operators $H_\mathcal{H}$ and $R_{J_n}$ are defined in \eqref{HH} and \eqref{recurrenceoperatorJn}, respectively. Hence taking the limit $\sigma_s\to\sigma_s^{(n)}$ of \eqref{diffeqH1} and recalling that $x_J =e^{2\pi b (\theta_t+\frac{\theta_*}2+\frac{iQ}2)}e^{-2\pi b \nu}$, we see that $P_n$ satisfies $R_{J_n} P_n = x_J P_n$. Thus the $P_n$ satisfy the same recurrence relation \eqref{recurrenceJn} as the big $q$-Jacobi polynomials $J_n$, and the limit \eqref{limHtoJn} for $n\geq 1$ follows by induction.
\end{proof}

\begin{remark}
It is also possible to define a function $\mathcal{H}'$ by sending $\Lambda \to +\infty$ in \eqref{fromRtoH} instead of $\Lambda \to -\infty$:
\beq\label{fromAtoHprime}
\mathcal{H}'(b,\theta_0,\theta_t,\theta_*,\sigma_s,\nu) = \lim\limits_{\Lambda\to +\infty} \mathcal{R}\left[\substack{\frac{\Lambda+\theta_*}2\;\; \theta_t\vspace{0.1cm}\\ \frac{\Lambda-\theta_*}2\;\; \theta_0};\substack{\sigma_s \vspace{0.15cm} \\ \frac{\Lambda}2+\nu}\right].
\eeq
It can be shown that the limit in \eqref{fromAtoHprime} exists for $(\sigma_s,\nu) \in D_{\mathcal{H}}$.  Moreover, due to the asymptotic formula \eqref{sbasymptotics} for $s_b$, the only difference between $\mathcal{H}'$ and $\mathcal{H}$ resides in the sign of the phases in the representation \eqref{defH}.  In fact, the following two limits hold:
\begin{align}
& \lim\limits_{\nu \to \nu_n} \mathcal{H}'(b,\theta_0,\theta_t,\theta_*,\sigma_s,\nu) = 
H_n(e^{2\pi b \sigma_s};\alpha_H^{-1},\beta_{H}^{-1},\gamma_{H}^{-1},q^{-1}),  \\
& \lim\limits_{\sigma_s \to \sigma_s^{(n)}} \mathcal{H}'(b,\theta_0,\theta_t,\theta_*,\sigma_s,\nu) = J_n(x_J^{-1};\alpha_J^{-1},\beta_J^{-1},\gamma_J^{-1};q^{-1}).
\end{align}
We expect that a similar phenomenon is present for all the elements of the non-polynomial scheme. For simplicity, we will only study one of the two representatives for each element.
\end{remark}

\section{The function $\mathcal{S}$}
In this section, we introduce the function $\mathcal{S}(b,\theta_0,\theta_t,\sigma_s,\rho)$ which is one of the two elements at the third level of the non-polynomial scheme, see Figure \ref{nonpolynomialscheme}. The function $\mathcal{S}$ is defined as a confluent limit of the function $\mathcal{H}$. We show that $\mathcal{S}$ is a joint eigenfunction of four difference operators and that it reduces to the Al-Salam Chihara and the little $q$-Jacobi polynomials, which lie at the third level of the $q$-Askey scheme, when $\rho$ and $\sigma_s$ are suitably discretized, respectively. 

\subsection{Definition and integral representation}
Let $\rho$ be a new parameter defined in terms of $\theta_*$ and $\nu$ by
\beq
\nu = \frac{\theta_*}2+\rho.
\eeq
Define the open set $D_\mathcal{S} \subset \mathbb{C}^2$ by
\beq \label{sigmasrho}
D_\mathcal{S} := (\mathbb{C} \backslash \Delta_{\mathcal S,\sigma_s}) \times (\{\im \rho > -Q/2\} \backslash \Delta_\rho),
\eeq 
where the discrete subsets $\Delta_{\mathcal S,\sigma_s}$ and $\Delta_\rho$ are given by
\begin{align*} 
& \Delta_{\mathcal S,\sigma_s} := \{\pm \sigma_s \, |\, \sigma_s \in \Delta_{\mathcal{S}, \sigma_s}'\},
	\\
& \Delta_\rho := \{\pm \theta_t+\tfrac{iQ}2+ibm+ilb^{-1} \}_{m,l=0}^\infty \cup \{\theta_t-\tfrac{iQ}2-ibm-ilb^{-1} \}_{m,l=0}^\infty,
\end{align*}
with
\begin{align*}
\Delta_{\mathcal S,\sigma_s}' := &\; \{\theta_0 \pm \theta_t + \tfrac{iQ}{2} + i m b +il b^{-1}\}_{m,l=0}^\infty \cup \{\theta_0+ \theta_t -\tfrac{i Q}{2} -i m b-il b^{-1}\}_{m,l=0}^\infty.
\end{align*}

\begin{definition} \label{DefS}
Let $\mathcal{H}$ be defined by \eqref{fromRtoH}. The function $\mathcal{S}$ is defined for $(\sigma_s,\rho) \in D_\mathcal{S}$ by 
\beq \label{fromHtoS}
\mathcal{S}(b,\theta_0,\theta_t,\sigma_s,\rho) = \lim\limits_{\theta_* \to -\infty} \mathcal{H}\lb b,\theta_0,\theta_t,\theta_*,\sigma_s,\tfrac{\theta_*}{2}+\rho\rb
\eeq 
and is extended meromorphically to $(\sigma_s,\rho) \in \mathbb{C}^2$. 
\end{definition}

The next theorem shows that $\mathcal{S}$ is a well-defined meromorphic function of $(\sigma_s,\rho) \in \mathbb{C}^2$.

\begin{theorem} \label{thmforS}
Suppose that Assumption \ref{assumption} is satisfied. The limit in \eqref{fromHtoS} exists uniformly for $(\sigma_s,\rho)$ in compact subsets of $D_\mathcal{S}$.
Moreover, the function $\mathcal{S}$ is an analytic function of $(\sigma_s,\rho) \in (\mathbb{C} \backslash \Delta_{\mathcal S,\sigma_s}) \times (\mathbb{C}\backslash \Delta_\rho)$ and admits the following integral representation:
\beq \label{defS}
\mathcal{S}(b,\theta_0,\theta_t,\sigma_s,\rho) = P_\mathcal{S}(\sigma_s,\rho) \int_{\mathsf{S}} dx~I_\mathcal{S}(x,\sigma_s,\rho) \qquad \text{for $(\sigma_s,\rho) \in (\mathbb{C} \backslash \Delta_{\mathcal S,\sigma_s}) \times (\mathbb{C}\backslash \Delta_\rho)$}, \eeq
where the dependence of $P_\mathcal{S}$ and $I_\mathcal{S}$ on $b,\theta_0,\theta_t$ is omitted for simplicity, 
\begin{align}
\label{PS} & P_\mathcal{S}(\sigma_s,\rho) = s_b\left(2 \theta_t+\tfrac{i Q}{2}\right)  s_b\lb \rho- \theta_t \rb \prod_{\epsilon=\pm 1} s_b \lb \epsilon \sigma_s -\theta_0-\theta_t\rb, 
	\\
\label{IS} & I_\mathcal{S}(x,\sigma_s,\rho) = e^{-\frac{i\pi x^2}{2} -i \pi  x \left(iQ+2\theta_0+\theta_t-\rho\right)} \frac{s_b(x+\theta_t-\rho) }{s_b\left(x+\frac{i Q}{2}\right) s_b\left(x+\frac{i Q}{2}+2 \theta_t\right)} \prod_{\epsilon=\pm 1} s_b(x+\theta_0+\theta_t+\epsilon \sigma_s),
\end{align}
and the contour $\mathsf{S}$ is any curve from $-\infty$ to $+\infty$ which separates the three decreasing from the two increasing sequences of poles, with the requirement that its right tail satisfies
\beq \label{conditionimxforS}
\im x -\im \rho < -\delta \qquad \text{for all $x \in \mathsf{S}$ with $\re x$ sufficiently large},
\eeq
for some $\delta > 0$.
In particular, $\mathcal{S}$ is a meromorphic function of $(\sigma_s,\rho) \in \mathbb{C}^2$.
If $(\sigma_s,\rho) \in \mathbb{R}^2$, then the contour $\mathsf S$ can be any curve from $-\infty$ to $+\infty$ lying within the strip $\im x \in (-Q/2,-\delta)$. 
\end{theorem}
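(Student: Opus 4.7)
My plan is to mirror the structure of the proof of Theorem \ref{thmforH}. I first substitute $\nu = \tfrac{\theta_*}{2}+\rho$ into the integral representation \eqref{defH} for $\mathcal{H}$ and factor out the $\theta_*$-dependence as a multiplicative correction $Y(x,\theta_*)$. A direct computation using $s_b(z)=1/s_b(-z)$ yields the identity
\[
P_\mathcal{H}\bigl(\sigma_s,\tfrac{\theta_*}{2}+\rho\bigr)\,I_\mathcal{H}\bigl(x,\sigma_s,\tfrac{\theta_*}{2}+\rho\bigr)
= P_\mathcal{S}(\sigma_s,\rho)\,Y(x,\theta_*)\,I_\mathcal{S}(x,\sigma_s,\rho),
\]
with
\[
Y(x,\theta_*) = \frac{s_b(a)\,e^{-i\pi a^{2}/2}}{s_b(w)\,e^{-i\pi w^{2}/2}}, \qquad a := \theta_0+\theta_*+\theta_t+\tfrac{iQ}{2}, \qquad w := x+a.
\]
The Gaussian phase $e^{-i\pi x^{2}/2}$ in $I_\mathcal{S}$ is generated precisely by the algebraic identity $e^{i\pi w^{2}/2} = e^{i\pi x^{2}/2}\,e^{i\pi xa}\,e^{i\pi a^{2}/2}$; the factor $e^{i\pi xa}$ is absorbed into the phase of $I_\mathcal{S}$, while $e^{i\pi a^{2}/2}$ combines with $P_{\mathcal H}/P_{\mathcal S} = s_b(a)$ to build the numerator of $Y$.

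With this factorization in hand, the pointwise limit $Y(x,\theta_*) \to 1$ follows from the asymptotic expansion \eqref{sbasymptotics}: both $\re a \to -\infty$ and $\re w = \re x + \re a \to -\infty$ for $x$ in a bounded set, so the leading Gaussian contributions in $\ln s_b(a)$ and $\ln s_b(w)$ cancel against $-i\pi a^{2}/2$ and $-i\pi w^{2}/2$ respectively, leaving $\ln Y(x,\theta_*) = O(e^{-c|\re a|}) - O(e^{-c|\re w|}) \to 0$ uniformly for $x$ in bounded sets and $(\sigma_s,\rho)$ in a compact subset of $D_\mathcal{S}$.

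Next I analyze the pole structure of $I_\mathcal S$: three decreasing sequences starting at $x = -\tfrac{iQ}{2}\pm \sigma_s-\theta_0-\theta_t$ and $x = -\tfrac{iQ}{2}+\rho-\theta_t$, and two increasing sequences starting at $x = 0$ and $x = -2\theta_t$. The sets $\Delta_{\mathcal S,\sigma_s}$ and $\Delta_\rho$ are tailored so that no collisions occur; a separating contour $\mathsf S$ therefore exists, and since the $\theta_*$-dependent poles of $Y$ (from $1/s_b(w)$) have real parts tending to $+\infty$, $\mathsf S$ can be chosen independently of $\theta_*$ for $\theta_*$ sufficiently negative. Applying \eqref{sbasymptotics} to every $s_b$-factor and using $|e^{-i\pi x^{2}/2}|=e^{\pi\re x\,\im x}$, a bookkeeping of real-part exponents shows $|I_\mathcal{S}(x,\sigma_s,\rho)| = O(e^{2\pi(\im x-\im\rho)\re x})$ as $\re x \to +\infty$ and $O(e^{-2\pi Q|\re x|})$ as $\re x \to -\infty$; this is precisely what forces the tail condition \eqref{conditionimxforS}.

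The main obstacle is obtaining a uniform majorant for $|Y(x,\theta_*)\,I_\mathcal S(x,\sigma_s,\rho)|$ along $\mathsf S$ valid for all $\theta_*$ sufficiently negative, in order to apply Lebesgue's dominated convergence theorem. I anticipate a case split analogous to \eqref{inequalityXC5}: for $\re x \leq |\re a|$ the factor $Y$ is controlled by the asymptotics recalled above, while for $\re x \geq |\re a|$ the function $s_b(w)$ switches asymptotic regime (since $\re w$ changes sign) and introduces an additional exponential that must be absorbed by the strong right-tail decay of $I_\mathcal S$. Once this estimate is in place, dominated convergence yields \eqref{defS} together with uniform convergence on compact subsets of $D_\mathcal{S}$, so that $\mathcal{S}$ is analytic on $D_\mathcal{S}$. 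The extension to $(\sigma_s,\rho) \in (\mathbb{C}\setminus\Delta_{\mathcal S,\sigma_s})\times(\mathbb{C}\setminus\Delta_\rho)$ is then read off directly from the right-hand side of \eqref{defS}: the only requirements on $\mathsf S$ are pole-separation and \eqref{conditionimxforS}, both of which can be met for every $\rho \in \mathbb{C}\setminus\Delta_\rho$ by pushing the right tail sufficiently far down, and the discrete nature of $\Delta_{\mathcal S,\sigma_s}$ and $\Delta_\rho$ then yields meromorphy on $\mathbb C^2$.
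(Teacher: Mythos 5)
Your proposal follows essentially the same route as the paper's proof: your factor $Y(x,\theta_*)$ is algebraically identical to the paper's $Z(x,\theta_*)$ (after expanding $e^{i\pi(w^2-a^2)/2}=e^{i\pi x^2/2}e^{i\pi xa}$), and the pole analysis, the tail estimates on $I_\mathcal{S}$, the dominated-convergence argument, and the continuation via the tail condition \eqref{conditionimxforS} all match. The one step you only sketch — the uniform majorant for $|Y\,I_\mathcal{S}|$ via a case split at $\re x = -\theta_*$ — is precisely the estimate the paper carries out in \eqref{inequalityZS}--\eqref{ineqPHIH}, and your anticipated strategy is the correct one.
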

\begin{proof}
The proof is similar to the proof of Theorem \ref{thmforH}, but there are some differences because the exponent in \eqref{IS} is a quadratic (rather than a linear) polynomial in $x$.
Let $(b,\theta_0,\theta_t) \in (0,\infty) \times \mathbb{R}^2$. It can be verified that
\beq \label{PHIH}
P_\mathcal{H}\lb \sigma_s,\tfrac{\theta_*}{2}+\rho\rb I_\mathcal{H}\lb x,\sigma_s,\tfrac{\theta_*}{2}+\rho\rb = P_\mathcal{S}(\sigma_s,\rho) Z(x,\theta_*)  I_\mathcal{S}(x,\sigma_s,\rho),
\eeq
where
\beq \label{ZS}
Z(x,\theta_*) = e^{\frac{i \pi  x^2}{2}} e^{i \pi  x \left(\theta_0+\theta_*+\theta_t+\frac{i Q}{2}\right)} \frac{s_b\left(\theta_0+\theta_*+\theta_t+\frac{i Q}{2}\right)}{s_b\left(x+\theta_0+\theta_*+\theta_t+\frac{i Q}{2}\right)}.
\eeq
Due to the properties \eqref{polesb} of the function $s_b$, the function $I_\mathcal{S}(\cdot,\sigma_s,\rho)$ possesses two increasing sequences of poles starting at $x=0$ and $x=-2\theta_t$, as well as three decreasing sequences of poles starting at $x=-\tfrac{iQ}2+\rho-\theta_t$ and $x=-\tfrac{iQ}2\pm \sigma_s-\theta_0-\theta_t$. The discrete sets $\Delta_{\mathcal S,\sigma_s}$ and $\Delta_\rho$ contain all the values of $\sigma_s$ and $\rho$, respectively, for which poles in any of the two increasing sequences collide with poles in any of the three decreasing sequences. The sets $\Delta_{\mathcal S,\sigma_s}$ and $\Delta_\rho$ also contain all the values of $\sigma_s$ and $\rho$ at which the prefactor $P_\mathcal{S}(\sigma_s,\rho)$ has poles. 
Furthermore, $Z(x,\theta_*)$ possesses one increasing sequence of poles starting at $x=-\theta_0-\theta_*-\theta_t$ which lies in the half-plane $\im x \geq 0$. The real parts of the poles in this sequence tend to $+\infty$ as $\theta_*\to -\infty$.  

Let $K_{\sigma_s}$ and $K_\rho$ be compact subsets of $\mathbb C \backslash \Delta_{\mathcal S,\sigma_s}$ and $\{\im \rho > -Q/2\} \backslash \Delta_\rho$, respectively. Suppose $(\sigma_s,\rho) \in K_{\sigma_s} \times K_\rho$. Then, the above discussion shows that it is possible to choose a contour $\mathsf S = \mathsf S(\sigma_s,\rho)$ from $-\infty$ to $+\infty$ which separates the two upward from the three downward sequences of poles of $Z(\cdot,\theta_*) I_\mathcal{S}(\cdot,\sigma_s,\rho)$.
Let us choose $\mathsf S$ so that its right tail approaches the horizontal line $\im x = -Q/2-\delta$ as $\re x \to +\infty$ for some $\delta>0$. Then there is an $N>0$ such that $\mathsf{S}$ is independent of $\theta_*$ for $\theta_*<-N$, and \eqref{defH} and \eqref{PHIH} imply that, for all $(\sigma_s,\rho) \in K_{\sigma_s} \times K_\rho$ and all $\theta_* < -N$,
\beq \label{HequalsPSZIS}
\mathcal{H}\lb b,\theta_0,\theta_t,\theta_*,\sigma_s,\tfrac{\theta_*}{2}+\rho\rb = P_\mathcal{S}(\sigma_s,\rho) \displaystyle \int_{\mathsf{S}} dx \; Z(x,\theta_*) I_\mathcal{S}(x,\sigma_s,\rho).
\eeq

Utilizing the asymptotic formula \eqref{sbasymptotics}, it can be verified that the following limit
\beq \label{limZ}
\lim\limits_{\theta_* \to -\infty} Z(x,\theta_*) = 1,
\eeq
holds uniformly for $(\sigma_s,\rho) \in K_{\sigma_s} \times K_\rho$ and for $x$ in bounded subsets of $\mathsf{S}$.
Moreover, using \eqref{sbasymptotics} with $\epsilon=1/2$ we find that $I_\mathcal{S}$ obeys the estimates
\begin{align} \label{ISestimate}
 I_\mathcal{S}(x,\sigma_s,\rho) = \begin{cases}
O\lb e^{2\pi \lvert \re x \lvert(\im x-\im \rho)}\rb, & \re x \to +\infty, \\
O\lb e^{-2\pi Q \lvert\re x\lvert}\rb, & \re x \to -\infty,
\end{cases}
\end{align}
uniformly for $(\sigma_s,\rho)$ in compact subsets of $K_{\sigma_s} \times K_\rho$ and $\im x$ in compact subsets of $\mathbb{R}$.  Since the contour $\mathsf{S}$ stays a bounded distance away from the poles of the integrand $I_\mathcal{S}$, we infer that there exists a constant $C_1>0$ such that 
\beq \label{inequalityIS}
\lvert  I_\mathcal{S}(x,\sigma_s,\rho) \rvert \leq C_1 \times \begin{cases} \; e^{2\pi \lvert \re x\lvert(\im x-\im \rho)}, & \re x \geq 0, \\ \; e^{-2\pi Q \lvert\re x\lvert}, & \re x \leq 0, \end{cases}
\eeq
uniformly for $(\sigma_s,\rho)$ in compact subsets of $K_{\sigma_s} \times K_\rho$.  In particular, the integrand $I_\mathcal{S}$ has exponential decay along the left and right tails of the contour $\mathsf S$.

Suppose we can show that there exist constants $c>0$ and $C>0$ such that
\beq \label{boundZIS}
\lvert Z(x,\theta_*) I_\mathcal{S}(x,\sigma_s,\rho) \rvert \leq C e^{-c \lvert \re x \lvert},
\eeq
uniformly for all $\theta_*<-N$, $x\in \mathsf{S}$ and $(\sigma_s,\rho)\in K_{\sigma_s} \times K_\rho$. Then it follows from \eqref{HequalsPSZIS}, \eqref{limZ} and Lebesgue's dominated convergence theorem that the limit in \eqref{fromHtoS} exists uniformly for $(\sigma_s,\rho)\in K_{\sigma_s} \times K_\rho$ and is given by \eqref{defS}. This proves that the limit in \eqref{fromHtoS} exists uniformly for $(\sigma_s,\rho)$ in compact subsets of $D_\mathcal{S}$ and proves \eqref{defS} for $(\sigma_s,\rho) \in D_\mathcal{S}$. 
The analyticity of $\mathcal S$ as a function of $(\sigma_s,\rho) \in D_\mathcal{S}$ follows from the analyticity of $\mathcal H$ together with the uniform convergence on compact subsets.
Whenever the condition \eqref{conditionimxforS} holds, the estimate \eqref{inequalityIS} ensures that the integral representation \eqref{defS}  provides a meromorphic continuation of $\mathcal{S}$ to $(\sigma_s,\rho) \in \mathbb{C}^2$ which is analytic for $(\sigma_s,\rho) \in (\mathbb{C} \backslash \Delta_{\mathcal S,\sigma_s}) \times (\mathbb{C}\backslash \Delta_\rho)$.

To complete the proof of the theorem, it only remains to prove \eqref{boundZIS}. 
The asymptotic formula \eqref{sbasymptotics} for $s_b$ with $\epsilon=1/2$ implies that there exist constants $C_2,C_3,C_4>0$ such that the inequalitites
\begin{align}
& \lvert s_b\left(\theta_0+\theta_*+\theta_t+\tfrac{i Q}{2}\right) \lvert \leq C_2 \; e^{\frac{\pi Q \lvert \theta_* \lvert}2}, \qquad \theta_*<-N,  \\
& \lvert s_b\left(x+\theta_0+\theta_*+\theta_t+\tfrac{i Q}{2}\right)^{-1} \lvert \leq C_3 \; e^{-\frac{\pi Q \lvert \theta_*+\re x \lvert}2} e^{-\pi (\im x) \lvert \theta_*+\re x \lvert },  \qquad x \in \mathsf{S}, \; \theta_* \in \mathbb{R}, \\
& \lvert e^{\frac{i \pi  x^2}{2}} e^{i \pi  x \left(\theta_0+\theta_*+\theta_t+\frac{i Q}{2}\right)} \lvert \leq C_4 \; e^{-\pi(\im x)(\theta_*+\re x)} e^{-\frac{\pi Q \re x}2}, \qquad x \in \mathsf{S}, 
\end{align} 
hold uniformly for $(\sigma_s,\rho)\in K_{\sigma_s} \times K_\rho$. Therefore, in view of \eqref{ZS}, there exists a constant $C_5>0$ such that
\beq \label{inequalityforS}
\lvert Z(x,\theta_*) \rvert \leq C_5 \; e^{-\pi (\im x+\frac{Q}2)(\theta_*+\re x+\lvert \theta_*+\re x \lvert)}, \quad x \in \mathsf S, \; \theta_*<-N,
\eeq
uniformly for $(\sigma_s,\rho)\in K_{\sigma_s} \times K_\rho$. The inequality \eqref{inequalityforS} can be rewritten as follows:
\beq \label{inequalityZS}
\lvert Z(x,\theta_*) \rvert \leq \begin{cases} C_5 \; e^{-2\pi(\im x+\frac{Q}2)(\theta_*+\re x)}, & \theta_* + \re x \geq 0, \\ C_5, &  \theta_* + \re x \leq 0, \end{cases} \quad x \in \mathsf S,  ~ \theta_* < -N,
\eeq
uniformly for $(\sigma_s,\rho)\in K_{\sigma_s} \times K_\rho$.  The inequalities \eqref{inequalityIS} and \eqref{inequalityZS} yield the existence of a constant $C_6>0$ independent of $x \in \mathsf S$,  $\theta_* < -N$ and $(\sigma_s,\rho)\in K_{\sigma_s} \times K_\rho$ such that 
\beq \label{ineqPHIH}
\lvert Z(x,\theta_*) I_\mathcal{S}(x,\sigma_s,\rho) \lvert \leq \begin{cases}
C_6 \; e^{-2\pi (\re x) (\frac{Q}2+\im \rho)} e^{-2\pi \theta_*(\frac{Q}2+\im x)}, & \re x \geq -\theta_*, \\
C_6 \; e^{2\pi (\re x)(\im x-\im \rho)}, & 0 \leq \re x \leq -\theta_*,  \\
C_6 \; e^{-2\pi Q |\re x|}, & \re x \leq 0,
\end{cases}
\eeq
uniformly for $(\sigma_s,\rho)\in K_{\sigma_s} \times K_\rho$.  Since $\im \rho > -Q/2 + \delta$ for $\rho \in K_\rho$ and $\im x < -Q/2 -\delta$ on the right tail of the contour for some $\delta > 0$, this proves \eqref{boundZIS} and completes the proof. 
\end{proof}

Furthermore, thanks to the property \eqref{sbinverse} of $s_b$ the function $\mathcal{S}$ has the symmetry
\beq \label{Sbinverse}
\mathcal{S}(b,\theta_0,\theta_t,\sigma_s,\rho)=\mathcal{S}(b^{-1},\theta_0,\theta_t,\sigma_s,\rho).
\eeq

\subsection{Difference equations} 
By taking the confluent limit \eqref{fromHtoS} of the difference equations \eqref{diffeqH} and \eqref{diffeqtildeH} satisfied by $\mathcal{H}$, it follows that the function $\mathcal{S}$ is a joint eigenfunction of four different difference operators, two acting on $\sigma_s$ and the remaining two acting on $\rho$. The four difference equations will hold as equalities between meromorphic functions on $\mathbb C^2$. Since the derivations are similar to those presented in Section \ref{Hdifferencesubsec}, we state these results without proofs.

\subsubsection{First pair of difference equations}

Define the difference operator $H_{\mathcal{S}}(b,\sigma_s)$ by
\beq\label{HS}
H_{\mathcal{S}}(b,\sigma_s) = H^+_{\mathcal{S}}(b,\sigma_s)e^{ib\partial_{\sigma_s}}+H^+_{\mathcal{S}}(b,-\sigma_s)e^{-ib\partial_{\sigma_s}}+H^0_{\mathcal{S}}(b,\sigma_s),
\eeq
where
\begin{align}
\label{HSplussigma} & H^+_{\mathcal{S}}(b,\sigma_s) = e^{-\pi b (2 \sigma_s+i Q)} \frac{\prod_{\epsilon=\pm1}\cosh \left(\pi  b \left(\frac{ib}{2}+\epsilon\theta_0+\theta_t+\sigma_s\right)\right) }{\text{sinh}(\pi  b (2 \sigma_s+i b)) \text{sinh}(2 \pi  b \sigma_s)}, \\
\label{HS0sigma} & H^0_{\mathcal{S}}(b,\sigma_s)=e^{-\pi  b (2 \theta_t+i Q)}-H^+_{\mathcal{S}}(b,\sigma_s)-H^+_{\mathcal{S}}(b,-\sigma_s).
\end{align}
\begin{proposition}
For $(\sigma_s,\rho)\in \mathbb C^2$, the function $\mathcal{S}$ satisfies the pair of difference equations
\begin{subequations} \label{diffeqS} \begin{align}
\label{diffeqS1} & H_{\mathcal{S}}(b,\sigma_s) \; \mathcal{S}(b,\theta_0,\theta_t,\sigma_s,\rho) = e^{-2\pi b \rho} \; \mathcal{S}(b,\theta_0,\theta_t,\sigma_s,\rho), \\
\label{diffeqS2} & H_{\mathcal{S}}(b^{-1},\sigma_s) \; \mathcal{S}(b,\theta_0,\theta_t,\sigma_s,\rho) =  e^{-2\pi b^{-1} \rho} \; \mathcal{S}(b,\theta_0,\theta_t,\sigma_s,\rho).
\end{align}\end{subequations}
\end{proposition}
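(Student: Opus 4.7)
The plan is to derive both difference equations \eqref{diffeqS1} and \eqref{diffeqS2} by taking the confluent limit $\theta_* \to -\infty$ of the pair \eqref{diffeqH} satisfied by $\mathcal{H}$, following exactly the template of Propositions \ref{diffeqHprop} and \ref{diffeqtildeHprop}. First, I would evaluate \eqref{diffeqH1} at $\nu = \tfrac{\theta_*}{2} + \rho$, multiply both sides by $e^{\pi b \theta_*}$, and pass to the limit $\theta_* \to -\infty$ with $(\sigma_s, \rho)$ in a compact subset of $D_\mathcal{S}$. On the right-hand side the eigenvalue becomes $e^{\pi b \theta_*} e^{-2\pi b(\theta_*/2 + \rho)} = e^{-2\pi b \rho}$, which is precisely the eigenvalue appearing in \eqref{diffeqS1}. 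By Theorem \ref{thmforS}, the function $\mathcal{H}(b,\theta_0,\theta_t,\theta_*,\sigma_s,\tfrac{\theta_*}{2}+\rho)$ and its $\sigma_s$-shifted companions converge to the corresponding values of $\mathcal{S}$ uniformly on compact subsets, so the only remaining task is to identify the limit of the operator coefficients.

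For the off-diagonal coefficient \eqref{HHplus}, the asymptotic $\cosh(\zeta) \sim \tfrac{1}{2} e^{-\zeta}$ as $\re \zeta \to -\infty$ applied to the factor $\cosh(\pi b(\tfrac{ib}{2} + \theta_* + \sigma_s))$ gives
\beq
e^{\pi b \theta_*} H^+_\mathcal{H}(b,\sigma_s) \xrightarrow{\theta_* \to -\infty} -e^{-\pi b (2\sigma_s + ib)} \frac{\prod_{\epsilon=\pm 1} \cosh\!\lb \pi b(\tfrac{ib}{2} + \theta_t + \sigma_s + \epsilon \theta_0) \rb}{\sinh(\pi b(2\sigma_s + ib)) \sinh(2\pi b \sigma_s)}.
\eeq
Using the identity $-e^{-i\pi b \cdot b^{-1}} = 1$, which rewrites the prefactor as $e^{-\pi b (2\sigma_s + iQ)}$ with $Q = b + b^{-1}$, this limit is exactly $H^+_\mathcal{S}(b,\sigma_s)$ as given in \eqref{HSplussigma}, and similarly with $\sigma_s \to -\sigma_s$. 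For the diagonal coefficient \eqref{HH0}, the explicit constant yields $e^{\pi b \theta_*} \cdot e^{-\pi b(iQ + \theta_* + 2\theta_t)} = e^{-\pi b(iQ + 2\theta_t)}$, matching the constant in \eqref{HS0sigma}, so
\beq
\lim_{\theta_* \to -\infty} e^{\pi b \theta_*} H_\mathcal{H}(b,\sigma_s) = H_\mathcal{S}(b,\sigma_s),
\eeq
as a pointwise identity of difference operators acting on functions smooth in $\sigma_s$.

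Combining these ingredients yields \eqref{diffeqS1} as an equality of analytic functions on $D_\mathcal{S}$, and hence on all of $\mathbb{C}^2$ by the meromorphic extension of $\mathcal{S}$ guaranteed by Theorem \ref{thmforS}. The second equation \eqref{diffeqS2} follows at once from \eqref{diffeqS1} by replacing $b$ with $b^{-1}$ and invoking the symmetry \eqref{Sbinverse}, exactly as \eqref{diffeqH2} was deduced from \eqref{diffeqH1}. The only step requiring genuine care is the cancellation of the exponential factor $e^{\pi b \theta_*}$ and the accompanying sign that arises from $e^{-i \pi b Q} = -e^{-i\pi b^2}$; this is routine bookkeeping and no new analytic difficulty appears beyond what was already controlled in the proof of Theorem \ref{thmforS}.
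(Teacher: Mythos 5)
Your proposal is correct and follows exactly the route the paper intends: the paper omits this proof, stating only that it is obtained by taking the confluent limit $\theta_*\to-\infty$ of \eqref{diffeqH} along the lines of Propositions \ref{diffeqHprop} and \ref{diffeqtildeHprop}, which is precisely what you carry out (and your bookkeeping of the sign via $e^{-i\pi bQ}=-e^{-i\pi b^2}$ and of the cancellation of $e^{\pi b\theta_*}$ in the eigenvalue is accurate).
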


\subsubsection{Second pair of difference equations}
Define the difference operator $\tilde{H}_{\mathcal{S}}(b,\rho)$ such that
\beq\label{HStilde}
\tilde{H}_{\mathcal{S}}(b,\rho) = \tilde{H}^+_{\mathcal{S}}(b,\rho) e^{ib \partial_\rho} + \tilde{H}^-_{\mathcal{S}}(b,\rho) e^{-ib \partial_\rho} + \tilde{H}^0_{\mathcal{S}}(b,\rho),
\eeq
where
\begin{align}\label{HSpmrho}
& \tilde{H}^\pm_{\mathcal{S}}(b,\rho) = -2 e^{\pi b \rho} e^{\mp \pi  b \left(\tfrac{i b}{2}+2 \theta_0+\theta_t \right)} \cosh \left(\pi  b \left(\tfrac{i b}{2}+\theta_t \pm\rho \right)\right), \\ 
\label{HS0rho} & \tilde{H}^0_{\mathcal{S}}(b,\rho) = - 2 \cosh \left(2 \pi  b \left(\tfrac{i b}{2}+\theta_0+\theta_t\right)\right)-\tilde{H}^+_{\mathcal{S}}(b,\rho)-\tilde{H}^-_{\mathcal{S}}(b,\rho) .
\end{align}

\begin{proposition}
For $(\sigma_s,\rho)\in \mathbb C^2$, the function $\mathcal{S}$ satisfies the following pair of difference equations:
\begin{subequations}\label{diffeqtildeS} \begin{align}
\label{diffeqtilde1S}& \tilde{H}_{\mathcal{S}}(b,\rho)\; \mathcal{S}(b,\theta_0,\theta_t,\sigma_s,\rho) = 2\operatorname{cosh}{(2\pi b \sigma_s)}\; \mathcal{S}(b,\theta_0,\theta_t,\sigma_s,\rho), \\
\label{diffeqtilde2S} & \tilde{H}_{\mathcal{S}}(b^{-1},\rho)\;\mathcal{S}(b,\theta_0,\theta_t,\sigma_s,\rho) = 2\operatorname{cosh}{(2\pi b^{-1} \sigma_s)}\; \mathcal{S}(b,\theta_0,\theta_t,\sigma_s,\rho).
\end{align}\end{subequations}
\end{proposition}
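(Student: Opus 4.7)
The plan is to derive the pair \eqref{diffeqtildeS} by taking the confluent limit $\theta_*\to -\infty$ of the second pair of difference equations \eqref{diffeqtildeH} satisfied by $\mathcal{H}$, exactly in parallel with how Proposition \ref{diffeqtildeHprop} was derived from the difference equations for $\mathcal{R}$. This is the natural analog of the argument used to pass from \eqref{diffeqR1} and \eqref{diffeqR3} to Propositions \ref{diffeqHprop} and \ref{diffeqtildeHprop}.

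First, I would substitute $\nu = \theta_*/2 + \rho$ in \eqref{diffeqtildeH1}. Because $\theta_*$ is held fixed while $\rho$ is shifted, the identity $e^{\pm ib\partial_\nu}|_{\nu=\theta_*/2+\rho}=e^{\pm ib\partial_\rho}$ turns the operator $\tilde H_\mathcal H(b,\nu)$, acting on $\nu$, into a $\rho$-difference operator with coefficients $\tilde H_\mathcal H^{\pm,0}(b,\theta_*/2+\rho)$. The eigenvalue $2\cosh(2\pi b\sigma_s)$ on the right-hand side is independent of $\theta_*$, so the key step is to compute the pointwise limit of each coefficient and identify it with the corresponding coefficient of $\tilde H_{\mathcal S}(b,\rho)$. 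Using $\cosh(z)\sim \tfrac12 e^{\mp z}$ as $\re z\to\mp\infty$, the large-$|\theta_*|$ asymptotics of the two $\cosh$ factors in \eqref{HHtildeplus} (one growing like $e^{\mp\pi b\theta_*/2}\cdot\ldots$ depending on the sign) combine with the prefactor $e^{2\pi b\nu}=e^{\pi b\theta_*}e^{2\pi b\rho}$ so that the $e^{\pm\pi b\theta_*}$ factors cancel, yielding
\begin{equation*}
\lim_{\theta_*\to-\infty}\tilde H^{\pm}_{\mathcal H}(b,\tfrac{\theta_*}{2}+\rho) = \tilde H^{\pm}_{\mathcal S}(b,\rho).
\end{equation*}
The limit of $\tilde H^0_{\mathcal H}$ then follows automatically from the defining relations \eqref{HHtilde0} and \eqref{HS0rho}.

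Having established convergence of the coefficients, I would pass to the limit on both sides of \eqref{diffeqtildeH1}. The uniformity on compact subsets of $D_{\mathcal S}$ provided by Theorem \ref{thmforS} (applied at $\rho$ and at its shifts $\rho\pm ib$, all of which remain in $D_{\mathcal S}$ when $(\sigma_s,\rho)$ ranges over a small open set avoiding $\Delta_{\mathcal S,\sigma_s}\times\Delta_\rho$ and their translates) lets me interchange the limit with the finite sum of translates. Together with the definition \eqref{fromHtoS} of $\mathcal S$, this yields \eqref{diffeqtilde1S} as an identity between analytic functions on that open set. Since $\mathcal S$ is meromorphic on $\mathbb C^2$ by Theorem \ref{thmforS} and the operator $\tilde H_{\mathcal S}(b,\rho)$ has meromorphic coefficients, the identity extends by meromorphic continuation to all of $\mathbb C^2$. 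Finally, \eqref{diffeqtilde2S} follows by applying this argument with $b$ replaced by $b^{-1}$, or equivalently by using the symmetry \eqref{Sbinverse}.

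The main technical point is the asymptotic bookkeeping in the second paragraph: one must verify that the single surviving $\cosh$ in each of $\tilde H^+_{\mathcal H}$ and $\tilde H^-_{\mathcal H}$ combines with the residual exponentials $e^{-\pi b(ib/2+\theta_0+\rho)}$ and $e^{\pi b(ib/2+\theta_0-\rho)}$ respectively to produce exactly the closed-form expressions in \eqref{HSpmrho}. This is a direct but slightly delicate computation; everything else (uniform convergence, meromorphic continuation, reduction of the dual equation) is already in place from the results proved earlier in this section and from the analogous arguments in Section \ref{Hdifferencesubsec}.
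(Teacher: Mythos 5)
Your proposal is correct and follows exactly the route the paper intends: the paper omits this proof precisely because it is the confluent limit $\theta_*\to-\infty$ of \eqref{diffeqtildeH}, carried out as in Section \ref{Hdifferencesubsec}, and your asymptotic bookkeeping for $\tilde H^{\pm}_{\mathcal H}(b,\tfrac{\theta_*}{2}+\rho)\to\tilde H^{\pm}_{\mathcal S}(b,\rho)$ checks out against \eqref{HSpmrho}. The final step of establishing the identity on an open set (with $\im\rho$ large enough that the shifts $\rho\pm ib$ stay in the domain of convergence) and then extending by meromorphic continuation via Theorem \ref{thmforS} is also the intended argument.
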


\subsection{Polynomial limits}
Our next two theorems state that $\mathcal{S}$ reduces to the Al-Salam Chihara polynomials when the variable $\rho$ is suitably discretized and to the little $q$-Jacobi polynomials when $\sigma_s$ is suitably discretized. The proofs proceed along the same lines as the proofs of Theorem \ref{thhahn} and Theorem \ref{thmHtoJn} and are therefore omitted. 

We make the following assumption which ensures that the poles of the integrand $I_\mathcal{S}$ in \eqref{IS} are simple.

\begin{assumption}\label{assumptionS}
Assume that $b>0$ is such that $b^2$ is irrational. Moreover, assume that
\beq
\theta_t, \re \sigma_s \neq 0, \qquad \re\big(\pm \sigma_s+ \rho +\theta_0\big) \neq 0.
\eeq
\end{assumption}
 
\begin{theorem}[From $\mathcal{S}$ to the Al-Salam Chihara polynomials] \label{thmStoSn} Let $\sigma_s \in \mathbb{C} \backslash \Delta_{\mathcal S,\sigma_s}$ and suppose that Assumptions \ref{assumption} and \ref{assumptionS} are satisfied.  
Define $\{\rho_n\}_{n=0}^\infty \subset \mathbb{C}$ by
 \beq \label{rhon}
 \rho_n = \theta_t+\tfrac{i Q}2+inb.
 \eeq
 Under the parameter correspondence
  \beq \label{paramSn}
 \alpha_{S}=e^{2\pi b(\theta_t+\theta_0+\frac{iQ}2)}, \qquad \beta_{S}= e^{2\pi b(\theta_t-\theta_0+\frac{iQ}2)}, \qquad q=e^{2i\pi b^2},
 \eeq
the function $\mathcal{S}$ satisfies, for each $n\geq 0$,
 \beq \label{limStoSn}
 \lim\limits_{\rho\to \rho_n} \mathcal{S}(b,\theta_0,\theta_t,\sigma_s,\rho) = S_n\lb e^{2\pi b \sigma_s};\alpha_{S},\beta_{S};q\rb, 
 \eeq
 where $S_n$ are the Al-Salam Chihara polynomials defined in \eqref{Sn}.
 \end{theorem}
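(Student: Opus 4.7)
The proof proceeds in the same spirit as the proof of Theorem \ref{thhahn}: one first establishes the base case $n = 0$ by a direct residue computation, and then extends to all $n \geq 0$ by induction using the difference equation \eqref{diffeqtilde1S}.

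For the base case, observe that the prefactor $P_\mathcal{S}$ defined in \eqref{PS} contains the factor $s_b(\rho - \theta_t)$, which has a simple zero at $\rho = \rho_0 = \theta_t + iQ/2$. Simultaneously, as $\rho \to \rho_0$ the contour $\mathsf{S}$ in \eqref{defS} is pinched between the pole of $s_b(x + \theta_t - \rho)$ in $I_\mathcal{S}$ located at $x_0 = -iQ/2 - \theta_t + \rho$ and the pole of $1/s_b(x + iQ/2)$ at $x = 0$. Before taking the limit, deform $\mathsf{S}$ into a contour $\mathsf{S}'$ that passes below $x_0$, so that
\begin{equation*}
\mathcal{S}(b,\theta_0,\theta_t,\sigma_s,\rho) = -2i\pi \, P_\mathcal{S}(\sigma_s,\rho) \underset{x=x_0}{\text{Res}}\, I_\mathcal{S}(x,\sigma_s,\rho) + P_\mathcal{S}(\sigma_s,\rho)\int_{\mathsf{S}'} dx \, I_\mathcal{S}(x,\sigma_s,\rho).
\end{equation*}
As $\rho \to \rho_0$, the boundary integral along $\mathsf{S}'$ vanishes due to the zero of $P_\mathcal{S}$, while the residue term carries a compensating simple pole at $\rho_0$ produced by the factor $1/s_b(\rho - \theta_t)$ that emerges from the residue. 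A direct computation using \eqref{ressb} together with the identity $s_b(z) = 1/s_b(-z)$ shows that this limit equals $1$, which matches $S_0(e^{2\pi b \sigma_s}; \alpha_S, \beta_S; q) = 1$.

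For the inductive step, set $P_n := \lim_{\rho \to \rho_n} \mathcal{S}(b,\theta_0,\theta_t,\sigma_s,\rho)$; the same contour-deformation argument applied iteratively shows that the limit exists for every integer $n \geq 0$. Next, one verifies under the parameter correspondence \eqref{paramSn} that
\begin{equation*}
\lim_{\rho \to \rho_n} \tilde{H}_\mathcal{S}(b,\rho) = R_{S_n},
\end{equation*}
where $R_{S_n}$ denotes the three-term recurrence operator associated with the Al-Salam Chihara polynomials \eqref{Sn}. Taking the limit $\rho \to \rho_n$ of the difference equation \eqref{diffeqtilde1S} then shows that $\{P_n\}_{n \geq 0}$ satisfies the Al-Salam Chihara recurrence with eigenvalue $2\cosh(2\pi b\sigma_s) = z + z^{-1}$ where $z = e^{2\pi b \sigma_s}$. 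Combined with the base case $P_0 = 1$ and the standard convention $S_{-1} \equiv 0$, induction yields $P_n = S_n(e^{2\pi b\sigma_s}; \alpha_S, \beta_S; q)$ for all $n \geq 0$.

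The main technical obstacle is the verification of the operator identity $\lim_{\rho \to \rho_n} \tilde{H}_\mathcal{S}(b,\rho) = R_{S_n}$ under \eqref{paramSn}. This amounts to matching the hyperbolic coefficients in \eqref{HSpmrho}--\eqref{HS0rho}, evaluated at $\rho_n = \theta_t + iQ/2 + inb$, against the coefficients of the three-term recurrence for $S_n$: the shift operators $e^{\pm ib\partial_\rho}$ translate into $n \to n \pm 1$ on the discretized variable, and the exponentials and $\cosh$ factors must be converted into rational expressions in $\alpha_S$, $\beta_S$, and $q$. The computation is mechanical but requires careful bookkeeping. A secondary (and easier) point is to ensure that the contour deformation does not cross additional poles of the integrand; this is guaranteed by the exclusion $\sigma_s \in \mathbb{C} \setminus \Delta_{\mathcal S,\sigma_s}$ and the irrationality of $b^2$ imposed by Assumptions \ref{assumption} and \ref{assumptionS}.
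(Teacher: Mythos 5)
Your proposal is correct and follows essentially the same route as the paper: the paper omits this proof precisely because it "proceeds along the same lines as" Theorem \ref{thhahn}, namely a contour deformation and residue computation at $\rho_0$ (where the simple zero of $s_b(\rho-\theta_t)$ in $P_\mathcal{S}$ cancels the simple pole from the pinched contour) giving $P_0=1$, followed by induction via the limit $\lim_{\rho\to\rho_n}\tilde{H}_\mathcal{S}(b,\rho)=R_{S_n}$ of the difference equation \eqref{diffeqtilde1S}. Your identification of the operator-matching computation as the main technical step, and the observation that the coefficient of $T_{n-1}$ in \eqref{recurrenceoperatorSn} vanishes at $n=0$ (so the recurrence plus $P_0=1$ determines all $P_n$), are both consistent with the paper's template.
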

 
 \begin{theorem}[From $\mathcal{S}$ to the little $q$-Jacobi polynomials] \label{thmStoYn}
 Let $\rho \in \mathbb{C}\backslash \Delta_\rho$ and suppose that Assumptions \ref{assumption} and \ref{assumptionS} are satisfied. Under the parameter correspondence
 \beq\label{paramYn}
\alpha_Y = e^{4\pi b \theta_t}, \qquad \beta_Y = e^{4\pi b \theta_0}, \qquad x_Y = e^{\pi b(iQ+2\theta_t)} e^{-2\pi b \rho},  \qquad q= e^{2i\pi b^2},
 \eeq
the function $\mathcal{S}$ satisfies, for each $n\geq 0$,
 \beq \label{limStoYn}
 \lim\limits_{\sigma_s\to \sigma_s^{(n)}} \mathcal{S}(b,\theta_0,\theta_t,\sigma_s,\rho) = Y_n \lb x_Y;\alpha_Y,\beta_Y,q  \rb,
 \eeq
 where $\sigma_s^{(n)}$ is defined in \eqref{limsigmas} and where $Y_n$ are the little $q$-Jacobi polynomials defined in \eqref{Yn}.
 \end{theorem}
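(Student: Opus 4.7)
The plan is to follow the two-step strategy used for Theorem \ref{thhahn} and Theorem \ref{thmHtoJn}: first establish the case $n=0$ from the integral representation \eqref{defS} by extracting the residue produced when the contour of integration is pinched, and then upgrade to arbitrary $n\ge 1$ by induction using the difference equation \eqref{diffeqS1} in the variable $\sigma_s$.

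For $n=0$, I observe that the prefactor $P_{\mathcal{S}}(\sigma_s,\rho)$ in \eqref{PS} has a simple zero at $\sigma_s = \sigma_s^{(0)} = \theta_0+\theta_t+\tfrac{iQ}{2}$ coming from the factor $s_b(\sigma_s - \theta_0 - \theta_t)$. On the other hand, as $\sigma_s \to \sigma_s^{(0)}$, the descending pole of $I_{\mathcal{S}}(\cdot,\sigma_s,\rho)$ at $x_0 = -\tfrac{iQ}{2} + \sigma_s - \theta_0 - \theta_t$ collides with the ascending pole at $x=0$, so the contour $\mathsf{S}$ gets pinched. I deform $\mathsf{S}$ into a curve $\mathsf{S}'$ passing below $x_0$, which produces
\beq
\mathcal{S}(b,\theta_0,\theta_t,\sigma_s,\rho) = -2i\pi\, P_{\mathcal{S}}(\sigma_s,\rho)\,\underset{x=x_0}{\operatorname{Res}} I_{\mathcal{S}}(x,\sigma_s,\rho) + P_{\mathcal{S}}(\sigma_s,\rho) \int_{\mathsf{S}'} dx\, I_{\mathcal{S}}(x,\sigma_s,\rho).
\eeq
The second term vanishes in the limit $\sigma_s \to \sigma_s^{(0)}$ because of the simple zero of $P_{\mathcal{S}}$, while the residue in the first term carries a simple pole at $\sigma_s^{(0)}$ (from the factor $s_b(\sigma_s-\theta_0-\theta_t)^{-1}$ generated via \eqref{ressb}) which cancels against this zero. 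A short calculation based on $s_b(z) = 1/s_b(-z)$ then yields $\lim_{\sigma_s\to\sigma_s^{(0)}} \mathcal{S} = 1 = Y_0(x_Y;\alpha_Y,\beta_Y;q)$.

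For general $n \geq 0$, set $P_n := \lim_{\sigma_s \to \sigma_s^{(n)}} \mathcal{S}(b,\theta_0,\theta_t,\sigma_s,\rho)$. The same style of contour deformation, extracting residues at the additional collision points $x = -\tfrac{iQ}{2}+\sigma_s-\theta_0-\theta_t-ikb$, $0\le k\le n-1$, together with the simple zero of $P_{\mathcal{S}}$ at $\sigma_s^{(n)}$, shows that this limit exists for every $n$. To identify $P_n$ with $Y_n(x_Y;\alpha_Y,\beta_Y;q)$, I take the limit $\sigma_s \to \sigma_s^{(n)}$ of \eqref{diffeqS1}. Using \eqref{HS}--\eqref{HS0sigma} and the parameter correspondence \eqref{paramYn}, one checks that
\beq
\lim_{\sigma_s \to \sigma_s^{(n)}} H_{\mathcal{S}}(b,\sigma_s) = e^{-\pi b(iQ+2\theta_t)}\, R_{Y_n},
\eeq
where $R_{Y_n}$ denotes the three-term recurrence operator for the little $q$-Jacobi polynomials $Y_n$. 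Combined with the right-hand side $e^{-2\pi b\rho}$ of \eqref{diffeqS1} and the definition $x_Y = e^{\pi b(iQ+2\theta_t)} e^{-2\pi b\rho}$, the difference equation becomes $R_{Y_n} P_n = x_Y P_n$. Since $P_0 = Y_0 = 1$ and the $P_n$ satisfy the same three-term recurrence as the $Y_n$, induction on $n$ concludes the proof.

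The main obstacle is the explicit verification of the limit of $H_{\mathcal{S}}(b,\sigma_s)$. The coefficient $H^+_{\mathcal{S}}(b,-\sigma_s)$ in \eqref{HSplussigma} carries an apparent singularity from $\sinh(\pi b(2\sigma_s+ib))\sinh(2\pi b\sigma_s)$ in its denominator when $\sigma_s = \sigma_s^{(n)}$ with $n\ge 1$; one must check that this is cancelled by a zero in the numerator coming from one of the $\cosh$ factors, and that the residual hyperbolic expressions then combine under the substitutions $q = e^{2i\pi b^2}$ and $2\cosh(\pi b z + \tfrac{i\pi b^2 m}{2}) = e^{\pi b z}(1 + e^{i\pi b^2 m} e^{-2\pi b z})$ into the rational $q$-Pochhammer coefficients of the recurrence for $Y_n$ implied by \eqref{paramYn}. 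This is a careful bookkeeping computation but contains no genuine new difficulty beyond those already handled in Theorem \ref{thmHtoJn}.
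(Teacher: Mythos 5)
Your proposal is correct and follows exactly the route the paper intends: the paper omits this proof, stating only that it "proceeds along the same lines as the proofs of Theorem \ref{thhahn} and Theorem \ref{thmHtoJn}," and your argument (contour pinching and residue extraction for $n=0$, then the $\sigma_s$-difference equation \eqref{diffeqS1} degenerating to $R_{Y_n}P_n=x_YP_n$ and induction) is precisely that strategy with the correct normalization $\lim_{\sigma_s\to\sigma_s^{(n)}}H_{\mathcal{S}}(b,\sigma_s)=e^{-\pi b(iQ+2\theta_t)}R_{Y_n}$.
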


\section{The function $\mathcal{X}$}
In this section, we define the function $\mathcal{X}(b,\theta,\sigma_s,\omega)$ which generalizes continuous big $q$-Hermite polynomials. It lies at the fourth level of the non-polynomial scheme and is defined as a confluent limit of $\mathcal{S}$. We show that $\mathcal{X}$ is a joint eigenfunction of four difference operators and that it reduces to the continuous big $q$-Hermite polynomials, which lie at the fourth level of the $q$-Askey scheme, when $\omega$ is suitably discretized.

\subsection{Definition and integral representation}
Let $\theta$ and $\omega$ be two new parameters defined by
\beq\label{deftheta}
\theta_0 = \frac{\theta+\Lambda}2, \qquad \theta_t = \frac{\theta-\Lambda}2,\qquad \rho = -\frac{\Lambda}2+\omega.
\eeq
\begin{definition}
The function $\mathcal{X}$ is defined by 
\beq\label{fromStoX}
\mathcal{X}(b,\theta,\sigma_s,\omega) = \lim\limits_{\Lambda\to +\infty} \mathcal{S}\lb b,\tfrac{\theta+\Lambda}2, \tfrac{\theta-\Lambda}2,\sigma_s,-\tfrac{\Lambda}2+\omega\rb,
\eeq
where $\mathcal{S}$ is given in \eqref{defS}.
\end{definition}

The next theorem shows that for each choice of $(b,\theta) \in (0,\infty) \times \mathbb R$, $\mathcal{X}$ is a well-defined analytic function of $(\sigma_s,\omega) \in (\mathbb{C} \backslash \Delta_{\mathcal X,\sigma_s}) \times (\mathbb{C} \backslash \Delta_\omega)$, where $\Delta_{\mathcal S,\sigma_s}, \Delta_\omega \subset \mathbb C$ are discrete sets of points where $\mathcal S$ may have poles. The proof is omitted since it involves computations which are similar to those presented in the proofs of Theorems \ref{thmforH} and \ref{thmforS}.

\begin{theorem} \label{thmforX}
Suppose that Assumption \ref{assumption} is satisfied. The limit \eqref{fromStoX} exists uniformly for $(\sigma_s,\omega)$ in compact subsets of
\beq
D_\mathcal{X} := (\mathbb C \backslash \Delta_{\mathcal X,\sigma_s}) \times (\mathbb{C} \backslash \Delta_\omega),
\eeq
where 
\begin{align*}
& \Delta_{\mathcal X,\sigma_s} := \{\pm \sigma_s \, |\, \sigma_s \in \Delta_{\mathcal{X}, \sigma_s}'\},
	\\ 
& \Delta_\omega := \{\tfrac{\theta}2+\tfrac{iQ}2+ibm+ilb^{-1} \}_{m,l=0}^\infty \cup \{\tfrac{\theta}2-\tfrac{iQ}2-ibm-ilb^{-1} \}_{m,l=0}^\infty,
\end{align*}
with
\begin{align*}
& \Delta_{\mathcal X,\sigma_s}' := \{\theta + \tfrac{iQ}{2} + i m b +il b^{-1}\}_{m,l=0}^\infty
\cup \{\theta - \tfrac{iQ}{2} - i m b - il b^{-1}\}_{m,l=0}^\infty.
\end{align*}
Moreover, the function $\mathcal{X}$ admits the following integral representation:
\beq\label{defX}
\mathcal{X}(b,\theta,\sigma_s,\omega) = P_\mathcal{X}(\sigma_s,\omega) \displaystyle \int_{\mathsf{X}} dx \; I_\mathcal{X}(x,\sigma_s,\omega) \qquad \text{for $(\sigma_s,\omega) \in D_\mathcal{X}$},
\eeq
where
\begin{align}\label{PX}
& P_\mathcal{X}(\sigma_s,\omega) = s_b\left(\omega -\tfrac{\theta }{2}\right) \prod_{\epsilon=\pm 1} s_b(\epsilon \sigma_s-\theta), \\
\label{IX} &
I_\mathcal{X}(x,\sigma_s,\omega) = e^{-i \pi  x^2} e^{-i \pi  x \left(\frac{5\theta }{2}+\frac{3iQ}{2}-\omega \right)} \frac{s_b\left(x+\frac{\theta }{2}-\omega \right)}{s_b(x+\frac{i Q}{2})} \prod_{\epsilon=\pm 1} s_b\left(x+\theta+\epsilon\sigma_s\right),
\end{align}
and the contour $\mathsf{X}$ is any curve from $-\infty$ to $+\infty$ which separates the three decreasing from the increasing sequences of poles, with the requirement that its right tail satisfies
\beq \label{conditionimxforX}
\im x + \frac{Q}{4} - \frac{\im \omega}{2} <  - \delta
\qquad \text{for $x \in \mathsf{X}$ with $\re x$ sufficiently large},
\eeq
for some $\delta >0$. In particular, $\mathcal X$ is a meromorphic function of $(\sigma_s,\omega) \in \mathbb C^2$.
If $(\sigma_s,\omega) \in \mathbb{R}^2$, then the contour $\mathsf{X}$ can be any curve from $-\infty$ to $+\infty$ lying within the strip $\im x \in (-Q/2,-Q/4-\delta)$.
\end{theorem}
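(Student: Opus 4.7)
The plan is to mirror the template established in the proofs of Theorems \ref{thmforH} and \ref{thmforS}. First, substituting $\theta_0 = (\theta+\Lambda)/2$, $\theta_t = (\theta-\Lambda)/2$, $\rho = -\Lambda/2 + \omega$ into the integral representation \eqref{defS} of $\mathcal{S}$ and using $s_b(z) = 1/s_b(-z)$, one verifies directly that
\begin{equation*}
P_\mathcal{S}\!\left(\sigma_s,-\tfrac{\Lambda}{2}+\omega\right) I_\mathcal{S}\!\left(x,\sigma_s,-\tfrac{\Lambda}{2}+\omega\right) = P_\mathcal{X}(\sigma_s,\omega)\, Y(x,\Lambda)\, I_\mathcal{X}(x,\sigma_s,\omega),
\end{equation*}
with a correction factor
\begin{equation*}
Y(x,\Lambda) = \frac{s_b(\theta-\Lambda+\tfrac{iQ}{2})}{s_b(x+\theta-\Lambda+\tfrac{iQ}{2})}\, e^{\tfrac{i\pi x^2}{2} + i\pi x(\theta-\Lambda+\tfrac{iQ}{2})}.
\end{equation*}
Applying the asymptotic formula \eqref{sbasymptotics} with $\epsilon = 1/2$ to the two $s_b$ factors (whose arguments have real part tending to $-\infty$) shows that the quadratic and linear contributions of $\log Y$ cancel the explicit exponential, giving the pointwise limit $Y(x,\Lambda) \to 1$ as $\Lambda \to +\infty$, uniformly on bounded subsets of the eventual contour.

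Next I would analyze the pole structure. By \eqref{polesb}, $I_\mathcal{X}(\cdot,\sigma_s,\omega)$ has one upward sequence of poles starting at $x = 0$ and three downward sequences starting at $x = -iQ/2 + \omega - \theta/2$ and $x = -iQ/2 - \theta \mp \sigma_s$. The discrete sets $\Delta_{\mathcal{X},\sigma_s}$ and $\Delta_\omega$ given in the statement collect exactly the values at which these sequences collide or at which $P_\mathcal{X}$ develops poles. The factor $Y(\cdot,\Lambda)$ contributes one additional upward sequence of poles starting at $x = \Lambda - \theta$, whose real part escapes to $+\infty$ as $\Lambda \to +\infty$. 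For compact $K_{\sigma_s} \times K_\omega \subset D_\mathcal{X}$, one may therefore choose a $\Lambda$-independent contour $\mathsf{X}$ (valid for $\Lambda > N$ sufficiently large) separating the upward from the downward sequences and whose right tail approaches the horizontal line $\im x = \im\omega/2 - Q/4 - \delta$; this justifies \eqref{conditionimxforX}.

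The crux of the argument is to exhibit an integrable dominating function. Tracking all quadratic and linear contributions via \eqref{sbasymptotics}, one finds that as $\re x \to +\infty$ the net quadratic coefficient of $\log|I_\mathcal{X}|$ is $4\pi\,\re x \,\im x$ and the net linear coefficient of $\re x$ is $\pi Q - 2\pi\im\omega$, so exponential decay on the right tail is equivalent to $2\im x + Q/2 - \im\omega < -2\delta$, which is precisely \eqref{conditionimxforX}. On the left tail all quadratic contributions cancel and one obtains unconditional decay of order $e^{-2\pi Q |\re x|}$. The estimate on $Y(x,\Lambda)$ is handled by splitting according to the sign of $\Lambda - \theta + \re x$, exactly as is done for the analogous factor $Z(x,\theta_*)$ in the proof of Theorem \ref{thmforS}, yielding a uniform bound $|Y(x,\Lambda) I_\mathcal{X}(x,\sigma_s,\omega)| \leq C e^{-c |\re x|}$ independent of $\Lambda > N$ and of $(\sigma_s,\omega) \in K_{\sigma_s} \times K_\omega$.

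The main obstacle is the bookkeeping with the genuine quadratic exponential in $I_\mathcal{X}$: one must verify carefully that the explicit phase $e^{-i\pi x^2}$ combines with the four $s_b$ factors to produce the clean condition \eqref{conditionimxforX}, and that the cancellations making $Y \to 1$ survive the case split on $\re x$. Once the dominating bound is in place, Lebesgue's dominated convergence theorem delivers both the existence of the limit \eqref{fromStoX} uniformly on compact subsets of $D_\mathcal{X}$ and the representation \eqref{defX}. Analyticity in $D_\mathcal{X}$ is inherited from $\mathcal{S}$ via uniform convergence (or read off directly from \eqref{defX}); unlike the situation for $\mathcal{H}$ in Proposition \ref{Hextensionprop}, no auxiliary use of difference equations is required to reach meromorphy on all of $\mathbb{C}^2$, because the quadratic decay of $I_\mathcal{X}$ already allows the contour to be drawn for arbitrary $\omega$ as long as the pinching encoded in $\Delta_{\mathcal{X},\sigma_s}\cup\Delta_\omega$ is avoided.
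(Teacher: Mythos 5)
The paper omits the proof of this theorem, stating only that it is similar to the proofs of Theorems \ref{thmforH} and \ref{thmforS}; your reconstruction follows exactly that template, and your correction factor $Y(x,\Lambda)$, pole bookkeeping, and tail estimates (net coefficient $4\pi\,\re x\,\im x + (\pi Q - 2\pi \im\omega)\re x$ on the right, cancellation of quadratic terms and $-2\pi Q|\re x|$ decay on the left) all check out against \eqref{PS}--\eqref{IS}, \eqref{PX}--\eqref{IX}, and \eqref{sbasymptotics}. The only slip is cosmetic: the case split for bounding $Y$ should be on the sign of $\re x - (\Lambda-\theta)$ (where the real part of the argument of the denominator $s_b$ changes sign), not $\Lambda - \theta + \re x$; your closing observation that the quadratic Gaussian factor lets the contour be drawn for all $\omega\in\mathbb{C}\setminus\Delta_\omega$, so that no difference-equation continuation is needed, is also correct and consistent with the stated domain $D_{\mathcal{X}}$.
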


Furthermore, as a consequence of \eqref{sbinverse}, $\mathcal{X}$ satisfies
\beq \label{Xinverseb}
\mathcal{X}(b,\theta,\sigma_s,\omega) = \mathcal{X}(b^{-1},\theta,\sigma_s,\omega).
\eeq

\subsection{Difference equations}
The function $\mathcal{X}(b,\theta,\sigma_s,\omega)$ is a joint eigenfunction of four difference operators, two acting on $\sigma_s$ and the remaining two on $\omega$.
This follows by taking the confluent limit \eqref{fromStoX} of the difference equations \eqref{diffeqS} and \eqref{diffeqtildeS} satisfied by $\mathcal{S}$. 
The proofs of the following two propositions are omitted since they are similar to those presented in Section \ref{Hdifferencesubsec}.

\subsubsection{First pair of equations}

Introduce a difference operator $H_{\mathcal{X}}(b,\sigma_s)$ such that
\beq\label{HX}
H_{\mathcal{X}}(b,\sigma_s) = H_{\mathcal{X}}^+(b,\sigma_s) e^{ib \partial_{\sigma_s}} + H_{\mathcal{X}}^+(b,-\sigma_s) e^{-ib \partial_{\sigma_s}} + H_{\mathcal{X}}^0(b,\sigma_s),
\eeq
where
\begin{align}
\label{HXplussigmas} & H_{\mathcal{X}}^+(b,\sigma_s) = -e^{-\frac{3}{2} \pi  b (2 \sigma_s+i b)} \frac{\cosh \left(\pi  b \left(\frac{i b}{2}+\theta +\sigma_s\right)\right)}{2 \sinh (2 \pi  b \sigma_s) \sinh (\pi  b (2 \sigma_s+i b))}, \\
\label{HX0sigmas} & H_{\mathcal{X}}^0(b,\sigma_s) = e^{-\pi  b (\theta + i Q)} - H_{\mathcal{X}}^+(b,\sigma_s)-H_{\mathcal{X}}^+(b,-\sigma_s).
\end{align}
\begin{proposition}
For $(\sigma_s,\omega) \in \mathbb C^2$, the function $\mathcal{X}$ satisfies the following pair of difference equations:
\begin{subequations}\label{diffeqX}\begin{align}
\label{diffeqX1} & H_{\mathcal{X}}(b,\sigma_s) \; \mathcal{X}(b,\theta,\sigma_s,\omega) = e^{-2\pi b \omega} \; \mathcal{X}(b,\theta,\sigma_s,\omega) \\
\label{diffeqX2} & H_{\mathcal{X}}(b^{-1},\sigma_s) \; \mathcal{X}(b,\theta,\sigma_s,\omega) = e^{-2\pi b^{-1} \omega} \; \mathcal{X}(b,\theta,\sigma_s,\omega).
\end{align}\end{subequations}
\end{proposition}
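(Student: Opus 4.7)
The plan is to mimic the strategy used in Section \ref{Hdifferencesubsec} to establish Proposition \ref{diffeqHprop}: derive the two equations in \eqref{diffeqX} by taking the confluent limit \eqref{fromStoX} of the first pair of difference equations \eqref{diffeqS} for $\mathcal{S}$, after a suitable rescaling.

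To identify the correct rescaling, note that under the substitution $\rho = -\Lambda/2 + \omega$ the eigenvalue on the right-hand side of \eqref{diffeqS1} satisfies $e^{-2\pi b\rho} = e^{\pi b\Lambda} e^{-2\pi b \omega}$. Thus, multiplying both sides of \eqref{diffeqS1} by $e^{-\pi b \Lambda}$ will produce the correct finite eigenvalue $e^{-2\pi b \omega}$ on the right-hand side, provided all the coefficients of the difference operator also have finite limits after multiplication by $e^{-\pi b\Lambda}$. Substituting $\theta_0 = (\theta+\Lambda)/2$ and $\theta_t = (\theta-\Lambda)/2$ into \eqref{HSplussigma}, the argument $-\theta_0 + \theta_t + \sigma_s + ib/2 = -\Lambda + \sigma_s + ib/2$ of one of the hyperbolic cosines has real part tending to $-\infty$, so that cosh factor behaves like $\tfrac{1}{2} e^{\pi b\Lambda - \pi b\sigma_s - i\pi b^2/2}$ as $\Lambda \to +\infty$. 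A short computation then gives
\[
e^{-\pi b\Lambda}\, H^+_{\mathcal{S}}(b,\sigma_s)\Big|_{\theta_0 = \frac{\theta+\Lambda}{2},\, \theta_t = \frac{\theta-\Lambda}{2}} \xrightarrow[\Lambda \to +\infty]{} H^+_{\mathcal{X}}(b,\sigma_s),
\]
where the combined exponential prefactors simplify to $-e^{-\frac{3}{2}\pi b(2\sigma_s + ib)}$ thanks to $bQ = b^2+1$. Similarly $e^{-\pi b\Lambda} e^{-\pi b(2\theta_t + iQ)} \to e^{-\pi b(\theta + iQ)}$, which matches the first term of \eqref{HX0sigmas}, and together with the corresponding limit of $H^+_{\mathcal{S}}(b,-\sigma_s)$ one obtains $e^{-\pi b\Lambda} H^0_{\mathcal{S}}(b,\sigma_s) \to H^0_{\mathcal{X}}(b,\sigma_s)$. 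Hence
\[
\lim_{\Lambda \to +\infty} e^{-\pi b\Lambda}\, H_{\mathcal{S}}(b,\sigma_s)\Big|_{\theta_0 = \frac{\theta+\Lambda}{2},\, \theta_t = \frac{\theta-\Lambda}{2}} = H_{\mathcal{X}}(b,\sigma_s).
\]

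Combining these coefficient limits with Theorem \ref{thmforX}, which guarantees that the limit defining $\mathcal{X}$ exists and is analytic on $D_\mathcal{X}$, the equation \eqref{diffeqX1} follows by multiplying \eqref{diffeqS1} by $e^{-\pi b\Lambda}$, substituting $\theta_0,\theta_t,\rho$ according to \eqref{deftheta}, and letting $\Lambda \to +\infty$. The translations $e^{\pm ib\partial_{\sigma_s}}$ commute with the limit since they act only on $\sigma_s$, and the $b$-rescaled version \eqref{diffeqX2} then follows immediately from \eqref{diffeqX1} and the symmetry \eqref{Xinverseb}. Finally, since both sides of \eqref{diffeqX} are meromorphic functions of $(\sigma_s,\omega) \in \mathbb{C}^2$, the identity extends from $D_\mathcal{X}$ to all of $\mathbb{C}^2$ by analytic continuation.

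The only non-routine step is verifying the asymptotic cancellation of exponential prefactors in $H^+_{\mathcal{S}}$; this is the main obstacle, but it reduces to a careful bookkeeping of the identity $-\pi b(2\sigma_s + iQ) - \pi b \sigma_s - i\pi b^2/2 = -\frac{3}{2}\pi b(2\sigma_s + ib) - i\pi$, which produces the overall minus sign in $H_{\mathcal{X}}^+$ (cf.\ \eqref{HXplussigmas}). All other manipulations are direct analogues of those carried out in Section \ref{Hdifferencesubsec} and are therefore straightforward.
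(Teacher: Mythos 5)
Your proposal is correct and follows exactly the route the paper intends: the paper omits the proof of this proposition, stating only that it follows by taking the confluent limit \eqref{fromStoX} of the difference equations \eqref{diffeqS} in the same way as in Section \ref{Hdifferencesubsec}, and your rescaling by $e^{-\pi b\Lambda}$, the asymptotics of the $\epsilon=-1$ cosh factor, and the exponent bookkeeping via $bQ=b^2+1$ all check out.
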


\subsubsection{Second pair of equations}
Introduce the difference operator $\tilde{H}_{\mathcal{X}}(b,\omega)$ such that
\beq\label{HXtilde}
\tilde{H}_{\mathcal{X}}(b,\omega) = \tilde{H}_{\mathcal{X}}^+(b,\omega) e^{ib \partial_\omega} + \tilde{H}_{\mathcal{X}}^-(b,\omega) e^{-ib \partial_\omega} + \tilde{H}_{\mathcal{X}}^0(b,\omega),
\eeq
where
\begin{align*}
& \tilde{H}_{\mathcal{X}}^+(b,\omega) = e^{-2\pi b(\theta+\frac{iQ}2)}, 
\qquad 
\tilde{H}_{\mathcal{X}}^-(b,\omega) = -2 e^{\pi  b \big(\tfrac{i b}{2}+\frac{3 \theta }{2}+\omega \big)} \cosh \left(\pi  b \left(\tfrac{i b}{2}+\tfrac{\theta }{2}-\omega \right)\right), 
	\\
& \tilde{H}_{\mathcal{X}}^0(b,\omega) = -2 \cosh (\pi  b (2 \theta +i b)) - \tilde{H}_{\mathcal{X}}^+(b,\omega) - \tilde{H}_{\mathcal{X}}^-(b,\omega)=e^{\pi  b (\theta+2 \omega )}.
\end{align*}
\begin{proposition}
For $(\sigma_s,\omega) \in \mathbb C^2$, the function $\mathcal{X}$ satisfies the following pair of difference equations:
\begin{subequations}\label{diffeqtildeX}\begin{align}
\label{diffeqtildeX1} & \tilde{H}_{\mathcal{X}}(b,\omega) \; \mathcal{X}(b,\theta,\sigma_s,\omega) = 2\operatorname{cosh}{(2\pi b \sigma_s)} \; \mathcal{X}(b,\theta,\sigma_s,\omega) , \\
\label{diffeqtildeX2} & \tilde{H}_{\mathcal{X}}(b^{-1},\omega) \; \mathcal{X}(b,\theta,\sigma_s,\omega) = 2\operatorname{cosh}{(2\pi b^{-1} \sigma_s)} \; \mathcal{X}(b,\theta,\sigma_s,\omega).
\end{align}\end{subequations}
\end{proposition}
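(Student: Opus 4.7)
The plan is to obtain \eqref{diffeqtildeX} from the already-established pair of difference equations \eqref{diffeqtildeS} for $\mathcal{S}$ by taking the confluent limit $\Lambda\to+\infty$ with the parameter substitution \eqref{deftheta}, in close analogy with the proof of Proposition \ref{diffeqtildeHprop} (and the unproved propositions for $\mathcal{S}$ and $\mathcal{X}$ earlier in the section).

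First, I would apply \eqref{deftheta} to the coefficients \eqref{HSpmrho}--\eqref{HS0rho} of $\tilde{H}_{\mathcal{S}}(b,\rho)$ and compute the pointwise limits as $\Lambda\to+\infty$. The key observation is that no $\Lambda$-dependent rescaling is needed: in $\tilde{H}^+_{\mathcal{S}}$ the factor $e^{-\pi b\Lambda}$ coming from $e^{\pi b\rho}e^{-\pi b(2\theta_0+\theta_t)}$ is exactly compensated by the dominant exponential inside $\cosh(\pi b(\tfrac{ib}{2}+\theta_t+\rho))$ as $\Lambda\to+\infty$, producing the constant $\tilde{H}^+_{\mathcal{X}}(b,\omega)=e^{-2\pi b(\theta+iQ/2)}$; in $\tilde{H}^-_{\mathcal{S}}$ the $\Lambda$-dependence cancels between $e^{\pi b\rho}$ and $e^{\pi b(2\theta_0+\theta_t)}$, and the cosine loses its $\Lambda$-dependence because $\theta_t-\rho=\tfrac{\theta}{2}-\omega$, reproducing $\tilde{H}^-_{\mathcal{X}}(b,\omega)$; finally $-2\cosh(2\pi b(\tfrac{ib}{2}+\theta_0+\theta_t))$ is $\Lambda$-independent since $\theta_0+\theta_t=\theta$. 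Collecting these and using the definition of $\tilde{H}^0_{\mathcal{S}}$, one gets
\begin{equation*}
\lim_{\Lambda\to+\infty}\tilde{H}_{\mathcal{S}}\bigl(b,-\tfrac{\Lambda}{2}+\omega\bigr)\bigg|_{\theta_0=\frac{\theta+\Lambda}{2},\,\theta_t=\frac{\theta-\Lambda}{2}}=\tilde{H}_{\mathcal{X}}(b,\omega),
\end{equation*}
where the shift operator $e^{\pm ib\partial_\rho}$ passes through the substitution $\rho=-\Lambda/2+\omega$ as $e^{\pm ib\partial_\omega}$.

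Next, I would apply the difference equation \eqref{diffeqtilde1S} at the substituted parameters and pass to the limit $\Lambda\to+\infty$. The right-hand side $2\cosh(2\pi b\sigma_s)$ is independent of $\Lambda$, so it survives trivially. On the left-hand side, $\tilde{H}_{\mathcal{S}}$ is a finite linear combination of the shifts $e^{\pm ib\partial_\omega}$ and the identity, with coefficients that are analytic functions of $\omega$ (and $\Lambda$) on any compact set avoiding the pole loci. Since Theorem \ref{thmforX} gives uniform convergence of the limit \eqref{fromStoX} on compact subsets of $D_{\mathcal X}$, the shifted $\mathcal{S}$'s converge uniformly to the corresponding shifted $\mathcal{X}$'s on compact subsets of $D_{\mathcal X}$ shifted by $\pm ib$ in $\omega$, so the operator and the limit commute. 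This proves \eqref{diffeqtildeX1} for $(\sigma_s,\omega)$ in an open subset of $\mathbb{C}^2$, and since both sides are meromorphic on $\mathbb{C}^2$ (the left side via the meromorphic extension of $\mathcal{X}$ provided by Theorem \ref{thmforX}), the identity extends to all of $\mathbb{C}^2$. Equation \eqref{diffeqtildeX2} then follows immediately from \eqref{diffeqtildeX1} and the symmetry \eqref{Xinverseb}.

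The main technical obstacle is the third step: the contour $\mathsf{X}$ used to represent $\mathcal{X}(b,\theta,\sigma_s,\omega\pm ib)$ (through its integral representation \eqref{defX}) may need to be shifted to avoid collisions with the increasing pole sequences when $\omega$ is translated, and one has to make sure that the uniform convergence of Theorem \ref{thmforX} indeed survives these shifts. This is handled by noting that the discrete exceptional set $\Delta_\omega$ is invariant under $\omega\mapsto\omega\pm ib$ on the complement of a discrete set, so one can pick a common compact neighborhood of any given $(\sigma_s,\omega)$ avoiding the poles of all three translates; alternatively, once \eqref{diffeqtildeX1} is established on any open set, meromorphicity finishes the job.
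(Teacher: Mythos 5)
Your proposal is correct and follows exactly the route the paper intends (the paper omits this proof, noting only that it is obtained by taking the confluent limit \eqref{fromStoX} of \eqref{diffeqtildeS} in the same manner as the proofs in Section \ref{Hdifferencesubsec}): you verify the coefficient limits $\tilde{H}^{\pm,0}_{\mathcal{S}}\to\tilde{H}^{\pm,0}_{\mathcal{X}}$ under \eqref{deftheta}, pass to the limit using the uniform convergence from Theorem \ref{thmforX}, extend by meromorphicity, and deduce \eqref{diffeqtildeX2} from \eqref{Xinverseb}. The computations you describe (cancellation of the $e^{-\pi b\Lambda}$ prefactor against the growth of the cosh in $\tilde{H}^+_{\mathcal{S}}$, and the $\Lambda$-independence of $\theta_t-\rho$ and $\theta_0+\theta_t$) all check out.
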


\subsection{Polynomial limit}
Our next theorem shows that $\mathcal{X}$ reduces to the continuous big $q$-Hermite polynomials when $\omega$ is suitably discretized. We make the following assumption, which implies that all the poles of the integrand $I_\mathcal{X}$ are simple and that $q=e^{2i\pi b^2}$ is not a root of unity. 

\begin{assumption} \label{assumptionX}
Assume that $b>0$ is such that $b^2$ is irrational and that
\beq
\re \sigma_s \neq 0, \qquad \re\big(\omega+\tfrac{\theta}2\pm \sigma_s\big) \neq 0.
\eeq
\end{assumption}

The proof of the next theorem is analogous to the proof of Theorem \ref{thhahn} and is omitted.

\begin{theorem}[From $\mathcal{X}$ to the continuous big $q$-Hermite polynomials] \label{thmXtoXn} 
Let $\sigma_s \in \mathbb C \backslash \Delta_{\mathcal X,\sigma_s}$ and suppose that Assumptions \ref{assumption} and \ref{assumptionX} are satisfied. Define  $\{\omega_n\}_{n=0}^\infty \subset \mathbb{C}$ by
\beq \label{omegan}
\omega_n = \frac{\theta }{2}+\frac{iQ}{2}+ibn.
\eeq
Under the parameter correspondence
\beq\label{paramXn}
\alpha_{X} = e^{2\pi  b (\theta +\frac{i Q}2)}, \qquad q=e^{2i\pi b^2},
\eeq
the function $\mathcal{X}$ satisfies, for each $n\geq 0$,
\beq \label{limXtoXn}
\lim\limits_{\omega\to\omega_n} \mathcal{X}(b,\theta,\omega,\sigma_s) = X_n(e^{2\pi b \sigma_s};\alpha_{X},q),
\eeq
where $X_n$ are the continuous big $q$-Hermite polynomials defined in \eqref{Xn}.
\end{theorem}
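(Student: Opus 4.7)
The plan is to follow the two-step strategy employed in the proof of Theorem \ref{thhahn}: first establish the base case $n=0$ directly from the integral representation \eqref{defX}, then extend to all $n \geq 0$ inductively using the $\omega$-difference equation \eqref{diffeqtildeX1}.

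For the base case, the prefactor $P_\mathcal{X}(\sigma_s,\omega)$ in \eqref{PX} contains the factor $s_b(\omega-\tfrac{\theta}{2})$, which has a simple zero at $\omega_0 = \tfrac{\theta}{2} + \tfrac{iQ}{2}$ in view of \eqref{polesb}. At the same value of $\omega$, the downward pole of $s_b(x+\tfrac{\theta}{2}-\omega)$ at $x_0 := -\tfrac{iQ}{2}+\omega-\tfrac{\theta}{2}$ collides with the upward pole of $1/s_b(x+\tfrac{iQ}{2})$ at $x=0$, pinching the contour $\mathsf{X}$. I would therefore deform $\mathsf{X}$ into a contour $\mathsf{X}'$ passing below $x_0$, writing
\beq
\mathcal{X}(b,\theta,\sigma_s,\omega) = -2 i\pi \, P_\mathcal{X}(\sigma_s,\omega) \underset{x=x_0}{\text{Res}}\, I_\mathcal{X}(x,\sigma_s,\omega) + P_\mathcal{X}(\sigma_s,\omega) \int_{\mathsf{X}'} dx \, I_\mathcal{X}(x,\sigma_s,\omega).
\eeq
In the limit $\omega \to \omega_0$, the simple zero of $P_\mathcal{X}$ kills the second term, whereas the residue in the first term carries a matching simple pole coming from $s_b(\omega-\tfrac{\theta}{2})^{-1}$ after using \eqref{ressb}. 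A short calculation based on \eqref{ressb} and $s_b(z)s_b(-z)=1$ should then yield the value $1 = X_0(e^{2\pi b\sigma_s};\alpha_X,q)$.

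For the inductive step, set $P_n := \lim_{\omega \to \omega_n} \mathcal{X}(b,\theta,\sigma_s,\omega)$. The same kind of contour deformation used for $n=0$ (now past the pinch points $x=0, ib, \dots, inb$) shows that $P_n$ is well defined for every $n\geq 0$. Taking the limit $\omega\to\omega_n$ of the difference equation \eqref{diffeqtildeX1} and noting that $e^{\pm i b\partial_\omega}$ converts to the index shift $n\mapsto n\pm 1$ because $\omega_{n\pm 1}=\omega_n\pm ib$, I would verify that the coefficients of $\tilde{H}_\mathcal{X}(b,\omega_n)$ in \eqref{HXtilde} reduce, under the parameter correspondence \eqref{paramXn}, to the three-term recurrence operator $R_{X_n}$ associated with \eqref{Xn}. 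This would give $R_{X_n} P_n = (z+z^{-1}) P_n$ with $z=e^{2\pi b\sigma_s}$, the same recurrence as for the continuous big $q$-Hermite polynomials; combined with the base case $P_0 = 1 = X_0$, induction on $n$ yields $P_n = X_n$ for all $n\geq 0$.

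The main obstacle is the explicit algebraic matching of $\tilde{H}_\mathcal{X}(b,\omega_n)$ with the recurrence operator $R_{X_n}$ under \eqref{paramXn}. Since $\tilde{H}_\mathcal{X}^+(b,\omega)$ is independent of $\omega$, only $\tilde{H}_\mathcal{X}^-(b,\omega_n)$ and $\tilde{H}_\mathcal{X}^0(b,\omega_n)$ require nontrivial hyperbolic-to-$q$-exponential simplifications, but these should be routine given the identities $e^{2\pi b(\omega_n\pm\theta/2\pm\cdots)}$ converting cleanly into powers of $q$ and $\alpha_X$. A secondary verification point is ensuring that the residue computation in the base case really collapses to $1$ rather than some nontrivial constant; this is the step most susceptible to sign or phase errors, so I would double-check it against the explicit definition of $X_n$ at $n=0$.
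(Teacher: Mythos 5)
Your proposal is correct and is exactly the argument the paper intends: the proof of Theorem \ref{thmXtoXn} is omitted in the text precisely because it is "analogous to the proof of Theorem \ref{thhahn}", i.e.\ the two-step strategy you describe (residue computation at the pinch point for $n=0$, then induction via the $\omega$-difference equation). Your identification of the pinch at $x_0=\omega-\tfrac{\theta}{2}-\tfrac{iQ}{2}$ colliding with $x=0$, the cancellation of the simple zero of $s_b(\omega-\tfrac{\theta}{2})$ in $P_\mathcal{X}$ against the pole of the residue, and the matching $\tilde{H}^+_\mathcal{X}=\alpha_X^{-1}$, $\tilde{H}^0_\mathcal{X}(b,\omega_n)=\alpha_X q^n$, $\tilde{H}^-_\mathcal{X}(b,\omega_n)=\alpha_X(1-q^n)$ with the recurrence operator $R_{X_n}$ in \eqref{recurrenceoperatorXn} all check out.
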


\section{The function $\mathcal{Q}$}

The function $\mathcal{Q}(b,\sigma_s,\eta)$ is defined as a confluent limit of $\mathcal{X}$ and is one of the two elements at the fifth and lowest level of the non-polynomial scheme. We show that $\mathcal{Q}$ is a joint eigenfunction of four difference operators,  two acting on $\sigma_s$ and two acting on $\eta$. Finally, we show that $\mathcal{Q}$ reduces to the continuous $q$-Hermite polynomials, which lie at the lowest level of the $q$-Askey scheme, when $\eta$ is suitably discretized. 

Interestingly, the mechanism behind the polynomial limit for $\mathcal{Q}$ is different from that of all the other polynomial limits in this paper. In all other cases, the simple pole of the contour integral which compensates for the simple zero in the prefactor arises because the contour of integration is squeezed between two poles of the integrand. However, in the case of $\mathcal{Q}$, the simple pole of the contour integral arises because the integrand loses its decay at infinity in the relevant polynomial limit.

The derivation of the difference equations for $\mathcal{Q}$ also presents some novelties compared to the other derivations of difference equations appearing in this paper. More precisely, the first pair of difference equations for $\mathcal{Q}$ obtained by taking the confluent limit of the difference equations for $\mathcal{X}$ are ``squares'' of the simplest possible difference equations for $\mathcal{Q}$. By finding square roots of the relevant difference operators, we obtain the equations in their simplified form (this is the form that reduces to the standard difference equations for the continuous $q$-Hermite polynomials in the polynomial limit).

\subsection{Definition and integral representation}
Let $\eta$ be a new parameter defined as follows:
\beq \label{defomega}
\omega = \frac{\theta}2+\eta.
\eeq
Moreover,  define the normalization factor $K$ by
\beq \label{K}
K(\eta,\theta) = e^{2 i \pi  \left(\eta - \frac{i Q}{2}\right) \left(\theta +\frac{i Q}{2}\right)}.
\eeq 
The factor $K$ is a non-polynomial analog of the factor $\alpha^{-n}$ in \eqref{Qn}.

\begin{definition}
The function $\mathcal{Q}$ is defined by 
\beq\label{fromXtoQ}
\mathcal{Q}(b,\sigma_s,\eta) = \lim\limits_{\theta\to -\infty} \lb K(\eta,\theta) \mathcal{X}(b,\theta,\sigma_s,\tfrac{\theta}2+\eta)\rb.
\eeq
\end{definition}
The next theorem shows that $\mathcal Q$ is a well-defined and analytic function of $(\sigma_s,\eta) \in \mathbb C \times (\{\im \eta < Q/2 \} \backslash \Delta_\eta)$, where $\Delta_\eta$ is a discrete set of points at which $\mathcal Q$ may have poles. 
The theorem also provides an integral representation for $\mathcal Q$. Even if the requirement $\im \eta < Q/2$ is needed to ensure that the integral in the representation for $\mathcal{Q}$ converges, it will follow from the difference equations established later that $\mathcal Q$ extends to a meromorphic function of $\eta \in \mathbb{C}$. 

\begin{theorem} \label{thmforQ}
Suppose that Assumption \ref{assumption} is satisfied. The limit \eqref{fromXtoQ} exists uniformly for $(\sigma_s,\eta)$ in compact subsets of
\beq
D_\mathcal{Q} := \mathbb C \times (\{ \im \eta <Q/2 \} \backslash \Delta_\eta),
\eeq
where
\beq \label{Detadef}
\Delta_\eta := \{-\tfrac{iQ}2 -imb - ilb^{-1} \}_{m,l=0}^\infty.
\eeq
Moreover, the function $\mathcal{Q}$ admits the following integral representation:
\beq\label{defQ}
\mathcal{Q}(b,\sigma_s,\eta) = P_\mathcal{Q}(\sigma_s,\eta) \displaystyle \int_{\mathsf{Q}} dx\; I_\mathcal{Q}(x,\sigma_s,\eta) \qquad \text{for $(\sigma_s,\eta) \in D_\mathcal{Q}$},
\eeq
where 
\begin{align}
\label{PQ} & P_\mathcal{Q}(\sigma_s,\eta) = e^{i \pi  \left(\frac{1}{6}+\frac{7 Q^2}{24}-\frac{\eta^2}{2}+i \eta  Q- \sigma_s^2\right)} s_b(\eta), \\
\label{IQ} & I_\mathcal{Q}(x,\sigma_s,\eta) = e^{-i\pi x^2} e^{2i\pi x(\eta-\frac{iQ}2)} s_b\left(x+\sigma_s\right) s_b\left(x-\sigma_s\right),
\end{align}
and the contour of integration $\mathsf{Q}$ is any curve from $-\infty$ to $+\infty$ passing above the points $x= \pm \sigma_s - iQ/2$ and with the requirement that its right tail satisfies
\beq\label{Qrighttailcondition}
\im x + \tfrac{Q}4 - \tfrac{ \im \eta}2 < -\delta \qquad
\text{for all $x \in \mathsf{Q}$ with $\re x$ sufficiently large},
\eeq
for some $\delta>0$.  If $(\sigma_s,\eta) \in \mathbb R^2$, then the contour $\mathsf{Q}$ can be any curve from $-\infty$ to $+\infty$ lying within the strip $\im x \in (-Q/2,-Q/4-\delta)$.
\end{theorem}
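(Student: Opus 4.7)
The plan is to adapt the strategy used for Theorems \ref{thmforH} and \ref{thmforS}, with one essential modification. The integrand $K(\eta,\theta) P_\mathcal{X}(\sigma_s,\tfrac{\theta}{2}+\eta) I_\mathcal{X}(x,\sigma_s,\tfrac{\theta}{2}+\eta)$ does \emph{not} converge pointwise as $\theta\to-\infty$ on the original contour $\mathsf{X}$: both the exponential prefactor $e^{-i\pi x^2 - 2i\pi x\theta + \cdots}$ and the product $s_b(x+\theta+\sigma_s) s_b(x+\theta-\sigma_s)$ carry large $\theta$-dependent phases. The idea is to first perform the change of variable $x = y-\theta$ in the integral \eqref{defX} and only then take the limit. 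After this substitution, a careful application of the asymptotic formula \eqref{sbasymptotics} to each of $s_b(\pm\sigma_s-\theta)$ (arguments with $\re\to+\infty$) and $s_b(y-\theta\pm\sigma_s)$ (arguments with $\re\to-\infty$) yields the identity
\[
K(\eta,\theta) P_\mathcal{X}(\sigma_s,\tfrac{\theta}{2}+\eta) I_\mathcal{X}(y-\theta,\sigma_s,\tfrac{\theta}{2}+\eta) = P_\mathcal{Q}(\sigma_s,\eta) I_\mathcal{Q}(y,\sigma_s,\eta) W(y,\theta),
\]
where $W(y,\theta)$ is an explicit combination of ratios of $s_b$-factors and exponentials such that $W(y,\theta)\to 1$ uniformly for $y$ in bounded subsets. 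All four quadratic-in-$\theta$ phases, the linear-in-$\theta$ phase from the exponential in $I_\mathcal{X}$, and the two $\theta$-dependent phases in $K(\eta,\theta)$ combine precisely to produce the $(\sigma_s,\eta)$-dependent constant $e^{i\pi(\tfrac{1}{6}+\tfrac{7Q^2}{24}-\tfrac{\eta^2}{2}+i\eta Q-\sigma_s^2)}$ appearing in $P_\mathcal{Q}$.

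With the substitution in place, the integration runs over the rigidly shifted contour $\mathsf{X}+\theta$. I would next deform $\mathsf{X}+\theta$ to any fixed admissible contour $\mathsf{Q}$ without crossing poles of the shifted integrand. Under the shift, the downward pole sequences of $I_\mathcal{X}$ at $x = -\theta\mp\sigma_s-\tfrac{iQ}{2}-imb-ilb^{-1}$ become the fixed sequences at $y = \mp\sigma_s-\tfrac{iQ}{2}-imb-ilb^{-1}$, above which $\mathsf{Q}$ is required to pass (matching the corresponding condition on $\mathsf{X}+\theta$), while the remaining $\theta$-dependent sequences, originating from $s_b(y-\theta-\eta)$ and $1/s_b(y-\theta+\tfrac{iQ}{2})$, have real part near $\theta$ and recede to $-\infty$. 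A standard contour deformation using Cauchy's theorem then identifies the two integrals for all $\theta$ sufficiently negative.

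The last step is a dominated-convergence estimate on the fixed contour $\mathsf{Q}$. The asymptotic formula \eqref{sbasymptotics} yields $s_b(y+\sigma_s) s_b(y-\sigma_s) = O(e^{-i\pi y^2 + O(1)})$ as $\re y\to+\infty$, so the effective exponent of $I_\mathcal{Q}$ on the right tail is $-2i\pi y^2 + 2i\pi y\eta + \pi Qy + O(1)$; the right-tail condition \eqref{Qrighttailcondition} makes its real part behave like $\pi(4\im y - 2\im\eta + Q)\re y$ with strictly negative coefficient, producing exponential decay. On the left tail, the two $s_b$-asymptotics have the opposite sign and cancel the $e^{-i\pi y^2}$ prefactor, leaving the linear exponent $\pi(Q - 2\im\eta)\re y$, which decays thanks to the hypothesis $\im\eta < Q/2$ defining $D_\mathcal{Q}$. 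Combining this with uniform bounds on $W(y,\theta)$ will allow Lebesgue's theorem to conclude both that the limit exists (uniformly on compact subsets of $D_\mathcal{Q}$) and that it is given by \eqref{defQ}.

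I expect the main technical difficulty to be establishing the uniform bound on $W(y,\theta)$ along the entire contour, in particular on the right tail where both $y$ and $y-\theta$ can have large real part. There one must simultaneously apply the $+\infty$ asymptotic to $s_b(\pm\sigma_s-\theta)$ and the $-\infty$ asymptotic to $s_b(y-\theta\pm\sigma_s)$, carefully tracking error terms of order $e^{-c|\re y|}$ and $e^{-c|\theta+\re y|}$. This is analogous to the two-regime estimate at the end of the proof of Theorem \ref{thmforS} (case distinction between $\re y \geq -\theta$ and $\re y < -\theta$), and the same strategy should apply here.
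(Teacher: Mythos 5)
Your proposal follows essentially the same route as the paper: the paper omits the details of this proof but explicitly singles out the change of variables $x \to x-\theta$ before taking the limit as the key step, writing the shifted integrand as $P_\mathcal{Q}(\sigma_s,\eta)\,Z_\mathcal{Q}(x,\theta)\,I_\mathcal{Q}(x,\sigma_s,\eta)$ with $Z_\mathcal{Q}\to 1$, which is exactly your factor $W(y,\theta)$, and then appeals to the same dominated-convergence machinery as in Theorems \ref{thmforH} and \ref{thmforS}. Your tail estimates (quadratic decay on the right under \eqref{Qrighttailcondition}, linear decay $\pi(Q-2\im\eta)\re y$ on the left from $\im\eta<Q/2$) agree with the expansions $\mathcal{A}_\mathcal{Q}^{\pm}$ recorded later in Lemma \ref{lemmaestimateIQ}, so the plan is sound.
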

\begin{proof}
The proof will be omitted since it involves computations which are similar to those presented in the proofs of Theorems \ref{thmforH} and  \ref{thmforS}. However, we point out that before taking the limit $\theta \to -\infty$ of the integral representation \eqref{defX} for $\mathcal{X}$, one should make the change of variables $x \to x - \theta$ and thus write
\beq
K(\eta, \theta) P_\mathcal{X}(\sigma_s,\tfrac{\theta}2+\eta) I_\mathcal{X}(x-\theta,\sigma_s,\tfrac{\theta}2+\eta) = P_\mathcal{Q}(\sigma_s,\eta) Z_\mathcal{Q}(x, \theta) I_\mathcal{Q}(x,\sigma_s,\eta),
\eeq
where
\beq
Z_\mathcal{Q}(x, \theta) := e^{\frac{i \pi}{6} (3 \eta ^2+\frac{5 Q^2}{4}+6 \sigma_s^2-1)} e^{\frac{\pi Q}{2}(x-\theta )} e^{-i \pi  \eta  x} e^{i \pi  \theta  (\eta +\theta )} \frac{s_b(x-\eta -\theta )s_b(\sigma_s-\theta )s_b(-\sigma_s-\theta) }{s_b(x-\theta +\frac{i Q}{2})}
\eeq
satisfies $Z_\mathcal{Q}(x, \theta) \to 1$ as $\theta \to -\infty$.
\end{proof}

Thanks to \eqref{sbinverse}, we have
\beq \label{Qinverse}
\mathcal{Q}(b,\sigma_s,\eta) = \mathcal{Q}(b^{-1},\sigma_s,\eta).
\eeq

\begin{remark}
A close relative of the function $\mathcal{Q}$ has appeared in \cite{KLS2002,R2011} in the context of quantum relativistic Toda systems. More precisely, the function $\mathcal{H}(a_-,a_+,x,y)$ in \cite[Eq. (5.56)]{R2011} is related to $\mathcal{Q}$ by
\beq\label{QequalH}
\mathcal{Q}(b,\sigma_s,\eta) = \; e^{\frac{1}{4} i \pi  \big(\frac{Q^2}{4}+\frac{1}{2}-\eta ^2-8 \sigma_s^2\big)} g_b(\eta) \; \mathcal{H}(b,b^{-1},-\eta,2\sigma_s),
\eeq
where $g_b(z)$ satisfies $s_b(z)=g_b(z)/g_b(-z)$ and is defined by
\beq \label{gb}
g_b(z)=\operatorname{exp}{\left\{ \int_0^\infty \frac{dt}{t}\left[\frac{e^{2i z t}-1}{4 \operatorname{sinh}{b t} \operatorname{sinh}{b^{-1} t}}+\frac{1}{4}z^2 \lb e^{-2bt}+e^{-\frac{2t}b}\rb-\frac{iz}{2t} \right]\right\}}, \qquad \im z > -\frac{Q}2.
\eeq
\end{remark}

\subsection{Difference equations}
We show that $\mathcal{Q}(b,\sigma_s,\eta)$ is a joint eigenfunction of four difference operators, two acting on $\sigma_s$ and the remaining two on $\eta$.

We know from Theorem \ref{thmforQ} that $\mathcal Q$ is a well-defined analytic function of $\sigma_s \in \mathbb C$ and a meromorphic function of $\eta$ for $\im \eta <Q/2$. The difference equations will first be derived as equalities between meromorphic functions defined on this limited domain. However, the difference equations in $\eta$ can then be used to extend the results to $\eta \in \mathbb{C}$, see Proposition \ref{Qextensionprop}.

\subsubsection{First pair of equations}
Define the difference operator $H_\mathcal{Q}(b,\sigma_s)$ by
\beq \label{HQ}
H_\mathcal{Q}(b,\sigma_s) = H^+_\mathcal{Q}(b,\sigma_s) e^{ib \partial_{\sigma_s}} + H^+_\mathcal{Q}(b,-\sigma_s) e^{-ib \partial_{\sigma_s}} + H^0_\mathcal{Q}(b,\sigma_s),
\eeq
where
\begin{align}
 \label{HQplus} & H^+_\mathcal{Q}(b,\sigma_s) = -\frac{e^{-2 \pi  b (2 \sigma_s+i b)}}{4 \sinh (2 \pi  b \sigma_s) \sinh (\pi  b (2 \sigma_s+i b))}, \\
\label{HQ0} & H^0_\mathcal{Q}(b,\sigma_s) = e^{-i\pi b Q} - H^+_\mathcal{Q}(b,\sigma_s) - H^+_\mathcal{Q}(b,-\sigma_s).
\end{align}
\begin{proposition}
For $\sigma_s \in \mathbb C$ and $\im \eta < Q/2$,  the function $\mathcal{Q}$ satisfies the following pair of difference equations:
\begin{subequations} \label{diffeqQ}\begin{align}
\label{diffeqQ1} & H_\mathcal{Q}(b,\sigma_s) \; \mathcal{Q}(b,\sigma_s,\eta) = e^{-2\pi b \eta} \; \mathcal{Q}(b,\sigma_s,\eta),  \\
\label{diffeqQ2} & H_\mathcal{Q}(b^{-1},\sigma_s) \; \mathcal{Q}(b,\sigma_s,\eta) = e^{-2\pi b^{-1} \eta} \; \mathcal{Q}(b,\sigma_s,\eta).
\end{align}\end{subequations}
\end{proposition}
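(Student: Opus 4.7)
The plan is to derive \eqref{diffeqQ1} by taking the confluent limit $\theta\to -\infty$ of the difference equation \eqref{diffeqX1} evaluated at $\omega=\tfrac{\theta}{2}+\eta$; equation \eqref{diffeqQ2} then follows by the same argument applied to \eqref{diffeqX2}, or alternatively from the symmetry \eqref{Qinverse}. This mirrors the strategy already used in Sections \ref{Hdifferencesubsec} and elsewhere to push difference equations down the scheme.

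The first step is to rewrite the eigenvalue on the right-hand side of \eqref{diffeqX1} as $e^{-\pi b\theta} e^{-2\pi b\eta}$ and multiply both sides by $e^{\pi b\theta} K(\eta,\theta)$. Since this factor is independent of $\sigma_s$, it commutes with $H_\mathcal{X}(b,\sigma_s)$, producing
\begin{equation*}
\bigl[e^{\pi b\theta} H_\mathcal{X}(b,\sigma_s)\bigr]\bigl[K(\eta,\theta) \mathcal{X}\lb b,\theta,\sigma_s,\tfrac{\theta}{2}+\eta\rb\bigr] = e^{-2\pi b\eta}\bigl[K(\eta,\theta) \mathcal{X}\lb b,\theta,\sigma_s,\tfrac{\theta}{2}+\eta\rb\bigr].
\end{equation*}
I would then establish the convergence of the rescaled operator coefficients,
\begin{equation*}
\lim_{\theta\to -\infty} e^{\pi b\theta} H_\mathcal{X}^+(b,\pm\sigma_s) = H_\mathcal{Q}^+(b,\pm\sigma_s), \qquad \lim_{\theta\to -\infty} e^{\pi b\theta} H_\mathcal{X}^0(b,\sigma_s) = H_\mathcal{Q}^0(b,\sigma_s).
\end{equation*}
For $H_\mathcal{X}^+$, the only $\theta$-dependence in \eqref{HXplussigmas} is inside $\cosh\lb\pi b(\tfrac{ib}{2}+\theta+\sigma_s)\rb$, whose dominant exponential as $\theta\to-\infty$ is $\tfrac12 e^{-i\pi b^2/2} e^{-\pi b\theta} e^{-\pi b\sigma_s}$; the $e^{-\pi b\theta}$ cancels the rescaling factor, and combining what remains with the exponential prefactor of \eqref{HXplussigmas} reproduces exactly the expression \eqref{HQplus}. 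The convergence for $H_\mathcal{X}^0$ is then immediate from \eqref{HX0sigmas}, \eqref{HQ0}, and the identity $e^{\pi b\theta}\cdot e^{-\pi b(\theta+iQ)}=e^{-i\pi bQ}$.

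The final step is to pass to the limit in the shifted terms. By Theorem \ref{thmforQ}, the convergence $K(\eta,\theta)\mathcal{X}(b,\theta,\sigma_s,\tfrac{\theta}{2}+\eta)\to \mathcal{Q}(b,\sigma_s,\eta)$ is uniform on compact subsets of $D_\mathcal{Q}$. Since $\mathcal Q$ is analytic in $\sigma_s$ on all of $\mathbb C$, this same convergence holds at the shifted points $\sigma_s\pm ib$, which handles the two shifted contributions obtained by expanding $H_\mathcal{X}(b,\sigma_s)$ via \eqref{HX}. Combining this with the pointwise convergence of the coefficients yields \eqref{diffeqQ1} first for $\im\eta<Q/2$ (where both sides are defined directly from Theorem \ref{thmforQ}); equation \eqref{diffeqQ2} follows by the same procedure using \eqref{diffeqX2}, or more cheaply via \eqref{Qinverse}.

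The main obstacle is the identification of the correct rescaling factor $e^{\pi b\theta}$: it is precisely what is needed so that (i) the eigenvalue $e^{-\pi b\theta}e^{-2\pi b\eta}$ becomes the finite quantity $e^{-2\pi b\eta}$, and simultaneously (ii) the operator $e^{\pi b\theta}H_\mathcal{X}(b,\sigma_s)$ has a finite and non-degenerate limit. A naive confluent limit without rescaling would give a divergent eigenvalue; any stronger rescaling annihilates the operator. Beyond this careful bookkeeping, the remaining work is a routine application of uniform convergence.
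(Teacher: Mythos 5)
Your proposal is correct and follows essentially the same route as the paper: multiply \eqref{diffeqX1} by $e^{\pi b\theta}$ (and the $\sigma_s$-independent factor $K(\eta,\theta)$), verify $\lim_{\theta\to-\infty}e^{\pi b\theta}H^{+}_{\mathcal{X}}(b,\pm\sigma_s)=H^{+}_{\mathcal{Q}}(b,\pm\sigma_s)$ and $\lim_{\theta\to-\infty}e^{\pi b\theta}H^{0}_{\mathcal{X}}(b,\sigma_s)=H^{0}_{\mathcal{Q}}(b,\sigma_s)$, and pass to the limit using the definition \eqref{fromXtoQ}. Your write-up is in fact somewhat more careful than the paper's (which omits the role of $K(\eta,\theta)$ and the justification of convergence at the shifted points $\sigma_s\pm ib$), and your explicit asymptotics for the $\cosh$ factor check out.
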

\begin{proof}
It is straightforward to verify that the following limits hold:
\beq
\lim\limits_{\theta\to -\infty} e^{\pi b \theta} H^+_\mathcal{X}(b,\pm\sigma_s) = H^+_\mathcal{Q}(b,\pm\sigma_s), \qquad \lim\limits_{\theta\to -\infty} e^{\pi b \theta} H^0_\mathcal{X}(b,\sigma_s) = H^0_\mathcal{Q}(b,\sigma_s),
\eeq
where $H^+_\mathcal{X}$ and $H^0_\mathcal{X}$ are defined in  \eqref{HXplussigmas} and \eqref{HX0sigmas}, respectively. It follows that
\beq \label{fromHXtoHQ}
\lim\limits_{\theta\to -\infty} e^{\pi b \theta} H_\mathcal{X}(b,\sigma_s) = H_\mathcal{Q}(b,\sigma_s).
\eeq
where $H_\mathcal{X}$ is defined in \eqref{HX}. The difference equation \eqref{diffeqQ1} follows after taking the confluent limit \eqref{fromXtoQ} of the difference equation \eqref{diffeqX1} and utilizing \eqref{fromHXtoHQ}.
\end{proof}

In what follows, we show that the function $\mathcal{Q}$ satisfies a pair of difference equations which is more fundamental than \eqref{diffeqQ}.

\begin{proposition}
Define the difference operator $\hat{H}_\mathcal{Q}(b,\sigma_s)$ by
\beq\label{HhatQ}
\hat{H}_\mathcal{Q}(b,\sigma_s) = \hat{H}^+_\mathcal{Q}(b,\sigma_s) e^{\frac{ib}2\partial_{\sigma_s}} + \hat{H}^-_\mathcal{Q}(b,\sigma_s) e^{-\frac{ib}2\partial_{\sigma_s}},
\eeq
with
\beq
\hat{H}^+_\mathcal{Q}(b,\sigma_s)=-\frac{e^{-\pi b \left(2\sigma_s+\frac{i Q}{2}\right)}}{2 \sinh (2\pi  b \sigma_s)}, \qquad \hat{H}^-_\mathcal{Q}(b,\sigma_s)=\hat{H}^+_\mathcal{Q}(b,-\sigma_s).
\eeq
The following operator identity holds:
\beq \label{squareHhatQ}
\lb \hat{H}_\mathcal{Q}(b,\sigma_s)\rb^2 = H_\mathcal{Q}(b,\sigma_s),
\eeq
where $H_\mathcal{Q}$ is defined in \eqref{HQ}.
\end{proposition}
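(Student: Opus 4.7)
The proof is an entirely algebraic verification: expand the square $(\hat H_\mathcal{Q}(b,\sigma_s))^2$ using the formal rule $e^{\pm \frac{ib}{2}\partial_{\sigma_s}} f(\sigma_s) = f(\sigma_s \pm \frac{ib}{2}) e^{\pm \frac{ib}{2}\partial_{\sigma_s}}$, and then match coefficients of $e^{ib\partial_{\sigma_s}}$, $e^{-ib\partial_{\sigma_s}}$, and the identity operator against $H_\mathcal{Q}(b,\sigma_s)$. Writing $A_\pm(\sigma_s) := \hat H^\pm_\mathcal{Q}(b,\sigma_s)$, the expansion yields
\begin{align*}
(\hat H_\mathcal{Q})^2 = A_+(\sigma_s)A_+(\sigma_s+\tfrac{ib}{2})\,e^{ib\partial_{\sigma_s}} + A_-(\sigma_s)A_-(\sigma_s-\tfrac{ib}{2})\,e^{-ib\partial_{\sigma_s}} + \big[A_+(\sigma_s)A_-(\sigma_s+\tfrac{ib}{2}) + A_-(\sigma_s)A_+(\sigma_s-\tfrac{ib}{2})\big],
\end{align*}
so the claim reduces to three scalar identities.

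For the two shifting terms, I would plug in the explicit formulas for $A_\pm$ and simplify; the key arithmetic fact that powers the simplification in each case is $e^{-i\pi bQ} = e^{-i\pi b^2} \cdot e^{-i\pi b \cdot b^{-1}} = -e^{-i\pi b^2}$, which accounts for the overall minus sign present in $H^+_\mathcal{Q}$ but absent in the products $A_+(\sigma_s)A_+(\sigma_s+\tfrac{ib}{2})$ and $A_-(\sigma_s)A_-(\sigma_s-\tfrac{ib}{2})$. This also gives automatically the identity $A_-(\sigma_s)A_-(\sigma_s - \tfrac{ib}{2}) = H^+_\mathcal{Q}(b,-\sigma_s)$ from the $\sigma_s \to -\sigma_s$ symmetry of the computation.

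For the constant term, the substitution $x := \pi b(2\sigma_s + ib)$, $y := \pi b(2\sigma_s - ib)$ makes the bookkeeping transparent: $x+y = 4\pi b\sigma_s$, $x-y = 2i\pi b^2$, and $\sinh(\tfrac{x+y}{2}) = \sinh(2\pi b\sigma_s)$. A direct computation gives
\begin{equation*}
A_+(\sigma_s)A_-(\sigma_s+\tfrac{ib}{2}) + A_-(\sigma_s)A_+(\sigma_s-\tfrac{ib}{2}) = \frac{\cos(\pi b^2)}{2\sinh(x)\sinh(y)},
\end{equation*}
using only $e^{i\pi b(b-Q)} = e^{-i\pi} = -1$ to clear a phase. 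Then the required identity becomes
\begin{equation*}
\frac{\cos(\pi b^2)}{2\sinh(x)\sinh(y)} + H^+_\mathcal{Q}(b,\sigma_s) + H^+_\mathcal{Q}(b,-\sigma_s) = e^{-i\pi b Q},
\end{equation*}
which, after putting everything over the common denominator $4\sinh(\tfrac{x+y}{2})\sinh(x)\sinh(y)$ and using $2\sinh(\tfrac{x+y}{2})\cosh(\tfrac{x-y}{2}) = \sinh(x) + \sinh(y)$ together with $1 - e^{-2x} = 2e^{-x}\sinh(x)$ and $1 - e^{2y} = -2e^y \sinh(y)$, collapses to $-e^{(y-x)/2} = -e^{-i\pi b^2} = e^{-i\pi b Q}$.

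I do not expect any real obstacle here; the proof is pure symbol-pushing once the three scalar identities are isolated. The only mild subtlety is the correct tracking of the factor $e^{-i\pi} = -1$ produced by repeatedly converting between $i\pi b^2$ and $i\pi b Q$, which is what links the ``square-root'' operator $\hat H_\mathcal{Q}$ (whose shifts are by $\pm \tfrac{ib}{2}$) to the original operator $H_\mathcal{Q}$ (whose shifts are by $\pm ib$ and whose constant term involves $e^{-i\pi b Q}$).
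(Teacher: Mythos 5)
Your proposal is correct and follows exactly the paper's route: expand $(\hat H_\mathcal{Q})^2$ into the three operator coefficients and verify the resulting scalar identities by direct computation (the paper states the same expansion and then simply declares the verification "straightforward", whereas you supply the details, all of which check out, including the sign bookkeeping via $e^{-i\pi bQ}=-e^{-i\pi b^2}$).
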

\begin{proof}
The left-hand side of \eqref{squareHhatQ} can be written as
\begin{align} \label{squareHhatQexpand}
\nonumber \lb \hat{H}_\mathcal{Q}(b,\sigma_s)\rb^2 = &\; \hat{H}^+_\mathcal{Q}(b,\sigma_s) \hat{H}^+_\mathcal{Q}(b,\sigma_s+\tfrac{ib}2) e^{ib\partial_{\sigma_s}} 
+ \hat{H}^-_\mathcal{Q}(b, \sigma_s) \hat{H}^-_\mathcal{Q}(b,\sigma_s-\tfrac{ib}2) e^{-ib\partial_{\sigma_s}} 
	\\
& +  \hat{H}^+_\mathcal{Q}(b,\sigma_s) \hat{H}^-_\mathcal{Q}(b,\sigma_s+\tfrac{ib}2) + \hat{H}^-_\mathcal{Q}(b,\sigma_s) \hat{H}^+_\mathcal{Q}(b,\sigma_s-\tfrac{ib}2) 
\end{align}
and straightforward computations show that the right-hand side coincides with the operator $H_\mathcal{Q}(b,\sigma_s)$.
\end{proof}

We next show that the difference equations \eqref{diffeqQ} satisfied by the function $\mathcal{Q}$ can be simplified using the identity \eqref{squareHhatQ}. The simplified equations can be viewed as ``square roots'' of the equations in \eqref{diffeqQ}.

\begin{proposition}\label{diffeqQrootprop}
For $\sigma_s \in \mathbb C$ and $\im \eta < Q/2$,  the function $\mathcal{Q}$ satisfies
\begin{subequations} \label{diffeqQroot} \begin{align}
\label{diffeqQroot1} & \hat{H}_\mathcal{Q}(b,\sigma_s) \; \mathcal{Q}(b,\sigma_s,\eta) = e^{-\pi b \eta} \; \mathcal{Q}(b,\sigma_s,\eta),  \\
\label{diffeqQroot2} & \hat{H}_\mathcal{Q}(b^{-1},\sigma_s) \; \mathcal{Q}(b,\sigma_s,\eta) = e^{-\pi b^{-1} \eta} \; \mathcal{Q}(b,\sigma_s,\eta).
\end{align}\end{subequations}
\end{proposition}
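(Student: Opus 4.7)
The plan is to verify the difference equation \eqref{diffeqQroot1} by direct substitution of the integral representation \eqref{defQ}, working in an open subregion of $(\sigma_s,\eta)$ where a single contour $\mathsf{Q}$ can be used for all three values $\sigma_s$, $\sigma_s\pm\tfrac{ib}{2}$ and where the shifted contour $\mathsf{Q}-\tfrac{ib}{2}$ still lies above the topmost poles of the integrand $I_\mathcal{Q}(\cdot,\sigma_s,\eta)$. The equation \eqref{diffeqQroot2} will then follow from the symmetry \eqref{Qinverse}, and meromorphic continuation will extend the result to the full stated domain $\sigma_s\in\mathbb{C}$, $\im\eta<Q/2$.

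The algebraic heart of the argument is an integrand identity derived from the half-period $s_b$ difference equation $s_b(z+\tfrac{ib}{2}) = 2\cosh(\pi b z)s_b(z-\tfrac{ib}{2})$. Applied to the four factors $s_b(x\pm(\sigma_s\pm\tfrac{ib}{2}))$, this identity allows both $I_\mathcal{Q}(x,\sigma_s+\tfrac{ib}{2},\eta)$ and $I_\mathcal{Q}(x,\sigma_s-\tfrac{ib}{2},\eta)$ to be written as $2\cosh(\pi b(x\pm\sigma_s))$ times the common factor
\begin{equation*}
\Psi(x) := e^{-i\pi x^2}\,e^{2i\pi x(\eta-iQ/2)}\,s_b(x+\sigma_s-\tfrac{ib}{2})\,s_b(x-\sigma_s-\tfrac{ib}{2}).
\end{equation*}
A short hyperbolic computation then yields the pointwise identity
\begin{equation*}
e^{\pi b\sigma_s}\,I_\mathcal{Q}(x,\sigma_s-\tfrac{ib}{2},\eta) - e^{-\pi b\sigma_s}\,I_\mathcal{Q}(x,\sigma_s+\tfrac{ib}{2},\eta) = 2e^{-\pi b x}\sinh(2\pi b\sigma_s)\,\Psi(x).
\end{equation*}

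Combining this identity with the prefactor ratio $P_\mathcal{Q}(\sigma_s\pm\tfrac{ib}{2},\eta)/P_\mathcal{Q}(\sigma_s,\eta) = e^{\pm\pi b\sigma_s+i\pi b^2/4}$, the explicit coefficients $\hat{H}^{\pm}_\mathcal{Q}(b,\sigma_s)$, and the simplification $e^{-i\pi bQ/2} = -i e^{-i\pi b^2/2}$, I obtain
\begin{equation*}
\hat{H}_\mathcal{Q}(b,\sigma_s)\,\mathcal{Q}(b,\sigma_s,\eta) = -i e^{-i\pi b^2/4}\,P_\mathcal{Q}(\sigma_s,\eta)\int_{\mathsf{Q}}e^{-\pi b x}\,\Psi(x)\,dx.
\end{equation*}
Substituting $x = y+\tfrac{ib}{2}$ collapses the four shifted $s_b$-factors in $\Psi$ back to $s_b(y\pm\sigma_s)$ of $I_\mathcal{Q}(y,\sigma_s,\eta)$, while the Gaussian and the linear exponential combine into the constant factor $i e^{i\pi b^2/4-\pi b\eta}$; together these give
\begin{equation*}
\hat{H}_\mathcal{Q}(b,\sigma_s)\,\mathcal{Q}(b,\sigma_s,\eta) = e^{-\pi b\eta}\,P_\mathcal{Q}(\sigma_s,\eta)\int_{\mathsf{Q}-ib/2}I_\mathcal{Q}(y,\sigma_s,\eta)\,dy.
\end{equation*}

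The main obstacle is then the contour identification $\int_{\mathsf{Q}-ib/2} = \int_{\mathsf{Q}}$. The integrand $I_\mathcal{Q}(\cdot,\sigma_s,\eta)$ has topmost poles at $y=\pm\sigma_s-iQ/2$, so the downward shift of the contour crosses no pole provided $\mathsf{Q}$ lies at least $b/2$ above these points while still satisfying the right-tail condition \eqref{Qrighttailcondition}. For $\sigma_s$ real and $\im\eta\in(b-Q/2,Q/2)$ the strip $\im y\in(b/2-Q/2,\im\eta/2-Q/4)$ is non-empty, so such a contour exists; the direct computation then closes to $\hat{H}_\mathcal{Q}(b,\sigma_s)\mathcal{Q}=e^{-\pi b\eta}\mathcal{Q}$ on this open set, and analytic continuation extends the identity throughout the domain of the proposition. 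As a consistency check, the identity $(\hat{H}_\mathcal{Q})^2 = H_\mathcal{Q}$ from \eqref{squareHhatQ} together with \eqref{diffeqQ1} forces the eigenvalue of $\mathcal{Q}$ under $\hat{H}_\mathcal{Q}$ to equal $\pm e^{-\pi b\eta}$, and the direct computation above selects the positive sign.
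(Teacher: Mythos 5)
Your argument is correct and is essentially the paper's own proof: your pointwise identity for $e^{\pm\pi b\sigma_s}I_\mathcal{Q}(x,\sigma_s\mp\tfrac{ib}{2},\eta)$, combined with the prefactor ratio and the coefficients $\hat{H}^{\pm}_\mathcal{Q}$, is exactly the identity $\hat{H}_\mathcal{Q}(b,\sigma_s)\,\psi(x,\sigma_s,\eta)=\psi(x-\tfrac{ib}{2},\sigma_s,\eta)$ that the paper records in \eqref{identitypsi}, and both proofs then conclude by the substitution $x=y+\tfrac{ib}{2}$ and a pole-free, decay-preserving deformation of $\mathsf{Q}-\tfrac{ib}{2}$ back to $\mathsf{Q}$. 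Your explicit verification of the constants and the sign check via $(\hat{H}_\mathcal{Q})^2=H_\mathcal{Q}$ are consistent with, and slightly more detailed than, what the paper writes out.
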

\begin{proof}
Let us rewrite \eqref{defQ} as 
\beq \label{defQ2}
\mathcal{Q}(b,\sigma_s,\eta) = \displaystyle\int_{\mathsf{Q}} dx \; \psi(x,\sigma_s,\eta) e^{2i\pi x \eta},
\eeq
where
\begin{align}
\psi(x,\sigma_s,\eta) = e^{i \pi  \left(\frac{1}{6}+\frac{7 Q^2}{24}-\frac{\eta ^2}{2}+i \eta  Q-\sigma_s^2\right)} \; s_b(\eta) e^{-i\pi x^2} e^{\pi Q x} s_b\left(x+\sigma_s\right) s_b\left(x-\sigma_s\right),
\end{align}
and where the contour $\mathsf{Q}$ is such that it passes above the points $x= \pm \sigma_s$, and such that $\im x < \tfrac{ \im \eta}2 - Q/4 - \delta$ for some small but fixed $\delta > 0$ as $\re x \to +\infty$. 
The following estimates, which are easily established with the help of \eqref{sbasymptotics}, imply that the integrand has exponential decay on $\mathsf{Q}$ as $\re x \to +\infty$:
\beq \label{estimateIQ}
\psi(x,\sigma_s,\eta) e^{2i\pi x \eta} = \begin{cases} O\lb e^{4\pi (\re x)( \im x+\frac{Q}4-\frac{\im \eta}2)} \rb, & \re x \to +\infty, \\
O\lb e^{2\pi \re x (\frac{Q}{2} - \im \eta)} \rb, & \re x \to -\infty,
 \end{cases}
\eeq
uniformly for $(b,\im x,\sigma_s,\eta)$ in compact subsets of $(0,\infty) \times \mathbb R \times \mathbb{C}^2$. 

Utilizing the difference equation \eqref{differencesb}, we verify that the following identity holds:
\beq \label{identitypsi}
\hat{H}_\mathcal{Q}(b,\sigma_s) \psi(x,\sigma_s,\eta) = \psi(x-\tfrac{ib}2,\sigma_s,\eta).
\eeq
Letting the difference operator $\hat{H}_\mathcal{Q}$ act inside the contour integral in \eqref{defQ2} and using \eqref{identitypsi}, we obtain
\begin{align*}
\hat{H}_\mathcal{Q}(b,\sigma_s) \mathcal{Q}(b,\sigma_s,\eta) = \displaystyle \int_{\mathsf{Q}} dx \; \psi(x-\tfrac{ib}2,\sigma_s,\eta) e^{2i\pi x \eta}.
\end{align*}
Performing the change of variables $x = y + ib/2$, we find
\begin{align*}
\hat{H}_\mathcal{Q}(b,\sigma_s) \mathcal{Q}(b,\sigma_s,\eta) = 
e^{-\pi b \eta}  \int_{\mathsf{Q} - \frac{ib}{2}} dy \; \psi(y,\sigma_s,\eta) e^{2i\pi y \eta}.
\end{align*}
Deforming the contour back to $\mathsf{Q}$, noting that no poles are crossed and that the integrand retains its exponential decay at infinity throughout the deformation, we arrive at \eqref{diffeqQroot1}. The difference equation \eqref{diffeqQroot2} follows from \eqref{diffeqQroot1} and the symmetry \eqref{Qinverse} of $\mathcal{Q}$.
\end{proof}

\subsubsection{Second pair of equations}

Define the difference operator $\tilde{H}_\mathcal{Q}(b,\eta)$ by
\beq \label{HQtilde}
\tilde{H}_\mathcal{Q}(b,\eta) = e^{ib\partial_\eta} + \lb 1+e^{2 \pi  b \left(\eta-\frac{i b}{2}\right)} \rb e^{-ib\partial_\eta}.
\eeq
\begin{proposition}
For $\sigma_s \in \mathbb C$ and $\im(\eta+ib^{\pm 1}) < Q/2$, the function $\mathcal{Q}$ satisfies the pair of difference equations
\begin{subequations} \label{diffeqtildeQ}\begin{align}
\label{diffeqtildeQ1} & \tilde{H}_\mathcal{Q}(b,\eta) \; \mathcal{Q}(b,\sigma_s,\eta) = 2\operatorname{cosh}{(2\pi b \sigma_s)} \; \mathcal{Q}(b,\sigma_s,\eta),  \\
\label{diffeqtildeQ2} & \tilde{H}_\mathcal{Q}(b^{-1},\eta) \; \mathcal{Q}(b,\sigma_s,\eta) = 2\operatorname{cosh}{(2\pi b^{-1} \sigma_s)} \; \mathcal{Q}(b,\sigma_s,\eta).
\end{align}\end{subequations}
\end{proposition}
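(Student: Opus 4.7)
The plan is to obtain \eqref{diffeqtildeQ1} as the confluent limit $\theta \to -\infty$ of the difference equation \eqref{diffeqtildeX1} satisfied by $\mathcal{X}$, in the same spirit as Propositions \ref{diffeqHprop} and \ref{diffeqtildeHprop}; \eqref{diffeqtildeQ2} will then follow from \eqref{diffeqtildeQ1} and the $b \leftrightarrow b^{-1}$ symmetry \eqref{Qinverse} of $\mathcal{Q}$. The genuinely new ingredient, compared with the analogous derivations in earlier sections, is that the definition \eqref{fromXtoQ} of $\mathcal{Q}$ contains the normalization factor $K(\eta,\theta)$, which must be conjugated through the shift operators $e^{\pm ib \partial_\eta}$ before the limit can be taken.

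Concretely, I would first substitute $\omega = \theta/2 + \eta$ in \eqref{diffeqtildeX1} so that the shifts $e^{\pm ib\partial_\omega}$ become $e^{\pm ib\partial_\eta}$, and then multiply both sides by $K(\eta,\theta)$. A direct calculation from the definition $K(\eta,\theta) = e^{2i\pi(\eta - iQ/2)(\theta + iQ/2)}$ gives the conjugation identity
\begin{equation*}
K(\eta,\theta)\, e^{\pm ib\partial_\eta}\, K(\eta,\theta)^{-1} = e^{\pm 2\pi b(\theta + iQ/2)}\, e^{\pm ib \partial_\eta},
\end{equation*}
so that the equation takes the form $[A^+ e^{ib\partial_\eta} + A^- e^{-ib\partial_\eta} + A^0](K\mathcal{X}) = 2\cosh(2\pi b \sigma_s)(K\mathcal{X})$ with coefficients $A^\pm(\theta,\eta) = e^{\pm 2\pi b (\theta + iQ/2)}\, \tilde{H}^\pm_\mathcal{X}(b, \theta/2 + \eta)$ and $A^0(\theta,\eta) = \tilde{H}^0_\mathcal{X}(b, \theta/2 + \eta)$. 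The specific choice $\tilde{H}^+_\mathcal{X}(b,\omega) = e^{-2\pi b(\theta + iQ/2)}$ makes $A^+ \to 1$ automatically; a short hyperbolic manipulation using $Q = b + b^{-1}$ and $e^{-i\pi bQ} = -e^{-i\pi b^2}$ reduces $A^-$ to $1 + e^{2\pi b(\eta - ib/2)}$; and the overall factor $e^{2\pi b\theta}$ in $\tilde{H}^0_\mathcal{X}(b, \theta/2 + \eta) = e^{2\pi b(\theta + \eta)}$ forces $A^0 \to 0$. These pointwise limits match precisely the coefficients of $\tilde{H}_\mathcal{Q}$ in \eqref{HQtilde}.

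Combining the pointwise convergence of the coefficients with the uniform-on-compacts convergence $K\mathcal{X} \to \mathcal{Q}$ from Theorem \ref{thmforQ} then yields \eqref{diffeqtildeQ1} as an identity of meromorphic functions on the region where $\mathcal{Q}(b,\sigma_s,\eta)$ and its shifts $\mathcal{Q}(b,\sigma_s,\eta \pm ib)$ are all defined by the integral representation \eqref{defQ}; this is exactly the strip $\im(\eta + ib^{\pm 1}) < Q/2$ appearing in the proposition, the binding constraint coming from $e^{ib\partial_\eta}$. The only real obstacle is the bookkeeping around $K(\eta,\theta)$ and the hyperbolic simplification producing $A^- \to 1 + e^{2\pi b(\eta - ib/2)}$; interchanging the confluent limit with the shift operators is automatic once the uniform convergence in Theorem \ref{thmforQ} is invoked.
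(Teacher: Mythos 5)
Your proposal is correct and follows essentially the same route as the paper: rewrite \eqref{diffeqtildeX1} as $\big(K\tilde{H}_\mathcal{X}(b,\tfrac{\theta}{2}+\eta)K^{-1}\big)(K\mathcal{X}) = 2\cosh(2\pi b\sigma_s)(K\mathcal{X})$, verify that the conjugated operator tends to $\tilde{H}_\mathcal{Q}(b,\eta)$ as $\theta\to-\infty$, and deduce \eqref{diffeqtildeQ2} from the symmetry \eqref{Qinverse}. Your explicit computation of the conjugation identity and of the limits $A^+\to 1$, $A^-\to 1+e^{2\pi b(\eta-\frac{ib}{2})}$, $A^0\to 0$ just fills in details that the paper leaves to the reader, and your identification of $\im(\eta+ib)<Q/2$ as the binding constraint agrees with the stated hypothesis.
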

\begin{proof}
The difference equation \eqref{diffeqtildeX1} can be rewritten as follows:
\beq \label{rewritediffeqXtildeq}
\lb K(\eta,\theta) \tilde{H}_{\mathcal{X}}(b,\tfrac{\theta}2+\eta) K(\eta,\theta)^{-1} \rb K(\eta,\theta) \mathcal{X}(b,\theta,\sigma_s,\tfrac{\theta}2+\eta) = 2\operatorname{cosh}{(2\pi b \sigma_s)} \; K(\eta,\theta) \mathcal{X}(b,\theta,\sigma_s,\tfrac{\theta}2+\eta),
\eeq
where the operator $\tilde{H}_{\mathcal{X}}$ is given in \eqref{HXtilde}. Moreover, we have
\beq
\lim \limits_{\theta\to -\infty} K(\eta,\theta) \tilde{H}_{\mathcal{X}}(b,\tfrac{\theta}2+\eta) K(\eta,\theta)^{-1} = \tilde{H}_\mathcal{Q}(b,\eta).
\eeq
Taking the limit $\theta\to -\infty$ of \eqref{rewritediffeqXtildeq} and recalling the definition \eqref{fromXtoQ}, we obtain \eqref{diffeqtildeQ1}. Finally,  \eqref{diffeqtildeQ2} follows from \eqref{diffeqtildeQ1} and the symmetry \eqref{Qinverse} of $\mathcal{Q}$.
\end{proof}

The next proposition shows that $\mathcal{Q}$ extends to a meromorphic function of $\eta$ everywhere in the complex plane. The proof will be omitted, since it is similar to that of Proposition \ref{Hextensionprop}.

\begin{proposition} \label{Qextensionprop}
Let $b \in (0,\infty)$ and $\sigma_s \in \mathbb C$. Then there is a discrete subset $\Delta \subset \mathbb C$ such that the limit in \eqref{fromXtoQ} exists for all $\eta \in \mathbb C \backslash \Delta$. Moreover, the function $\mathcal Q$ defined by \eqref{fromXtoQ} is a meromorphic function of $(\sigma_s,\eta) \in \mathbb C^2$ and the four difference equations \eqref{diffeqQroot} and \eqref{diffeqtildeQ} hold as equalities between meromorphic functions of $(\sigma_s,\eta) \in \mathbb C^2$.
\end{proposition}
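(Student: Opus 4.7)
The plan is to mirror the proof of Proposition~\ref{Hextensionprop}. The key observation is that the difference equation \eqref{diffeqtildeQ1}, combined with the explicit form \eqref{HQtilde} of $\tilde{H}_\mathcal{Q}$, rearranges to the upward recursion
\begin{equation*}
\mathcal{Q}(b,\sigma_s,\eta+ib) = 2\cosh(2\pi b \sigma_s)\,\mathcal{Q}(b,\sigma_s,\eta) - \bigl(1+e^{2\pi b(\eta-ib/2)}\bigr)\mathcal{Q}(b,\sigma_s,\eta-ib).
\end{equation*}
This will allow us to extend the domain of validity of the limit in \eqref{fromXtoQ} from the initial strip $\{\im \eta < Q/2\} \setminus \Delta_\eta$ upward by increments of $ib$.

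Since the proposition asserts that the limit in \eqref{fromXtoQ} itself exists on the extended domain (not merely that $\mathcal{Q}$ admits a meromorphic continuation), I would argue at the pre-limit level, just as in the proof of Proposition~\ref{Hextensionprop}. Starting from the difference equation \eqref{diffeqtildeX1} evaluated at $\omega=\tfrac{\theta}{2}+\eta$, I would solve for $\mathcal{X}(b,\theta,\sigma_s,\tfrac{\theta}{2}+\eta+ib)$ and multiply through by $K(\eta+ib,\theta)$, to obtain an identity expressing $K(\eta+ib,\theta)\mathcal{X}(\tfrac{\theta}{2}+\eta+ib)$ as an explicit linear combination of $K(\eta,\theta)\mathcal{X}(\tfrac{\theta}{2}+\eta)$ and $K(\eta-ib,\theta)\mathcal{X}(\tfrac{\theta}{2}+\eta-ib)$. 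Using the definition \eqref{K} of $K$ and the explicit coefficients of $\tilde{H}_\mathcal{X}$, a direct computation shows that all $\theta$-dependent factors either cancel (via the ratios $K(\eta\pm ib,\theta)/K(\eta,\theta) = e^{\mp 2\pi b(\theta+iQ/2)}$, which exactly compensate the $\theta$-dependence of $\tilde{H}_\mathcal{X}^\pm$) or vanish in the limit (such as $\tilde{H}_\mathcal{X}^0 = e^{\pi b(\theta + 2\omega)} \to 0$). Applying Theorem~\ref{thmforQ} to the two lower terms, uniform convergence on compact subsets then shows that the limit in \eqref{fromXtoQ} exists for $\eta$ in $\{Q/2 \le \im\eta < Q/2+b\}\setminus\Delta^{(1)}$ for a discrete set $\Delta^{(1)}$, and reproduces the recursion above in the limit.

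Iteration extends the limit to $\eta \in \mathbb{C}\setminus \Delta$ for a discrete set $\Delta$, and each extension step is given by a meromorphic expression, so the extended $\mathcal{Q}$ is meromorphic in $(\sigma_s,\eta) \in \mathbb{C}^2$. The four difference equations \eqref{diffeqQroot} and \eqref{diffeqtildeQ} are already known to be identities of meromorphic functions on the nonempty open set $\mathbb{C}\times(\{\im\eta<Q/2\}\setminus\Delta_\eta)$, so they extend to identities on all of $\mathbb{C}^2$ by the identity principle for meromorphic functions. The main subtlety is bookkeeping: one must verify that $\Delta$ remains discrete after countably many iterations, which follows because the zeros of the recursion coefficient $1+e^{2\pi b(\eta-ib/2)}$ form a discrete set $\{\tfrac{ib}{2}+\tfrac{i}{2b}+\tfrac{ik}{b}\}_{k\in\mathbb{Z}}$ and the inherited poles are only shifted by the fixed lattice step $ib$ at each stage, so no accumulation in $\mathbb{C}$ occurs.
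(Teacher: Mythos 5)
Your proposal is correct and follows essentially the same route the paper intends: the paper omits the proof of Proposition \ref{Qextensionprop} precisely because it is the analogue of Proposition \ref{Hextensionprop}, i.e., one solves the pre-limit difference equation \eqref{diffeqtildeX1} (conjugated by $K$) for the shifted term, passes to the limit $\theta\to-\infty$ strip by strip, and then extends the difference equations by meromorphic continuation. Your computations of the $K$-conjugated coefficients match the paper's own limit $\lim_{\theta\to-\infty}K\tilde{H}_{\mathcal{X}}K^{-1}=\tilde{H}_\mathcal{Q}$, and the only (harmless) superfluity is the worry about zeros of $1+e^{2\pi b(\eta-ib/2)}$, which do not enter the upward recursion since the coefficient of $e^{ib\partial_\eta}$ in $\tilde{H}_\mathcal{Q}$ is identically $1$.
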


\subsection{Polynomial limit}
In this subsection, we show that the function $\mathcal{Q}$ reduces to the continuous $q$-Hermite polynomials when $\eta$ is suitably discretized.

\begin{theorem}[From $\mathcal{Q}$ to the continuous $q$-Hermite polynomials] \label{thmQtoQn}
Let $\sigma_s \in \mathbb{C}$. Suppose that $b>0$ is such that $b^2$ is irrational. Define $\{\eta_n\}_{n=0}^\infty \subset \mathbb{C}$ by
\beq \eta_n = \tfrac{iQ}2+ibn. \eeq
For each integer $n \geq 0$, the function $\mathcal{Q}$ satisfies
\beq\label{QtoQn}
\lim\limits_{\eta\to \eta_n} \mathcal{Q}(b,\eta,\sigma_s) = Q_n\lb e^{2\pi b \sigma_s};e^{2i\pi b^2}\rb,
\eeq
where $Q_n$ are the continuous $q$-Hermite polynomials defined in \eqref{Qn}.
\end{theorem}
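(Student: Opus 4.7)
The strategy has two parts: first, establish the base case $n = 0$ by asymptotic analysis of the integral representation \eqref{defQ} as $\eta \to \eta_0 = iQ/2$, and second, use the difference equation \eqref{diffeqtildeQ1}, which by Proposition \ref{Qextensionprop} holds as an equality of meromorphic functions on $\mathbb{C}^2$, to derive a three-term recurrence for $P_n := \mathcal{Q}(b,\sigma_s,\eta_n)$ matching the one satisfied by the continuous $q$-Hermite polynomials, and then induct.

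For the base case, the prefactor $P_\mathcal{Q}$ has a simple zero at $\eta_0$ coming from $s_b(\eta)$: the identity $s_b(z) = 1/s_b(-z)$ combined with \eqref{ressb} gives $s_b(\eta) \sim 2i\pi(\eta - iQ/2)$ as $\eta \to iQ/2$. The integral $\int_\mathsf{Q} I_\mathcal{Q}\,dx$ is regular for $\im \eta < Q/2$, but as $\eta \to iQ/2$ the integrand loses its decay on the left tail of $\mathsf{Q}$: applying \eqref{sbasymptotics} with $\re x \to -\infty$ gives
\begin{equation*}
I_\mathcal{Q}(x,\sigma_s,\eta) = e^{i\pi \sigma_s^2} e^{i\pi(b^2+b^{-2})/12}\,e^{2i\pi x(\eta - iQ/2)}\bigl(1 + o(1)\bigr).
\end{equation*}
I would split $\mathsf{Q} = \mathsf{Q}_L \cup \mathsf{Q}_R$ at a fixed point, bound the contributions from $\mathsf{Q}_R$ and from the bulk of $\mathsf{Q}_L$ uniformly in $\eta$ near $\eta_0$, and evaluate the leading-order left-tail contribution as an elementary exponential integral, producing a simple pole of magnitude $\bigl[2i\pi(\eta - iQ/2)\bigr]^{-1} e^{i\pi\sigma_s^2} e^{i\pi(b^2+b^{-2})/12}$. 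Using $Q^2 = b^2 + b^{-2} + 2$, the constant exponent of $P_\mathcal{Q}$ evaluated at $\eta_0$ simplifies to $-\tfrac{b^2+b^{-2}}{12} - \sigma_s^2$, so the zero of $P_\mathcal{Q}$ and the pole of the integral cancel exactly, yielding $\lim_{\eta\to\eta_0}\mathcal{Q} = 1 = Q_0(e^{2\pi b\sigma_s};q)$.

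For the recurrence, evaluating $\tilde H_\mathcal{Q}(b,\eta)$ at $\eta = \eta_n$ uses $e^{i\pi bQ} = e^{i\pi(b^2+1)} = -q^{1/2}$ to compute $e^{2\pi b(\eta_n - ib/2)} = -q^n$, so the coefficients become $\tilde H^+_\mathcal{Q} = 1$ and $\tilde H^-_\mathcal{Q}(b,\eta_n) = 1 - q^n$. Since $\eta_n \pm ib = \eta_{n\pm 1}$, the identity \eqref{diffeqtildeQ1} at $\eta = \eta_n$ becomes
\begin{equation*}
P_{n+1} + (1-q^n)P_{n-1} = (z + z^{-1})P_n, \qquad z = e^{2\pi b\sigma_s},
\end{equation*}
which is the three-term recurrence satisfied by the continuous $q$-Hermite polynomials defined in \eqref{Qn}. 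A short check shows that $\eta_{-1} = iQ/2 - ib$ satisfies $\im\eta_{-1} = (b^{-1}-b)/2 < Q/2$ and lies outside $\Delta_\eta$, so $\mathcal{Q}(\sigma_s,\eta_{-1})$ is finite by Theorem \ref{thmforQ}. Applying the recurrence at $n=0$ (where the coefficient of $P_{-1}$ vanishes) gives $P_1 = z+z^{-1} = Q_1$, and an induction on $n\geq 1$ then propagates both the finiteness of $P_n$ and the identification $P_n = Q_n(e^{2\pi b\sigma_s};q)$.

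The main obstacle is the base case: unlike every other polynomial limit in the paper, the singular factor compensating the zero of $P_\mathcal{Q}$ does not arise from a pinched contour but from the loss of integrand decay at $-\infty$, so a genuine tail expansion with uniform control of subleading terms as $\eta \to \eta_0$ is required.
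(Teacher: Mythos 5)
Your proposal is correct and follows essentially the same route as the paper: the base case is obtained by splitting the contour, isolating the left-tail contribution $\int e^{2i\pi x(\eta-iQ/2)}\,dx$ whose simple pole at $\eta_0$ cancels the simple zero of $s_b(\eta)$ in $P_{\mathcal{Q}}$ (the paper packages the uniform control of the remainder into Lemmas \ref{lemmaestimateIQ} and \ref{lemmaforQ1}), and the induction uses the meromorphically extended difference equation \eqref{diffeqtildeQ1} with the coefficient $1+e^{2\pi b(\eta_n-ib/2)}=1-q^n$, together with the finiteness of $\mathcal{Q}$ at $\eta_{-1}$, exactly as in the paper. All of your numerical checks (the constant $-\tfrac{b^2+b^{-2}}{12}-\sigma_s^2$ in the exponent of $P_{\mathcal{Q}}$ at $\eta_0$, and $e^{2\pi b(\eta_n-ib/2)}=-q^n$) agree with the paper's computations.
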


In order to prove Theorem \ref{thmQtoQn}, we will need the following two lemmas.

\begin{lemma}\label{lemmaestimateIQ}
For each $\epsilon >0$, the integrand $I_\mathcal{Q}$ defined in \eqref{IQ} obeys the estimates
\begin{subequations} \label{estimatesIQ}
\begin{align}
\label{estimateIQplus} & \operatorname{ln}{\lb I_\mathcal{Q}(x,\sigma_s,\eta) \rb} = \mathcal{A}^+_\mathcal{Q}(x,\eta) + O\Big( e^{-\frac{2\pi (1-\epsilon)}{\max(b,b^{-1})}|\re{x}|}\Big), \qquad \re{x} \to +\infty,  \\
\label{estimateIQminus} & \operatorname{ln}{\lb I_\mathcal{Q}(x,\sigma_s,\eta) \rb} = \mathcal{A}^-_\mathcal{Q}(x,\eta) + O\Big( e^{-\frac{2\pi (1-\epsilon)}{\max(b,b^{-1})}|\re{x}|}\Big), \qquad \re{x} \to -\infty,
\end{align}\end{subequations}
uniformly for $(b,\im x, \sigma_s,\eta)$ in compact subsets of $(0,\infty) \times \mathbb{R} \times \mathbb{C}^2$, where
\begin{subequations}\begin{align}
\label{AQplus} & \mathcal{A}^+_\mathcal{Q}(x,\eta) = -2i\pi x^2 + 2i\pi x(\eta-\tfrac{iQ}2)-i\pi\lb\sigma_s^2 + \tfrac{Q^2-2}{12}\rb , \\
\label{AQminus} & \mathcal{A}^-_\mathcal{Q}(x,\eta) = 2i\pi x(\eta-\tfrac{iQ}2)+i\pi\lb\sigma_s^2 + \tfrac{Q^2-2}{12}\rb.
 \end{align}\end{subequations}
\end{lemma}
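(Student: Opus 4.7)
The plan is to reduce both estimates directly to the asymptotic formula \eqref{sbasymptotics} applied to the two factors $s_b(x+\sigma_s)$ and $s_b(x-\sigma_s)$, and then combine with the explicit exponential prefactors $e^{-i\pi x^2}$ and $e^{2i\pi x(\eta-iQ/2)}$ in \eqref{IQ}. The main algebraic simplification will use the identity $b^2+b^{-2} = Q^2-2$ (valid since $Q=b+b^{-1}$).

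First I would prove \eqref{estimateIQplus}. As $\re x \to +\infty$, we have $\re(x\pm\sigma_s) \to +\infty$ as well, uniformly for $\sigma_s$ in a compact set. Applying \eqref{sbasymptotics} with the $+$ sign to each of $s_b(x+\sigma_s)$ and $s_b(x-\sigma_s)$, adding the two and using $(x+\sigma_s)^2 + (x-\sigma_s)^2 = 2x^2 + 2\sigma_s^2$, yields
\begin{equation*}
\ln s_b(x+\sigma_s) + \ln s_b(x-\sigma_s) = -i\pi x^2 - i\pi \sigma_s^2 - \frac{i\pi(Q^2-2)}{12} + O\Big(e^{-\frac{2\pi(1-\epsilon)}{\max(b,b^{-1})}|\re x|}\Big),
\end{equation*}
uniformly for $(b,\im x,\sigma_s,\eta)$ in compact subsets. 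Here I would note that $|\re(x\pm\sigma_s)| \geq |\re x| - |\re \sigma_s|$, so the error exponent can be absorbed into $|\re x|$ at the cost of adjusting $\epsilon$. Adding the contribution $-i\pi x^2 + 2i\pi x(\eta - iQ/2)$ coming from the explicit exponentials in \eqref{IQ} gives exactly $\mathcal{A}^+_\mathcal{Q}(x,\eta)$ as defined in \eqref{AQplus}, establishing \eqref{estimateIQplus}.

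The estimate \eqref{estimateIQminus} is obtained in the same way, but now as $\re x \to -\infty$ we apply \eqref{sbasymptotics} with the $-$ sign, which changes the overall sign of the quadratic and constant terms on the right-hand side. Summing the two contributions yields
\begin{equation*}
\ln s_b(x+\sigma_s) + \ln s_b(x-\sigma_s) = +i\pi x^2 + i\pi\sigma_s^2 + \frac{i\pi(Q^2-2)}{12} + O\Big(e^{-\frac{2\pi(1-\epsilon)}{\max(b,b^{-1})}|\re x|}\Big),
\end{equation*}
and combining with $-i\pi x^2 + 2i\pi x(\eta - iQ/2)$ the quadratic terms in $x$ cancel, leaving precisely $\mathcal{A}^-_\mathcal{Q}(x,\eta)$.

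There is no real obstacle here; the only point requiring a little care is to confirm that the uniformity in $(b,\im z)$ provided by \eqref{sbasymptotics} is preserved after the substitutions $z = x\pm\sigma_s$ and for $\sigma_s,\eta$ in compact subsets of $\mathbb{C}$, and to check the constant $b^2+b^{-2} = Q^2 - 2$ that appears in $\mathcal{A}^\pm_\mathcal{Q}$. All other manipulations are elementary.
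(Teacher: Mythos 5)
Your proposal is correct and is exactly the argument the paper intends: the paper's own proof of this lemma is the one-line statement that it ``follows from \eqref{sbasymptotics} and \eqref{IQ}'', and your computation (applying \eqref{sbasymptotics} to $s_b(x\pm\sigma_s)$, using $(x+\sigma_s)^2+(x-\sigma_s)^2=2x^2+2\sigma_s^2$ and $b^2+b^{-2}=Q^2-2$, and adding the explicit exponential prefactors) is precisely the omitted verification. The remarks on uniformity and on absorbing $|\re\sigma_s|$ into the error exponent are the right points of care and are handled correctly.
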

\begin{proof}
The lemma follows from \eqref{sbasymptotics} and \eqref{IQ}. 
\end{proof}

\begin{lemma} \label{lemmaforQ1}
Let
\beq \label{epsilonQ}
\epsilon_\mathcal{Q}(x,\eta) = I_\mathcal{Q}(x,\sigma_s,\eta) - e^{\mathcal{A}_\mathcal{Q}^-(x,\eta)},
\eeq
where $\mathcal{A}_\mathcal{Q}^-$ is defined in \eqref{AQminus}. 
There is a neighborhood $U_0$ of $\eta_0=iQ/2$ and constants $c > 0$ and $M>0$ such that 
\beq\label{epsilonQxetaleq}
\left \lvert \epsilon_\mathcal{Q}(x,\eta) \right \lvert \leq M e^{-c|\re x|} \qquad \text{for $\re x \leq 0$ and for $\eta \in U_0$ with $\im \eta \leq \im \eta_0$},
\eeq
uniformly for $(\im x, \sigma_s)$ in compact subsets of $\mathbb{R} \times \mathbb{C}$.
\end{lemma}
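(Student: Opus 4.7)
The plan is to convert the logarithmic (multiplicative) asymptotic provided by Lemma \ref{lemmaestimateIQ} into an additive bound on $\epsilon_\mathcal{Q}$. The clean way to do this is to write
\beq
\epsilon_\mathcal{Q}(x,\eta) = e^{\mathcal{A}^-_\mathcal{Q}(x,\eta)}\bigl(e^{f(x,\eta)}-1\bigr), \qquad f(x,\eta) := \ln I_\mathcal{Q}(x,\sigma_s,\eta) - \mathcal{A}^-_\mathcal{Q}(x,\eta),
\eeq
so that two ingredients remain to be controlled: an exponentially small bound on $|f|$ for $\re x$ very negative, and a uniform upper bound on the prefactor $|e^{\mathcal{A}^-_\mathcal{Q}}|$ throughout the region in question.

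The first ingredient is immediate from Lemma \ref{lemmaestimateIQ}. Fixing any $\epsilon \in (0,1)$, the estimate \eqref{estimateIQminus} yields constants $N_0, M_1 > 0$ and $c := 2\pi(1-\epsilon)/\max(b,b^{-1}) > 0$ such that $|f(x,\eta)| \leq M_1 e^{-c|\re x|}$ for $\re x \leq -N_0$, uniformly over $(\im x, \sigma_s)$ in the chosen compact set and over $\eta$ in a sufficiently small neighborhood of $\eta_0$. (Single-valuedness of $\ln I_\mathcal{Q}$ in this regime is not an issue since for $\re x$ sufficiently negative $I_\mathcal{Q}/e^{\mathcal{A}^-_\mathcal{Q}}$ is close to $1$.)

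For the second ingredient, write $\eta - iQ/2 = u + iv$ with $u = \re \eta$ and $v = \im \eta - Q/2$. A direct computation from \eqref{AQminus} gives
\beq
\re \mathcal{A}^-_\mathcal{Q}(x,\eta) = -2\pi(\re x)\, v - 2\pi(\im x)\, u - \pi \im(\sigma_s^2).
\eeq
After shrinking $U_0$, both $|u|$ and $|v|$ are arbitrarily small, and the hypothesis $\im \eta \leq Q/2$ forces $v \leq 0$; hence for $\re x \leq 0$ the first term is nonpositive, while the remaining two are uniformly bounded on the prescribed compact set. This produces a constant $M_2 > 0$ with $|e^{\mathcal{A}^-_\mathcal{Q}(x,\eta)}| \leq M_2$ throughout.

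Combining (i) and (ii) via the elementary inequality $|e^f - 1| \leq |f|\, e^{|f|}$ (valid for all $f \in \mathbb{C}$) gives $|\epsilon_\mathcal{Q}(x,\eta)| \leq M_1 M_2 e^{M_1} e^{-c|\re x|}$ for $\re x \leq -N_0$. For $-N_0 \leq \re x \leq 0$ both $I_\mathcal{Q}$ and $e^{\mathcal{A}^-_\mathcal{Q}}$ are continuous on a compact set, hence uniformly bounded, and since $|\re x|$ is bounded there the estimate extends (after enlarging $M$) to this transitional range. The only genuinely delicate point I anticipate is verifying that the implicit constants in Lemma \ref{lemmaestimateIQ} remain uniform as $\eta$ ranges over $U_0$; this is automatic once one traces them back to the uniform asymptotic \eqref{sbasymptotics} for $s_b$, which is perfectly regular at $\eta_0$ and uniform on compact $(b,\im z)$.
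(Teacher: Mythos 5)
Your proof is correct and follows essentially the same route as the paper: the paper likewise writes $\epsilon_\mathcal{Q}=e^{\mathcal{A}_\mathcal{Q}^-}\big(e^{O(\exp(\pi \re x/\max(b,b^{-1})))}-1\big)$ using \eqref{estimateIQminus} and then bounds the prefactor via the same formula $e^{\re \mathcal{A}_\mathcal{Q}^-(x,\eta)} = e^{2\pi (\re x)(\frac{Q}2-\im \eta)} e^{-2\pi (\im x) (\re \eta)}e^{-2\pi (\im \sigma_s) (\re \sigma_s)}$, whose first factor is $\leq 1$ exactly because $\re x\leq 0$ and $\im\eta\leq Q/2$. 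Your write-up merely makes explicit the elementary inequality $|e^f-1|\leq |f|e^{|f|}$ and the compactness argument on the transitional range $-N_0\leq \re x\leq 0$, which the paper leaves implicit.
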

\begin{proof}
Utilizing the estimate \eqref{estimateIQminus} with $\epsilon = 1/2$, we find that
\beq
\epsilon_\mathcal{Q}(x,\eta) = e^{\mathcal{A}_\mathcal{Q}^-(x,\eta)} \Big( e^{O\lb \text{exp}(\frac{\pi \re x}{\max(b,b^{-1})}) \rb} - 1 \Big)
= O\lb e^{\mathcal{A}_\mathcal{Q}^-(x,\eta)} e^{-\frac{\pi |\re x|}{\max(b,b^{-1})}} \rb, \qquad \re x \to -\infty,
\eeq 
uniformly for $(b,\im x,\sigma_s,\eta)$ in compact subsets of $(0,\infty) \times \mathbb{R}\times \mathbb{C}^2$. Since
\beq\label{ereAminusestimate}
e^{\re \mathcal{A}_\mathcal{Q}^-(x,\eta)} = e^{2\pi (\re x)(\frac{Q}2-\im \eta)} e^{-2\pi (\im x) (\re \eta)}e^{-2\pi (\im \sigma_s) (\re \sigma_s)},
\eeq
the desired conclusion follows. 
\end{proof}

\begin{proof}[Proof of Theorem \ref{thmQtoQn}]
Suppose $b >0$ and $\sigma_s \in \mathbb{C}$. 
The function $I_{\mathcal{Q}}(x,\sigma_s,\eta)$ has two downward sequences of poles starting at $x = \pm \sigma_s - iQ/2$. 
Consider the representation \eqref{defQ} for $\mathcal{Q}$ with the contour $\mathsf{Q}$ passing above the points $x= \pm \sigma_s - iQ/2$ and satisfying \eqref{Qrighttailcondition} on its right tail.

Taking $\epsilon=1/2$ in the estimate \eqref{estimateIQplus}, we find that there exists a  constant $M_1$ such that
\beq
\left\lvert I_\mathcal{Q}(x,\sigma_s,\eta) \right\lvert \leq M_1  \big| e^{\mathcal{A}_\mathcal{Q}^+(x,\eta)}  \big|
= M_1  e^{\re \mathcal{A}_\mathcal{Q}^+(x,\eta)}, \qquad x \in \mathsf{Q}, \; \re x \geq 0,
\eeq
uniformly for $(b,\sigma_s, \eta)$ in compact subsets of $(0,\infty) \times \mathbb{C}^2$. Using \eqref{Qrighttailcondition} and noting that 
\beq\label{ereAplusestimate}
e^{\re \mathcal{A}^+_\mathcal{Q}(x,\eta)} = e^{4\pi(\re x)\lb \im x+\frac Q4 - \frac{\im \eta}{2} \rb} e^{-2\pi (\im x) (\re \eta)}e^{2\pi (\im \sigma_s)(\re \sigma_s)},
\eeq
we infer the existence of a neighborhood $U_0$ of $\eta_0=iQ/2$ and constants $c > 0$ and $M>0$ such that 
\beq\label{IcalQestimate}
\left \lvert I_\mathcal{Q}(x,\sigma_s,\eta) \right \lvert \leq M e^{-c \re x}
\eeq
for all $x \in \mathsf{Q}$ with $\re x \geq 0$ and for all $\eta \in U_0$.

 Writing $\mathsf{Q} = \mathsf{Q}_+ \cup \mathsf{Q}_-$, where $\mathsf{Q}_+ = \mathsf{Q} \cap \{\re x \geq 0\}$ and $\mathsf{Q}_- = \mathsf{Q} \cap \{\re x \leq 0\}$, we can express \eqref{defQ} as
\beq\label{rewritedefQ}
\mathcal{Q}(b,\sigma_s,\eta) = P_\mathcal{Q}(\sigma_s,\eta) \displaystyle \int_{\mathsf{Q}_+} dx\; I_\mathcal{Q}(x,\sigma_s,\eta) 
+ P_\mathcal{Q}(\sigma_s,\eta) \displaystyle \int_{\mathsf{Q}_-} dx\; I_\mathcal{Q}(x,\sigma_s,\eta).
\eeq
The prefactor $P_\mathcal{Q}$ defined in \eqref{PQ} has a simple zero at $\eta = \eta_0=iQ/2$. 
Hence, by \eqref{IcalQestimate}, the first term on the right-hand side of \eqref{rewritedefQ} vanishes in the limit $\eta \to \eta_0$.
On the other hand, by \eqref{epsilonQ}, 
\beq \label{rewriteQ1}
\displaystyle \int_{\mathsf{Q}_-} dx\; I_\mathcal{Q}(x,\sigma_s,\eta) = \displaystyle \int_{\mathsf{Q}_-} dx\; e^{\mathcal{A}_\mathcal{Q}^-(x,\eta)} + \displaystyle \int_{\mathsf{Q}_-} dx \; \epsilon_\mathcal{Q}(x,\eta). 
\eeq
In view of Lemma \ref{lemmaforQ1}, there exist constants $M_3, M_4 > 0$ such that 
\beq\label{intQminuseA}
\left\lvert \displaystyle \int_{\mathsf{Q}_-} dx \; \epsilon_\mathcal{Q}(x,\eta) \right \lvert \leq M_3 \displaystyle \int_{\mathsf{Q}_-} dx \; e^{-c|\re x|}
\leq M_4
\eeq
for all $\eta$ in a small neighborhood of $\eta_0$ with $\im \eta \leq \im \eta_0$. 

By Proposition \ref{Qextensionprop}, $\mathcal Q$ is a meromorphic function of $\eta \in \mathbb{C}$, and so has at most a pole at $\eta_0$. To prove \eqref{QtoQn}, it is therefore enough to consider the limit $\eta \to \eta_0$ with $\eta$ such that $\im \eta < \im \eta_0 = Q/2$. In the remainder of the proof, we assume $\im \eta < Q/2$. Then, after multiplication by the prefactor $P_\mathcal{Q}$, the second term on the right-hand side of \eqref{rewriteQ1} vanishes in the limit $\eta \to \eta_0$ as a consequence of \eqref{intQminuseA}.
Furthermore, employing \eqref{estimateIQminus} and using that $\im \eta < \im \eta_0$ so that the contribution from $-\infty+ia$ vanishes, we obtain
\beq \label{resultintQ1}
 \int_{\mathsf{Q}_-} dx\; e^{\mathcal{A}_\mathcal{Q}^-(x,\eta)}
 = \int_{-\infty+ia}^{ia} dx\; e^{\mathcal{A}_\mathcal{Q}^-(x,\eta)} = e^{i\pi \lb \sigma_s^2+\frac{Q^2-2}{12}\rb} \frac{e^{-2\pi a(\eta-\eta_0)}}{2i\pi(\eta-\eta_0)}.
\eeq
The right-hand side of \eqref{resultintQ1} has a simple pole at $\eta=\eta_0$.  Therefore, collecting the above conclusions,
\beq
\lim\limits_{\eta\to\eta_0} \mathcal{Q}(b,\sigma_s,\eta) 
= \lim\limits_{\eta\to\eta_0} P_\mathcal{Q}(\sigma_s,\eta) \int_{\mathsf{Q}_-} dx\; e^{\mathcal{A}_\mathcal{Q}^-(x,\eta)}
= \lim\limits_{\eta\to\eta_0} P_\mathcal{Q}(\sigma_s,\eta) e^{i\pi \lb \sigma_s^2+\frac{Q^2-2}{12}\rb} \frac{e^{-2\pi a(\eta-\eta_0)}}{2i\pi(\eta-\eta_0)},
\eeq
where the limits are taken with $\im\eta < \im \eta_0$.
Utilizing \eqref{PQ} and the identity $s_b(z)=s_b(-z)^{-1}$, we obtain
\beq \label{limitQeta0}
\lim\limits_{\eta\to\eta_0} \mathcal{Q}(b,\sigma_s,\eta) = \frac{1}{2i\pi} \lim\limits_{\eta\to\eta_0} \lb \frac{1}{s_b(-\eta)(\eta-\eta_0)}\rb.
\eeq
Setting $z=-\eta$, recalling that $\eta_0=iQ/2$, and using \eqref{ressb}, we find
\beq \label{limQateta0}
\lim\limits_{\eta\to\eta_0} \mathcal{Q}(b,\sigma_s,\eta) = -\frac{1}{2i\pi} \lim\limits_{z\to -\tfrac{iQ}2} \lb \frac{1}{s_b(z)(z+\frac{iQ}2)}\rb = 1,
\eeq
which proves \eqref{QtoQn} for $n=0$. 

To show \eqref{QtoQn} also for $n \geq 1$, we rewrite the difference equation \eqref{diffeqtildeQ1} as follows:
\beq \label{rewritediffeqtildeQ1}
\mathcal{Q}(b,\sigma_s,\eta+ib) = - \lb 1 + e^{2\pi b(\eta-\frac{ib}2)} \rb \mathcal{Q}(b,\sigma_s,\eta-ib) +2\operatorname{cosh}{(2\pi b \sigma_s)} \mathcal{Q}(b,\sigma_s,\eta).
\eeq
Note that $1 + e^{2\pi b(\eta-\frac{ib}2)}$ vanishes for $\eta=\eta_0$. Moreover, the function $\mathcal{Q}(b,\sigma_s,\eta-ib)$ is analytic at $\eta=\eta_0$. Indeed, as $\eta$ approaches $\eta_0$, the contour $\mathsf{Q}$ remains above the two decreasing sequences of poles, and, in view of \eqref{estimatesIQ} (see also \eqref{ereAplusestimate} and \eqref{ereAminusestimate}), the integrand $I_\mathcal{Q}$ retains its exponential decay provided that the right tail of the contour is  deformed downwards. Therefore, evaluating \eqref{rewritediffeqtildeQ1} at $\eta=\eta_0$ and using \eqref{limQateta0}, we obtain $\mathcal{Q}(b,\sigma_s,\eta_1) = 2\operatorname{cosh}{(2\pi b \sigma_s)}$. 
Evaluating the recurrence relation \eqref{recurrenceQn} satisfied by the continuous $q$-Hermite polynomials at $n=0$ and $z=e^{2\pi b \sigma_s}$, we find
\beq
Q_1\lb e^{2\pi b \sigma_s};q \rb = 2\operatorname{cosh}{(2\pi b \sigma_s)} = \mathcal{Q}(b,\sigma_s,\eta_1),
\eeq
which proves \eqref{QtoQn} for $n=1$. More generally, suppose that the function $\mathcal{Q}(b,\sigma_s,\eta_n)$ exists for all $n\leq N$ and coincides with the polynomials  $Q_n\lb e^{2\pi b \sigma_s};q \rb$. Evaluating \eqref{rewritediffeqtildeQ1} at $n=N+1$, we obtain
\beq
\mathcal{Q}(b,\sigma_s,\eta_{N+1}) = - \lb 1 -q^{N+1} \rb Q_N\lb e^{2\pi b \sigma_s};q \rb +2\operatorname{cosh}{(2\pi b \sigma_s)} Q_{N-1}\lb e^{2\pi b \sigma_s};q \rb,
\eeq
and hence the recurrence relation \eqref{recurrenceQn} implies that
\beq
\mathcal{Q}(b,\sigma_s,\eta_{N+1}) = Q_{N+1}\lb e^{2\pi b \sigma_s};q \rb.
\eeq
By induction, we conclude that $\mathcal{Q}(b,\sigma_s,\eta_n)$ exists for all $n\geq 0$ and coincides with $Q_n\lb e^{2\pi b \sigma_s};q \rb$. This completes the proof of the theorem.
\end{proof}

\section{The function $\mathcal{L}$}
In this section, we define the function $\mathcal{L}(b,\theta_t,\theta,\lambda,\mu)$ which generalizes the big $q$-Laguerre polynomials. It is defined as a confluent limit of $\mathcal{H}$ and lies at the third level of the non-polynomial scheme. We show that $\mathcal{L}$ is a joint eigenfunction of four difference operators. Finally, we show that $\mathcal{L}$ reduces to the big $q$-Laguerre polynomials, which lie at the third level of the $q$-Askey scheme, when $\lambda$ is suitably discretized.

\subsection{Definition and integral representation}
Let $\theta, \Lambda, \lambda, \mu$ be defined as follows:
\beq \label{paramforL}
\theta_0 = \frac{\theta+\Lambda}{2}, \qquad \theta_*= \frac{\theta-\Lambda}{2}, \qquad \sigma_s = \lambda + \frac{\Lambda}2, \qquad \nu = \mu-\frac{\Lambda}4.
\eeq
Define the open set $D_\mathcal{L} \subset \mathbb{C}^2$ by
\beq \label{DLdef}
D_\mathcal{L} := \; (\mathbb C \backslash \Delta_\lambda) \times (\{\im \mu>-Q/2\} \backslash \Delta_\mu),
\eeq
where the discrete subsets $\Delta_\lambda$ and $\Delta_\mu$ are given by
\begin{align*} 
\nonumber \Delta_\lambda := & \; \{\tfrac{iQ}2 \pm \theta_t + \tfrac{\theta}2+ibm+ilb^{-1} \}_{m,l=0}^\infty  \cup \{\tfrac{iQ}2 - \tfrac{\theta}2+ibm+ilb^{-1} \}_{m,l=0}^\infty
	\\
 &  \cup \{-\tfrac{iQ}2+\theta_t + \tfrac{\theta}2-ibm-ilb^{-1} \}_{m,l=0}^\infty,
	\\\nonumber 
\Delta_\mu := &\; \{\tfrac{iQ}2 \pm \theta_t + \tfrac{\theta}4+ibm+ilb^{-1} \}_{m,l=0}^\infty  \cup \{\tfrac{iQ}2 - \tfrac{3\theta}4+ibm+ilb^{-1} \}_{m,l=0}^\infty 
	\\
 &  \cup \{-\tfrac{iQ}2+\theta_t + \tfrac{\theta}4-ibm-ilb^{-1} \}_{m,l=0}^\infty.
\end{align*}

\begin{definition}
Let $\mathcal{H}$ be defined by \eqref{fromRtoH}. The function $\mathcal{L}$ is defined for $(\lambda,\mu) \in D_\mathcal{L}$ by 
\beq \label{fromHtoL}
\mathcal{L}(b,\theta_t,\theta,\lambda,\mu) = \lim\limits_{\Lambda \to -\infty} \mathcal{H}\lb b,  \tfrac{\theta+\Lambda}{2}, \theta_t, \tfrac{\theta-\Lambda}{2},  \lambda + \tfrac{\Lambda}2,  \mu-\tfrac{\Lambda}4\rb,
\eeq
and is extended meromorphically to $(\lambda,\mu) \in \mathbb{C}^2$.
\end{definition}

The next theorem shows that $\mathcal{L}$ is a well-defined meromorphic function of $(\lambda,\mu) \in \mathbb{C}^2$.

\begin{theorem} \label{thmforL}
Suppose that Assumption \ref{assumption} is satisfied. The limit in \eqref{fromHtoL} exists uniformly for $(\lambda,\mu)$ in compact subsets of $D_\mathcal{L}$. Moreover, $\mathcal L$ is an analytic function of $(\lambda,\mu) \in (\mathbb C \backslash \Delta_\lambda) \times (\mathbb C \backslash \Delta_\mu)$ and admits the following integral representation:
\beq\label{defL}
\mathcal{L}(b,\theta_t,\theta,\lambda,\mu) = P_\mathcal{L}(\lambda,\mu) \displaystyle \int_{\mathsf{L}} dx \; I_\mathcal{L}(x,\lambda,\mu) \qquad \text{for $(\lambda,\mu) \in (\mathbb C \backslash \Delta_\lambda) \times (\mathbb C \backslash \Delta_\mu)$},
\eeq
where
\begin{align} \label{PL} & P_\mathcal{L}(\lambda,\mu) = s_b\left(2 \theta_t+\tfrac{i Q}{2}\right) s_b\left(\theta +\theta_t+\tfrac{i Q}{2}\right) s_b\left(\lambda-\tfrac{\theta }{2}-\theta_t \right) s_b\left(\mu-\tfrac{\theta }{4}-\theta_t \right), \\
\label{IL} & I_\mathcal{L}(x,\lambda,\mu) = e^{\frac{i\pi x^2}2} e^{i\pi x (\frac{\theta}4+\theta_t+\lambda+\mu-\frac{iQ}2)} \frac{s_b\left(x+\frac{\theta }{2}+\theta_t-\lambda \right) s_b\left(x+\frac{\theta }{4}+\theta_t-\mu \right)}{s_b\left(x+\frac{i Q}{2}\right) s_b\left(x+2 \theta_t+\frac{i Q}{2}\right) s_b\left(x+\theta +\theta_t+\frac{i Q}{2}\right)},
\end{align}
and the contour $\mathsf{L}$ is any curve from $-\infty$ to $+\infty$ which separates the three increasing from the two decreasing sequences of poles, with the requirement that its right tail satisfies
\beq
\im x + \tfrac{Q}2 + \im \lambda + \im \mu > \delta \qquad
\text{for all $x \in \mathsf{S}$ with $\re x$ sufficiently large},
\eeq
for some $\delta >0$. In particular, $\mathcal L$ is a meromorphic function of $(\lambda,\mu) \in \mathbb C^2$.  If $(\lambda,\mu) \in \mathbb R^2$, the contour $\mathsf L$ can be any curve from $-\infty$ to $+\infty$ lying within the strip $\im x \in (-Q/2 +\delta,0)$. 
\end{theorem}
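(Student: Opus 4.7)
The plan is to follow the template of the proofs of Theorems \ref{thmforH} and \ref{thmforS}: substitute the parametrization \eqref{paramforL} into the integral representation \eqref{defH} of $\mathcal{H}$ and factor the resulting integrand as
\beq
P_\mathcal{H}\lb \sigma_s,\nu\rb\, I_\mathcal{H}\lb x, \sigma_s, \nu\rb = P_\mathcal{L}(\lambda,\mu)\, Y(x,\Lambda)\, I_\mathcal{L}(x,\lambda,\mu),
\eeq
where the surviving $\Lambda$-dependence is collected in
\beq
Y(x,\Lambda) = s_b\lb-\lambda-\tfrac{\theta}{2}-\theta_t-\Lambda\rb\, s_b\lb x+\tfrac{\theta}{2}+\theta_t+\lambda+\Lambda\rb\, e^{-\frac{i\pi x^2}{2}}\, e^{-i\pi x(\tfrac{\theta}{2}+\theta_t+\lambda+\Lambda)}.
\eeq
The two $s_b$-factors have arguments whose real parts tend to $\mp\infty$ as $\Lambda\to -\infty$. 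Applying the asymptotic formula \eqref{sbasymptotics} with $\epsilon=1/2$ to both, the divergent $(A+\Lambda)^2$ and $(A+\Lambda)$-terms (with $A=\lambda+\tfrac{\theta}{2}+\theta_t$) cancel exactly against each other and against the explicit exponential factor in $Y$, leaving $Y(x,\Lambda)\to 1$ uniformly for $x$ in bounded subsets of the contour and $(\lambda,\mu)$ in compact subsets of $D_\mathcal{L}$. The Gaussian phase $e^{i\pi x^2/2}$ that appears in $I_\mathcal{L}$ is precisely the mixed-term residue of this cancellation.

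Next I would track the pole structure. The three upward sequences of $I_\mathcal{H}$ at $x=0,\,-2\theta_t,\,-\theta_0-\theta_*-\theta_t=-\theta-\theta_t$ survive unchanged and coincide with the three upward sequences of $I_\mathcal{L}$. Two of the three downward sequences of $I_\mathcal{H}$ become $\Lambda$-independent and match the two downward sequences of $I_\mathcal{L}$, while the remaining downward sequence, starting at $x=-\tfrac{iQ}{2}-\lambda-\tfrac{\theta}{2}-\theta_t-\Lambda$, drifts off to $+\infty$ and reappears as the unique downward pole sequence of $Y(\cdot,\Lambda)$. The discrete sets $\Delta_\lambda$ and $\Delta_\mu$ in \eqref{DLdef} are designed precisely so that, for $(\lambda,\mu)$ in a compact subset $K$ of $D_\mathcal{L}$, $P_\mathcal{L}$ is regular and none of the surviving upward and downward sequences collide. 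A single contour $\mathsf L$ satisfying the separation and right-tail conditions can therefore be chosen independently of $\Lambda$ for all $\Lambda<-N(K)$ with $N(K)$ sufficiently large.

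The third step is to produce the uniform domination required by Lebesgue's theorem. The asymptotic formula \eqref{sbasymptotics} yields the tail bound
\beq
|I_\mathcal{L}(x,\lambda,\mu)| \leq C \begin{cases} e^{-2\pi(\im x+\tfrac{Q}{2}+\im\lambda+\im\mu)|\re x|}, & \re x\geq 0,\\ e^{-2\pi Q|\re x|}, & \re x\leq 0,\end{cases}
\eeq
which is integrable on $\mathsf L$ by virtue of the right-tail constraint $\im x+\tfrac{Q}{2}+\im\lambda+\im\mu>\delta$. An estimate on $|Y(x,\Lambda)|$ of the same flavor as \eqref{inequalityXC5} and \eqref{inequalityZS} is then obtained by splitting $\mathsf L$ into the regions $\re x \leq -\Lambda$ and $\re x \geq -\Lambda$ and applying \eqref{sbasymptotics} separately in each. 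Together these yield $|Y(x,\Lambda)\, I_\mathcal{L}(x,\lambda,\mu)| \leq C e^{-c|\re x|}$ uniformly in $\Lambda<-N$, $x\in \mathsf L$, and $(\lambda,\mu)\in K$, so dominated convergence delivers both the existence of the limit \eqref{fromHtoL} and the representation \eqref{defL} for $(\lambda,\mu) \in D_\mathcal{L}$. Analyticity on $D_\mathcal{L}$ is inherited from $\mathcal H$ via uniform convergence, and the integral representation itself provides the meromorphic continuation to $(\mathbb{C}\setminus\Delta_\lambda)\times(\mathbb{C}\setminus\Delta_\mu)$ by deforming $\mathsf L$ downward on its right tail as needed to preserve the constraint on $\im x$.

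The main obstacle lies in the Gaussian-phase cancellation in step one: the $\Lambda^2$-divergent contributions of the two $s_b$-factors in $Y$ must cancel exactly, with only the mixed $2x\Lambda$-term surviving to combine with the explicit $e^{-i\pi x \Lambda}$ factor from $I_\mathcal H$. Making this rigorous requires uniform control of the $O$-error in \eqref{sbasymptotics} on portions of $\mathsf L$ where $|\re x|$ is allowed to be comparable to $|\Lambda|$, which is the principal technical novelty relative to the proofs of Theorems \ref{thmforH} and \ref{thmforS}.
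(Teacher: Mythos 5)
Your proposal is correct and follows exactly the template the paper intends for this omitted proof (the same scheme as Theorems \ref{thmforH} and \ref{thmforS}): the factorization $P_\mathcal{H}I_\mathcal{H}=P_\mathcal{L}\,Y(x,\Lambda)\,I_\mathcal{L}$ with your stated $Y(x,\Lambda)$ is the right one, the Gaussian cancellation giving $Y\to 1$ checks out via \eqref{sbasymptotics}, and the tail bounds on $I_\mathcal{L}$ and the splitting at $\re x=-\Lambda$ reproduce the dominated-convergence argument of \eqref{inequalityXC5} and \eqref{inequalityZS}. The only quibble is that the quadratic-phase cancellation is not really a novelty beyond Theorem \ref{thmforS}, where $Z(x,\theta_*)$ already contains the factor $e^{i\pi x^2/2}$ and is handled by the same regional estimates.
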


Furthermore, the function $\mathcal{L}$ obeys the symmetry
\beq \label{Linverse}
\mathcal{L}(b,\theta_t,\theta,\lambda,\mu) = \mathcal{L}(b^{-1},\theta_t,\theta,\lambda,\mu).
\eeq

\subsection{Difference equations} 
The next two propositions, whose proofs are omitted because they are similar to those presented in Section \ref{Hdifferencesubsec}, show that the two pairs of difference equations \eqref{diffeqH} and \eqref{diffeqtildeH} satisfied by the function $\mathcal{H}$ survive in the confluent limit \eqref{fromHtoL}, implying that $\mathcal{L}(b,\theta_t,\theta,\lambda,\mu)$ is a joint eigenfunction of four difference operators, two acting on $\lambda$ and the other two on $\mu$. 
The four difference equations hold as equalities between meromorphic functions of $(\lambda, \mu) \in \mathbb C^2$.

\subsubsection{First pair of difference equations}

Define a difference operator $H_\mathcal{L}(b,\lambda)$ such that
\beq \label{HL}
H_\mathcal{L}(b,\lambda) = H^+_\mathcal{L}(b,\lambda) e^{ib \partial_\lambda} + H^-_\mathcal{L}(b,\lambda) e^{-ib \partial_\lambda} + H^0_\mathcal{L}(b,\lambda),
\eeq
where
\begin{align}
\label{HLplus} & H^+_\mathcal{L}(b,\lambda) =  -4 e^{-\pi b \left(\frac{\theta }{2}+\theta_t-2 \lambda \right)} \cosh \left(\pi  b \left(\tfrac{i b}{2}+\tfrac{\theta }{2}+\lambda \right)\right) \cosh \left(\pi  b \left(\tfrac{i b}{2}-\tfrac{\theta }{2}+\theta_t+\lambda \right)\right),
\\
\label{HLminus} & H^-_\mathcal{L}(b,\lambda) = -2 e^{\pi  b \left(\theta_t-\frac{ib}{2}+3 \lambda \right)} \cosh \left(\pi  b \left(\tfrac{i b}{2}+\tfrac{\theta }{2}+\theta_t-\lambda \right)\right),
\end{align}
and
\beq \label{HL0}
H^0_\mathcal{L}(\lambda) = e^{-\pi b \left(\frac{\theta }{2}+2 \theta_t+i Q\right)} - H^+_\mathcal{L}(b,\lambda) - H^-_\mathcal{L}(b,\lambda).
\eeq
\begin{proposition}
For $(\lambda,\mu) \in \mathbb C^2$, the function $\mathcal{L}$ satisfies the other pair of difference equations:
\begin{subequations}\label{diffeqL}\begin{align}
\label{diffeqL1} & H_\mathcal{L}(b,\lambda) \; \mathcal{L}(b,\theta_t,\theta,\lambda,\mu) = e^{-2\pi b \mu} \; \mathcal{L}(b,\theta_t,\theta,\lambda,\mu), \\
\label{diffeqL2} & H_\mathcal{L}(b^{-1},\lambda) \; \mathcal{L}(b,\theta_t,\theta,\lambda,\mu) = e^{-2\pi b^{-1} \mu} \; \mathcal{L}(b,\theta_t,\theta,\lambda,\mu).
\end{align}\end{subequations}
\end{proposition}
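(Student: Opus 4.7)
The plan is to carry out the same argument as in Proposition \ref{diffeqHprop}: I would derive \eqref{diffeqL1} by taking the confluent limit \eqref{fromHtoL} of the difference equation \eqref{diffeqH1} satisfied by $\mathcal{H}$, and \eqref{diffeqL2} would then follow from \eqref{diffeqL1} together with the $b \mapsto b^{-1}$ symmetry \eqref{Linverse} of $\mathcal{L}$. After substituting the parameters \eqref{paramforL} into \eqref{diffeqH1}, the shifts $e^{\pm ib\partial_{\sigma_s}}$ coincide with $e^{\pm ib\partial_\lambda}$ (since $\sigma_s = \lambda + \Lambda/2$), and the eigenvalue reads $e^{-2\pi b \nu} = e^{-2\pi b\mu}\, e^{\pi b\Lambda/2}$ (since $\nu = \mu - \Lambda/4$). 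Multiplying the whole equation by $e^{-\pi b\Lambda/2}$ therefore normalizes the eigenvalue to $e^{-2\pi b\mu}$, and the task reduces to establishing the coefficient convergence
\begin{equation*}
\lim_{\Lambda \to -\infty} e^{-\pi b\Lambda/2}\, H_\mathcal{H}\bigl(b,\lambda+\tfrac{\Lambda}{2}\bigr) = H_\mathcal{L}(b,\lambda),
\end{equation*}
where $\theta_0,\theta_*$ inside $H_\mathcal{H}$ are understood to be substituted as in \eqref{paramforL}. Combined with Theorem \ref{thmforL}, which gives uniform convergence of \eqref{fromHtoL} on compact subsets of $D_\mathcal{L}$, this would establish \eqref{diffeqL1} on $D_\mathcal{L}$.

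The chief step, and the one I expect to constitute the bulk of the bookkeeping, is the direct asymptotic computation of $H^+_\mathcal{H}(b,\pm\sigma_s)$ and $H^0_\mathcal{H}(b,\sigma_s)$ using the standard asymptotics of $\cosh$ and $\sinh$ for arguments whose real parts tend to $\pm\infty$. The feature worth emphasizing is that the two shift coefficients behave \emph{asymmetrically}: since $\re\sigma_s \to -\infty$ as $\Lambda \to -\infty$, the coefficient $H^+_\mathcal{H}(b,\sigma_s)$ retains two unbounded $\cosh$ factors that, after cancellation against the $\sinh$ denominators and multiplication by $e^{-\pi b\Lambda/2}$, reproduce the two bounded $\cosh$ factors of $H^+_\mathcal{L}(b,\lambda)$, whereas $H^+_\mathcal{H}(b,-\sigma_s)$ retains only a single such factor and matches $H^-_\mathcal{L}(b,\lambda)$. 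This is what explains the structural difference between $H^+_\mathcal{L}$ and $H^-_\mathcal{L}$ (in contrast with the $\mathcal{H}$-level operator, which was manifestly symmetric under $\sigma_s \mapsto -\sigma_s$). The constant piece is handled analogously: the leading term $e^{-\pi b(iQ+\theta_*+2\theta_t)}$ rescaled by $e^{-\pi b\Lambda/2}$ collapses to $e^{-\pi b(iQ+\theta/2+2\theta_t)}$ in the limit, which is precisely the constant appearing in $H^0_\mathcal{L}$.

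Finally, both sides of \eqref{diffeqL1} extend to meromorphic functions of $(\lambda,\mu) \in \mathbb{C}^2$ by Theorem \ref{thmforL} and the explicit meromorphy of the coefficients of $H_\mathcal{L}(b,\lambda)$, so the identity extends from $D_\mathcal{L}$ to all of $\mathbb{C}^2$ by analytic continuation. The dual equation \eqref{diffeqL2} then follows by replacing $b$ with $b^{-1}$ and applying \eqref{Linverse}.
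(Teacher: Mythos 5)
Your proposal is correct and follows exactly the route the paper intends: the paper omits this proof, remarking only that it is similar to the arguments of Section \ref{Hdifferencesubsec}, i.e., one multiplies \eqref{diffeqH1} by the normalizing factor $e^{-\pi b \Lambda/2}$, verifies $\lim_{\Lambda\to-\infty}e^{-\pi b\Lambda/2}\,H_\mathcal{H}(b,\lambda+\tfrac{\Lambda}{2})=H_\mathcal{L}(b,\lambda)$ by hyperbolic asymptotics (I checked the three coefficient limits and they come out as you claim), and then invokes the uniform convergence from Theorem \ref{thmforL}, meromorphic continuation, and the symmetry \eqref{Linverse}. The only quibble is terminological: the $\cosh$ factors that survive and become those of $H^{\pm}_\mathcal{L}$ are the ones whose arguments stay \emph{bounded} as $\Lambda\to-\infty$ (two such factors in $H^+_\mathcal{H}(b,\sigma_s)$, one in $H^+_\mathcal{H}(b,-\sigma_s)$), not the unbounded ones, which instead cancel against the $\sinh$ denominators and the exponential prefactor.
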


\subsubsection{Second pair of difference equations}

Define a difference operator $\tilde{H}_\mathcal{L}(b,\mu)$ such that
\beq \label{HLtilde}
\tilde{H}_\mathcal{L}(b,\mu) = \tilde{H}^+_\mathcal{L}(b,\mu) e^{ib \partial_\mu} + \tilde{H}^-_\mathcal{L}(b,\mu) e^{-ib \partial_\mu} + \tilde{H}^0_\mathcal{L}(b,\mu),
\eeq
where
\begin{align}
\label{HtildeLplus} & \tilde{H}^+_\mathcal{L}(b,\mu) = -4 e^{-\pi b \left(\frac{\theta }{2}+\theta_t-2 \mu \right)} \cosh \left(\pi  b \left(\tfrac{i b}{2}+\tfrac{3 \theta }{4}+\mu \right)\right) \cosh \left(\pi  b \left(\tfrac{i b}{2}-\tfrac{\theta }{4}+\theta_t+\mu \right)\right) , \\
\label{HtildeLminus} & \tilde{H}^-_\mathcal{L}(b,\mu) = -2 e^{\pi  b \left(-\frac{ib}{2}+\frac{\theta }{4}+\theta_t+3 \mu \right)} \cosh \left(\pi  b \left(\tfrac{i b}{2}+\tfrac{\theta }{4}+\theta_t-\mu \right)\right), 
\end{align}
and
\beq \label{HtildeL0}
\tilde{H}^0_\mathcal{L}(b,\mu) = e^{-\pi b (\theta +2 \theta_t+i Q)} - \tilde{H}^+_\mathcal{L}(b,\mu) - \tilde{H}^+_\mathcal{L}(b,-\mu).
\eeq

\begin{proposition}
For $(\lambda,\mu) \in \mathbb C^2$, the function $\mathcal{L}$ satisfies the pair of difference equations
\begin{subequations} \label{diffeqtildeL} \begin{align} 
\label{diffeqtildeL1} & \tilde{H}_\mathcal{L}(b,\mu) \; \mathcal{L}(b,\theta_t,\theta,\lambda,\mu) = e^{-2\pi b \lambda} \; \mathcal{L}(b,\theta_t,\theta,\lambda,\mu), \\
\label{diffeqtildeL2} & \tilde{H}_\mathcal{L}(b^{-1},\mu) \; \mathcal{L}(b,\theta_t,\theta,\lambda,\mu) = e^{-2\pi b^{-1} \lambda} \; \mathcal{L}(b,\theta_t,\theta,\lambda,\mu).
\end{align} \end{subequations}
\end{proposition}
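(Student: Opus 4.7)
The plan is to take the confluent limit $\Lambda \to -\infty$ of the difference equation \eqref{diffeqtildeH1} satisfied by $\mathcal{H}$, closely mirroring the derivation of Proposition \ref{diffeqtildeHprop}. Under the substitutions \eqref{paramforL} we have $\nu = \mu - \Lambda/4$, so $\partial_\nu = \partial_\mu$ and the shifts $e^{\pm ib \partial_\nu}$ coincide with $e^{\pm ib \partial_\mu}$. Multiplying \eqref{diffeqtildeH1} through by $e^{\pi b\Lambda}$, the right-hand side becomes
\beq
e^{\pi b\Lambda}\cdot 2\cosh\!\bigl(2\pi b\bigl(\lambda + \tfrac{\Lambda}{2}\bigr)\bigr)\,\mathcal{H} = \bigl(e^{-2\pi b\lambda} + e^{2\pi b(\lambda+\Lambda)}\bigr)\,\mathcal{H},
\eeq
which, in view of the uniform convergence $\mathcal{H} \to \mathcal{L}$ on compact subsets of $D_\mathcal{L}$ supplied by Theorem \ref{thmforL}, tends to $e^{-2\pi b\lambda}\,\mathcal{L}(b,\theta_t,\theta,\lambda,\mu)$ as $\Lambda \to -\infty$.

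The core of the argument is to establish the operator limit
\beq
\lim_{\Lambda \to -\infty}\, e^{\pi b\Lambda}\,\tilde{H}_\mathcal{H}(b,\nu)\Big|_{\theta_0 = \frac{\theta+\Lambda}{2},\,\theta_* = \frac{\theta-\Lambda}{2},\,\nu = \mu - \frac{\Lambda}{4}} = \tilde{H}_\mathcal{L}(b,\mu).
\eeq
Substituting into \eqref{HHtildeplus} one computes $\nu + \theta_*/2 = \mu + \theta/4 - \Lambda/2$ and $\nu - \theta_*/2 = \mu - \theta/4$. For the coefficient $\tilde{H}^+_\mathcal{H}$ the $\Lambda$-dependence inside the relevant hyperbolic cosines cancels exactly, and the prefactor $e^{\pi b\Lambda}$ absorbs the remaining $\Lambda$-dependence coming from $e^{2\pi b\nu}e^{-\pi b(\theta_0+\theta_t)}$, reproducing \eqref{HtildeLplus} on the nose (no limit is actually needed here). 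For $\tilde{H}^-_\mathcal{H}$ one argument of a $\cosh$ still carries $+\Lambda$ with $\Lambda \to -\infty$, so one substitutes $\cosh(z) \sim \tfrac{1}{2}e^{-z}$ as $\re z \to -\infty$; this produces an additional $e^{-\pi b\Lambda}$ which combines with the prefactor to yield \eqref{HtildeLminus} in the limit. The constant piece $e^{\pi b\Lambda}\tilde{H}^0_\mathcal{H}$ is handled via the sum rule \eqref{HHtilde0} in the same fashion.

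Combining these limits gives \eqref{diffeqtildeL1} first as an identity between meromorphic functions of $(\lambda,\mu) \in D_\mathcal{L}$, and the meromorphicity of $\mathcal{L}$ asserted in Theorem \ref{thmforL} then extends it to all of $(\lambda,\mu) \in \mathbb{C}^2$. Equation \eqref{diffeqtildeL2} follows immediately from \eqref{diffeqtildeL1} by swapping $b \leftrightarrow b^{-1}$ and invoking the symmetry \eqref{Linverse}. The main technical obstacle is the bookkeeping for the $\tilde{H}^-_\mathcal{H}$ limit: one must correctly separate the $\Lambda$-dependence that cancels exactly from that which survives only asymptotically, and verify that the residual exponential factors assemble into those of \eqref{HtildeLminus}. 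This is exactly the type of hyperbolic/exponential manipulation already carried out in Section \ref{Hdifferencesubsec}.
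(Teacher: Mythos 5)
Your proof is correct and is essentially the argument the paper has in mind for this proposition (whose proof it omits as being analogous to Propositions \ref{diffeqHprop} and \ref{diffeqtildeHprop}): multiply \eqref{diffeqtildeH1} by $e^{\pi b \Lambda}$, check the operator limit coefficient by coefficient --- your bookkeeping for $\tilde{H}^{\pm}_{\mathcal{H}}$, including the exact cancellation of $\Lambda$ in the $+$ term and the $\cosh(z)\sim\tfrac12 e^{-z}$ asymptotics in the $-$ term, is right --- and then extend to $(\lambda,\mu)\in\mathbb{C}^2$ by the meromorphy furnished by Theorem \ref{thmforL}. Note only that your sum rule gives the constant coefficient as $e^{-\pi b(\theta+2\theta_t+iQ)}-\tilde{H}^{+}_{\mathcal{L}}(b,\mu)-\tilde{H}^{-}_{\mathcal{L}}(b,\mu)$, whereas \eqref{HtildeL0} is printed with $\tilde{H}^{+}_{\mathcal{L}}(b,-\mu)$ in place of $\tilde{H}^{-}_{\mathcal{L}}(b,\mu)$; since these two coefficients are not equal, this appears to be a typo in \eqref{HtildeL0} rather than a defect in your argument.
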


\subsection{Polynomial limit}
Our next theorem, whose proof is similar to that of Theorem \ref{thhahn}, shows that the function $\mathcal{L}$ reduces to the big $q$-Laguerre polynomials $L_n$ when $\lambda$ is suitably discretized. 

The following assumption ensures that all the poles of the integrand  in \eqref{defL} are distinct and simple.

\begin{assumption} \label{assumptionL}
Assume that $b>0$ is such that $b^2$ is irrational and that
\beq
\theta_t \neq 0, \qquad  \theta \pm \theta_t \neq 0, \qquad \re\big(\tfrac{\theta}4 -\lambda + \mu\big) \neq 0.
\eeq
\end{assumption}

\begin{theorem}[From $\mathcal{L}$ to the big $q$-Laguerre polynomials] \label{thmLtoLn} Let $\mu \in \{\im \mu>-Q/2\} \backslash \Delta_\mu$ and suppose that Assumptions \ref{assumption} and \ref{assumptionL} are satisfied. Define $\{\lambda_n\}_{n=0}^\infty \subset \mathbb{C}$ by
\beq
\lambda_n = \frac{\theta}2+\theta_t+\tfrac{i Q}2+ibn.
\eeq
Under the parameter correspondence
\beq\label{paramLn}
\alpha_{L} = e^{4\pi b \theta_t},  \qquad \beta_{L} = e^{2\pi b(\theta+\theta_t)},  \qquad x_{L}= e^{\pi b(iQ+\frac{\theta}2+2\theta_t)} e^{-2\pi b \mu}  , \qquad q=e^{2i\pi b^2},
\eeq
the function $\mathcal{L}$ satisfies, for each $n\geq 0$,
\beq\label{limitLn}
\lim\limits_{\lambda \to \lambda_n} \mathcal{L}(b,\theta_t,\theta,\lambda,\mu)  = L_n(x_{L};\alpha_{L},\beta_{L},q),
\eeq
where $L_n$ are the big $q$-Laguerre polynomials defined in \eqref{Ln}.
\end{theorem}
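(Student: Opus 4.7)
The strategy follows the two-step template used for Theorems \ref{thhahn} and \ref{thmHtoJn}: first establish the case $n=0$ by a direct residue/contour-deformation computation, then use the difference equation \eqref{diffeqL1} together with induction to obtain all $n \geq 1$.

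For the base case, observe that the prefactor $P_\mathcal{L}$ in \eqref{PL} contains the factor $s_b(\lambda - \tfrac{\theta}{2} - \theta_t)$, which has a simple zero at $\lambda = \lambda_0 = \tfrac{\theta}{2}+\theta_t+\tfrac{iQ}{2}$. Meanwhile, as $\lambda \to \lambda_0$, the pole of $s_b(x + \tfrac{\theta}{2} + \theta_t - \lambda)$ in the integrand $I_\mathcal{L}$ at $x_0 := -\tfrac{iQ}{2} + \lambda - \tfrac{\theta}{2} - \theta_t$ collides with the pole of $1/s_b(x + \tfrac{iQ}{2})$ at $x=0$, pinching the contour $\mathsf{L}$. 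I would deform $\mathsf{L}$ into a contour $\mathsf{L}'$ passing above $x_0$, picking up $-2i\pi$ times the residue of $I_\mathcal{L}$ at $x_0$. This gives
\beq
\mathcal{L}(b,\theta_t,\theta,\lambda,\mu) = -2i\pi \, P_\mathcal{L}(\lambda,\mu) \underset{x = x_0}{\text{Res}} I_\mathcal{L}(x,\lambda,\mu) + P_\mathcal{L}(\lambda,\mu) \int_{\mathsf{L}'} dx \, I_\mathcal{L}(x,\lambda,\mu).
\eeq
Using \eqref{ressb} to compute the residue, the product $P_\mathcal{L} \cdot \mathrm{Res}(I_\mathcal{L})$ develops a simple pole at $\lambda_0$ coming from $s_b(\lambda - \tfrac{\theta}{2} - \theta_t)^{-1}$ inside the residue formula, which is exactly cancelled by the simple zero of the outer $P_\mathcal{L}$. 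The remaining integral over $\mathsf{L}'$ is finite and gets killed by the zero of $P_\mathcal{L}$. A routine simplification using $s_b(z) = 1/s_b(-z)$ then shows that the surviving limit equals $1 = L_0(x_L;\alpha_L,\beta_L,q)$.

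For the inductive step, define $P_n := \lim_{\lambda \to \lambda_n} \mathcal{L}(b,\theta_t,\theta,\lambda,\mu)$; the same contour-deformation argument as above (with the pole of $s_b(x+\tfrac{iQ}{2})^{-1}$ being passed by the shifted pole at $x_n = -\tfrac{iQ}{2} + \lambda_n - \tfrac{\theta}{2} - \theta_t = ibn$) shows that this limit exists for every $n\geq 0$. I would then take the limit $\lambda \to \lambda_n$ of the difference equation \eqref{diffeqL1}. Under the parameter correspondence \eqref{paramLn}, a direct computation using \eqref{HLplus}--\eqref{HL0} should yield
\beq
\lim_{\lambda \to \lambda_n} H_\mathcal{L}(b,\lambda) = e^{-\pi b (iQ + \tfrac{\theta}{2} + 2\theta_t)} R_{L_n},
\eeq
where $R_{L_n}$ denotes the three-term recurrence operator for the big $q$-Laguerre polynomials appearing in Appendix \ref{AppendixB}. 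Since $e^{-2\pi b \mu} = e^{-\pi b(iQ+\tfrac{\theta}{2}+2\theta_t)} x_L$ under \eqref{paramLn}, the equation \eqref{diffeqL1} passes to $R_{L_n} P_n = x_L P_n$, which is the same recurrence satisfied by $L_n(x_L;\alpha_L,\beta_L,q)$. Combined with the base case $P_0 = L_0 = 1$ and an analogous check for $P_1$ (or, alternatively, the observation that the coefficient of the shift $e^{ib\partial_\lambda}$ in $H_\mathcal{L}$ vanishes at $\lambda_n$ only at the boundary $n=0$, so the recurrence is genuinely three-term for $n\geq 1$), induction yields $P_n = L_n$ for all $n\geq 0$.

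The main obstacle is the algebraic identification $\lim_{\lambda \to \lambda_n} H_\mathcal{L}(b,\lambda) = e^{-\pi b(iQ+\tfrac{\theta}{2}+2\theta_t)} R_{L_n}$: the coefficients $H_\mathcal{L}^{\pm}, H_\mathcal{L}^{0}$ involve exponentials and hyperbolic cosines that must be reconciled with the rational coefficients in $R_{L_n}$ after substituting \eqref{paramLn}. This is purely computational but is where sign conventions, the shift $e^{\pm ib\partial_\lambda}$ vs.\ multiplication by $q^{\pm n}$, and the overall normalizing exponential have to be aligned precisely; once that identity is in hand, the inductive closure is immediate.
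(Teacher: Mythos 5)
Your proposal is correct and follows exactly the strategy the paper intends for this theorem (the paper omits the proof, stating only that it is ``similar to that of Theorem \ref{thhahn}''): a residue computation at $\lambda_0$ for the base case, then the operator limit $H_\mathcal{L}(b,\lambda_n)=e^{-\pi b(iQ+\frac{\theta}{2}+2\theta_t)}R_{L_n}$ applied to the difference equation \eqref{diffeqL1} plus induction --- and both the base-case cancellation (everything collapses to $1$ via $s_b(z)=s_b(-z)^{-1}$) and the claimed operator identity check out against \eqref{HLplus}--\eqref{HL0} and \eqref{recurrenceoperatorLn}. Two small slips worth fixing: the contour $\mathsf{L}$ must be deformed to pass \emph{below} $x_0$ (which is what your residue formula $-2i\pi\,\mathrm{Res}$ actually encodes, consistent with \eqref{Hdeform}), and it is the coefficient of $e^{-ib\partial_\lambda}$ (i.e.\ $H^-_\mathcal{L}$, corresponding to $l_n^-\propto 1-q^n$), not of $e^{ib\partial_\lambda}$, that vanishes at $n=0$ and lets the induction close without ever invoking $P_{-1}$.
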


\section{The function $\mathcal{W}$}
In this section, we define the function $\mathcal{W}(b,\theta_t,\kappa,\omega)$ which generalizes the little $q$-Laguerre polynomials. It is defined as a confluent limit of $\mathcal{L}$ and lies at the fourth level of the non-polynomial scheme. We show that $\mathcal{W}$ is a joint eigenfunction of four difference operators and that it reduces to the little $q$-Laguerre polynomials, which lie at the fourth level of the $q$-Askey scheme, when $\kappa$ is suitably discretized.

\subsection{Definition and integral representation}
Introduce two new parameters $\kappa$ and $\omega$ by
\beq\label{kappaomega}
\lambda = \frac{\theta}2+\kappa, \qquad \mu = -\frac{3\theta}4+\omega.
\eeq

\begin{definition}
The function $\mathcal{W}(b,\theta_t,\kappa,\omega)$ is defined by
\beq\label{fromLtoW}
\mathcal{W}(b,\theta_t,\kappa,\omega) = \lim\limits_{\theta \to +\infty} \mathcal{L}(b,\theta_t,\theta,\tfrac{\theta}2+\kappa, -\tfrac{3\theta}4+\omega),
\eeq
where $\mathcal{L}$ is defined by \eqref{defL}.
\end{definition} 

The next theorem, whose proof is similar to that of Theorem \ref{thmforH} and will be omitted, shows that, for each choice of $(b,\theta_t) \in (0,\infty) \times \mathbb R$, $\mathcal W$ is a well-defined and analytic function of $(\kappa,\omega) \in (\mathbb{C} \backslash \Delta_\kappa) \times \{\im \omega < Q/2 \}$, where $\Delta_\kappa \subset \mathbb C$ is a discrete set of points at which $\mathcal W$ may have poles. In particular, $\mathcal W$ is a meromorphic function of $\kappa \in \mathbb C$ and of $\omega$ for $\im \omega < Q/2$. The theorem also provides an integral representation for $\mathcal W$ for $(\kappa,\omega) \in (\mathbb{C} \backslash \Delta_\kappa) \times \{\im \omega < Q/2 \}$. In fact, even if the requirement $\im \omega < Q/2$ is needed to ensure that the integral representation converges, it will be shown later, with the help of the difference equations, that $\mathcal W$ extends to a meromorphic function of $(\kappa,\omega) \in \mathbb C^2$.

\begin{theorem} \label{thmforW}
Suppose that Assumption \ref{assumption} holds. The limit \eqref{fromLtoW} exists uniformly for $(\kappa,\omega)$ in compact subsets of
\beq \label{DWdef}
D_\mathcal{W} := \; (\mathbb C \backslash \Delta_\kappa) \times \{\im \omega < Q/2 \},
\eeq
where
\beq \label{Dkappadef}
\Delta_\kappa := \{\tfrac{iQ}2 \pm\theta_t+imb+ilb^{-1} \}_{m,l=0}^\infty \cup \{-\tfrac{iQ}2+\theta_t-imb-ilb^{-1} \}_{m,l=0}^\infty.
\eeq
Moreover, $\mathcal{W}$ is an analytic function of $(\kappa,\omega) \in D_\mathcal{W}$ and admits the following integral representation:
\beq\label{defW}
\mathcal{W}(b,\theta_t,\kappa,\omega) = P_\mathcal{W}(\kappa,\omega) \displaystyle \int_{\mathsf{W}} dx \; I_\mathcal{W}(x,\kappa,\omega) \qquad \text{for $(\kappa,\omega) \in D_\mathcal{W}$},
\eeq
where 
\begin{align}
\label{PW} & P_\mathcal{W}(\kappa,\omega) = s_b(\tfrac{iQ}2+2\theta_t) s_b(\kappa-\theta_t), \\
\label{IW} & I_\mathcal{W}(x,\kappa,\omega) = e^{\frac{i \pi  x^2}{2}} e^{i \pi  x (\theta_t+\kappa +2 \omega )} \frac{s_b(x+\theta_t-\kappa)}{s_b\left(x+\frac{i Q}{2}\right) s_b\left(x+2 \theta_t +\frac{i Q}{2}\right)},
\end{align}
and the contour $\mathsf{W}$ is any curve from $-\infty$ to $+\infty$ which separates the decreasing sequence of poles from the two increasing ones, with the requirement that its right tail satisfies
\beq
\im x + \tfrac{Q}2 + \im \kappa + \im \omega > \delta \qquad \text{for all $x \in \mathsf{W}$ with $\re x$ sufficiently large},
\eeq
for some $\delta>0$. If $(\kappa,\omega)\in \mathbb R^2$, then $\mathsf{W}$ can be any curve from $-\infty$ to $+\infty$ lying within the strip $\im x \in (-Q/2+\delta,0)$.
\end{theorem}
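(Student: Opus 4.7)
The plan is to imitate the proofs of Theorems \ref{thmforH} and \ref{thmforS}. Starting from the integral representation \eqref{defL} for $\mathcal{L}$ with the substitutions \eqref{kappaomega}, I would use $s_b(z) = 1/s_b(-z)$ to factor the integrand as
\begin{equation*}
P_\mathcal{L}\bigl(\tfrac{\theta}{2}+\kappa,-\tfrac{3\theta}{4}+\omega\bigr)\, I_\mathcal{L}\bigl(x,\tfrac{\theta}{2}+\kappa,-\tfrac{3\theta}{4}+\omega\bigr) = P_\mathcal{W}(\kappa,\omega)\, Z(x,\theta)\, I_\mathcal{W}(x,\kappa,\omega),
\end{equation*}
where a direct calculation yields
\begin{equation*}
Z(x,\theta) = s_b\bigl(\theta+\theta_t+\tfrac{iQ}{2}\bigr)\, s_b(\omega-\theta-\theta_t)\, e^{-i\pi x(\omega+iQ/2)}\, \frac{s_b(x+\theta+\theta_t-\omega)}{s_b\bigl(x+\theta+\theta_t+\tfrac{iQ}{2}\bigr)}.
\end{equation*}

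Next I would analyze the pole structure. The integrand $I_\mathcal{W}(\cdot,\kappa,\omega)$ has two upward sequences of poles based at $x=0$ and $x=-2\theta_t$ and one downward sequence based at $x=-iQ/2+\kappa-\theta_t$; the discrete set $\Delta_\kappa$ in \eqref{Dkappadef} collects precisely the values of $\kappa$ at which these sequences collide or at which $P_\mathcal{W}$ has a pole. The factor $Z(\cdot,\theta)$ contributes one upward and one downward sequence, both of which escape to $\re x \to +\infty$ as $\theta \to +\infty$. Hence for any compact $K_\kappa \times K_\omega \subset D_\mathcal{W}$ and any fixed $\delta>0$, one can choose a contour $\mathsf{W}$ satisfying the right-tail condition and a threshold $N>0$ such that $\mathsf{W}$ separates the required pole sequences for every $\theta>N$ and every $(\kappa,\omega) \in K_\kappa \times K_\omega$; \eqref{defL} then gives $\mathcal{L} = P_\mathcal{W}\int_\mathsf{W} Z\, I_\mathcal{W}\, dx$. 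Applying \eqref{sbasymptotics} with $\epsilon = 1/2$ to each of the four $s_b$-factors in $Z$ and collecting terms, the quadratic and linear contributions in $\theta$ cancel algebraically, giving
\begin{equation*}
\ln Z(x,\theta) = O\bigl(e^{-\pi \theta/\max(b,b^{-1})}\bigr), \qquad \theta \to +\infty,
\end{equation*}
uniformly for $(\kappa,\omega) \in K_\kappa \times K_\omega$ and for $x$ in bounded subsets of $\mathsf{W}$; in particular $Z(x,\theta) \to 1$.

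Finally I would establish the dominated-convergence envelope. The asymptotic formula \eqref{sbasymptotics} yields $|I_\mathcal{W}(x,\kappa,\omega)| \leq C e^{-2\pi Q|\re x|}$ on the left tail of $\mathsf{W}$ and $|I_\mathcal{W}(x,\kappa,\omega)| \leq C e^{-2\pi \delta |\re x|}$ on its right tail (using $\im x + Q/2 + \im \kappa + \im \omega > \delta$), uniformly on $K_\kappa \times K_\omega$. For the factor $Z$ one partitions $\mathsf{W}$ according to the sign of $\theta + \re x$, exactly as in \eqref{inequalityXC5}--\eqref{inequalityXC6}: on $\theta + \re x \leq 0$ the asymptotics make $|Z|$ uniformly bounded, while on $\theta + \re x \geq 0$ the deficit in decay of $I_\mathcal{W}$ is absorbed by the strong decay of the ratio $s_b(x+\theta+\theta_t-\omega)/s_b(x+\theta+\theta_t+iQ/2)$, so that $|Z\, I_\mathcal{W}|$ is dominated by a fixed exponentially decaying function of $\re x$ for all $\theta > N$. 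Lebesgue's dominated convergence theorem then yields the existence of the limit \eqref{fromLtoW} uniformly on compact subsets of $D_\mathcal{W}$ and the integral representation \eqref{defW}, and the analyticity of $\mathcal{W}$ on $D_\mathcal{W}$ follows from uniform convergence of analytic functions. The main obstacle will be the careful verification that the $\theta$-divergent terms in $\ln Z$ cancel and the region-by-region envelope bound on $\theta + \re x \geq 0$; both are completely parallel to the arguments already carried out in the proofs of Theorems \ref{thmforH} and \ref{thmforS}.
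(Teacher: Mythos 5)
Your overall strategy is exactly the one the paper intends: the proof of Theorem \ref{thmforW} is omitted there precisely because it is ``similar to that of Theorem \ref{thmforH}'', and your decomposition $P_\mathcal{L}I_\mathcal{L}=P_\mathcal{W}\,Z\,I_\mathcal{W}$ is correct. I have checked that your formula for $Z(x,\theta)$ is what the substitution \eqref{kappaomega} produces from \eqref{PL}--\eqref{IL}, that the quadratic and linear $\theta$-divergent terms in $\ln Z$ cancel (so $Z\to 1$ via \eqref{sbasymptotics}), and that your identification of the pole structure of $I_\mathcal{W}$ and of $\Delta_\kappa$ is right.

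However, two of your supporting claims are stated backwards, and executed literally they would derail the envelope estimate. First, since the limit here is $\theta\to+\infty$ (not $-\infty$ as for $\mathcal{H}$ and $\mathcal{S}$), the two extra pole sequences of $Z$ --- the upward one starting at $x=-\theta-\theta_t$ and the downward one starting at $x=-\tfrac{iQ}{2}+\omega-\theta-\theta_t$ --- escape to $\re x\to-\infty$, not $+\infty$; it is therefore the \emph{left} portion of $\mathsf{W}$, which must stay in the strip $\im\omega-\tfrac{Q}{2}<\im x<0$, that has to be kept clear of them. Second, your region analysis is reversed: on $\theta+\re x\geq 0$ the asymptotics give $Z\approx 1$ (uniformly bounded), whereas the region where $\lvert Z\rvert$ can grow --- like $e^{2\pi\lvert\theta+\re x\rvert\,\lvert\im\omega+Q/2\rvert}$ when $\im\omega<-Q/2$ --- is $\theta+\re x\leq 0$, i.e.\ the far left tail; there the growth must be absorbed by the left-tail decay of $I_\mathcal{W}$. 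That decay is $O\bigl(e^{-\pi(Q-2\im\omega)\lvert\re x\rvert}\bigr)$, not $O\bigl(e^{-2\pi Q\lvert\re x\rvert}\bigr)$ as you assert (your bound is false already for real $\omega$); the product $\lvert Z\,I_\mathcal{W}\rvert$ then does decay like $e^{-2\pi Q\lvert\re x\rvert}$ on $\re x\leq-\theta$, in exact analogy with \eqref{inequalityXC5}--\eqref{inequalityXC6}. With these corrections the dominated-convergence argument closes as you describe.
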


Furthermore,  thanks to the symmetry \eqref{sbinverse},  $\mathcal W$ satisfies
\beq \label{Winverse}
\mathcal{W}(b^{-1},\theta_t,\kappa,\omega) = \mathcal{W}(b,\theta_t,\kappa,\omega).
\eeq

\subsection{Difference equations}
By taking the confluent limit \eqref{fromLtoW} of the difference equations \eqref{diffeqL} and \eqref{diffeqtildeL} satisfied by $\mathcal{L}$, we obtain the next two propositions which show that $\mathcal{W}$ is a joint eigenfunction of four difference operators, two acting on $\kappa$ and the other two on $\omega$.

The difference equations will first be derived as equalities between meromorphic functions of $\kappa \in \mathbb C$ and $\omega$ with $\im \omega < Q/2$. We will then use the difference equations in $\omega$ to show that (i) the limit in \eqref{fromLtoW} exists for all $\omega$ in the complex plane away from a discrete subset, (ii) $\mathcal{W}$ is in fact a meromorphic function of $(\kappa,\omega)$ in all of $\mathbb C^2$, and (iii) the four difference equations hold as equalities between meromorphic functions on $\mathbb C^2$, see Proposition \ref{Wextensionprop}.

\subsubsection{First pair of difference equations}

Define the difference operator $H_{\mathcal{W}}(b,\kappa)$ such that
\beq\label{HW}
H_{\mathcal{W}}(b,\kappa) = H^+_{\mathcal{W}}(b,\kappa) e^{ib \partial_\kappa} + H^-_{\mathcal{W}}(b,\kappa) e^{-ib \partial_\kappa} + H^0_{\mathcal{W}}(b,\kappa),
\eeq
where
\begin{align}
\label{HWpm} & H^\pm_{\mathcal{W}}(b,\kappa) = -2 e^{3\pi b \kappa} e^{\pm \pi b(\frac{ib}2-\theta_t)} \cosh \left(\pi  b \left(\tfrac{i b}{2}+\theta_t\pm \kappa \right)\right), \\
\label{HW0} & H^0_{\mathcal{W}}(b,\kappa) = - H^+_{\mathcal{W}}(b,\kappa) - H^-_{\mathcal{W}}(b,\kappa).
\end{align}
\begin{proposition}
For $\kappa \in \mathbb C$ and $\im \omega <Q/2$, the function $\mathcal{W}$ satisfies the following pair of difference equations:
\begin{subequations}\label{diffeqW}\begin{align}
\label{diffeqW1} & H_{\mathcal{W}}(b,\kappa)\; \mathcal{W}(b,\theta_t,\kappa,\omega) = e^{-2\pi b \omega}\; \mathcal{W}(b,\theta_t,\kappa,\omega), \\
\label{diffeqW2} & H_{\mathcal{W}}(b^{-1},\kappa)\; \mathcal{W}(b,\theta_t,\kappa,\omega) = e^{-2\pi b^{-1} \omega}\; \mathcal{W}(b,\theta_t,\kappa,\omega).
\end{align}\end{subequations}
\end{proposition}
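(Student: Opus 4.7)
The plan is to follow exactly the same strategy used for $\mathcal{H}$ and $\mathcal{L}$: start from the difference equation \eqref{diffeqL1} satisfied by $\mathcal{L}$, specialize the parameters according to \eqref{kappaomega}, multiply by an appropriate power of $e^{\pi b \theta}$ so that both sides have a finite limit, and then take $\theta \to +\infty$ using Theorem \ref{thmforW}.

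More precisely, substituting $\lambda=\tfrac{\theta}{2}+\kappa$ and $\mu=-\tfrac{3\theta}{4}+\omega$ into \eqref{diffeqL1} yields
\begin{equation*}
H_\mathcal{L}\bigl(b,\tfrac{\theta}{2}+\kappa\bigr)\, \mathcal{L}\bigl(b,\theta_t,\theta,\tfrac{\theta}{2}+\kappa,-\tfrac{3\theta}{4}+\omega\bigr) = e^{3\pi b \theta/2}e^{-2\pi b \omega}\, \mathcal{L}\bigl(b,\theta_t,\theta,\tfrac{\theta}{2}+\kappa,-\tfrac{3\theta}{4}+\omega\bigr),
\end{equation*}
so the correct compensating factor is $e^{-3\pi b \theta/2}$. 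The first step is then to verify the limit
\begin{equation*}
\lim_{\theta \to +\infty} e^{-3\pi b \theta /2}\, H_\mathcal{L}\bigl(b,\tfrac{\theta}{2}+\kappa\bigr) = H_\mathcal{W}(b,\kappa),
\end{equation*}
coefficient by coefficient. Using \eqref{HLplus}, the factor $\cosh(\pi b(\tfrac{ib}{2}+\tfrac{\theta}{2}+\lambda))$ at $\lambda=\tfrac{\theta}{2}+\kappa$ grows like $\tfrac{1}{2}e^{\pi b(\tfrac{ib}{2}+\theta+\kappa)}$, and tracking the remaining exponential prefactor one finds $e^{-3\pi b\theta/2}H^+_\mathcal{L}(b,\tfrac{\theta}{2}+\kappa)$ converges to $H^+_\mathcal{W}(b,\kappa)$ as in \eqref{HWpm}. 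An analogous direct computation with \eqref{HLminus} gives $e^{-3\pi b\theta/2}H^-_\mathcal{L}(b,\tfrac{\theta}{2}+\kappa) \to H^-_\mathcal{W}(b,\kappa)$. Finally, the ``constant'' term in \eqref{HL0} is $e^{-\pi b(\theta/2+2\theta_t+iQ)}$, which after multiplication by $e^{-3\pi b\theta/2}$ decays like $e^{-2\pi b\theta}$, so it vanishes in the limit and one recovers \eqref{HW0}.

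Once these pointwise limits of the coefficients are established, the second step is to interchange the limit $\theta \to +\infty$ with the action of the finite-order difference operator. Since $H_\mathcal{W}(b,\kappa)$ is a three-term difference operator in $\kappa$ with $\kappa$-independent shifts $\pm ib$, this interchange is justified by the uniform convergence on compact subsets of $D_\mathcal{W}$ stated in Theorem \ref{thmforW}: applying the limit at the shifted points $\kappa, \kappa\pm ib$ (which remain in $\mathbb{C}\setminus\Delta_\kappa$ after shrinking the compact set if necessary) together with the convergence of the coefficients yields \eqref{diffeqW1} as an equality between meromorphic functions of $(\kappa,\omega)\in\mathbb{C}\times\{\im\omega<Q/2\}$. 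The companion equation \eqref{diffeqW2} is immediate from \eqref{diffeqW1} and the $b\leftrightarrow b^{-1}$ symmetry \eqref{Winverse}.

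The only nontrivial step is the bookkeeping in the first paragraph: verifying that the exponential prefactors conspire correctly so that, after the $e^{-3\pi b\theta/2}$ rescaling, the $\theta$-dependence drops out exactly and reproduces the compact formulas \eqref{HWpm}--\eqref{HW0}. This is a routine but slightly delicate check because the exponents in \eqref{HLplus}--\eqref{HLminus} are linear in $\theta$ with different slopes and combine with the asymptotics of the hyperbolic factors; nothing deeper than this occurs, and the rest of the argument is an immediate transcription of the scheme used throughout Section \ref{Hdifferencesubsec} and the analogous subsections for $\mathcal{S}$, $\mathcal{X}$, and $\mathcal{L}$.
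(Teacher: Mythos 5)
Your proposal is correct and follows exactly the route the paper intends (the paper omits this proof precisely because it is the confluent limit of \eqref{diffeqL1} under the substitution \eqref{kappaomega}, in the pattern of Section \ref{Hdifferencesubsec}): the compensating factor $e^{-3\pi b\theta/2}$ is the right one, the coefficient-by-coefficient limits $e^{-3\pi b\theta/2}H^{\pm}_\mathcal{L}(b,\tfrac{\theta}{2}+\kappa)\to H^{\pm}_\mathcal{W}(b,\kappa)$ and the vanishing of the rescaled constant term in \eqref{HL0} all check out, and the interchange of the limit with the three-term difference operator is justified by the uniform convergence on compact subsets from Theorem \ref{thmforW}, with \eqref{diffeqW2} following from \eqref{Winverse}. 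No gaps.
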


\subsubsection{Second pair of difference equations}

Introduce the difference operator $\tilde{H}_{\mathcal{W}}(b,\omega)$ by
\beq\label{HtildeW}
\tilde{H}_{\mathcal{W}}(b,\omega) = \tilde{H}_{\mathcal{W}}^+(b,\omega) e^{ib \partial_\omega} + \tilde{H}_{\mathcal{W}}^-(b,\omega) e^{-ib \partial_\omega} + \tilde{H}_{\mathcal{W}}^0(b,\omega),
\eeq
where
\begin{align}
\label{HtildeWplus} & \tilde{H}_{\mathcal{W}}^+(b,\omega) = -2 e^{\pi  b \left(\omega-\frac{i b}{2}-2 \theta_t \right)} \cosh \left(\pi  b \left(\omega +\tfrac{i b}{2}\right)\right), \\
\label{HtildeWminus} & \tilde{H}_{\mathcal{W}}^-(b,\omega) = -e^{2 \pi  b (\theta_t+\omega )}, \\
\label{HtildeW0} & \tilde{H}_{\mathcal{W}}^0(b,\omega) = 2 e^{2 \pi  b \omega } \cosh (2 \pi  b \theta_t).
\end{align}
\begin{proposition}
For $\kappa \in \mathbb C$ and $\im(\omega+ib^{\pm 1}) <Q/2$, the function $\mathcal{W}$ satisfies the following pair of difference equations:
\begin{subequations}\label{diffeqtildeW}\begin{align}
\label{diffeqtildeW1} & \tilde{H}_{\mathcal{W}}(b,\omega)\;\mathcal{W}(b,\theta_t,\kappa,\omega) = e^{-2\pi b \kappa}\; \mathcal{W}(b,\theta_t,\kappa,\omega), \\
\label{diffeqtildeW2} & \tilde{H}_{\mathcal{W}}(b^{-1},\omega)\;\mathcal{W}(b,\theta_t,\kappa,\omega) = e^{-2\pi b^{-1} \kappa}\;\mathcal{W}(b,\theta_t,\kappa,\omega).
\end{align}\end{subequations}
\end{proposition}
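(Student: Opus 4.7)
The plan is to derive \eqref{diffeqtildeW} as the confluent limit $\theta \to +\infty$ of the second pair \eqref{diffeqtildeL} of difference equations satisfied by $\mathcal{L}$, following exactly the blueprint laid out in Section \ref{Hdifferencesubsec}. Substituting $\lambda = \tfrac{\theta}{2}+\kappa$ and $\mu = -\tfrac{3\theta}{4}+\omega$ into \eqref{diffeqtildeL1} and multiplying both sides by $e^{\pi b \theta}$, the right-hand side reads $e^{-2\pi b \kappa}\,\mathcal{L}(b,\theta_t,\theta,\tfrac{\theta}{2}+\kappa,-\tfrac{3\theta}{4}+\omega)$, which by the definition \eqref{fromLtoW} and the uniform convergence on compact subsets provided by Theorem \ref{thmforW} tends to $e^{-2\pi b \kappa}\,\mathcal{W}(b,\theta_t,\kappa,\omega)$.

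For the left-hand side, the task reduces to verifying the coefficient limits
\beq
\lim_{\theta \to +\infty} e^{\pi b \theta}\, \tilde{H}^{\pm}_{\mathcal{L}}\bigl(b,-\tfrac{3\theta}{4}+\omega\bigr) = \tilde{H}^{\pm}_{\mathcal{W}}(b,\omega), \qquad \lim_{\theta \to +\infty} e^{\pi b \theta}\, \tilde{H}^{0}_{\mathcal{L}}\bigl(b,-\tfrac{3\theta}{4}+\omega\bigr) = \tilde{H}^{0}_{\mathcal{W}}(b,\omega),
\eeq
with the coefficients as defined in \eqref{HtildeLplus}--\eqref{HtildeL0} and \eqref{HtildeWplus}--\eqref{HtildeW0}. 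These are routine consequences of the exponential asymptotics of the hyperbolic cosines. For instance, in $\tilde{H}^{+}_{\mathcal{L}}(b,\mu)$ at $\mu = -\tfrac{3\theta}{4}+\omega$, the $3\theta/4$-contribution inside the first cosh cancels to leave $\cosh(\pi b(ib/2+\omega))$, while the second cosh is asymptotic to $\tfrac{1}{2}e^{\pi b(\theta-\theta_t-\omega-ib/2)}$ as $\theta\to+\infty$; combined with the explicit prefactor $e^{-\pi b(2\theta+\theta_t-2\omega)}$ and the $e^{\pi b\theta}$ normalization, this yields exactly $-2e^{\pi b(\omega-ib/2-2\theta_t)}\cosh(\pi b(\omega+ib/2))=\tilde{H}^{+}_{\mathcal{W}}(b,\omega)$. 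Analogous computations handle $\tilde{H}^{-}_{\mathcal{L}}$ (whose dominant contribution simplifies to $-e^{\pi b(-\theta+2\theta_t+2\omega)}$) and the constant term $\tilde{H}^{0}_{\mathcal{L}}$. This establishes \eqref{diffeqtildeW1} as an identity of meromorphic functions in the stated domain, and \eqref{diffeqtildeW2} follows immediately from it and the $b \leftrightarrow b^{-1}$ symmetry \eqref{Winverse} of $\mathcal{W}$.

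The only subtlety concerns the domain of validity: the integral representation \eqref{defW} requires $\im \omega < Q/2$, whereas the shift operators $e^{\pm i b \partial_\omega}$ displace $\omega$ by $\pm i b$, so \eqref{diffeqtildeW1} can only be interpreted as stated when $\im(\omega+ib)<Q/2$; the corresponding restriction for the companion $b^{-1}$ equation produces the joint hypothesis $\im(\omega+ib^{\pm 1})<Q/2$. I anticipate no genuine obstacle beyond this bookkeeping, since the hard analysis has already been carried out in the proof of Theorem \ref{thmforW} and all that remains is tracking exponential factors through the confluent limit. Later, Proposition \ref{Wextensionprop} will invoke \eqref{diffeqtildeW1} itself as a recursion to extend $\mathcal{W}$ meromorphically to all $\omega \in \mathbb{C}$, thereby upgrading both \eqref{diffeqW} and \eqref{diffeqtildeW} to identities of meromorphic functions on $\mathbb{C}^{2}$.
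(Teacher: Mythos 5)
Your proposal is correct and follows exactly the route the paper intends: the paper omits this proof precisely because it is obtained, as you do, by taking the confluent limit \eqref{fromLtoW} of the second pair \eqref{diffeqtildeL} of difference equations for $\mathcal{L}$, normalizing by $e^{\pi b\theta}$, verifying the coefficient limits $e^{\pi b\theta}\tilde{H}^{\pm,0}_{\mathcal{L}}(b,-\tfrac{3\theta}{4}+\omega)\to\tilde{H}^{\pm,0}_{\mathcal{W}}(b,\omega)$, and deducing the companion equation from the symmetry \eqref{Winverse}. Your coefficient computations and the bookkeeping of the domain restriction $\im(\omega+ib^{\pm1})<Q/2$ check out.
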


The next proposition is stated without proof since it is similar to that of Proposition \ref{Hextensionprop}.

\begin{proposition} \label{Wextensionprop}
Let $(b,\theta_t) \in (0,\infty) \times \mathbb R$ and $\kappa \in \mathbb C \backslash \Delta_\kappa$. There is a discrete subset $\Delta \subset \mathbb C$ such that the limit in \eqref{fromLtoW} exists for all $\omega \in \mathbb C \backslash \Delta$. Moreover, the function $\mathcal W$ defined by \eqref{fromLtoW} is a meromorphic function of $(\kappa,\omega) \in \mathbb C^2$ and the four difference equations \eqref{diffeqW} and \eqref{diffeqtildeW} hold as equalities between meromorphic functions of $(\kappa,\omega) \in \mathbb C^2$.
\end{proposition}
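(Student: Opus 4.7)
The proof mirrors the strategy of Proposition \ref{Hextensionprop}. By Theorem \ref{thmforW}, $\mathcal W$ is initially a meromorphic function of $(\kappa,\omega)$ on $(\mathbb C\setminus\Delta_\kappa)\times\{\im\omega<Q/2\}$, and on this domain the four difference equations \eqref{diffeqW} and \eqref{diffeqtildeW} hold as identities between meromorphic functions. The goal is to propagate both the existence of the confluent limit \eqref{fromLtoW} and the meromorphic function $\mathcal W$ across the line $\im\omega=Q/2$ and then upward by successive shifts, using the second pair of difference equations, whose shifts $e^{\pm ib^{\pm 1}\partial_\omega}$ act in the $\omega$-variable.

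The plan is to work at the $\mathcal L$-level, i.e.\ before taking the confluent limit $\theta\to+\infty$ in \eqref{fromLtoW}. Fix $\omega$ with $\im\omega<Q/2$ but with $\im(\omega+ib)\geq Q/2$. Under the parameter identification \eqref{kappaomega}, I would solve the difference equation \eqref{diffeqtildeL1} for the shifted value, giving schematically
\beq
\mathcal L\bigl(b,\theta_t,\theta,\tfrac{\theta}{2}+\kappa,\,-\tfrac{3\theta}{4}+\omega+ib\bigr)
= \frac{\bigl(e^{-2\pi b\lambda}-\tilde H_{\mathcal L}^{0}(b,\mu)\bigr)\,\mathcal L\big|_{\omega} \;-\; \tilde H_{\mathcal L}^{-}(b,\mu)\,\mathcal L\big|_{\omega-ib}}{\tilde H_{\mathcal L}^{+}(b,\mu)},
\eeq
where the right-hand side involves $\mathcal L$ only at the parameter values corresponding to $\omega$ and $\omega-ib$, both of which lie in $\{\im\omega<Q/2\}$ where the limit is already controlled by Theorem \ref{thmforW}. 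Passing to the limit $\theta\to+\infty$ and invoking the convergence of the coefficients $\tilde H_{\mathcal L}^{\pm,0}$ to the corresponding $\tilde H_{\mathcal W}^{\pm,0}$ (as in the proof of the propositions that established \eqref{diffeqtildeW}), I conclude that the limit in \eqref{fromLtoW} exists at the shifted argument $\omega+ib$ off a discrete set. Iterating this step extends $\mathcal W$ meromorphically to $\{\im\omega<Q/2+nb\}\setminus\Delta^{(n)}$ for every $n\geq 1$, and since $b>0$ the union exhausts all of $\mathbb C$, establishing the existence of $\Delta\subset\mathbb C$ outside which the limit converges. An analogous argument based on \eqref{diffeqtildeL2}, or the symmetry \eqref{Winverse}, controls the $b^{-1}$-direction. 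Once $\mathcal W$ is meromorphic on $\mathbb C^2$, the four difference equations extend from their initial domains to equalities between meromorphic functions on $\mathbb C^2$ by analytic continuation.

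The main delicate point is to verify that the discrete set of exceptional $\omega$, where division by $\tilde H_{\mathcal L}^{+}$ (and in the limit, $\tilde H_{\mathcal W}^{+}$) might produce spurious singularities, remains discrete after the countably many iterations. Since $\tilde H_{\mathcal W}^{+}(b,\omega)=-2e^{\pi b(\omega-\tfrac{ib}{2}-2\theta_t)}\cosh(\pi b(\omega+\tfrac{ib}{2}))$ has an explicit and discrete zero set on the imaginary axis, and only countably many $ib$-shifts of this set enter, the union is again discrete; the same holds for the $b^{-1}$-direction, and the two together assemble into the required exceptional set $\Delta$.
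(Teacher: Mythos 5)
Your proposal is correct and follows essentially the same route as the paper: the paper omits this proof, noting only that it is ``similar to that of Proposition~\ref{Hextensionprop}'', and that model proof proceeds exactly as you do---solving the parent function's difference equation (there \eqref{diffeqR3}, here \eqref{diffeqtildeL1}) for the shifted argument before taking the confluent limit, iterating strip by strip, and then extending the four difference equations to $\mathbb{C}^2$. Your explicit check that the zero set of $\tilde H^{+}_{\mathcal L}$ (and of $\tilde H^{+}_{\mathcal W}$) contributes only a discrete exceptional set after countably many shifts is a correct refinement of a point the paper leaves implicit.
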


\subsection{Polynomial limit}
Our next theorem shows that $\mathcal{W}$ reduces to the little $q$-Laguerre polynomials when $\kappa$ is suitably discretized. We omit the proof which is similar to that of Theorem \ref{thhahn}.

\begin{assumption} \label{assumptionW}
Assume that $b>0$ is such that $b^2$ is irrational and 
\beq 
\theta_t \neq 0.
\eeq
\end{assumption}
Assumption \ref{assumptionW} implies that all the poles of the integrand $I_\mathcal{W}$ are distinct and simple.

\begin{theorem}[From $\mathcal{W}$ to the little $q$-Laguerre polynomials] \label{fromWtoWn}
Let $\omega \in \mathbb{C}$ be such that $\im \omega < Q/2$ and suppose that Assumptions \ref{assumption} and \ref{assumptionW} are satisfied. Define $\{\kappa_n\}_{n=0}^\infty \subset \mathbb{C}$ by
\beq
\kappa_n = \theta_t + \tfrac{i Q}2 + ibn.
\eeq
Under the parameter correspondence
\beq\label{paramWn}
\alpha_{W} = e^{4\pi b \theta_t},  \qquad x_{W}=e^{-2 \pi  b \left(\frac{i Q}{2}+\omega\right)},\qquad q=e^{2i\pi b^2},
\eeq
the function $\mathcal{W}$ satisfies, for each $n\geq 0$,
\beq\label{limitWn}
\lim\limits_{\kappa \to \kappa_n} \mathcal{W}(b,\theta_t,\kappa,\omega) = W_n(x_{W};\alpha_{W},q),
\eeq
where $W_n$ are the little $q$-Laguerre polynomials defined in \eqref{Wn}.
\end{theorem}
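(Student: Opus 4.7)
I would mirror the structure of the proof of Theorem \ref{thhahn}: establish the base case $n=0$ directly from the integral representation \eqref{defW} by a contour-deformation and residue argument, then propagate to all $n\ge 0$ by induction using the difference equation \eqref{diffeqW1}.

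For the base case, observe from \eqref{PW} that the prefactor $P_\mathcal{W}(\kappa,\omega)$ contains the factor $s_b(\kappa-\theta_t)$, which by \eqref{polesb} has a simple zero at $\kappa_0=\theta_t+iQ/2$. Meanwhile, the integrand $I_\mathcal{W}$ in \eqref{IW} has a downward sequence of poles beginning at $x_0:=-iQ/2-\theta_t+\kappa$, coming from $s_b(x+\theta_t-\kappa)$, which collides at $\kappa=\kappa_0$ with the pole at $x=0$ produced by $s_b(x+iQ/2)^{-1}$, pinching the contour $\mathsf{W}$. Before taking the limit, I would deform $\mathsf{W}$ into a contour $\mathsf{W}'$ passing below $x_0$, yielding
\beq
\mathcal{W}(b,\theta_t,\kappa,\omega) = -2i\pi\, P_\mathcal{W}(\kappa,\omega) \underset{x=x_0}{\text{Res}}\, I_\mathcal{W}(x,\kappa,\omega) + P_\mathcal{W}(\kappa,\omega) \int_{\mathsf{W}'} dx\, I_\mathcal{W}(x,\kappa,\omega).
\eeq
The residue, computed via \eqref{ressb}, carries a simple pole in $\kappa$ at $\kappa_0$ that cancels the simple zero of $P_\mathcal{W}$, while the remainder integral vanishes in the limit thanks to that same zero. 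A direct computation using the reflection identity $s_b(z)=1/s_b(-z)$ should then give the limit value $1 = W_0(x_W;\alpha_W,q)$.

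For the inductive step, set $P_n := \lim_{\kappa\to\kappa_n}\mathcal{W}(b,\theta_t,\kappa,\omega)$; the same contour-deformation argument shows that each $P_n$ exists. Under the parameter correspondence \eqref{paramWn}, I would verify from the explicit expressions \eqref{HWpm}--\eqref{HW0} that the evaluation $\kappa=\kappa_n$ of the operator $H_\mathcal{W}(b,\kappa)$ reproduces, up to a uniform factor that matches $e^{-2\pi b\omega}$ to $x_W=e^{-2\pi b(iQ/2+\omega)}$, the three-term recurrence operator $R_{W_n}$ of the little $q$-Laguerre polynomials appearing in \eqref{recurrenceWn}; the crucial point is that $e^{\pm ib\partial_\kappa}$ sends $\kappa_n$ to $\kappa_{n\pm 1}$. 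Passing to the limit $\kappa\to\kappa_n$ in \eqref{diffeqW1} then yields a three-term recurrence for $P_n$ identical to that of $W_n$, and since $P_0=W_0=1$, induction gives $P_n = W_n$ for all $n\ge 0$.

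The main obstacle is purely computational: matching the evaluation of $H_\mathcal{W}(b,\kappa_n)$ to the correct coefficients of the recurrence for $W_n$ under \eqref{paramWn}, with correct tracking of the overall phase factors, and verifying that the residue in the base case simplifies to exactly $1$ after the $s_b$-identities are applied. Both steps are routine algebraic manipulations of hyperbolic functions and reflection identities, but some bookkeeping is required because the substitution \eqref{paramWn} involves the shift $x_W = e^{-2\pi b(iQ/2+\omega)}$ rather than $e^{-2\pi b\omega}$ itself.
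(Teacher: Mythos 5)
Your proposal is correct and follows exactly the route the paper intends: the paper omits the proof of Theorem \ref{fromWtoWn} precisely because it is "similar to that of Theorem \ref{thhahn}", and your argument (simple zero of $s_b(\kappa-\theta_t)$ in $P_\mathcal{W}$ at $\kappa_0$ cancelling the pole produced by the pinched contour, giving $P_0=1$, followed by induction via the $\kappa$-difference equation \eqref{diffeqW1}, whose evaluation at $\kappa_n$ reproduces $q^{1/2}R_{W_n}$ so that $e^{-2\pi b\omega}=-q^{1/2}x_W$ turns it into the recurrence \eqref{recurrenceWn}) is that proof transposed to $\mathcal{W}$. The remaining steps are, as you say, routine bookkeeping.
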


\section{The function $\mathcal{M}$}\label{Msec}

In this section, we define the function $\mathcal{M}(b,\zeta,\omega)$ which generalizes the little $q$-Laguerre polynomials \eqref{Wn} evaluated at $\alpha=0$. It lies at the fifth and lowest level of the non-polynomial scheme and is defined as a limit of the function $\mathcal{W}$. We show that $\mathcal M$ is a joint eigenfunction of four difference operators, two acting on $\zeta$ and the other two on $\omega$. Finally, we show that $\mathcal M$ reduces to the little $q$-Laguerre polynomials evaluated at $\alpha=0$ when $\zeta$ is suitably discretized.

\subsection{Definition and integral representation}
Introduce a new parameter $\zeta$ such that $\kappa = \theta_t+\zeta$.

\begin{definition}
The function $\mathcal{M}(b,\zeta,\omega)$ is defined by
\beq \label{fromWtoM}
\mathcal{M}(b,\zeta,\omega) = \lim\limits_{\theta_t \to -\infty} \mathcal{W}(b,\theta_t,\theta_t + \zeta,\omega),
\eeq
where the function $\mathcal{W}$ is defined in \eqref{fromLtoW}.
\end{definition}
The next theorem shows that, for each choice of $b \in (0,\infty)$, $\mathcal M$ is a well-defined and analytic function of
\beq \label{zetaomegacondition}
(\zeta,\omega) \in \{\im \omega<Q/2,\; \im(\zeta+\omega) >0 \} \setminus (\Delta_\zeta \times \mathbb C),
\eeq
where $\Delta_\zeta \subset \mathbb C$ is a discrete set of points at which $\mathcal M$ may have poles.  More precisely, $\Delta_\zeta$ is defined by
\beq \label{Dzetadef}
\Delta_{\zeta} := \{\tfrac{iQ}2+ibm+ilb^{-1}\}_{m,l=0}^\infty \cup \{-\tfrac{iQ}2-imb-ilb^{-1} \}_{m,l=0}^\infty.
\eeq
The theorem also provides an integral representation for $\mathcal M$ for $(\zeta,\omega)$ satisfying \eqref{zetaomegacondition}.  The restrictions in \eqref{zetaomegacondition} are needed to ensure convergence of the integral in the integral representation.  Nevertheless, it will be shown later, with the help of the difference equations satisfied by $\mathcal M$, that $\mathcal M$ extends to a meromorphic function of $(\zeta,\omega) \in \mathbb C^2$. 

\begin{theorem} \label{thmforM} 
Suppose that Assumption \ref{assumption} holds. Let $\Delta_\zeta \subset \mathbb C$ be the discrete subset defined in \eqref{Dzetadef}. Then the limit \eqref{fromWtoM} exists uniformly for $(\zeta,\omega)$ in compact subsets of 
\beq \label{DMdef}
D_\mathcal{M} := \{(\zeta, \omega) \in \mathbb C^2 \,|\, \im \omega<Q/2,\; \im(\zeta+\omega) >0 \} \setminus (\Delta_\zeta \times \mathbb C).
\eeq
Moreover, $\mathcal M$ is an analytic function of $(\zeta,\omega) \in D_\mathcal{M}$ and admits the following integral representation:
\beq \label{defM}
\mathcal{M}(b,\zeta,\omega) = P_\mathcal{M}(\zeta,\omega) \displaystyle \int_\mathsf{M} dx \; I_\mathcal{M}(x,\zeta,\omega) \qquad \text{for $(\zeta,\omega) \in D_\mathcal{M}$},
\eeq
where
\begin{align}
\label{PM} & P_\mathcal{M}(\zeta,\omega) = s_b(\zeta ),  \\
\label{IM} & I_\mathcal{M}(x,\zeta,\omega) = e^{i \pi  x \left(\zeta -\frac{i Q}{2}+2 \omega \right)} \frac{s_b(x-\zeta ) }{s_b\big(x+\frac{i Q}{2}\big)},
\end{align}
and the contour $\mathsf{M}$ is any curve from $-\infty$ to $+\infty$ which separates the increasing from the decreasing sequence of poles. If $\im \omega \in (0,Q/2)$ and $\zeta \in \mathbb R$, then the contour $\mathsf M$ can be any curve from $-\infty$ to $+\infty$ lying within the strip $\im x \in (-Q/2,0)$.
\end{theorem}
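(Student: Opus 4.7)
The plan is to mirror the strategy used in the proofs of Theorems \ref{thmforH} and \ref{thmforS}. Substitute $\kappa=\theta_t+\zeta$ into the integral representation \eqref{defW} of $\mathcal{W}$ and rearrange the integrand as
\begin{equation*}
P_\mathcal{W}(\theta_t+\zeta,\omega)\,I_\mathcal{W}(x,\theta_t+\zeta,\omega)=P_\mathcal{M}(\zeta,\omega)\,Z(x,\theta_t)\,I_\mathcal{M}(x,\zeta,\omega),
\end{equation*}
where $I_\mathcal{M}$ and $P_\mathcal{M}$ are as in \eqref{PM}--\eqref{IM} and
\begin{equation*}
Z(x,\theta_t)=e^{\tfrac{i\pi x^2}{2}+i\pi x(2\theta_t+\tfrac{iQ}{2})}\,\frac{s_b(\tfrac{iQ}{2}+2\theta_t)}{s_b(x+\tfrac{iQ}{2}+2\theta_t)}.
\end{equation*}
The $\theta_t$-dependent factor $s_b(\tfrac{iQ}{2}+2\theta_t)$ of $P_\mathcal{W}$ is absorbed into $Z$, while $s_b(\kappa-\theta_t)=s_b(\zeta)$ produces $P_\mathcal{M}$.

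Next I would verify that $Z(x,\theta_t)\to 1$ as $\theta_t\to-\infty$ uniformly for $x$ in bounded subsets of $\mathbb{C}$. Since both $\re(\tfrac{iQ}{2}+2\theta_t)$ and $\re(x+\tfrac{iQ}{2}+2\theta_t)$ tend to $-\infty$, applying the asymptotic formula \eqref{sbasymptotics} (with $\epsilon=1/2$) to both $s_b$ factors and using the identity $(\tfrac{iQ}{2}+2\theta_t)^2-(x+\tfrac{iQ}{2}+2\theta_t)^2=-x^2-2x(\tfrac{iQ}{2}+2\theta_t)$ shows that the quadratic and linear growth in the ratio of $s_b$'s is exactly cancelled by the exponential prefactor, so $\ln Z=o(1)$. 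For the contour, the integrand of $\mathcal{W}$ has two increasing sequences of poles at $x=0$ and $x=-2\theta_t$ and one decreasing sequence at $x=-\zeta-\tfrac{iQ}{2}$; the second increasing sequence escapes to $+\infty$ as $\theta_t\to-\infty$, so for $\theta_t$ sufficiently negative the contour $\mathsf{W}$ may be chosen independently of $\theta_t$ and taken to coincide with the contour $\mathsf{M}$ appearing in \eqref{defM}. The conditions $\im\omega<Q/2$, $\im(\zeta+\omega)>0$, and $\zeta\notin\Delta_\zeta$ are precisely what is required to prevent pinching of $\mathsf{M}$ and to keep $P_\mathcal{M}(\zeta,\omega)=s_b(\zeta)$ finite.

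The main obstacle is to produce a $\theta_t$-independent integrable majorant of the form $|Z(x,\theta_t)\,I_\mathcal{M}(x,\zeta,\omega)|\le Ce^{-c|\re x|}$ so that Lebesgue's dominated convergence theorem applies. The difficulty concentrates near $\re x\approx -2\theta_t$, where the denominator of $Z$ switches asymptotic regime and its pole sequence at $x=-2\theta_t+imb+ilb^{-1}$ sweeps past. Following the pattern of the inequalities \eqref{inequalityX}--\eqref{inequalityXC6} from the proof of Theorem \ref{thmforH}, I would split the contour into the three regions $\re x\le 0$, $0\le\re x\le -2\theta_t$, and $\re x\ge -2\theta_t$. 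On the first two regions the $-\infty$ asymptotic for $s_b(x+\tfrac{iQ}{2}+2\theta_t)$ yields $|Z|\le C$ uniformly in $\theta_t$, and the decay of the majorant then comes from $|I_\mathcal{M}|$, which a direct application of \eqref{sbasymptotics} shows decays like $e^{-2\pi\im(\zeta+\omega)\re x}$ as $\re x\to+\infty$ (using $\im(\zeta+\omega)>0$) and like $e^{-\pi(Q-2\im\omega)|\re x|}$ as $\re x\to-\infty$ (using $\im\omega<Q/2$). On the third region the $+\infty$ asymptotic gives $\re\ln Z\sim -2\pi(\re x+2\theta_t)(\im x+\tfrac{Q}{2})$, which is strongly negative for $\im x\in(-Q/2,0)$ and more than compensates any growth in $I_\mathcal{M}$. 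Combining the three regional estimates with the pointwise limit $Z\to 1$ then yields both the existence of the limit \eqref{fromWtoM} and the representation \eqref{defM}. The analyticity of $\mathcal{M}$ on $D_\mathcal{M}$ follows either from uniform convergence on compact subsets or directly from \eqref{defM}, the set $\Delta_\zeta$ having been defined to exclude the zeros of $P_\mathcal{M}$ together with pinching of $\mathsf{M}$ between the two pole sequences of $I_\mathcal{M}$.
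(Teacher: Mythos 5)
Your proposal is correct and follows the same route the paper intends (the paper omits this proof, deferring to the analogous arguments for Theorems \ref{thmforH} and \ref{thmforS}): the factorization $P_\mathcal{W}I_\mathcal{W}=P_\mathcal{M}\,Z\,I_\mathcal{M}$ with your $Z(x,\theta_t)$, the pointwise limit $Z\to1$ via the exact cancellation of the quadratic exponent, and the three-region majorant giving dominated convergence are precisely what is needed, and your identification of $\im(\zeta+\omega)>0$ and $\im\omega<Q/2$ as the decay conditions at $\pm\infty$ is right. Two small slips that do not affect the argument: the decreasing pole sequence of $I_\mathcal{M}$ starts at $x=\zeta-\tfrac{iQ}{2}$ (not $-\zeta-\tfrac{iQ}{2}$), and it is the \emph{poles} of $P_\mathcal{M}(\zeta,\omega)=s_b(\zeta)$, located at $\zeta=-\tfrac{iQ}{2}-imb-ilb^{-1}$, rather than its zeros that must be excluded --- the zeros of $P_\mathcal{M}$ coincide with the pinching locus $\zeta=\tfrac{iQ}{2}+imb+ilb^{-1}$, and these two sets together make up the two components of $\Delta_\zeta$ in \eqref{Dzetadef}.
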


Thanks to the identity \eqref{sbinverse}, $\mathcal{M}$ satisfies
\beq \label{Minverse}
\mathcal{M}(b^{-1},\zeta,\omega) = \mathcal{M}(b,\zeta,\omega).
\eeq

\subsection{Difference equations}
The two pairs of difference equations \eqref{diffeqW} and \eqref{diffeqtildeW} satisfied by the function $\mathcal{W}$ survive in the confluent limit \eqref{fromWtoM}. This implies that $\mathcal{M}$ is a joint eigenfunction of four difference operators, two acting on $\zeta$ and two acting on $\omega$.

We know from Theorem \ref{thmforM} that $\mathcal M$ is a well-defined holomorphic function of $\omega$ for $\im \omega <Q/2$ and is meromorphic in $\zeta$ for $\im(\zeta+\omega) >0$. The difference equations will first be derived as equalities between meromorphic functions defined on this limited domain and then extended to equalities between meromorphic functions on $\mathbb C^2$, see Proposition \ref{Mextensionprop}.

\subsubsection{First pair of difference equations}

Consider the difference operator $H_\mathcal{M}(b,\omega)$ defined by
\beq \label{HM}
H_\mathcal{M}(b,\zeta) = e^{2\pi b \zeta} \lb 1 - e^{ib\partial_\zeta} \rb.
\eeq
\begin{proposition} \label{diffeqMprop}
For $\im \omega<Q/2$ and $\im(\zeta+\omega)>0$, the function $\mathcal{M}$ satisfies the difference equations
\begin{subequations}\label{diffeqM}\begin{align}
\label{diffeqM1} & H_\mathcal{M}(b,\zeta)\;\mathcal{M}(b,\zeta,\omega) = e^{-2\pi b \omega}\; \mathcal{M}(b,\zeta,\omega), \\
\label{diffeqM2} & H_\mathcal{M}(b^{-1},\zeta)\;\mathcal{M}(b,\zeta,\omega) = e^{-2\pi b^{-1} \omega}\; \mathcal{M}(b,\zeta,\omega).
\end{align}\end{subequations}
\end{proposition}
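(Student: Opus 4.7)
My plan is to derive \eqref{diffeqM1} by taking the confluent limit $\theta_t\to-\infty$ of the difference equation \eqref{diffeqW1} satisfied by $\mathcal{W}$, evaluated at $\kappa=\theta_t+\zeta$, and then to deduce \eqref{diffeqM2} from the symmetry \eqref{Minverse}. This is the same strategy used to establish the difference equations for $\mathcal{H}$, $\mathcal{S}$, $\mathcal{X}$, $\mathcal{L}$, and $\mathcal{W}$ in the earlier sections.

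The first main step is to compute the pointwise limits of the coefficients of $H_\mathcal{W}(b,\theta_t+\zeta)$ given in \eqref{HWpm}--\eqref{HW0}. Using the elementary asymptotics $\cosh(\pi b(2\theta_t+c))\sim \tfrac12 e^{-\pi b(2\theta_t+c)}$ as $\theta_t\to-\infty$ (for any fixed $c\in\mathbb{C}$), straightforward algebra yields
\begin{align*}
H^+_\mathcal{W}(b,\theta_t+\zeta)&\longrightarrow -e^{2\pi b\zeta},
\\
H^-_\mathcal{W}(b,\theta_t+\zeta)&=O\big(e^{4\pi b\theta_t}\big)\longrightarrow 0,
\\
H^0_\mathcal{W}(b,\theta_t+\zeta)&\longrightarrow e^{2\pi b\zeta},
\end{align*}
so that formally $H_\mathcal{W}(b,\theta_t+\zeta)\to e^{2\pi b\zeta}\bigl(1-e^{ib\partial_\zeta}\bigr)=H_\mathcal{M}(b,\zeta)$, where the shift $e^{\pm ib\partial_\kappa}$ acts as $e^{\pm ib\partial_\zeta}$ after the substitution $\kappa=\theta_t+\zeta$ (with $\theta_t$ held fixed).

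The second step is to pass to the limit inside the difference equation. Fix $(\zeta,\omega)$ in the domain $\{\im\omega<Q/2,\ \im(\zeta+\omega)>0\}\setminus(\Delta_\zeta\times\mathbb{C})$ specified in the statement, and further require that $(\zeta+ib,\omega)$ lies in $D_\mathcal{M}$ (which holds automatically, since $\im(\zeta+ib+\omega)=\im(\zeta+\omega)+b>0$). Theorem \ref{thmforM} supplies uniform-on-compact-subsets convergence
\begin{equation*}
\mathcal{W}(b,\theta_t,\theta_t+\zeta',\omega)\longrightarrow \mathcal{M}(b,\zeta',\omega)
\qquad(\theta_t\to-\infty)
\end{equation*}
for $\zeta'=\zeta,\zeta+ib$, which combined with the coefficient limits above handles the $e^{0\cdot ib\partial_\zeta}$ and $e^{+ib\partial_\zeta}$ terms. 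For the $e^{-ib\partial_\zeta}$ term, the prefactor $O(e^{4\pi b\theta_t})$ decays fast enough to absorb any polynomial-in-$\theta_t$ growth of $\mathcal{W}(b,\theta_t,\theta_t+\zeta-ib,\omega)$; if $\im(\zeta+\omega)>b$ we may invoke Theorem \ref{thmforM} directly, and otherwise the crude bound obtained from the integral representation \eqref{defW} combined with the asymptotics \eqref{sbasymptotics} shows that this term still vanishes in the limit. Taking the limit of both sides of \eqref{diffeqW1} then yields
\begin{equation*}
e^{2\pi b\zeta}\bigl(1-e^{ib\partial_\zeta}\bigr)\mathcal{M}(b,\zeta,\omega)=e^{-2\pi b\omega}\,\mathcal{M}(b,\zeta,\omega),
\end{equation*}
which is \eqref{diffeqM1}. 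Finally, \eqref{diffeqM2} follows by substituting $b\mapsto b^{-1}$ in \eqref{diffeqM1} and invoking the symmetry \eqref{Minverse}.

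The only real subtlety in the plan is the justification of the $-ib$ shift term: its naive interpretation requires evaluating $\mathcal{M}$ outside the domain where the integral representation \eqref{defM} converges. I expect this to be the main obstacle, but it is resolvable either by verifying that the exponentially small prefactor dominates in absolute value any growth of the shifted $\mathcal{W}$-value, or by first establishing the equation on the enlarged strip $\im(\zeta+\omega)>b$ and then extending meromorphically, exactly as done for $\mathcal{H}$ in Proposition \ref{Hextensionprop}. All other steps are direct computations paralleling the proofs of Propositions \ref{diffeqHprop} and \ref{diffeqtildeHprop}.
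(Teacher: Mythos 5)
Your proposal is correct and follows essentially the same route as the paper: take the confluent limit $\theta_t\to-\infty$ of \eqref{diffeqW1} at $\kappa=\theta_t+\zeta$, verify the coefficient limits $H^+_\mathcal{W}(b,\theta_t+\zeta)\to-e^{2\pi b\zeta}$, $H^-_\mathcal{W}(b,\theta_t+\zeta)\to 0$, $H^0_\mathcal{W}(b,\theta_t+\zeta)\to e^{2\pi b\zeta}$ so that $H_\mathcal{W}(b,\theta_t+\zeta)\to H_\mathcal{M}(b,\zeta)$, and deduce \eqref{diffeqM2} from the symmetry \eqref{Minverse}. Your additional care with the $e^{-ib\partial_\zeta}$ term (whose shifted argument may leave $D_\mathcal{M}$ when $0<\im(\zeta+\omega)\leq b$) addresses a point the paper's proof passes over in silence, and either of your two proposed fixes suffices.
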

\begin{proof}
The proof consists of taking the confluent limit \eqref{fromWtoM} of the difference equation \eqref{diffeqW1}. It is straightforward to verify that 
\beq
\lim\limits_{\theta_t \to -\infty} H_\mathcal{W}^+(b,\theta_t+\zeta) = -e^{2\pi b \zeta}, \qquad \lim\limits_{\theta_t \to -\infty} H_\mathcal{W}^-(b,\theta_t+\zeta) = 0, 
\eeq
where $H_\mathcal{W}^\pm$ is defined in \eqref{HWpm}. From \eqref{HW0}, this implies
\beq
\lim\limits_{\theta_t \to -\infty} H_\mathcal{W}^0(b,\theta_t+\zeta) = e^{2\pi b \zeta}.
\eeq
Therefore we obtain
\beq \label{fromHWtoHM}
\lim\limits_{\theta_t \to -\infty} H_\mathcal{W}(b,\theta_t+\zeta) = H_\mathcal{M}(b,\zeta),
\eeq
where $H_\mathcal{M}$ is given in \eqref{HM}. By Theorem \ref{thmforM}, the limit in \eqref{fromWtoM} exists whenever $(\zeta,\omega) \in D_\mathcal{M}$. Thus, the difference equation \eqref{diffeqM1} follows after utilizing \eqref{fromHWtoHM} and the definition \eqref{fromWtoM} of $\mathcal M$. Finally,  \eqref{diffeqM2} follows from \eqref{diffeqM1} and the symmetry \eqref{Minverse} of $\mathcal{M}$.
\end{proof}

\subsubsection{Second pair of difference equations}
Define the dual difference operator $\tilde{H}_\mathcal{M}(b,\omega)$ by
\beq\label{HtildeM}
\tilde{H}_\mathcal{M}(b,\omega) = e^{2\pi b \omega} - 2 e^{\pi b (\omega-\frac{ib}2)} \operatorname{cosh}{(\pi b(\tfrac{ib}2+\omega))} e^{ib \partial_\omega}.
\eeq

\begin{proposition} \label{diffeqMtildeprop}
For $\im(\omega+ib^{\pm 1}) < Q/2$ and $\im(\zeta+\omega)>0$, the function $\mathcal{M}$ satisfies the following pair of difference equations:
\begin{subequations}\label{diffeqtildeM}\begin{align}
\label{diffeqtildeM1} & \tilde{H}_\mathcal{M}(b,\omega)\;\mathcal{M}(b,\zeta,\omega) = e^{-2\pi b \zeta}\; \mathcal{M}(b,\zeta,\omega), \\
\label{diffeqtildeM2} & \tilde{H}_\mathcal{M}(b^{-1},\omega)\;\mathcal{M}(b,\zeta,\omega) = e^{2\pi b^{-1} \zeta}\; \mathcal{M}(b,\zeta,\omega).
\end{align}\end{subequations}
\end{proposition}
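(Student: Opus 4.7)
The plan is to derive both equations by taking the confluent limit $\theta_t\to-\infty$ of the corresponding difference equations \eqref{diffeqtildeW} satisfied by $\mathcal{W}$ under the substitution $\kappa=\theta_t+\zeta$, exactly as in the proof of Proposition \ref{diffeqMprop}. The only subtlety, compared with the first pair, is that the raw operator $\tilde H_\mathcal{W}$ does not have a finite limit and must be rescaled.

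First I would inspect the scaling of each coefficient. Using \eqref{HtildeWplus}--\eqref{HtildeW0}, one sees that $\tilde H^+_\mathcal{W}(b,\omega)$ and $\tilde H^0_\mathcal{W}(b,\omega)$ both grow like $e^{-2\pi b\theta_t}$ as $\theta_t\to-\infty$, while $\tilde H^-_\mathcal{W}(b,\omega)$ decays like $e^{2\pi b\theta_t}$. On the eigenvalue side, $e^{-2\pi b\kappa}=e^{-2\pi b\theta_t}e^{-2\pi b\zeta}$ has the same growth. Multiplying the difference equation \eqref{diffeqtildeW1} (evaluated at $\kappa=\theta_t+\zeta$) through by $e^{2\pi b\theta_t}$ thus gives a finite limit on both sides. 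A short computation, using $e^{2\pi b\theta_t}\cosh(2\pi b\theta_t)\to\tfrac12$ as $\theta_t\to-\infty$, yields
\begin{align*}
&\lim_{\theta_t\to-\infty}e^{2\pi b\theta_t}\tilde H^+_\mathcal{W}(b,\omega)=-2e^{\pi b(\omega-\frac{ib}{2})}\cosh\!\bigl(\pi b(\tfrac{ib}{2}+\omega)\bigr),\\
&\lim_{\theta_t\to-\infty}e^{2\pi b\theta_t}\tilde H^-_\mathcal{W}(b,\omega)=0,\qquad \lim_{\theta_t\to-\infty}e^{2\pi b\theta_t}\tilde H^0_\mathcal{W}(b,\omega)=e^{2\pi b\omega},
\end{align*}
so $\lim_{\theta_t\to-\infty}e^{2\pi b\theta_t}\tilde H_\mathcal{W}(b,\omega)=\tilde H_\mathcal{M}(b,\omega)$ as defined in \eqref{HtildeM}, while the right-hand side limits to $e^{-2\pi b\zeta}\mathcal{M}(b,\zeta,\omega)$.

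Next I would justify that the limit commutes with the action of the difference operator on $\mathcal{W}$. Since $\tilde H_\mathcal{W}$ only shifts $\omega\mapsto\omega\pm ib$ (and not $\kappa$), this amounts to applying Theorem \ref{thmforM} at the three points $\omega,\omega+ib,\omega-ib$. The hypothesis $\im(\omega+ib^{\pm 1})<Q/2$ ensures that $\omega+ib$ lies in the strip where the limit \eqref{fromWtoM} holds (and $\omega-ib$ automatically does too, provided $\im(\zeta+\omega-ib)>0$, which can be arranged by shrinking the domain; the resulting equality is then extended meromorphically to $\mathbb{C}^2$ using Proposition \ref{Wextensionprop} in the same spirit as Proposition \ref{Hextensionprop}). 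Uniform convergence on compacta then lets us pass the limit inside the finite combination of shifts, giving \eqref{diffeqtildeM1}.

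The second equation \eqref{diffeqtildeM2} follows by replacing $b$ with $b^{-1}$ in \eqref{diffeqtildeM1} and invoking the symmetry \eqref{Minverse} of $\mathcal{M}$. The main (mild) obstacle is purely bookkeeping: keeping track of the $e^{2\pi b\theta_t}$ rescaling so that the three coefficients limit correctly and the dominated-convergence argument inside the integral representation \eqref{defM} applies uniformly on compact subsets of the domain \eqref{zetaomegacondition} shifted by $\pm ib$ in $\omega$.
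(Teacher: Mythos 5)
Your proposal is correct and follows essentially the same route as the paper: multiply \eqref{diffeqtildeW1} (at $\kappa=\theta_t+\zeta$) by $e^{2\pi b\theta_t}$, verify that the rescaled coefficients converge to those of $\tilde H_\mathcal{M}$ in \eqref{HtildeM} (with $\tilde H^-_\mathcal{W}$ dropping out), invoke Theorem \ref{thmforM} to pass to the limit, and deduce \eqref{diffeqtildeM2} from the symmetry \eqref{Minverse}. Your extra care about the point $\omega-ib$ and the subsequent meromorphic extension via Proposition \ref{Mextensionprop} is a slightly more explicit treatment of a step the paper leaves implicit, but it is the same argument.
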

\begin{proof}
It is straightforward to show that the following limit holds:
\beq \label{fromHWtildetoHMtilde}
e^{2\pi b \theta_t} \tilde{H}_\mathcal{W}(b,\omega) = \tilde{H}_\mathcal{M}(b,\omega),
\eeq
where $\tilde{H}_\mathcal{W}$ and $\tilde{H}_\mathcal{M}$ are defined by \eqref{HtildeW} and \eqref{HtildeM}, respectively. By Theorem \ref{thmforM}, the limit in \eqref{fromWtoM} exists whenever $(\zeta,\omega) \in D_\mathcal{M}$. Thus, the difference equation \eqref{diffeqtildeM1} follows after multiplying \eqref{diffeqtildeW1} by $e^{2\pi b \theta_t}$ and utilizing \eqref{fromHWtildetoHMtilde} and the definition \eqref{fromWtoM} of $\mathcal M$. Finally, \eqref{diffeqtildeM2} follows from \eqref{diffeqtildeM1} thanks to the symmetry \eqref{Minverse} of $\mathcal M$.
\end{proof}

\begin{proposition} \label{Mextensionprop}
Let $b \in (0,\infty)$. Then there exist discrete subsets $\Delta,\Delta' \subset \mathbb C$ such that the limit in \eqref{fromWtoM} exists for all $(\zeta,\omega) \in (\mathbb C \setminus \Delta) \times (\mathbb C \setminus \Delta')$. Moreover, the function $\mathcal M$ defined by \eqref{defM} is a meromorphic function of $(\zeta,\omega) \in \mathbb C^2$ and the four difference equations \eqref{diffeqM} and \eqref{diffeqtildeM} hold as equalities between meromorphic functions of $(\zeta,\omega) \in \mathbb C^2$.
\end{proposition}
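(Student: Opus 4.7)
The proof proposal follows the pattern established in the proofs of Propositions \ref{Hextensionprop}, \ref{Qextensionprop}, and \ref{Wextensionprop}: since Proposition \ref{Wextensionprop} ensures that $\mathcal{W}$ is meromorphic on all of $\mathbb{C}^2$ and satisfies \eqref{diffeqW}--\eqref{diffeqtildeW} as meromorphic identities, one can use these to propagate both the existence of the limit \eqref{fromWtoM} and the meromorphic structure of $\mathcal{M}$ beyond the initial domain $D_\mathcal{M}$. The starting point is Theorem \ref{thmforM}, which gives $\mathcal{M}$ as an analytic (hence meromorphic) function on $D_\mathcal{M}$, together with Propositions \ref{diffeqMprop} and \ref{diffeqMtildeprop}, which give the four difference equations as meromorphic identities on that initial domain.

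\textbf{Step 1 (extension in $\omega$).} The plan is to rearrange \eqref{diffeqtildeM1} as
\[
\mathcal{M}(b,\zeta,\omega+ib) = \frac{e^{2\pi b \omega}-e^{-2\pi b \zeta}}{2 e^{\pi b(\omega-\frac{ib}{2})}\cosh\lb\pi b(\tfrac{ib}{2}+\omega)\rb}\, \mathcal{M}(b,\zeta,\omega),
\]
allowing $\omega$ to be shifted upward by $ib$, and symmetrically (solving for the $e^{-ib\partial_\omega}$ term) downward by $ib$. To confirm that the right-hand side really computes the limit of $\mathcal{W}$ at the shifted argument, I would solve \eqref{diffeqtildeW1} for $\mathcal{W}(b,\theta_t,\kappa,\omega+ib)$ \emph{before} sending $\theta_t\to-\infty$, multiply by the normalization $e^{2\pi b \theta_t}$ (which by \eqref{fromHWtildetoHMtilde} converts $\tilde{H}_\mathcal{W}$ into $\tilde{H}_\mathcal{M}$), and then apply Theorem \ref{thmforM} to pass to the limit term by term. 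This extends the domain of existence of the limit from $\{\im \omega<Q/2\}$ to $\{\im \omega<Q/2+b\}$, minus a discrete exceptional set coming from the zeros of $\cosh(\pi b(\frac{ib}{2}+\omega))$ and the denominator of the downward shift formula. Iteration covers all $\omega\in\mathbb{C}$ away from a discrete set, and the same argument with $b\to b^{-1}$ using \eqref{diffeqtildeM2} ensures the exceptional set is of the form $\Delta'$ independent of $\zeta$.

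\textbf{Step 2 (extension in $\zeta$).} Rewriting \eqref{diffeqM1} as
\[
\mathcal{M}(b,\zeta+ib,\omega) = \bigl(1-e^{-2\pi b(\zeta+\omega)}\bigr)\,\mathcal{M}(b,\zeta,\omega),\qquad \mathcal{M}(b,\zeta,\omega) = \frac{\mathcal{M}(b,\zeta-ib,\omega)}{1-e^{-2\pi b(\zeta-ib+\omega)}},
\]
and verifying these by the same $\theta_t\to-\infty$ procedure applied to \eqref{diffeqW1}, we iteratively extend $\mathcal{M}$ in $\zeta$ from the half-plane $\im(\zeta+\omega)>0$ to all of $\mathbb{C}$, modulo a discrete set of $\zeta$ values coming from the zeros of $1-e^{-2\pi b(\zeta-ikb+\omega)}$, $k\in\mathbb{Z}_{\geq 0}$; combining with the $b\to b^{-1}$ analog gives a discrete set $\Delta\subset\mathbb{C}$.

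Combining the two steps yields the existence of the limit \eqref{fromWtoM} on $(\mathbb{C}\setminus\Delta)\times(\mathbb{C}\setminus\Delta')$ and defines $\mathcal{M}$ as a meromorphic function on $\mathbb{C}^2$. The four difference equations \eqref{diffeqM}--\eqref{diffeqtildeM} then hold as identities of meromorphic functions on $\mathbb{C}^2$: they have been verified on the initial open set $D_\mathcal{M}$ by Propositions \ref{diffeqMprop}--\ref{diffeqMtildeprop}, and both sides of each equation are meromorphic on $\mathbb{C}^2$, so the identities extend by analytic continuation. The only delicate point is consistency of the different propagation paths; but this is automatic because the operators $H_\mathcal{M}(b,\zeta)$ and $\tilde{H}_\mathcal{M}(b,\omega)$ act on disjoint variables and therefore commute, so the shift-by-$ib$ operations in $\zeta$ and in $\omega$ commute as meromorphic operations. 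The main obstacle is nothing conceptual but merely the careful bookkeeping that the exceptional loci generated in each step remain discrete and have the product form stated; this comes down to the fact that all denominators appearing in the propagation formulas are explicit entire functions of $\zeta$ and $\omega$ with only discrete zero sets, whose union under countable $ib^{\pm 1}\mathbb{Z}$-shifts is still discrete.
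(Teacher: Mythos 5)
Your proposal follows the same route as the paper: the paper's own proof is only a one-line reference to the argument of Proposition \ref{Hextensionprop}, which is precisely the propagation scheme you describe (solve the pre-limit difference equation for the shifted argument, pass to the confluent limit on the initial domain, iterate, and track the discrete exceptional loci). Two small corrections: solving \eqref{diffeqM1} at $\zeta-ib$ gives $\mathcal{M}(b,\zeta,\omega)=\bigl(1-e^{-2\pi b(\zeta-ib+\omega)}\bigr)\,\mathcal{M}(b,\zeta-ib,\omega)$, so the downward-shift formula should read $\mathcal{M}(b,\zeta-ib,\omega)=\mathcal{M}(b,\zeta,\omega)/\bigl(1-e^{-2\pi b(\zeta-ib+\omega)}\bigr)$ rather than the inverted version you wrote; and $\tilde{H}_\mathcal{M}(b,\omega)$ contains no $e^{-ib\partial_\omega}$ term, so only the upward shift in $\omega$ is available --- which suffices, since the initial domain $\{\im\omega<Q/2\}$ is already a lower half-plane.
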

\begin{proof}
The proof utilizes the difference equations in $\zeta$ and $\omega$ and is similar to the proof of Proposition \ref{Hextensionprop}.
\end{proof}

\subsection{Polynomial limit}
We now show that the function $\mathcal{M}$ reduces to the little $q$-Laguerre polynomials with $\alpha=0$ when $\zeta$ is suitably discretized.

\begin{theorem}[From $\mathcal{M}$ to the little $q$-Laguerre polynomials with $\alpha=0$] \label{fromMtoMn}
Assume that $b>0$ is such that $b^2$ is irrational. Suppose $\omega \in \mathbb{C}$ satisfies $-Q/2+\delta < \im \omega < Q/2$ for some $\delta > 0$.
Define $\{\zeta_n\}_{n=0}^\infty \subset \mathbb{C}$ by
\beq
\zeta_n = \frac{iQ}2+ibn.
\eeq
For each $n\geq 0$, the function $\mathcal M$ satisfies
\beq \label{limMtoMn}
\lim\limits_{\zeta\to\zeta_n} \mathcal{M}(b,\zeta,\omega) = W_n(x_{W_n};0,q),
\eeq
where $W_n$ are the little $q$-Laguerre polynomials defined in \eqref{Wn} and where $x_{W_n}, q$ are given in \eqref{paramWn}.
\end{theorem}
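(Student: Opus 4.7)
The plan is to follow the two-step strategy already used in Theorems \ref{thhahn} and \ref{thmHtoJn}: first settle $n=0$ by a direct contour-deformation analysis of the integral representation \eqref{defM}, then propagate to $n\geq 1$ by iterating the difference equation \eqref{diffeqM1}, which is remarkably simple---it is first-order in $\zeta$.

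For $n=0$, I would exploit a dual phenomenon at $\zeta=\zeta_0=iQ/2$: the prefactor $P_\mathcal{M}(\zeta,\omega)=s_b(\zeta)$ has a simple zero there, while in the integrand the decreasing pole of $s_b(x-\zeta)$ at $x_0:=\zeta-iQ/2$ collides with the increasing pole of $s_b(x+iQ/2)^{-1}$ at $x=0$, pinching the contour $\mathsf M$. Deforming $\mathsf M$ to a contour $\mathsf M'$ that passes below $x_0$ yields
\begin{equation*}
\mathcal{M}(b,\zeta,\omega) = -2i\pi\, P_\mathcal{M}(\zeta,\omega)\,\underset{x=x_0}{\mathrm{Res}}\, I_\mathcal{M}(x,\zeta,\omega) + P_\mathcal{M}(\zeta,\omega)\int_{\mathsf M'} dx\, I_\mathcal{M}(x,\zeta,\omega).
\end{equation*}
The second term vanishes at $\zeta=\zeta_0$ because of the zero of $P_\mathcal{M}$, and a short computation using \eqref{ressb} shows that the first term simplifies to $e^{i\pi(\zeta-iQ/2)(\zeta-iQ/2+2\omega)}$, which tends to $1$ as $\zeta\to\zeta_0$. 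Since $W_0(\cdot;0,q)=1$, this settles the base case.

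For $n\geq 1$, I would first observe that under the hypothesis $-Q/2+\delta<\im\omega<Q/2$ the integral representation \eqref{defM} remains valid at every $\zeta=\zeta_n$ (indeed $\im(\zeta_n+\omega)=Q/2+bn+\im\omega>0$), so $\mathcal{M}(b,\zeta_n,\omega)$ is directly defined for each $n\geq 0$ by the same contour-deformation argument as above, with the relevant pinching absorbed by the zero of $P_\mathcal{M}$ at $\zeta_n$. Next, I would rewrite \eqref{diffeqM1} as the first-order recurrence
\begin{equation*}
\mathcal{M}(b,\zeta+ib,\omega)=\bigl(1-e^{-2\pi b(\zeta+\omega)}\bigr)\,\mathcal{M}(b,\zeta,\omega),
\end{equation*}
evaluate it at $\zeta=\zeta_{n-1}$, and iterate from the base value $\mathcal{M}(b,\zeta_0,\omega)=1$. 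Using the parameter correspondence \eqref{paramWn}, which gives $e^{-2\pi b(\zeta_k+\omega)}=x_{W_n}\,q^{-k}$, this produces an explicit product formula for $\mathcal{M}(b,\zeta_n,\omega)$.

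The last step is to identify this product with $W_n(x_{W_n};0,q)$, and this is the only place where a nontrivial verification is required. I expect it to be the main technical point but not a genuine obstacle: the ${}_2\phi_1$ series defining $W_n$ in \eqref{Wn} collapses at $\alpha=0$ into a form directly comparable with the product, and alternatively one can simply check that $W_n(x_{W_n};0,q)$ itself satisfies the same first-order recurrence with the same initial value $W_0=1$ (which follows by specializing \eqref{recurrenceWn}, or more directly from the ${}_2\phi_1$ series), after which a trivial induction closes the argument.
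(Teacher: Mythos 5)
Your proposal is correct, but it takes a genuinely different route from the paper for the induction step. The paper proves Theorem \ref{fromMtoMn} entirely by the ``first approach'' mentioned in the proof of Theorem \ref{thhahn}: for each $n$ it deforms $\mathsf{M}$ past \emph{all} the poles $x_{m,l}=\zeta-\tfrac{iQ}{2}-imb-ilb^{-1}$ with $mb+lb^{-1}\leq nb$, observes that only the residues with $l=0$, $0\leq m\leq n$ survive against the simple zero of $P_\mathcal{M}(\zeta,\omega)=s_b(\zeta)$, and then resums the resulting finite sum into the ${}_2\phi_1$ form of $W_n(x_W;0,q)$ through a chain of $q$-Pochhammer identities (equations \eqref{firstsum}--\eqref{sum} and beyond). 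You instead compute only the single residue needed for $n=0$ (your value $e^{i\pi(\zeta-iQ/2)(\zeta-iQ/2+2\omega)}\to 1$ checks out against \eqref{ressb} and \eqref{IM}) and then iterate the first-order recurrence $\mathcal{M}(b,\zeta+ib,\omega)=(1-e^{-2\pi b(\zeta+\omega)})\mathcal{M}(b,\zeta,\omega)$ coming from \eqref{diffeqM1}, which at $\zeta=\zeta_k$ reads $\mathcal{M}(b,\zeta_{k+1},\omega)=(1-x_Wq^{-k})\mathcal{M}(b,\zeta_k,\omega)$; since \eqref{recurrenceWn} at $\alpha=0$ degenerates to exactly $W_{n+1}=(1-xq^{-n})W_n$ (because $w_n^-=0$ and $w_n^+=q^n$), the identification is immediate. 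This is precisely the ``second approach'' the paper uses for Theorems \ref{thhahn}, \ref{thmHtoJn}, and \ref{thmQtoQn}; the authors deliberately chose $\mathcal{M}$ as the place to showcase the direct multi-residue computation. Your route is shorter and avoids the $q$-series manipulations, at the cost of not exhibiting the explicit finite-sum formula \eqref{firstsum}; it also has the small bonus that regularity of $\mathcal{M}$ at $\zeta_{k+1}$ follows inductively from the recurrence itself once the $n=0$ case is settled, and the hypothesis $-Q/2+\delta<\im\omega<Q/2$ guarantees $\im(\zeta_k+\omega)>0$ so that Proposition \ref{diffeqMprop} applies near every $\zeta_k$ without invoking the meromorphic extension.
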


\begin{proof}
We prove \eqref{limMtoMn} by computing the limit $\zeta\to \zeta_n$ of the representation \eqref{defM} for each $n\geq 0$. Let $m,l\geq 0$ be integers and define $x_{m,l} \in \mathbb{C}$ by
\beq \label{xml}
x_{m,l} = \zeta-\frac{iQ}2-imb-\frac{il}b.
\eeq
The function $s_b(x-\zeta)$ in \eqref{IM} has a simple pole located at $x=x_{m,l}$ for any integers $m,l \geq 0$. In the limit $\zeta \to \zeta_n$, the pole $x_{n,0}$ collides with the pole of $s_b(x+\tfrac{iQ}2)$ located at $x=0$, and the contour $\mathsf{M}$ is squeezed between the colliding poles. Hence, before taking the limit $\zeta\to\zeta_n$, we deform $\mathsf{M}$ into a contour $\mathsf{M}'$ which passes below $x_{n,0}$, thus picking up residue contributions from all the poles $x=x_{m,l}$ which satisfy $\im x_{m,l} \geq \im x_{n,0}$, i.e., from all the poles $x_{m,l}$ such that $mb+\tfrac{l}b \leq nb$.  This leads to
\begin{align} \label{deformM}
\mathcal{M}(b,\zeta,\omega) = -2i\pi P_\mathcal{M}(\zeta,\omega) \displaystyle \sum_{\begin{matrix} \substack{m,l \geq 0 \\ mb+\tfrac{l}b \leq nb} \end{matrix}}  \underset{x=x_{m,l}}{\text{Res}} \lb I_\mathcal{M}(x,\zeta,\omega) \rb + P_\mathcal{M}(\zeta,\omega) \displaystyle \int_{\mathsf{M}'} dx \; I_\mathcal{M}(x,\zeta,\omega).
\end{align}
Utilizing the generalized difference equation \eqref{differencepochsb} satisfied by the function $s_b$ and the residue formula \eqref{ressb}, straightforward calculations show that
\begin{align} 
\nonumber  -2i\pi \underset{x=x_{m,l}}{\text{Res}} \lb I_\mathcal{M}(x,\zeta,\omega) \rb =&\; e^{-i \pi  \left(\frac{b^2 m^2}{2}+b m (Q+i (\zeta +2 \omega ))+\left(\zeta -\frac{i Q}{2}\right) \left(\frac{i Q}{2}-\zeta-2 \omega \right)\right)} e^{-i\pi b^{-2}(\frac{l^2}{2}+i b l (\zeta-i b m -i Q+2 \omega))} 
	\\\label{resIMml}
& \times \frac{1}{\big(e^{-\frac{2 i l \pi }{b^2}};e^{\frac{2 i \pi }{b^2}}\big){}_l \left(e^{-2 i b^2 m \pi };e^{2 i b^2 \pi }\right){}_m } \frac{1}{s_b\left(\zeta-\frac{i l}{b}-i b m \right)}.
\end{align}
Because of the factor $s_b(\zeta-\frac{i l}{b}-i b m )^{-1}$, we deduce from \eqref{polesb}  that the right-hand side of \eqref{resIMml} has a simple pole at $\zeta=\zeta_n$ if the pair $(m,l)$ satisfies $m \in [0,n]$ and $l=0$, but is regular at $\zeta=\zeta_n$ for all other choices of $m\geq 0$ and $l \geq 0$.  On the other hand, the prefactor $P_\mathcal{M}$ in \eqref{PM} has a simple zero at $\zeta=\zeta_n$. Therefore, in the limit $\zeta\to \zeta_n$ the second term on the right-hand side of \eqref{deformM} vanishes, and the first term is nonzero only if $m \in [0,n]$ and $l=0$.  We conclude that
\beq
\lim\limits_{\zeta\to \zeta_n} \mathcal{M}(b,\zeta,\omega) = \mathcal{M}(b,\zeta_n,\omega) = -2i\pi \lim\limits_{\zeta\to \zeta_n} P_\mathcal{M}(\zeta,\omega)  \displaystyle \sum_{m=0}^n  \underset{x=x_{m,0}}{\text{Res}} \lb I_\mathcal{M}(x,\zeta,\omega) \rb,
\eeq
or, more explicitly,
\beq\label{firstsum}
\mathcal{M}(b,\zeta_n,\omega) = \displaystyle \sum_{m=0}^n e^{-i \pi b^2\lb \frac{m^2}{2}-mn+n^2 \rb}e^{2\pi b\lb \omega(m-n)-\frac{imQ}{4}\rb} \frac{1}{\left(e^{-2 i b^2 m \pi };e^{2 i b^2 \pi }\right){}_m} \frac{s_b(ibn+\frac{iQ}2)}{s_b(ib(n-m)+\frac{iQ}2)}.
\eeq
Using \eqref{differencepochsb}, we obtain
\beq\label{secondsum}
\mathcal{M}(b,\zeta_n,\omega) = \displaystyle \sum_{m=0}^n e^{\pi  b \left(2(m-n) \omega-i b n^2-i m Q\right)} \frac{\left(q^{1+n-m};q\right){}_m}{\left(q^{-m};q\right){}_m}.
\eeq
We now apply the q-Pochhammer identity
\beq \label{qpochidentity}
(\alpha;q)_n = (-\alpha)^n q^{\frac{n(n-1)}2} \lb \alpha^{-1} q^{1-n};q \rb_n
\eeq
with $\alpha=q^{1+n-m}$ and with $\alpha=q^{-m}$ to find 
\beq \label{sum}
\mathcal{M}(b,\zeta_n,\omega) = \displaystyle \sum_{m=0}^n  e^{\pi  b (2 m-n) (2 \omega +i b n)} e^{i \pi  m \left(b^2+2 i b \omega -1\right)} \frac{\big(q^{-n};q\big){}_m}{\left(q;q\right){}_m}.
\eeq
After replacing $m \to n-m$, in the sum and using the parameter correspondence \eqref{paramWn}, we obtain
\beq
\mathcal{M}(b,\zeta_n,\omega) = \displaystyle \sum_{m=0}^n e^{i\pi(2m-n)} e^{-i\pi b^2(2m-n)(n+1)} \frac{\left(q^{-n};q\right){}_{n-m}}{\left(q;q\right){}_{n-m}} (q x_W)^m.
\eeq
Using the q-Pochhammer identity 
\beq 
\frac{(\alpha;q)_{n-m}}{(\beta;q)_{n-m}} = \frac{(\alpha;q)_n}{(\beta;q)_n} \frac{(\beta^{-1} q^{1-n};q)_m}{(\alpha^{-1} q^{1-n};q)_m} \lb \frac{\beta}\alpha \rb^m, \qquad \alpha,\beta \neq 0,  \quad m=0,1,\dots,n,
\eeq
with $\alpha=q^{-n}$ and $\beta=q$, we arrive at
\beq
\mathcal{M}(b,\zeta_n,\omega) = \displaystyle e^{i \pi  n \left(b^2 (n+1)-1\right)} \frac{\left(q^{-n};q\right){}_n}{\left(q;q\right){}_n} \sum_{m=0}^n    \frac{\left(q^{-n};q\right){}_m}{\left(q;q\right){}_m} (q x_W)^m.
\eeq
Finally, utilizing the q-Pochhammer identity \eqref{qpochidentity} with $\alpha=q^{-n}$, we find
\beq
e^{i \pi  n \left(b^2 (n+1)-1\right)} \frac{\left(q^{-n};q\right){}_n}{\left(q;q\right){}_n} = 1.
\eeq
Therefore we conclude that
\beq
\mathcal{M}(b,\zeta_n,\omega) = {}_2\phi_1\left( \left. \begin{split} \begin{matrix} q^{-n}, 0 \\ 0 \end{matrix} \end{split} \right|q;q x_W\right) = W_n(x_{W_n};0,q).
\eeq
This concludes the proof of \eqref{limMtoMn}.
\end{proof}

\section{A simple application: Duality formulas}\label{dualitysec}
As a simple application of the constructions presented above, we show how the non-polynomial scheme can be used to easily obtain various duality formulas which relate  members of the $q$-Askey scheme. 

\subsection{Duality formula for the Askey--Wilson polynomials}
The duality formula \eqref{dualityRn} for the Askey--Wilson polynomials is an easy consequence of Theorem \ref{thmAW} and the self-duality property \eqref{selfdual} of the function $\mathcal{R}$ defined in \eqref{defR}. Indeed, suppose Assumptions \ref{assumption} and \ref{assumptionAW} are satisfied and define $\{\sigma_t^{(n)}\}_{n=0}^\infty \subset \mathbb{C}$  by
\beq
  \sigma_t^{(n)} = \tfrac{iQ}2+\theta_1+\theta_t+ibn.
\eeq
Under the parameter correspondence
$$
\tilde{\alpha}_R = e^{2 \pi  b \left(\frac{i Q}{2}+\theta_0+\theta_t\right)}, \quad \tilde{\beta}_R =e^{2 \pi  b \left(\frac{i Q}{2}+\theta_1-\theta_\infty\right)}, \quad \tilde{\gamma}_R=e^{2 \pi  b \left(\frac{i Q}{2}-\theta_0+\theta_t\right)}, \quad \tilde{\delta}_R=e^{2 \pi  b \left(\frac{i Q}{2}+\theta_1+\theta_\infty\right)}, \quad q=e^{2i\pi b^2},
$$
the function $\mathcal{R}$ satisfies, for each integer $n \geq 0$,
\beq\label{secondlimitAW}
\lim\limits_{\sigma_t \to \sigma_t^{(n)}} \mathcal{R} \left[\substack{\theta_1\;\;\;\theta_t\vspace{0.1cm}\\ \theta_{\infty}\;\;\theta_0};\substack{\sigma_s \vspace{0.15cm} \\  \sigma_t},b\right] 
= R_n(e^{2\pi b \sigma_s};\tilde{\alpha}_R,\tilde{\beta}_R,\tilde{\gamma}_R,\tilde{\delta}_R,q),
\eeq
where $R_n$ are the Askey--Wilson polynomials defined in (\ref{AW}).
Moreover, evaluating $\mathcal{R}$ at $\sigma_s = \sigma_s^{(n)}$ and $\sigma_t = \sigma_t^{(m)}$ with $n,m \in \mathbb{Z}_{\geq 0}$ and using \eqref{limitAW} and \eqref{secondlimitAW}, we obtain
\beq\label{dualityAW}
\mathcal{R} \left[\substack{\theta_1\;\;\;\theta_t\vspace{0.1cm}\\ \theta_{\infty}\;\;\theta_0};\substack{\sigma_s^{(n)} \vspace{0.15cm} \\  \sigma_t^{(m)}},b\right] = R_n(e^{2\pi b \sigma_t^{(m)}};\alpha_R,\beta_R,\gamma_R,\delta_R,q) = R_m(e^{2\pi b \sigma_s^{(n)}};\tilde{\alpha}_R,\tilde{\beta}_R,\tilde{\gamma}_R,\tilde{\delta}_R,q).
\eeq
Employing the symmetry $R_n(z;\alpha,\beta,\gamma,\delta,q) =R_n(z^{-1};\alpha,\beta,\gamma,\delta,q)$ and observing that the parameters in \eqref{paramAW} satisfy
\beq
\alpha_R^2 = \frac{\tilde{\alpha}_R \tilde{\beta}_R \tilde{\gamma}_R \tilde{\delta}_R}{q}, \qquad \beta_R = \frac{\tilde{\alpha}_R \tilde{\beta}_R}{\alpha_R}, \qquad \gamma_R = \frac{\tilde{\alpha}_R \tilde{\gamma}_R}{\alpha_R}, \qquad \delta_R = \frac{\tilde{\alpha}_R \tilde{\delta}_R}{\alpha_R},
\eeq
and that
\beq
e^{-2\pi b\sigma_s^{(n)}} = \alpha_R^{-1} q^{-n}, \qquad e^{-2\pi b\sigma_t^{(m)}} = \tilde{\alpha}_R^{-1} q^{-m},
\eeq
we recover the duality formula \eqref{dualityRn} for the Askey--Wilson polynomials.

\subsection{Duality formula relating the continuous dual $q$-Hahn and the big $q$-Jacobi polynomials}
Suppose that Assumptions \ref{assumption} and \ref{assumptionhahnjacobi} are satisfied.  Evaluating the function $\mathcal{H}$ at $\nu=\nu_n$ and $\sigma_s = \sigma_s^{(m)}$ with $n,m \in \mathbb{Z}_{\geq 0}$ and utilizing the polynomial limits \eqref{limHtoHn} and \eqref{limHtoJn}, we obtain a duality formula relating the polynomials $H_n$ and $J_n$:
\beq
\mathcal{H}(b,\theta_0,\theta_t,\theta_*,\sigma_s^{(m)},\nu_n) = H_n(e^{2\pi b \sigma_s^{(m)}};\alpha_H,\beta_{H},\gamma_{H},q) = J_m(q^{-n};\alpha_J,\beta_J,\gamma_J;q).
\eeq
Employing the symmetry $H_n(z; \alpha,\beta,\gamma,q)=H_n(z^{-1}; \alpha,\beta,\gamma,q)$ and observing that the parameters in \eqref{paramhahn} and \eqref{paramjacobi} are related by
\beq
\alpha_J = \frac{\alpha_H \beta_H}{q}, \qquad \beta_J = \frac{\alpha_H}{\beta_H}, \qquad \gamma_J = \frac{\alpha_H \gamma_H}{q}, \qquad e^{-2\pi b \sigma_s^{(m)}} = \alpha_H^{-1} q^{-m}, 
\eeq
we recognize the duality formula \eqref{dualityHnJn}.

\subsection{Duality formula relating the little $q$-Jacobi and the Al-Salam Chihara polynomials}
Suppose that Assumptions \ref{assumption} and \ref{assumptionS} are satisfied. Evaluating $\mathcal{S}(b,\theta_0,\theta_t,\sigma_s,\rho)$ at $\rho=\rho_n$ and $\sigma_s=\sigma_s^{(m)}$ with $n,m \in \mathbb{Z}_{\geq0}$ and utilizing the limits \eqref{limStoSn} and \eqref{limStoYn}, we obtain a duality formula relating the polynomials $S_n$ and $Y_n$:
\beq
S_n(e^{2\pi b \sigma_s^{(m)}};\alpha_S,\beta_S,q) = Y_m(q^{-n}; \alpha_Y,\beta_Y,q), \qquad n,m=0,1,2,\dots.
\eeq
Employing the symmetry $S_n(z;\alpha_S,\beta_S,q)=S_n(z^{-1};\alpha_S,\beta_S,q)$ and observing that
\beq
\alpha_Y = \frac{\alpha_S \beta_S}{q}, \qquad \beta_Y = \frac{\alpha_S}{\beta_S}, \qquad e^{-2\pi b \sigma_s^{(m)}} = \alpha_S^{-1}q^{-m},
\eeq
we recover the duality formula \eqref{dualitySnYn}.

\section{Perspectives}\label{perspectivessec}

\subsection{Relations to other areas}

We discuss some relations that deserve to be studied in more detail in the future.

\subsubsection{Relation to quantum relativistic integrable systems}

The function $\mathcal{R}$ was introduced in \cite{R1994} in the context of relativistic systems of Calogero-Moser type, and studied in greater detail in \cite{R1999,R2003,R2003bis}. In particular, after a proper change of parameters, the difference operator $H_\mathcal{R}$ defined in \eqref{HR} corresponds to the rank one hyperbolic van Diejen Hamiltonian. Strong coupling (or Toda) limits of $H_\mathcal{R}$ and of its higher rank generalizations were considered in \cite{V95}. In this paper, we have shown that each element in the non-polynomial scheme is a joint eigenfunction of four difference operators, which, by construction, are confluent limits of $H_\mathcal{R}$.  Thus it would be desirable to understand if each confluent limit considered in the present article  corresponds to a Toda-type limit. In fact, a renormalized version of the function $\mathcal{Q}$ (see \eqref{defQ}), which is one of the two elements at the fifth level of the non-polynomial scheme, was studied in \cite{KLS2002,R2011} and was interpreted as the eigenfunction of a $q$-Toda type Hamiltonian. 

\subsubsection{Relation to two-dimensional CFTs}
As mentioned in the introduction, Ruijsenaars' hypergeometric function is essentially equal to the Virasoro fusion kernel \cite{R2021}. The latter plays a key role in the conformal bootstrap approach to Liouville conformal field theory on punctured Riemann spheres. We believe that the various confluent limits considered in the present article correspond to collisions of punctures in this theory.  Therefore, we expect the other elements in the non-polynomial scheme to play a role in Liouville conformal field theory on different Riemann surfaces with punctures and cusps \cite{GT}.

\subsubsection{Relation to quantum groups}

Motivated by the application of quantum groups in two-dimensional conformal field theories, Faddeev introduced in \cite{F1999} the modular double of $U_q(sl_2(\mathbb{R}))$, denoted $U_{q\tilde{q}}(sl_2(\mathbb{R}))$.  Both the functions $\mathcal R$ and $\mathcal Q$ can be obtained from the viewpoint of harmonic analysis on $U_{q\tilde{q}}(sl_2(\mathbb{R}))$ \cite{PT1,PT2,B2006,KLS2002}. More generally, we expect that all the elements of the non-polynomial scheme can be obtained from the viewpoint of harmonic analysis on $U_{q\tilde{q}}(g)$, where $g$ is the Lie algebra of a non-compact Lie group. Examples of such modular doubles were studied in \cite {I2015,I2015bis,I2020}. 

\subsection{Outlook}

It was shown in \cite[Theorem 1.1]{R2003bis} that a renormalized version of the function $\mathcal{R}$ has a hidden $D_4$ symmetry in its four external parameters. It would be interesting to determine what this $D_4$ invariance becomes after taking the various confluent limits presented in this article.

In a certain region of the external parameter space,  Ruijsenaars constructed in \cite{R2003} a unitary Hilbert space transform admitting the function $\mathcal R$ as a kernel. An important project is to handle the Hilbert space theory of the other difference equations studied in this article. This project would be of particular interest, since the theory of linear (analytic) difference equations is much less understood than the theory of linear discrete difference, or linear differential equations. In particular, it is not known if the existence of joint eigenfunctions implies that they can be promoted to kernels of unitary Hilbert space transforms. 

Finally, the Askey-Wilson polynomials $R_n(z;\alpha,\beta,\gamma,\delta,q)$ are symmetric under the exchange $z \to z^{-1}$. The non-symmetric Askey-Wilson polynomials, introduced in \cite{NS2004,S1999}, are Laurent polynomials satisfying a difference/reflection equation. A symmetrization procedure described in \cite{NS2004} can be used to recover the symmetric Askey-Wilson polynomials from their non-symmetric counterparts. More generally, a non-symmetric extension of the $q$-Askey scheme was initiated in \cite{M2014,KM2018}. To the best of our knowledge, non-polynomial generalizations of such Laurent polynomials have not been studied in the literature. It would therefore be of interest to explore whether there exists a non-symmetric version of the non-polynomial scheme presented in this paper.

\appendix

\section{$q$-hypergeometric series}\label{appendixA}
The $q$-hypergeometric series $_{s+1}\phi_s$ is defined by
\begin{align}\label{hypergeometricphidef}
_{s+1}\phi_s\left[ \begin{matrix} a_1, \dots, a_{s+1} \\ b_1, \dots, b_s \end{matrix} ;q,z\right] = \sum_{k=0}^\infty \frac{(a_1, \dots,a_{s+1};q)_k}{(b_1, \dots,b_s,q;q)_k}z^k,
\end{align}
where the $q$-Pochammer symbols $(a;q)_n$ and $(a_1,a_2,\dots,a_m;q)_n$ are given by
\begin{align}\label{qpochhammerdef}
(a;q)_n = \prod_{k=0}^{n-1} (1-a q^k) \quad \text{and} \quad (a_1,a_2,\dots,a_m;q)_n = \prod_{j=1}^m (a_j;q)_n.
\end{align}
Note that the sum in \eqref{hypergeometricphidef} contains only finitely many terms if one of the $a_i$ in the numerator is equal to $q^{-n}$ for some integer $n\geq 1$. Otherwise, the sum converges for $|z|<1$.

\section{The $q$-Askey scheme}\label{AppendixB}

\subsection{Askey--Wilson polynomials}
The Askey--Wilson polynomials $R_n$, defined by
\beq\label{AW}
R_n(z;\alpha,\beta,\gamma,\delta,q) = {}_4 \phi_3\lb \left. \begin{split}\begin{matrix}  q^{-n}, \alpha\beta\gamma\delta q^{n-1}, \alpha z, \alpha z^{-1} \\ \alpha \beta, \alpha\gamma, \alpha\delta \end{matrix} \end{split}\right| q;q \rb,
\eeq
are the most general polynomials of the $q$-Askey scheme. 
The normalization in $\eqref{AW}$ for $R_n$ is related to the standard normalization of \cite[Eq. (14.1.1)]{KLS2010} by
\beq\label{1p2}
p_n\lb\tfrac{z+z^{-1}}2;\alpha,\beta,\gamma,\delta,q\rb = \alpha^{-n} (\alpha\beta,\alpha\gamma,\alpha\delta;q)_n ~ R_n(z;\alpha,\beta,\gamma,\delta,q).
\eeq
Since $p_n(x;\alpha,\beta,\gamma,\delta,q)$ is a polynomial of order $n$ in $x$, $R_n$ is a polynomial of order $n$ in $z + z^{-1}$.
The polynomials $R_n$ satisfy the three-term recurrence relation
\beq\label{recurrenceAW}
(R_{R_n} R_n)(z;\alpha,\beta,\gamma,\delta,q) = (z+z^{-1}) R_n(z;\alpha,\beta,\gamma,\delta,q),
\eeq
where the operator $R_{R_n}$ is given by
\beq\label{Mn}
R_{R_n} = a^{+}_n T_{n+1} + (\alpha+\alpha^{-1}-a^{+}_n-a^{-}_n) + a^{-}_n T_{n-1},
\eeq
with $T_{n \pm 1}p_n(x) := p_{n \pm 1}(x)$ and
\beq \label{bndef}
\begin{split}
& a^{+}_n = \frac{\lb 1-\alpha\beta q^n\rb\lb 1-\alpha\gamma q^n\rb\lb 1-\alpha\delta q^n\rb\lb 1-\alpha\beta\gamma\delta q^{n-1}\rb}{\alpha \lb 1-\alpha\beta\gamma\delta q^{2n-1}\rb\lb 1-\alpha\beta\gamma\delta q^{2n}\rb}, \\
& a^{-}_n = \frac{\alpha\lb 1-q^n\rb\lb 1-\beta \gamma q^{n-1}\rb\lb 1-\beta\delta q^{n-1}\rb\lb 1-\gamma\delta q^{n-1}\rb}{\lb 1-\alpha\beta\gamma\delta q^{2n-2}\rb\lb 1-\alpha\beta\gamma\delta q^{2n-1}\rb}.
\end{split}\eeq
The Askey--Wilson polynomials also possess a symmetry exchanging the parameters $n$ and $z$ \cite{KM2018}. More precisely, define dual parameters $\tilde{\alpha},\tilde{\beta},\tilde{\gamma},\tilde{\delta}$ such that
\beq
\alpha^2 = q^{-1} \tilde{\alpha} \tilde{\beta} \tilde{\gamma} \tilde{\delta}, \qquad \beta = \frac{\tilde{\alpha} \tilde{\beta}}{\alpha}, \qquad \gamma = \frac{\tilde{\alpha} \tilde{\gamma}}{\alpha}, \qquad \delta=\frac{\tilde{\alpha} \tilde{\delta}}{\alpha}.
\eeq
Then, by \cite[Eq. (27)]{KM2018},
\beq \label{dualityRn}
R_n(\alpha^{-1} q^{-m};\alpha,\beta,\gamma,\delta,q) = R_m(\tilde{\alpha}^{-1} q^{-n};\tilde{\alpha},\tilde{\beta},\tilde{\gamma},\tilde{\delta},q),\qquad n,m=0,1,2,\dots.
\eeq
\subsection{Continuous dual $q$-Hahn polynomials} 
The continuous dual $q$-Hahn polynomials $H_n(z;\alpha,\beta,\gamma,q)$ are defined by
\beq\label{qhahn}
H_n(z;\alpha,\beta,\gamma,q) = R_n(z;\alpha,\beta,\gamma,0,q)= {}_3\phi_2\left(\left. \begin{matrix} q^{-n},\alpha z ,\alpha z^{-1} \\ \alpha\beta, \alpha\gamma \end{matrix}\right|q;q\right).
\eeq
They satisfy the three-term recurrence relation 
\beq\label{recurrenceHn}
\lb R_{H_n} H_n\rb(z;\alpha,\beta,\gamma,q)=(z+z^{-1})~H_n(z;\alpha,\beta,\gamma,q),
\eeq
where the operator $R_{H_n}$ is defined by
\beq \label{recurrenceoperatorHn}
R_{H_n} = b^{+}_n T_{n+1}+\lb \alpha+\alpha^{-1}-b^{+}_n-b^{-}_n\rb +b^{-}_n T_{n-1},
\eeq
with 
\beq\label{cndef}
b^{+}_n=\alpha^{-1}(1-\alpha \beta q^n)(1-\alpha \gamma q^n), \qquad 
b^{-}_n=\alpha(1-q^n)(1-\beta \gamma q^{n-1}). 
\eeq
 
\subsection{Big $q$-Jacobi polynomials}
The big $q$-Jacobi polynomials $J_n(x;\alpha,\beta,\gamma;q)$ are defined by
\beq\label{Jn}
J_n(x;\alpha,\beta,\gamma;q) = \lim\limits_{\lambda \to 0} R_n\lb\frac{x}{\lambda};\lambda,\frac{\alpha q}\lambda,\frac{\gamma q}\lambda,\frac{\lambda \beta}\gamma,q\rb
= {}_3\phi_2\left( \left. \begin{split} \begin{matrix} q^{-n},\alpha \beta q^{n+1}, x \\ \alpha q, \gamma q\end{matrix} \end{split} \right|q;q\right).
\eeq
They satisfy the three-term recurrence relation
\beq\label{recurrenceJn}
R_{J_n} J_n(x;\alpha,\beta,\gamma;q)=x J_n(x;\alpha,\beta,\gamma;q),
\eeq
where the operator $R_{J_n}$ is defined by
\beq\label{recurrenceoperatorJn}
R_{J_n} = c^{+}_n T_{n+1} + \lb 1-c^{+}_n-c^{-}_n\rb + c^{-}_n T_{n-1},
\eeq
with
\beq\label{cnpm}
\left\{\begin{split} 
& c^{+}_n = \frac{\lb 1-\alpha q^{n+1}\rb\lb 1-\alpha \beta q^{n+1}\rb\lb 1-\gamma q^{n+1}\rb}{\lb 1-\alpha \beta q^{2n+1}\rb\lb 1-\alpha \beta q^{2n+2}\rb}, \\
& c^{-}_n = -\alpha \gamma q^{n+1} \frac{\lb 1-q^n\rb\lb 1-\alpha \beta \gamma^{-1} q^n\rb\lb 1-\beta q^n\rb}{\lb 1-\alpha \beta q^{2n}\rb\lb 1-\alpha \beta q^{2n+1}\rb}.
\end{split}\right.
\eeq
There exists a duality between the big $q$-Jacobi and the continuous dual $q$-Hahn polynomials which is inherited from the duality \eqref{dualityRn} of the Askey--Wilson polynomials. More precisely, by \cite[Eq. (44)]{KM2018},
\beq \label{dualityHnJn}
H_n\lb \alpha^{-1} q^{-m}; \alpha,\beta,\gamma,q\rb = J_m\lb q^{-n}; q^{-1} \alpha \beta, \alpha \beta^{-1},  q^{-1} \alpha \gamma; q\rb, \qquad n,m=0,1,2,\dots.
\eeq

\subsection{Al-Salam-Chihara polynomials}

The Al-Salam Chihara polynomials $S_n(z;\alpha,\beta,q)$ are defined by
\beq\label{Sn}
S_n(z;\alpha,\beta,q) = H_n(z;\alpha,\beta,0,q) = \frac{\alpha^n}{\lb \alpha\beta;q \rb_n} \lb \alpha z;q \rb_n z^{-n} {}_2\phi_1\left( \left. \begin{split} \begin{matrix} q^{-n}, \beta z^{-1} \\ \alpha^{-1} q^{1-n} z^{-1} \end{matrix} \end{split} \right|q;\alpha^{-1} q z \right),
\eeq
where $H_n$ are the continuous dual $q$-Hahn polynomials defined in \eqref{qhahn}.  In \eqref{Sn}, we use the normalization of \cite[Eq. (59)]{KM2018}.
The polynomials $S_n$ satisfy the three-term recurrence relation
\beq\label{recurrenceSn}
R_{S_n} S_n(z;\alpha,\beta,q) = \lb z+z^{-1}\rb S_n(z;\alpha,\beta,q),
\eeq
where the operator $R_{S_n}$ is defined by
\beq\label{recurrenceoperatorSn}
R_{S_n} = \lb\alpha^{-1}-\beta q^n\rb T_{n+1} +\lb \alpha+\beta \rb q^n+ \alpha\lb 1-q^n \rb T_{n-1}.
\eeq

\subsection{Little $q$-Jacobi polynomials}
The Little q-Jacobi polynomials $p_n(x;\alpha,\beta,q)$ are defined by (see \cite[Eq. (66)]{KM2018})
\beq
p_n(x;\alpha,\beta,q) = {}_2\phi_1\left( \left. \begin{split} \begin{matrix} q^{-n}, \alpha \beta q^{n+1} \\ \alpha q \end{matrix} \end{split} \right|q;qx\right),
\eeq
or, equivalently, by \cite[Eq. (3.38)]{Koo94}
\beq
p_n(x;\alpha,\beta,q) = (-q\beta)^{-n} q^{-\frac{n(n-1)}{2}} \frac{(q\beta;q)_n}{(q \alpha;q)_n} {}_3\phi_2\left( \left. \begin{split} \begin{matrix} q^{-n},  q^{n+1}\alpha \beta, q \beta x \\ q \beta, 0 \end{matrix} \end{split} \right|q;q\right).
\eeq
They arise as limits of the big $q$-Jacobi polynomials in two ways. First, we have (see \cite[Eq. (68)]{KM2018})
\beq\label{limJnpn}
p_n(x;\alpha,\beta,q) = \lim\limits_{\gamma\to\infty} J_n(\gamma q x;\alpha,\beta,\gamma;q).
\eeq
Second, we have
\beq\label{limJnYn}
p_n(x;\alpha,\beta,q) = (-q \beta)^{-n} q^{-\frac{n(n-1)}2} \frac{(q\beta;q)_n}{(q \alpha;q)_n} Y_n(q \beta x; \beta, \alpha, q),
\eeq
where
\beq\label{Yn}
Y_n(x;\alpha, \beta, q) = \lim\limits_{\gamma \to 0} J_n(x;\alpha,\beta,\gamma,q) = {}_3\phi_2\left( \left. \begin{split} \begin{matrix} q^{-n},  q^{n+1}\alpha \beta, x \\ \alpha q , 0 \end{matrix} \end{split} \right|q;q\right).
\eeq
The polynomials $Y_n$ satisfy the three-term recurrence relation (see \cite[Eqs. (73)--(74)]{KM2018})
\beq\label{recurrenceYn}
R_{Y_n} Y_n(x;\alpha,\beta,q) = x Y_n(x;\alpha,\beta,q),
\eeq
where the operator $R_{Y_n}$ is defined by
\beq\label{recurrenceoperatorYn}
R_{Y_n} = y^+_n T_{n+1} + 1-y^+_n-y^-_n + y^-_n T_{n-1},
\eeq
with
\begin{align}\label{defyn}
& y^+_n = \frac{\lb 1-\alpha q^{n+1}\rb \lb 1-\alpha \beta q^{n+1} \rb}{\lb 1-\alpha \beta q^{2n+1} \rb \lb 1-\alpha \beta q^{2n+2} \rb}, \qquad  y^-_n =  q^{2n+1} \alpha^2 \beta \frac{\lb 1-q^n \rb \lb 1-\beta q^n \rb}{\lb 1-\alpha \beta q^{2n} \rb \lb 1-\alpha \beta q^{2n+1} \rb}.
\end{align}
Finally, there exists a duality between the little $q$-Jacobi polynomials and the Al-Salam Chihara polynomials which is inherited from \eqref{dualityHnJn}. More precisely, from \cite[Eq. (75)]{KM2018} we have
\beq \label{dualitySnYn}
S_n\lb \alpha^{-1} q^{-m}; \alpha,\beta,q \rb = Y_m \lb q^{-n}; q^{-n} \alpha \beta, \alpha \beta^{-1},  q\rb, \qquad n,m=0,1,2,\dots.
\eeq
\subsection{Big $q$-Laguerre polynomials}

The big $q$-Laguerre polynomials $L_n(x;\alpha,\beta;q)$ are defined by setting $\beta=0$ in the big $q$-Jacobi polynomials (see \cite[Eq. (14.11.1)]{KLS2010}):
\beq\label{Ln}
L_n(x;\alpha,\beta;q)=J_n(x;\alpha,0,\beta;q) =\frac{1}{\lb \beta^{-1} q^{-n};q \rb_n}{}_2\phi_1\left( \left. \begin{split} \begin{matrix} q^{-n}, \alpha q x^{-1} \\ \alpha q \end{matrix} \end{split} \right|q;x \beta^{-1}\right).
\eeq
The polynomials $L_n$ satisfy the three-term recurrence relation
\beq\label{recurrenceLn}
R_{L_n} L_n(x;\alpha,\beta,q) = x \;  L_n(x;\alpha,\beta,q),
\eeq
where the operator $R_{L_n}$ is defined by
\beq\label{recurrenceoperatorLn}
R_{L_n} = l^+_n T_{n+1} +(1-l^+_n-l^-_n) + l^-_n T_{n-1},
\eeq
with
\begin{align}
& l_n^+ = \lb 1-\alpha q^{n+1} \rb \lb 1-\beta q^{n+1} \rb, \qquad l_n^- = -\alpha \beta q^{n+1} \lb 1-q^n\rb.
\end{align}

\subsection{Little $q$-Laguerre polynomials}
The little $q$-Laguerre (or Wall) polynomials $W_n(x;\alpha,q)$ are obtained from the big $q$-Laguerre polynomials as follows:
\beq\label{Wn}
W_n(x;\alpha,q) = \lim\limits_{\beta \to -\infty} L_n(\beta q x;\alpha,\beta;q) = {}_2\phi_1\left( \left. \begin{split} \begin{matrix} q^{-n}, 0 \\ \alpha q \end{matrix} \end{split} \right|q;q x\right).
\eeq
They satisfy the recurrence relation
\beq\label{recurrenceWn}
R_{W_n} W_n(x;\alpha,q) = -x W_n(x;\alpha,q),
\eeq
where the operator $R_{W_n}$ is defined by
\beq \label{recurrenceoperatorWn}
R_{W_n} = w^+_n T_{n+1} -(w^+_n+w^-_n) + w^-_n T_{n-1},
\eeq
with
\begin{align}
& w_n^+ = q^n \lb 1-\alpha q^{n+1} \rb , \qquad w_n^- = \alpha q^n \lb 1-q^n \rb.
\end{align}
\subsection{Continuous big $q$-Hermite polynomials}
The continuous big $q$-Hermite polynomials $X_n(z;\alpha,q)$ are defined by 
\beq\label{Xn}
X_n(z;\alpha,q)=\lim\limits_{\beta \to 0} S_n(z;\alpha,\beta,q) = \alpha^n z^n {}_2\phi_0\left( \left. \begin{split} \begin{matrix} q^{-n}, \alpha z \\ - \end{matrix} \end{split} \right|q;q^n z^{-2} \right),
\eeq
where $S_n$ are the Al-Salam-Chihara polynomials defined in \eqref{Sn}.  The normalization in \eqref{Xn} is obtained by multiplying $H_n(\tfrac12(z+z^{-1}),\alpha|q)$ in \cite[Eq. (14.18.1)]{KLS2010}  by $\alpha^n$. They satisfy the recurrence relation
\beq\label{recurrenceXn}
R_{X_n} X_n(z;\alpha,q) = \lb z+z^{-1}\rb X_n(z;\alpha,q),
\eeq
where the operator $R_{X_n}$ is defined by
\beq \label{recurrenceoperatorXn}
R_{X_n} = \alpha^{-1} T_{n+1} + \alpha q^n + \alpha \lb 1-q^n \rb T_{n-1}.
\eeq

\subsection{Continuous $q$-Hermite polynomials}
The continuous $q$-Hermite polynomials $Q_n(z;q)$ are defined by
\beq\label{Qn}
Q_n(z;q) = \lim\limits_{\alpha \to 0} \alpha^{-n} X_n(z;\alpha,q)= z^n {}_2\phi_0\left( \left. \begin{split} \begin{matrix} q^{-n}, 0 \\ - \end{matrix} \end{split} \right|q;q^n z^{-2} \right).
\eeq
They satisfy the recurrence relation
\beq\label{recurrenceQn}
R_{Q_n} Q_n(z;q) = \lb z+z^{-1}\rb Q_n(z;q),
\eeq
where the operator $Q_n$ is defined by
\beq
R_{Q_n} = T_{n+1}+\lb 1-q^n \rb T_{n-1}.
\eeq

\end{document}